\def\l@section{\@tocline{1}{10pt}{1pc}{}{}}
\def\l@subsection{\@tocline{2}{0pt}{1pc}{4.6em}{}}
\def\l@subsubsection{\@tocline{3}{0pt}{1pc}{7.6em}{}}
\renewcommand{\tocsection}[3]{%
  \indentlabel{\@ifnotempty{#2}{\makebox[2.3em][l]{%
    \ignorespaces#1 #2.\hfill}}}\textbf{#3}}
\renewcommand{\tocsubsection}[3]{%
  \indentlabel{\@ifnotempty{#2}{\hspace*{2.3em}\makebox[2.3em][l]{%
    \ignorespaces#1 #2.\hfill}}}#3}
\renewcommand{\tocsubsubsection}[3]{%
  \indentlabel{\@ifnotempty{#2}{\hspace*{4.6em}\makebox[3em][l]{%
    \ignorespaces#1 #2.\hfill}}}#3}
\newcommand{\MM}{\mathcal{M}}
\newcommand{\CC}{\mathcal{C}}
\newcommand{\IR}{\mathbb{R}}
\newcommand{\IN}{\mathbb{N}}
\newcommand{\IC}{\mathbb{C}}
\newcommand{\IH}{\mathbb{H}}
\newcommand{\IZ}{\mathbb{Z}}
\renewcommand{\SS}{\mathcal{S}}
\newcommand{\eps}{\varepsilon}
\newcommand{\ov}[1]{\overline{#1}}
\newcommand{\td}[1]{\widetilde{#1}}
\DeclareMathOperator{\Klein}{Klein}
\DeclareMathOperator{\thick}{thick}
\DeclareMathOperator{\thin}{thin}
\DeclareMathOperator{\eucl}{eucl}
\DeclareMathOperator{\Int}{Int}
\DeclareMathOperator{\Ric}{Ric}
\DeclareMathOperator{\curv}{curv}
\DeclareMathOperator{\area}{area}
\DeclareMathOperator{\dist}{dist}
\DeclareMathOperator{\diam}{diam}
\DeclareMathOperator{\vol}{vol}
\DeclareMathOperator{\Stab}{Stab}
\DeclareMathOperator{\Rm}{Rm}
\newcommand{\cangle}{\widetilde{\sphericalangle}}
\newcommand{\dotcup}{\ensuremath{\mathaccent\cdot\cup}}
\newcommand{\EMPTY}[1]{}
\newtheorem{Theorem}{Theorem}[section]
\newtheorem{Lemma}[Theorem]{Lemma}
\newtheorem{Proposition}[Theorem]{Proposition}
\newtheorem{Definition}[Theorem]{Definition}
\numberwithin{equation}{section}
\newtheorem*{Claim}{Claim}
\newtheorem*{Claim0}{Claim 0}
\newtheorem*{Claim1}{Claim 1}
\newtheorem*{Claim2}{Claim 2}
\newtheorem*{Claim3}{Claim 3}
\newtheorem*{Claim4}{Claim 4}
\newtheorem*{Claim5}{Claim 5}
\newtheorem*{Result}{Long-time picture after Perelman}
\newtheorem*{Result1}{First step}
\newtheorem*{Result2}{Second step}
\newtheorem*{Result3}{Third step}
\newtheorem*{Result4}{Behavior of the flow on a large time-interval}
\title[Long-time behavior of 3d Ricci flow --- D]{Long-time behavior of 3 dimensional Ricci flow\\D: Proof of the main results}
\author{Richard H Bamler}
\address{UC Berkeley, Department of Mathematics, 970 Evans Hall, Berkeley, CA 94720, USA}
\email{rbamler@math.berkeley.edu}
\date{\today}
\begin{document}
\begin{abstract}
This is the fourth and last part of a series of papers on the long-time behavior of 3 dimensional Ricci flows with surgery.
In this paper, we prove our main two results.
The first result states that if the surgeries are performed correctly, then the flow becomes non-singular eventually and the curvature is bounded by $C t^{-1}$.
The second result provides a qualitative description of the geometry as $t \to \infty$. 
\end{abstract}

\maketitle
\tableofcontents

\section{Introduction and outline of the proof} \label{sec:IntroductionpartD}
In this paper we will prove the main results of this series of papers, namely Theorems \ref{Thm:LT0-main-1} and \ref{Thm:geombehavior} from \cite{Bamler-LT-Introduction}.
In our proofs we will make use of the three previous papers, which we will refer to as \cite{Bamler-LT-Perelman}, \cite{Bamler-LT-simpcx} and \cite{Bamler-LT-topology}.
For a precise statement of the main results, historical remarks and acknowledgements see \cite{Bamler-LT-Introduction}.

Our main interest will focus on the proof of Theorem \ref{Thm:LT0-main-1} in \cite{Bamler-LT-Introduction}, which states that for every Ricci flow with surgery $\MM$ that is performed by sufficiently precise cutoff, the surgeries stop to occur eventually and we have a curvature bound of the form $|{\Rm_t}| < C t^{-1}$ for large $t$.
The geometric characterization of the flow for $t \to \infty$, as described in \cite[Theorem \ref{Thm:geombehavior}]{Bamler-LT-Introduction}, will then be a byproduct of the proof of this theorem.
For more details on this geometric characterization see subsection \ref{subsec:behaviorlargetimesproof}.

We will now present an outline of the proof of \cite[Theorem \ref{Thm:LT0-main-1}]{Bamler-LT-Introduction}.
The key to proving this theorem is to establish the curvature bound $|{\Rm_t}| < C t^{-1}$ for large times $t$.
The fact that surgeries stop to occur eventually follows from this bound, since surgeries can only arise at points where the curvature goes to infinity.
For simplicity, we will only consider Ricci flows \emph{without} surgery in this outline, i.e. we will consider families of metrics $(g_t)_{t \in [0, \infty)}$ on a closed, orientable $3$-manifold $M$ that satisfy the evolution equation
\[ \partial_t g_t = - 2 \Ric (g_t) \]
and we will explain how we obtain the curvature bound $|{\Rm_t}| = |{\Rm} (g_t)| < C t^{-1}$ (for large $t$).
In other words, we will outline the proof of \cite[Corollary \ref{Cor:nonsingflow}]{Bamler-LT-Introduction}.
The case in which there are surgeries follows using the same arguments, but the existence of surgeries adds several less interesting technical difficulties.

Consider the rescaled and reparameterized flow $(\td{g}_t)_{t \in (-\infty, \infty)}$ with $\td{g}_t = e^{-t} g_{e^{t}}$, which satisfies the flow equation
\[ \partial_t \td{g}_t = - 2 \Ric (\td{g}_t) -  \td{g}_t. \]
Then the curvature bound $|{\Rm}(g_t)| < C t^{-1}$ for $g_t$ is equivalent to the bound
\begin{equation} \label{eq:mainrescaledbound}
|{\Rm} (\td{g}_t)| <C \qquad \text{for large $t$.}
\end{equation}
For clarity we will only consider the metrics $\td{g}_t$ instead of $g_t$ for the rest of this subsection.
We will however work with the metric $g_t$ in the main part of this paper.

In \cite[Proposition \ref{Prop:thickthindec}]{Bamler-LT-Perelman}, we recalled Perelman's result on the long-time behavior of Ricci flows (with surgery) (cf \cite[7.3]{PerelmanII}), which, in the language of the rescaled flow $(\td{g}_t)$, can be summarized as follows:

\begin{Result} 
For every sufficiently large time $t$ there is a decomposition $M = M_{\textnormal{thick}} (t) \dotcup M_{\textnormal{thin}} (t)$ (of $M$ into its ``thick part'' and ``thin part'') such that:
\begin{enumerate}[label=(\alph*)]
\item The components of $M_{\textnormal{thick}} (t)$ are diffeomorphic to hyperbolic manifolds and the metric $\td{g}_t$ on each component of $M_{\textnormal{thick}} (t)$ is sufficiently close to a hyperbolic metric of sectional curvature $-\frac14$ whose cusps are truncated along embedded $2$-tori sufficiently far away from a base point.
Those $2$-tori correspond to embedded $2$-tori between $M_{\textnormal{thin}} (t)$ and $M_{\textnormal{thick}} (t)$, which are incompressible in $M$ (i.e. $\pi_1$-injective).
\item If we set
\[ \rho_1 (x, t) = \sup \big\{ r \in (0,1] \;\; : \;\; \sec_t \geq - r^{-2} \quad \text{on} \quad B(x, t, r) \big\}, \]
then for all $x \in M_{\textnormal{thin}} (t)$
\[ \vol B(x, t, \rho_1(x,t)) < w(t) \rho_1^3(x,t). \]
Here $w(t)$ is a small constant with $\lim_{t \to \infty} w(t) = 0$.
\item There are functions $\ov{r}, K : (0,1) \to (0, \infty)$ such that for all $w' \in (w,1)$, $r < \ov{r}(w')$ and $x \in M_{\textnormal{thin}} (t)$ we have:
If $\vol B(x,r) > w' r^3$ and $r < \rho_1(x, t)$, then $|{\Rm_t}|, r |{\nabla\Rm_t}|, r^2 |{\nabla^2 \Rm_t}| < K(w') r^{-2}$ on $B(x,r)$.
\end{enumerate}
\end{Result}

In short, Perelman's result states that the metric on the thick part converges to a hyperbolic metric (see part (a)), while the metric on the thin part collapses locally at scale $\rho_1 (x,t)$ with a lower sectional curvature bound (see part (b)).
Part (c) is a technical statement.
So the desired curvature bound (\ref{eq:mainrescaledbound}) holds on $M_{\textnormal{thick}} (t)$ and it remains to study $M_{\textnormal{thin}} (t)$ and show that the curvature bound holds there as well.

In order to understand the behavior of the metric on the thin part, we first have to analyze the local collapsing behavior there.
This analysis is mainly due to \cite{MorganTian}, \cite{KLcollapse}, \cite{BBMP2}, \cite{CG}, \cite{ShioyaYamaguchi} and \cite{Faessler}, but it will require some work to convert the results of these papers into a form that is suitable for our purposes.
This conversion is carried out in subsection \ref{subsec:MorganTian}, Proposition \ref{Prop:MorganTianMain}, which is quite technical, describes its outcome.
For the purpose of this exposition, we will now present a very simplified version of this description, which does not cover several---less instructive---special cases.
For a more detailed explanation of Proposition \ref{Prop:MorganTianMain}, we refer to the beginning of subsection \ref{subsec:MorganTian}.

To understand the analysis on the thin part, first note that there are essentially three ways in which a collapse with lower sectional curvature bound can occur:
Namely, the collapse can occur along $S^2$, $T^2$ or $S^1$-fibers.
An example for a collapse along an $S^2$-fiber would be the standard geometry on $S^2 \times \IR$, where the metric on first factor is chosen small.
Similarly, an example for a collapse along a $T^2$-fiber would be $T^2 \times \IR$ with a small metric on the first factor.
In these two cases the metric collapses to a ($1$-dimensional) line and the first factor in the product determines the fiber along which the collapse occurs.
A collapse along an $S^1$-fiber can be observed in the model $S^1 \times \IR^2$, where the metric on the first factor is again small.
This example would be collapsed to $\IR^2$.
Note that the $\IR^2$-factor can be replaced by any Riemannian surface $\Sigma^2$.
The product $S^1 \times \Sigma$ would then be collapsed to $\Sigma^2$.
Even more generally, we can replace the product $S^1 \times \Sigma$ by any $S^1$-fibration over $\Sigma$ and choose a metric for which the $S^1$-fibers have small length.

\begin{figure}[t] 
\begin{center}
\setlength{\unitlength}{2863sp}%
\begingroup\makeatletter\ifx\SetFigFont\undefined%
\gdef\SetFigFont#1#2#3#4#5{%
  \reset@font\fontsize{#1}{#2pt}%
  \fontfamily{#3}\fontseries{#4}\fontshape{#5}%
  \selectfont}%
\fi\endgroup%
\begin{picture}(4500,3800)(3500,0)
\hspace{16mm}\includegraphics[width=14.5cm]{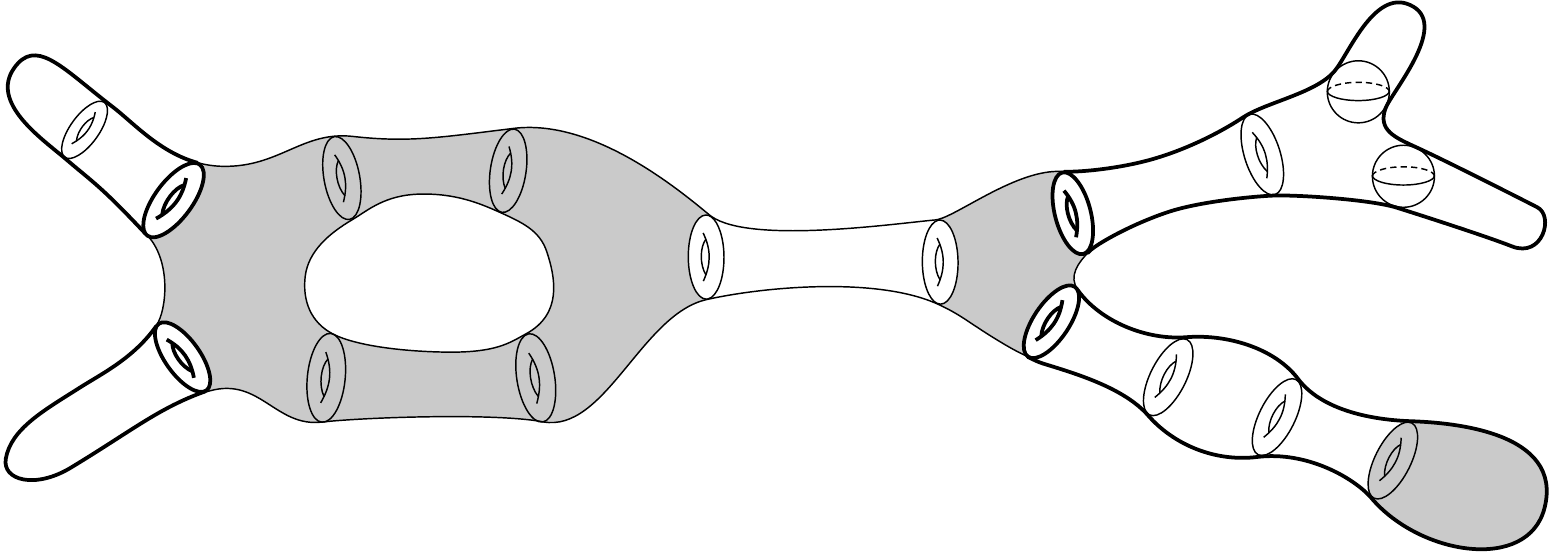}%
\put(-9300,2700){\makebox(0,0)[lb]{\smash{{\SetFigFont{12}{14.4}{\familydefault}{\mddefault}{\updefault}$V_2$}}}}
\put(-8800,2300){\makebox(0,0)[lb]{\smash{{\SetFigFont{12}{14.4}{\familydefault}{\mddefault}{\updefault}$V_1$}}}}
\put(-8850,845){\makebox(0,0)[lb]{\smash{{\SetFigFont{12}{14.4}{\familydefault}{\mddefault}{\updefault}$V_1$}}}}
\put(-8080,1630){\makebox(0,0)[lb]{\smash{{\SetFigFont{12}{14.4}{\familydefault}{\mddefault}{\updefault}$V_2$}}}}
\put(-7000,2270){\makebox(0,0)[lb]{\smash{{\SetFigFont{12}{14.4}{\familydefault}{\mddefault}{\updefault}$V_1$}}}}
\put(-7000,950){\makebox(0,0)[lb]{\smash{{\SetFigFont{12}{14.4}{\familydefault}{\mddefault}{\updefault}$V_1$}}}}
\put(-5800,1730){\makebox(0,0)[lb]{\smash{{\SetFigFont{12}{14.4}{\familydefault}{\mddefault}{\updefault}$V_2$}}}}
\put(-4600,1730){\makebox(0,0)[lb]{\smash{{\SetFigFont{12}{14.4}{\familydefault}{\mddefault}{\updefault}$V_1$}}}}
\put(-3400,1730){\makebox(0,0)[lb]{\smash{{\SetFigFont{12}{14.4}{\familydefault}{\mddefault}{\updefault}$V_2$}}}}
\put(-2500,2160){\makebox(0,0)[lb]{\smash{{\SetFigFont{12}{14.4}{\familydefault}{\mddefault}{\updefault}$V_1$}}}}
\put(-1550,2400){\makebox(0,0)[lb]{\smash{{\SetFigFont{12}{14.4}{\familydefault}{\mddefault}{\updefault}$V_2$}}}}
\put(-1100,3040){\makebox(0,0)[lb]{\smash{{\SetFigFont{12}{14.4}{\familydefault}{\mddefault}{\updefault}$V_1$}}}}
\put(-600,2050){\makebox(0,0)[lb]{\smash{{\SetFigFont{12}{14.4}{\familydefault}{\mddefault}{\updefault}$V_1$}}}}
\put(-2800,1130){\makebox(0,0)[lb]{\smash{{\SetFigFont{12}{14.4}{\familydefault}{\mddefault}{\updefault}$V_1$}}}}
\put(-2130,850){\makebox(0,0)[lb]{\smash{{\SetFigFont{12}{14.4}{\familydefault}{\mddefault}{\updefault}$V_2$}}}}
\put(-1450,620){\makebox(0,0)[lb]{\smash{{\SetFigFont{12}{14.4}{\familydefault}{\mddefault}{\updefault}$V_1$}}}}
\put(-700,320){\makebox(0,0)[lb]{\smash{{\SetFigFont{12}{14.4}{\familydefault}{\mddefault}{\updefault}$V_2$}}}}
\put(-8800,2880){\makebox(0,0)[lb]{\smash{{\SetFigFont{12}{14.4}{\familydefault}{\mddefault}{\updefault}$S_1(t)$}}}}
\put(-8700,450){\makebox(0,0)[lb]{\smash{{\SetFigFont{12}{14.4}{\familydefault}{\mddefault}{\updefault}$S_2(t)$}}}}
\put(-2400,2800){\makebox(0,0)[lb]{\smash{{\SetFigFont{12}{14.4}{\familydefault}{\mddefault}{\updefault}$S_3(t)$}}}}
\put(-2400,250){\makebox(0,0)[lb]{\smash{{\SetFigFont{12}{14.4}{\familydefault}{\mddefault}{\updefault}$S_4(t)$}}}}
\end{picture}%
\caption{A decomposition of $M$ into $V_1(t), V_2(t)$ for the case in which $M_{\textnormal{thin}} (t) = M$.
The ``$(t)$'' is omitted for space reasons. 
The good components of $V_1 (t), V_2(t)$, i.e. the components of $M_{\textnormal{good}}(t)$, are colored in gray.
The solid tori $S_1(t), \ldots, S_4(t)$, which are highlighted in bold, cover the bad part $M_{\textnormal{bad}} (t)$, except for one component of $V_1(t)$, which is diffeomorphic to $T^2 \times I$.
\label{fig:V1V2goodbadSi}}
\end{center}
\end{figure}
The analysis of $M_{\textnormal{thin}} (t)$ roughly implies the following picture (compare with Figure \ref{fig:V1V2goodbadSi}, for more details see Proposition \ref{Prop:MorganTianMain} and the explanation before that):
For large times $t$ there is a decomposition
\begin{equation} \label{eq:MthinV1V2Intro}
M_{\textnormal{thin}} (t) = V_1 (t) \cup V_2 (t)
\end{equation}
of subsets $V_1(t), V_2(t) \subset M$,\footnote{In the main part of this paper we usually fix $t$ and write $V_1, V_2$ instead of $V_1 (t), V_2(t)$.} whose components intersect in embedded $2$-spheres or $2$-tori.
The collapse on $V_1 (t)$ locally occurs along $S^2$ or $T^2$-fibers and can be locally modeled on geometries like $T^2 \times \IR$ or $S^2 \times \IR$, as discussed earlier.
So the components of $V_1(t)$ are diffeomorphic to $T^2 \times I$, $S^2 \times I$ or to a few other standard topologies.
The collapse on $V_2 (t)$ locally occurs along $S^1$-fibers and can be locally modeled on $S^1 \times \IR^2$.
The collapsing fibers induce an $S^1$-fibration (or, more generally, a Seifert fibration) on $V_2 (t)$, apart from a few exceptions.

Using elementary topological arguments, it is possible to reorganize the decomposition (\ref{eq:MthinV1V2Intro}) into a geometric decomposition of $M$ (see \cite[Definition \ref{Def:geomdec}]{Bamler-LT-topology}) and hence prove the Geometrization Conjecture.
In general, or at least a priori, this reorganization modifies the decomposition (\ref{eq:MthinV1V2Intro}) substantially.
For example, the decomposition (\ref{eq:MthinV1V2Intro}) is in general far more complex than the geometric decomposition of $M$ and the $2$-tori between the components of $V_1(t)$ and $V_2(t)$ are in general not incompressible in $M$, as are those of the geometric decomposition.

In a next step, we analyze the decomposition (\ref{eq:MthinV1V2Intro}) from a geometric and a topological point of view (see subsections \ref{subsec:geometricconsequences}, \ref{subsec:topimplications}).
Call a component of $V_1 (t)$ \emph{good} if it locally collapses along incompressible $T^2$-fibers.
Components of $V_1(t)$ that collapse along $S^2$-fibers or compressible $T^2$-fibers are called \emph{bad}.
Similarly, we call a component of $V_2(t)$ \emph{good} if the collapsing $S^1$-fibers are incompressible in $M$, otherwise we call it \emph{bad}.
Let $M_{\textnormal{good}} (t)$ be the union of all good components of $V_1 (t)$ or $V_2 (t)$ and $M_{\textnormal{bad}} (t)$ the union of all bad components.
So we obtain a decomposition (see again Figure \ref{fig:V1V2goodbadSi})
\begin{equation} \label{eq:goodbaddec}
 M_{\textnormal{thin}} (t) = M_{\textnormal{good}} (t) \cup M_{\textnormal{bad}} (t).
\end{equation}

In subsection \ref{subsec:geometricconsequences}, we will learn that good components of $V_1(t)$ or $V_2(t)$ become locally non-collapsed at scale $\rho_1(x,t)$ when we pass to the universal cover (see Lemma \ref{Lem:unwrapfibration}).
By this we mean the following: 
Let $x \in M_{\textnormal{good}} (t)$ be a point that is located in a good component of $V_1 (t)$ or $V_2(t)$, and choose a lift $\td{x} \in \td{M}$ of $x$ in the universal cover of $M$.
Then
\begin{equation} \label{eq:noncollapsednessintro}
 \vol_t B^{\td{M}} (\td{x},t,\rho_1 (x,t)) > w_1 \rho^3_1 (x,t).
\end{equation}
Here the left-hand side denotes the volume of the $\rho_1(x,t)$-ball around $\td{x}$ in the universal cover of $M$ (not the universal cover of $B(x, t, \rho_1(x,t))$!) and $w_1 > 0$ is a universal constant.

In subsection \ref{subsec:topimplications}, we analyze the topology of the decomposition (\ref{eq:goodbaddec}).
We will find that there are embedded, pairwise disjoint solid tori $S_1(t), \ldots, S_{m(t)} (t) \subset M_{\textnormal{thin}} (t)$, $S_i(t) \approx S^1 \times D^2$ such that the following holds (compare again with Figure \ref{fig:V1V2goodbadSi}):
\begin{enumerate}[label=(\alph*)]
\item each $S_i(t)$ is a union of components of $V_1 (t)$ and $V_2(t)$\label{list:topologygoodbad}
\item each $S_i(t)$ is adjacent to a component of $V_2 (t)$ and it contains a component of $V_1 (t)$ that is adjacent to its boundary $\partial S_i(t)$
\item each $S_i(t)$ is incompressible in $M$, i.e. the induced map $\pi_1 (S_i(t)) \cong \IZ \to \pi_1 (M)$ is injective
\item $M_{\textnormal{bad}} (t) \setminus (S_1(t) \cup \ldots \cup S_{m(t)} (t) )$ is a union of components that are diffeomorphic to $T^2 \times I$ or its twofold quotient $\Klein^2 \td\times I$, where the $T^2$-factor is compressible in $M$.
Each of these components is adjacent to $M_{\textnormal{good}} (t)$ on both sides.
\end{enumerate}

With this characterization at hand we are now able to apply our results from \cite{Bamler-LT-Perelman}, and analyze the behavior of the flow $(\td{g}_t)$ more precisely.
This analysis is explained in section \ref{sec:mainargument}, which also contains the proofs of the two main theorems, \cite[Theorems \ref{Thm:LT0-main-1}, \ref{Thm:geombehavior}]{Bamler-LT-Introduction}.
Section \ref{sec:Preparations} contains technical results that will be used in the course of this analysis.

In the remainder of this outline, we explain our strategy of proof in section \ref{sec:mainargument}.
In subsection \ref{subsec:firstcurvbounds} we first analyze the geometry of $\td{g}_t$ for a fixed, large time $t$.
Our analysis is organized in three steps.
In the ``first step'' (see Lemma \ref{Lem:firstcurvboundstep1}), we apply \cite[Proposition \ref{Prop:curvcontrolgood}]{Bamler-LT-Perelman} (``bounded curvature around good points'') to all points of $M_{\textnormal{good}} (t)$ and make use of the non-collapsedness in the universal cover, (\ref{eq:noncollapsednessintro}), to deduce that
\begin{equation} \label{eq:curvboundonMgood}
 |{\Rm}| < K \qquad \text{on} \qquad \big( M_{\textnormal{thick}} (t) \cup M_{\textnormal{good}} (t) \big) \times [t - \tau, t] 
\end{equation}
for some universal constants $K < \infty$ and $\tau > 0$.
Next, but still within the first step, we consider the components mentioned in item (d) of the list above, which are diffeomorphic to $T^2 \times I$ or $\Klein^2 \td\times I$.
The fact that these components are adjacent to $M_{\textnormal{good}} (t)$---a region on which the curvature is bounded on a time-interval of uniform size---can be used to localize the arguments leading to the curvature bound in (\ref{eq:curvboundonMgood}).
Using \cite[Proposition \ref{Prop:curvboundinbetween}]{Bamler-LT-Perelman} (``Curvature control at points that are good relative to regions whose boundary is geometrically controlled''), we find that the curvature is also bounded on these components, by a uniform constant and on a uniform time-interval.
So we obtain that
\begin{equation} \label{eq:curvboundoutsideSiinintro}
 |{\Rm}| < K \qquad \text{on} \qquad \big( M \setminus (S_1 (t) \cup \ldots \cup S_{m(t)} (t) ) \big) \times [t- \tau, t].
\end{equation}
Moreover, for large times $t$, we obtain a curvature bound on each $S_i(t)$ that depends on the distance to $\partial S_i (t)$, using \cite[Proposition \ref{Prop:curvcontrolincompressiblecollapse}]{Bamler-LT-Perelman} (``Bounded curvature at bounded distance from sufficiently collapsed and good regions'').
We will use this bound to conclude that there are two possibilities for each solid torus $S_i (t)$:
Either the diameter of $S_i(t)$ is controlled, in which case we obtain a curvature bound on all of $S_i(t)$ in terms of its diameter, or the diameter is uncontrolled, i.e. large, and we can use a distance dependent curvature bound to understand the geometry of a ``long'' collar neighborhood $P_i(t) \subset S_i(t)$ of $\partial S_i (t)$ (compare with Figure \ref{fig:SiStep1Intro}):

\begin{figure}[t] 
\begin{center}
\setlength{\unitlength}{2863sp}%
\begingroup\makeatletter\ifx\SetFigFont\undefined%
\gdef\SetFigFont#1#2#3#4#5{%
  \reset@font\fontsize{#1}{#2pt}%
  \fontfamily{#3}\fontseries{#4}\fontshape{#5}%
  \selectfont}%
\fi\endgroup%
\begin{picture}(4500,3800)(3500,0)
\hspace{16mm}\includegraphics[width=14.5cm]{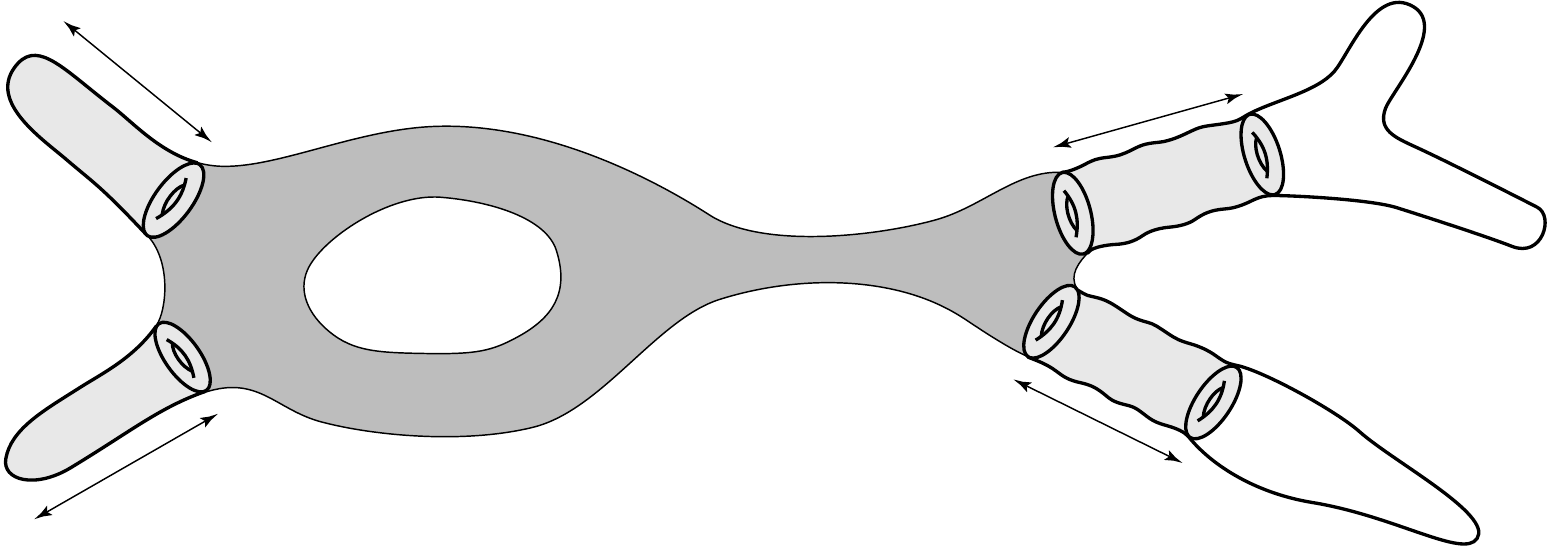}%
\put(-2590,2200){\makebox(0,0)[lb]{\smash{{\SetFigFont{12}{14.4}{\familydefault}{\mddefault}{\updefault}$P_3(t)$}}}}
\put(-2830,1070){\makebox(0,0)[lb]{\smash{{\SetFigFont{12}{14.4}{\familydefault}{\mddefault}{\updefault}$P_4(t)$}}}}
\put(-9400,1980){\makebox(0,0)[lb]{\smash{{\SetFigFont{12}{14.4}{\familydefault}{\mddefault}{\updefault}$S_1(t)$}}}}
\put(-8500,2980){\makebox(0,0)[lb]{\smash{{\SetFigFont{12}{14.4}{\familydefault}{\mddefault}{\updefault}$\leq D$}}}}
\put(-8600,380){\makebox(0,0)[lb]{\smash{{\SetFigFont{12}{14.4}{\familydefault}{\mddefault}{\updefault}$\leq D$}}}}
\put(-2900,2780){\makebox(0,0)[lb]{\smash{{\SetFigFont{12}{14.4}{\familydefault}{\mddefault}{\updefault}$\Delta(D)$}}}}
\put(-3400,560){\makebox(0,0)[lb]{\smash{{\SetFigFont{12}{14.4}{\familydefault}{\mddefault}{\updefault}$\Delta(D)$}}}}
\put(-9400,1270){\makebox(0,0)[lb]{\smash{{\SetFigFont{12}{14.4}{\familydefault}{\mddefault}{\updefault}$S_2(t)$}}}}
\put(-1800,1850){\makebox(0,0)[lb]{\smash{{\SetFigFont{12}{14.4}{\familydefault}{\mddefault}{\updefault}$S_3(t)$}}}}
\put(-1600,1100){\makebox(0,0)[lb]{\smash{{\SetFigFont{12}{14.4}{\familydefault}{\mddefault}{\updefault}$S_4(t)$}}}}
\end{picture}%
\caption{On the dark gray part, we have a curvature bound of $|{\Rm}| < K$ for a universal $K$.
The diameter of the solid tori $S_1 (t), S_2(t)$ (light gray and bold) is bounded by $D$ and the curvature on $S_1(t), S_2(t)$ satisfies the bound $|{\Rm}| < K'(D)$.
The solid tori $S_3(t), S_4(t)$ (highlighted in bold) are both larger than $D$ in diameter and we can only bound the geometry of their collars $P_3 (t) \subset S_3(t), P_4(t) \subset S_4(t)$ (light gray), which have ``length'' $\Delta (D)$.\newline
In the ``first step'', we prove that $S_3(t), S_4(t)$ must also have been large at time $t-\tau$ and in the ``second step'', we extend the curvature bound $|{\Rm}| < K$ from the dark gray area to $P_3(t), P_4(t)$.
\label{fig:SiStep1Intro}}
\end{center}
\end{figure}
\begin{Result1}[see Lemma \ref{Lem:firstcurvboundstep1}\footnote{Note that for clarity we have altered some of the notation from Lemma \ref{Lem:firstcurvboundstep1}.}]
The curvature bound (\ref{eq:curvboundoutsideSiinintro}) holds, outside the solid tori $S_i (t)$.
Moreover, there are functions $K'(D), \Delta (D)$ with $\Delta (D) \to \infty$ as $D \to \infty$ such that for every $D < \infty$ and sufficiently large time $t$ and for each  solid torus $S_i (t)$ there are two possibilities:
\begin{enumerate}[label=(\alph*)]
\item $\diam_t S_i (t) \leq D$ and $|{\Rm}| < K'(D)$ on $S_i (t)$.
\item $\diam_t S_i (t) > D$ and there is a collar neighborhood $P_i (t) \subset S_i (t)$ of $\partial S_i (t)$ that is diffeomorphic to $T^2 \times I$, in such a way that the diameter of the $T^2$-fibers are bounded by a universal constant (we may assume this constant to be $1$) and the two boundary components of $P_i (t)$ have distance $\Delta(D)$ from one another.

Furthermore, we have a lower bound on the diameter of $S_i (t)$ at earlier times:
\begin{equation} \label{eq:noexpandingintro}
 \diam_{t'} S_i (t) > \Delta (D) \qquad \text{for all} \qquad t' \in [t- \tau, t].
\end{equation}
In other words, the solid tori $S_i (t)$ cannot grow too fast on a small time-interval.
\end{enumerate}
\end{Result1}
Note that the bound (\ref{eq:noexpandingintro}) is a non-trivial result, which will become particularly important later on.
It is a consequence of \cite[Proposition \ref{Prop:slowdiamgrowth}]{Bamler-LT-Perelman} (``Controlled diameter growth of regions whose boundary is sufficiently collapsed and good'').
The result of the first step implies that, in view of our desired curvature bound (\ref{eq:mainrescaledbound}), it suffices to focus our attention on those solid tori $S_i(t)$ of large diameter.
The largeness hereby is regulated by the parameter $D$, which we can choose at our own liking.
It is however important to choose $D$ uniformly, since the constant $K'(D)$ in part (a) may deteriorate with it. 

In the ``second step'' (see Proposition \ref{Prop:firstcurvboundstep2}), we use the fact that the solid tori $S_i(t)$ cannot expand too fast, (\ref{eq:noexpandingintro}), to localize an argument similar to the one leading to the curvature bound (\ref{eq:curvboundonMgood}).
The idea hereby is the following:
The cross-sectional $2$-tori of the collar neighborhoods $P_i (t)$ are compressible within $M$ (or even within $S_i (t)$) and hence regions within $P_i (t)$ stay collapsed when we pass to the universal cover $\td{M}$.
However, any compressing disk for these $2$-tori have to intersect $S_i (t) \setminus P_i (t)$.
So in a local cover (around points that are sufficiently deep inside $P_i (t)$) we obtain a non-collapsing result similar to (\ref{eq:noncollapsednessintro}).
The non-expanding result (\ref{eq:noexpandingintro}) ensures that a similar behavior can be observed at all times of the time-interval $[t - \tau, t]$.
This puts us in a position to apply \cite[Proposition \ref{Prop:curvboundnotnullinarea}]{Bamler-LT-Perelman} (``Curvature control in large regions that are locally good everywhere''), and deduce a uniform curvature bound at most points of $P_i (t)$.
This additional, better curvature bound, can be viewed to fit into a dichotomy, as the following summary shows (note that for the following summary we may have to adjust $S_i (t)$, $P_i(t)$) (see again Figure \ref{fig:SiStep1Intro}):

\begin{Result2}[see Proposition \ref{Prop:firstcurvboundstep2}]
There are constants $K < \infty$, $\tau > 0$ and functions $K'(D), L'(D)$ with $\Delta(D) \to \infty$ as $D \to \infty$  such that for every $D < \infty$ and sufficiently large time $t$ we still have
\[ |{\Rm}| < K \qquad \text{on} \qquad \big( M \setminus (S_1 (t) \cup \ldots \cup S_{m(t)} (t) ) \big) \times [t- \tau, t]. \]
Moreover for each solid torus $S_i (t)$ there are two possibilities:
\begin{enumerate}[label=(\alph*)]
\item $\diam_t S_i (t) \leq D$ and $|{\Rm_t}| < K'(D)$ on $S_i (t)$.
\item $\diam_t S_i (t) > D$ and there is a collar neighborhood $P_i (t) \subset S_i (t)$, diffeomorphic to $T^2 \times I$, whose boundary components have distance $\Delta (D)$ from one another and whose cross-sectional $2$-tori have diameter smaller than some universal constant (e.g. $1$).

Moreover, $|{\Rm}| < K$ on $P_i (t) \times [t-\tau, t]$.
\end{enumerate}
\end{Result2}

In the ``third'' step (see Proposition \ref{Prop:firstcurvboundstep3}) we analyze the collar neighborhoods $P_i (t)$ more closely.
Note that in the previous statement, the ``thickness'' of each $P_i(t)$, i.e. the diameters of the cross-sectional $2$-tori, are bounded by $1$.
We can replace this constant by any arbitrarily small constant, but such a replacement would necessitate a change of the constants $K, K', \tau$, a change that we have to avoid.
In order to obtain more control on this ``thickness'', we instead proceed as follows:
Assume for the moment that $M$ is not diffeomorphic to a quotient of a $2$-torus bundle over a circle and consider the simplicial complex $V$ and the continuous map $f_0 : V \to M$ as constructed in \cite{Bamler-LT-topology}.
By \cite[Proposition \ref{Prop:areaevolutioninMM}]{Bamler-LT-simpcx}, there is a constant $A_0 < \infty$, which only depends on the topology of $V$, and a time-dependent family of maps $f_t : V \to M$ such that
\[ \area_t f_t < A_0 \qquad \text{for all} \qquad t \geq 0. \]
Based on this map---and using \cite[Proposition \ref{Prop:maincombinatorialresult}(a)]{Bamler-LT-topology}---it is then possible to show the existence of a ``compressing planar domain'' $f_{i,t} : \Sigma_{i,t} \to M$ of bounded area for each solid torus $S_i (t)$.
By this we mean the following: $\Sigma_{i,t} \subset \IR^2$ is a compact smooth domain (e.g. a disk or an annulus) and $f_{i,t} : \Sigma_{i,t} \to M$ is a smooth map with $f_{i,t} (\partial \Sigma_{i,t}) \subset \partial S_i (t)$ such that $f_{i,t}$ restricted to only the outer boundary circle of $\Sigma_{i,t}$ is non-contractible in $\partial S_i (t)$.
Moreover, the time-$t$ area of $f_{i,t}$ is bounded by some constant $A_1 < \infty$ that only depends on $A_0$

The existence of these ``compressing planar domains'' of bounded area give us a way to bound the ``thickness'' of  a long section somewhere inside the collar neighborhoods $P_i (t)$ by a constant $\delta(D) \approx A_1 / \Delta(D)$.
This constant becomes arbitrarily small for large $D$.
So after reselecting $P_i(t)$ to be that section of small ``thickness'', and adjusting $S_i(t)$ and $\Delta(D)$, we obtain (compare with Figure \ref{fig:SiStep3Intro}):

\begin{figure}[t] 
\begin{center}
\setlength{\unitlength}{2863sp}%
\begingroup\makeatletter\ifx\SetFigFont\undefined%
\gdef\SetFigFont#1#2#3#4#5{%
  \reset@font\fontsize{#1}{#2pt}%
  \fontfamily{#3}\fontseries{#4}\fontshape{#5}%
  \selectfont}%
\fi\endgroup%
\begin{picture}(4500,3800)(3500,0)
\hspace{16mm}\includegraphics[width=14.5cm]{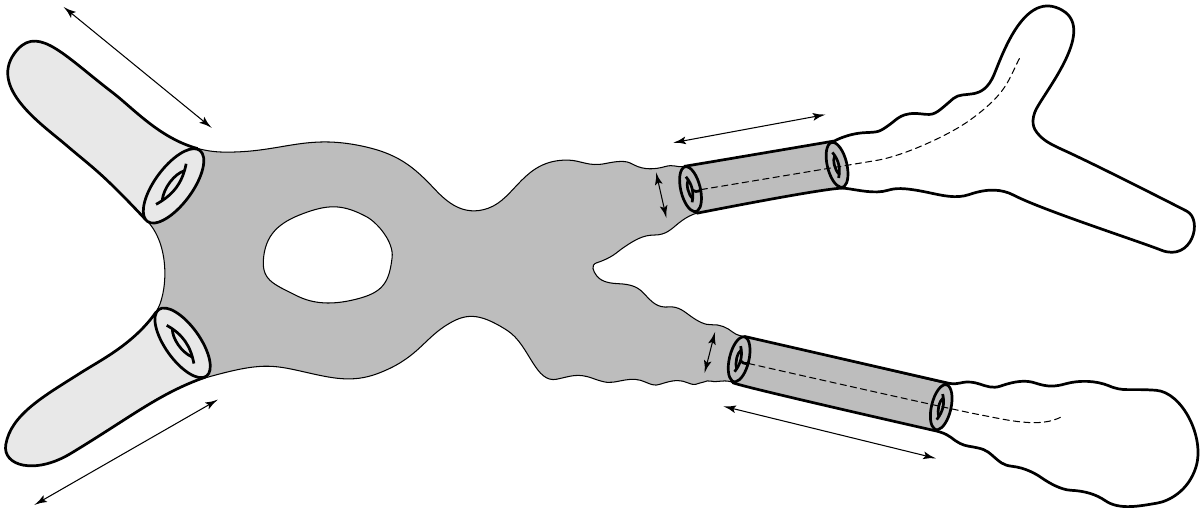}%
\put(-3600,2150){\makebox(0,0)[lb]{\smash{{\SetFigFont{12}{14.4}{\familydefault}{\mddefault}{\updefault}$P_3(t)$}}}}
\put(-5200,2350){\makebox(0,0)[lb]{\smash{{\SetFigFont{12}{14.4}{\familydefault}{\mddefault}{\updefault}$< \delta(D)$}}}}
\put(-4830,1200){\makebox(0,0)[lb]{\smash{{\SetFigFont{12}{14.4}{\familydefault}{\mddefault}{\updefault}$< \delta(D)$}}}}
\put(-3030,1330){\makebox(0,0)[lb]{\smash{{\SetFigFont{12}{14.4}{\familydefault}{\mddefault}{\updefault}$P_4(t)$}}}}
\put(-9400,2500){\makebox(0,0)[lb]{\smash{{\SetFigFont{12}{14.4}{\familydefault}{\mddefault}{\updefault}$S_1(t)$}}}}
\put(-8300,3580){\makebox(0,0)[lb]{\smash{{\SetFigFont{12}{14.4}{\familydefault}{\mddefault}{\updefault}$\leq D$}}}}
\put(-8390,270){\makebox(0,0)[lb]{\smash{{\SetFigFont{12}{14.4}{\familydefault}{\mddefault}{\updefault}$\leq D$}}}}
\put(-3900,3150){\makebox(0,0)[lb]{\smash{{\SetFigFont{12}{14.4}{\familydefault}{\mddefault}{\updefault}$\Delta(D)$}}}}
\put(-3400,330){\makebox(0,0)[lb]{\smash{{\SetFigFont{12}{14.4}{\familydefault}{\mddefault}{\updefault}$\Delta(D)$}}}}
\put(-9400,1220){\makebox(0,0)[lb]{\smash{{\SetFigFont{12}{14.4}{\familydefault}{\mddefault}{\updefault}$S_2(t)$}}}}
\put(-2300,2150){\makebox(0,0)[lb]{\smash{{\SetFigFont{12}{14.4}{\familydefault}{\mddefault}{\updefault}$S_3(t)$}}}}
\put(-1700,1200){\makebox(0,0)[lb]{\smash{{\SetFigFont{12}{14.4}{\familydefault}{\mddefault}{\updefault}$S_4(t)$}}}}
\put(-2160,2650){\makebox(0,0)[lb]{\smash{{\SetFigFont{12}{14.4}{\familydefault}{\mddefault}{\updefault}$f_{3,t}(\Sigma_{i,t})$}}}}
\put(-1190,450){\makebox(0,0)[lb]{\smash{{\SetFigFont{12}{14.4}{\familydefault}{\mddefault}{\updefault}$f_{4,t}(\Sigma_{i,t})$}}}}
\end{picture}%
\caption{In the ``third step'', we reselect the large solid tori $S_3 (t)$, $S_4(t)$ and their collars $P_3 (t)$, $P_4(t)$ such that the thickness of these collars is bounded by $\delta(D)$.
(The wavy lines next to $P_3(t), P_4(t)$ indicate where these collars used to be located in the ``second step''.)
Additionally, we find ``compressing planar domains'' $f_{3,t}, f_{4,t} : \Sigma_{i,t} \to M$ inside $S_3 (t), S_4(t)$ whose area is bounded by $A_1$.
On the dark gray region we have the curvature bound $|{\Rm}| < K$ while in the light gray region (i.e. on the small solid tori $S_1(t), S_2(t)$) we only have a curvature bound of the form $|{\Rm}| < K'(D)$.
\label{fig:SiStep3Intro}}
\end{center}
\end{figure}
\begin{Result3}[see Proposition \ref{Prop:firstcurvboundstep3}\footnote{In the actual phrasing of Proposition \ref{Prop:firstcurvboundstep3}, the manifold is allowed to be a quotient of a torus bundle.
Instead, the existence of ``compressing planar domains'' of bounded area is assumed and not asserted.}]
The same conclusions as in the ``second step'' hold and if $M$ is not a quotient of a $2$-torus bundle over a circle, then we have in addition:

Whenever $\diam_t S_i (t) > D$, then the thickness of each $P_i(t)$ is bounded by $\delta(D)$, where $\delta(D) \to 0$ as $D \to \infty$.
Moreover for all such $S_i(t)$ there is a ``compressing planar domain'' $f_{i,t} : \Sigma_{i,t} \to M$ of area $< A_1$.
\end{Result3}

The result of the third step is a very useful characterization of the geometry at a single, and late, time-slice.
In order to prove the desired curvature bound (\ref{eq:mainrescaledbound}), it now remains to show that those solid tori $S_i (t)$ of large diameter ($>D$) cannot occur for large times $t$ and for an appropriate and uniform choice of $D$.
This fact will follow from a contradiction argument, which will be carried out in subsections \ref{subsec:lateandlongtimei} and \ref{subsec:finalargument}.

In subsection \ref{subsec:lateandlongtimei} we first analyze the evolution of such large solid tori $S_i(t)$ when going backwards in time.
More specifically, we will fix some large number $\Delta T < \infty$ and consider the flow on the time-interval $[t-\Delta T, t]$ for $t \gg \Delta T$.
We can then apply the result of the ``third step'' at every time $t' \in [t- \Delta T, t]$ and obtain different collections of solid tori $S_1 (t'), \ldots, S_{m(t')} (t') \subset M$.
These solid tori, for different $t'$, will then be related to one another.
Their ``long and thin'' collars $P_i (t')$ are very useful in this process in order to keep track of each $S_i(t')$ in time and to show that sufficiently large solid tori $S_i (t)$ persist when going backwards in time up to time $t- \Delta T$.
Using some technical Lemmas it is moreover possible to modify the ``planar domains'' $f_{i,t - \Delta T} : \Sigma_{i,t - \Delta T} \to M$, obtained in the ``third step'' at time $t - \Delta T$, and construct ``generalized compressing disks'' $h_{i, t - \Delta T} : D^2 \to M$ for the solid tori $S_i (t - \Delta T)$ whose area is bounded and whose boundary loop is small and sufficiently regular.
By this we mean the following: $h_{i, t- \Delta T} (\partial D^2) \subset P_i ( t- \Delta T)$ is non-contractible in $P_i (t - \Delta T)$ and its area is bounded by some constant $A_2$ that only depends on $A_1$.
Moreover, the boundary loop $\gamma_{i, t- \Delta T} = h_{i, t- \Delta T} |_{\partial D^2}$ can be assumed to be arbitrarily small and its geodesic curvatures are bounded by some universal constant.
By the correct choice of $P_i (t - \Delta T)$, we can guarantee that the curvature around $\gamma_{i, t- \Delta T} (S^1)$ is bounded by some universal constant.
So the geodesic characterizations of $\gamma_{i,t-\Delta T}$ still hold on the time-interval $[t-\Delta T, t]$, where the bound on the geodesic curvature now has to depend on $\Delta T$.
After possibly readjusting the $S_i (t)$, we can then summarize our result as follows:

\begin{Result4}[see Proposition \ref{Prop:structontimeinterval}]
Assume that $M$ is not a quotient of a $2$-torus bundle over a circle.

Then there are constants $A_2, K < \infty$ and for every $\delta > 0$ and $\Delta T < \infty$ there are constants $D(\delta, \Delta T), \Gamma (\Delta T) < \infty$ such that for sufficiently large $t$ the following holds:
\[ |{\Rm_t}| < K \qquad \text{on} \qquad M \setminus (S_1 (t) \cup \ldots \cup S_{m(t)} (t) ). \]
Moreover, for each $S_i (t)$ there are two possibilities:
\begin{enumerate}[label=(\alph*)]
\item $\diam_t S_i (t) \leq D(\delta, \Delta T)$ and $|{\Rm_t}| < K'(D( \delta, \Delta T))$ on $S_i (t)$.
Here $K'$ is the constant from the ``second step''.
\item $\diam_t S_i (t) > D (\delta, \Delta T)$ and the following holds:
$\diam_{t'} \partial S_i (t) < \delta$ for all $t' \in [t-\Delta T, t]$.
And there is a smooth map $h_{i, t-\Delta T} : D^2 \to M$ such that $h_{i, t- \Delta T} (\partial D^2) \subset P_i (t)$, $\gamma_{i, t- \Delta T} = h_{i, t-\Delta T} |_{\partial D^2}$ is non-contractible in $P_i (t)$, has time-$t'$ length $< \delta$ and time-$t'$ geodesic curvature $< \Gamma( \Delta T)$ for all $t' \in [t-\Delta T, t]$ and $\area_{t-\Delta T} h_{i, t-\Delta T} < A_2$.
\end{enumerate}
\end{Result4}

Finally, in subsection \ref{subsec:finalargument} we show that the second possibility cannot occur, for the right choices of $\delta$ and $\Delta T$.
This is done using a minimal disk argument, which is due to Hamilton (compare with \cite[Proposition \ref{Prop:evolminsurfgeneral}]{Bamler-LT-simpcx}) and which we will briefly describe as follows:
At every time $t' \in [t - \Delta T, t]$, we can find the minimal disk in $(M, \td{g}_t)$ that is bounded by the loop $\gamma_{i, t- \Delta T}$.
Denote the time-$t'$ area of this disk by $A(t')$.
Then $A(t-\Delta T) < A_2$ and $A(t')$ satisfies a differential inequality, which forces $A(t')$ to become zero somewhere on $[t-\Delta T, t]$.
This is however impossible.
The bound on the length and the geodesic curvature of $\gamma_{i, t-\Delta T}$ at all times of $[t-\Delta T, t]$ is essential in this argument in order to keep certain error terms controlled.
Hence, for the right choices of $\delta$ and $\Delta T$ and for large enough $t$, only case (a) in the previous statement can occur and we have $|{\Rm_t}| < \max \{ K, K' (D(\delta, \Delta T)) \}$ on all of $M$.
This finishes the proof of the desired curvature bound (\ref{eq:mainrescaledbound}) in the case in which $M$ is not a quotient of a $2$-torus bundle over a circle.

In the case in which $M$ is a quotient of a $2$-torus bundle over a circle, we have to use a different argument (which resembles an argument used in \cite{Bamler-certain-topologies}).
Using part (b) of \cite[Proposition \ref{Prop:maincombinatorialresult}]{Bamler-LT-topology} together with \cite{Bamler-LT-simpcx}, we find a simplicial complex $V$ and a sequence of time-dependent maps $f_{1,t}, f_{2,t}, \ldots : V \to M$, with the same domain, such that:
For all $n \geq 1$ the image of $f_n$ has to intersect every incompressible loop $\sigma \subset M$ at least $n$ times.
And for sufficiently large $t$ (depending on $n$) the time-$t$ area of $f_{n,t}$ is bounded by a constant $A_0$ that only depends on $V$ (and not on $n$!).
Thus, the image of each $f_n$ has to intersect each $S^1$-fiber in $V_2(t) \cap M_{\textnormal{good}} (t)$ (see (\ref{eq:MthinV1V2Intro}) and (\ref{eq:goodbaddec})) at least $n$ times.
So the $2$-dimensional space towards which the components of $V_2(t) \cap M_{\textnormal{good}} (t)$ locally collapse---at scale $\rho_1 (x,t) \approx 1$---have to have area $< A_0 / n$ for some arbitrary $n$.
This is impossible for large enough $n$.
So $V_2(t) \cap M_{\textnormal{good}} (t) = \emptyset$.
Using our knowledge on the topology of the decomposition (\ref{eq:goodbaddec}), compare also with the list (a)--(d) on page \pageref{list:topologygoodbad}, we conclude that $V_2 (t) = \emptyset$.
This implies that $M = M_{\textnormal{thin}}(t) = V_1 (t) = M_{\textnormal{good}} (t)$ and hence (by (\ref{eq:curvboundonMgood})) we obtain the desired curvature bound on all of $M$.

Upon first reading we recommend to consider the case in which $\MM$ is non-singular.
The proof in the general case follows along the lines, but the existence of surgeries adds a number of technical difficulties.

\section{The analysis of the collapsed part and consequences} \label{sec:thinpart}
Based on property (e) of \cite[Proposition \ref{Prop:thickthindec}]{Bamler-LT-Perelman} we can analyze the thin part $\MM_{\thin}(t)$ for large times $t$ and recover its graph structure geometrically.
More specifically, we can decompose the thin part into pieces on which the collapse can be approximated by certain models.
We describe this decomposition in the first subsection.
After that we establish important geometric and topological consequences of it.

\subsection{Analysis of the collapse} \label{subsec:MorganTian}
The following result, Proposition \ref{Prop:MorganTianMain}, follows from the work of Morgan and Tian (\cite{MorganTian}).
We have altered its phrasing to include more geometric information.
After stating the proposition, we will explain how each of its assertions follows from the work of Morgan and Tian.
Similar results have also been obtained in \cite{KLcollapse}, \cite{BBMP2}, \cite{ShioyaYamaguchi}, \cite{CG} and \cite{Faessler}.

We first summarize the content of Proposition \ref{Prop:MorganTianMain}.
This summary follows \cite[sec 6]{Bamler-certain-topologies}.
Consider a Riemannian $3$-manifold $(M,g)$ with boundary.
As already mentioned in section \ref{sec:IntroductionpartD}, we define the scalar function $\rho_1$ on $M$ as follows.
\begin{equation} \label{eq:defofrho1inpartD}
 \rho_1 (x) = \sup \big\{ r \in (0,1] \;\; : \;\; \sec \geq - r^{-2} \; \text{on} \; B(x, r) \big\},
\end{equation}
The function $\rho_1(x)$ gives us a local scale at which we observe a collapse with lower sectional curvature bound.

We will first impose assumptions on $(M,g)$ that are satisfied by the rescaled metric on the thin part $(\MM_{\textnormal{thin}} (t), t^{-1} g(t))$, as defined in \cite[Proposition \ref{Prop:thickthindec}]{Bamler-LT-Perelman}.
The main assumption is that $(M,g)$ is locally collapsed at scale $\rho_1(x)$, i.e. for some small $w_0 > 0$ and for all $x \in M$ for which $B(x, \rho_1(x)) \subset \Int M$ we have
\[ \vol B(x, \rho_1(x)) < w_0 \rho_1^3 (x) . \]
Furthermore, we assume that the curvature of $(M,g)$ is bounded if we pass to smaller scales on which $(M,g)$ is non-collapsed.
And finally, we impose geometric conditions on collar neighborhoods of the boundary components of $(M,g)$, which are natural to the setting of \cite[Proposition \ref{Prop:thickthindec}]{Bamler-LT-Perelman}.

The conclusions of Proposition \ref{Prop:MorganTianMain} help us understand both the global topological structure of the collapse on $(M,g)$ as well as its approximate local geometric properties.
Before explaining these conclusions, it is helpful to consider the case in which $(M,g)$ is collapsed with a \emph{global} lower bound on the sectional curvature.
In this case $(M,g)$ is collapsed to either a point, a $1$-dimensional or a $2$-dimensional space.
The following examples illustrate different collapsing behaviors in this setting:
\begin{enumerate}[label=(\arabic*), start=0]
\item In the case in which $(M,g)$ is collapsed to a point, $M$ has to be closed and we speak of a \emph{total collapse}.
Examples for such a behavior would be a small $3$-sphere, a small $3$-torus or a small nilmanifold.
\item A collapse to a $1$-dimensional space generically occurs along $2$-dimensional fibers, which can be either spheres or tori.
For example, the Cartesian products $S^2 \times \IR$ (collapse along spheres) and $T^2 \times \IR$ (collapse along tori) with small first factor are each collapsed to a line.
The $\IZ_2$ quotients of these examples, $\IR P^2 \td\times \IR$ and $\Klein \td\times \IR$, are each collapsed along spheres or tori to a ray.
Note that in the latter two examples, $M$ is only fibered by spheres or tori on a generic subset, away from an embedded $\IR P^2$ or Klein bottle, where the fibration degenerates.

Such a fibration by spheres or tori does not always degenerate along an embedded hypersurface, as the next example illustrates:
Consider a $2$-dimensional, rotationally symmetric surface of positive curvature that has only one end and that is asymptotic to a thin cylinder.
The Cartesian product of this surface with a small $S^1$-factor is collapsed along tori to a ray.
These tori are products of concentric circles around the tip of the surface with the $S^1$-factor, and they degenerate to a circle over the tip of the surface.
Note that in this example $M$ is diffeomorphic to an open solid torus $S^1 \times B^2$.
In a similar way we can construct metrics on $B^3$ that are collapsed to a ray along $2$-spheres, which spatially degenerate to a point.
\item A collapse to a $2$-dimensional space generically occurs along $S^1$-fibers.
Basic examples for such a collapse would be Cartesian products $S^1 \times \IR^2$, with small $S^1$-factor, or $S^1 \times \Sigma^2$, where the $S^1$-factor is small and $\Sigma^2$ is a Riemannian surface whose curvature is bounded from below.
More generally, we can construct collapsing metrics on $S^1$-fibrations over such surfaces $\Sigma^2$.

Note that, similarly as in the previous case, $M$ might only be fibered by $S^1$-fibers on a generic subset of $M$.
For example, if $(M,g)$ is the quotient of $S^1 \times \IR^2$ by a cyclic subgroup that acts as non-trivial rotations around $(0, 0)$ on $\IR^2$ and as rotations on $S^1$, then $M$ only possesses such a fibration away from the quotient of $S^1 \times \{ (0,0) \}$, which is a singular fiber.
In this example $(M,g)$ is collapsed to a cone and the tip of this cone corresponds to the quotient of $S^1 \times \{ (0,0) \}$.

We point out another example in which the fibration on $M$ is degenerate.
Consider again a thin $2$-dimensional, rotationally symmetric surface of positive curvature that is asymptotic to a thin cylinder and take a Cartesian product with $\IR$.
This space is collapsed along $S^1$-fibers to a half plane.
The $S^1$-fibers correspond to concentric circles on the surface.
Hence the $S^1$-fibration only exists away from an embedded line or an embedded solid cylinder.
\end{enumerate}

In the setting of Proposition \ref{Prop:MorganTianMain}, $(M,g)$ is only \emph{locally} collapsed at scale $\rho_1(x)$ around every point $x \in M$.
So the examples given above for the case of the global collapse now only serve as models for these local collapses.
One of the main difficulties in the proof of Proposition \ref{Prop:MorganTianMain} is to understand the transition between those different models, which describe the metric at different scales, and to patch together the induced topological structures on their overlaps.
For example it is possible that a region that is modeled on $T^2 \times I$ is adjacent to a region that collapses along $S^1$-fibers towards a surface $\Sigma^2$.
The change from the first collapsing type to the second can be understood as follows:
The region that collapses along an $S^1$-fibration has a boundary component that is diffeomorphic to a $2$-torus; one $S^1$-direction in this torus corresponds to the collapsing $S^1$-fiber and the other direction corresponds to a boundary circle of the base $\Sigma^2$.
The $S^1$-fiber direction is very small and the boundary circle of $\Sigma^2$ is large enough such that $\Sigma^2$ is sufficiently non-collapsed and we can observe a collapse to a $2$-dimensional space in the interchanging region.
On the other hand, the boundary circle is also small enough such that the same region exhibits a collapse along $T^2$-fibers towards a $1$-dimensional space at the same time.
(So $\Sigma^2$ looks cusp-like around its boundary.)
The precise definition of what we mean by a ``collapse to a $2$-dimensional space'' or a ``collapse to a $1$-dimensional space'' includes a sufficient amount of play such that the collapse in the interchanging region can be modeled by $T^2 \times I$ as well as by an $S^1$-fibration.

Our analysis of $(M,g)$ depends on a parameter $\mu > 0$, which can be chosen arbitrarily small and which governs how well $(M,g)$ is approximated by these local models.
Based on this parameter, we choose scales $0 < s_2(\mu) < s_1(\mu)$.
We will expect to observe a collapse to a $2$-dimensional space at scale $s_2 \rho_1 (x)$, and a collapse to a $1$-dimensional space at scale $s_1 \rho_1 (x)$.
This choice produces the desired play, since a region that is collapsed to a $1$-dimensional space can be collapsed to a $2$-dimensional space at a smaller scale. 

\begin{figure}[t] 
\begin{center}
\setlength{\unitlength}{2863sp}%
\begingroup\makeatletter\ifx\SetFigFont\undefined%
\gdef\SetFigFont#1#2#3#4#5{%
  \reset@font\fontsize{#1}{#2pt}%
  \fontfamily{#3}\fontseries{#4}\fontshape{#5}%
  \selectfont}%
\fi\endgroup%
\begin{picture}(4500,3800)(3500,0)
\hspace{18mm}\includegraphics[width=13.8cm]{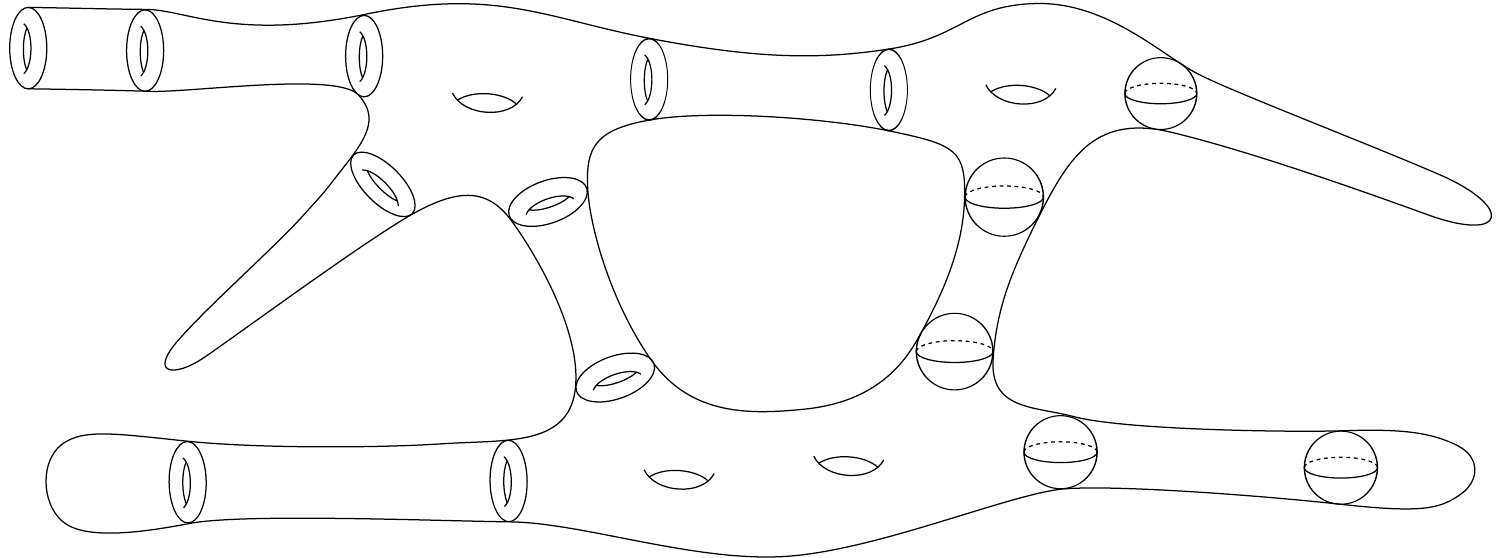}%
\put(-8650,2990){\makebox(0,0)[lb]{\smash{{\SetFigFont{12}{14.4}{\familydefault}{\mddefault}{\updefault}$U'_T$}}}}
\put(-7850,2950){\makebox(0,0)[lb]{\smash{{\SetFigFont{12}{14.4}{\familydefault}{\mddefault}{\updefault}$V_1$}}}}
\put(-6150,2950){\makebox(0,0)[lb]{\smash{{\SetFigFont{12}{14.4}{\familydefault}{\mddefault}{\updefault}$V_2$}}}}
\put(-4600,2750){\makebox(0,0)[lb]{\smash{{\SetFigFont{12}{14.4}{\familydefault}{\mddefault}{\updefault}$V_1$}}}}
\put(-1680,2530){\makebox(0,0)[lb]{\smash{{\SetFigFont{12}{14.4}{\familydefault}{\mddefault}{\updefault}$V_1$}}}}
\put(-2850,2970){\makebox(0,0)[lb]{\smash{{\SetFigFont{12}{14.4}{\familydefault}{\mddefault}{\updefault}$V_2$}}}}
\put(-3270,1600){\makebox(0,0)[lb]{\smash{{\SetFigFont{12}{14.4}{\familydefault}{\mddefault}{\updefault}$V_1$}}}}
\put(-1980,3060){\makebox(0,0)[lb]{\smash{{\SetFigFont{12}{14.4}{\familydefault}{\mddefault}{\updefault}$\Sigma^S_1$}}}}
\put(-7050,3380){\makebox(0,0)[lb]{\smash{{\SetFigFont{12}{14.4}{\familydefault}{\mddefault}{\updefault}$\Sigma^T_1$}}}}
\put(-5180,3240){\makebox(0,0)[lb]{\smash{{\SetFigFont{12}{14.4}{\familydefault}{\mddefault}{\updefault}$\Sigma^T_2$}}}}
\put(-3840,3180){\makebox(0,0)[lb]{\smash{{\SetFigFont{12}{14.4}{\familydefault}{\mddefault}{\updefault}$\Sigma^T_3$}}}}
\put(-5670,1600){\makebox(0,0)[lb]{\smash{{\SetFigFont{12}{14.4}{\familydefault}{\mddefault}{\updefault}$V_1$}}}}
\put(-700,450){\makebox(0,0)[lb]{\smash{{\SetFigFont{12}{14.4}{\familydefault}{\mddefault}{\updefault}$V'_2$}}}}
\put(-1980,500){\makebox(0,0)[lb]{\smash{{\SetFigFont{12}{14.4}{\familydefault}{\mddefault}{\updefault}$V_1$}}}}
\put(-2760,110){\makebox(0,0)[lb]{\smash{{\SetFigFont{12}{14.4}{\familydefault}{\mddefault}{\updefault}$\Sigma_3^S$}}}}
\put(-1100,30){\makebox(0,0)[lb]{\smash{{\SetFigFont{12}{14.4}{\familydefault}{\mddefault}{\updefault}$\Sigma_4^S$}}}}
\put(-7160,1850){\makebox(0,0)[lb]{\smash{{\SetFigFont{12}{14.4}{\familydefault}{\mddefault}{\updefault}$V_1$}}}}
\put(-6580,1780){\makebox(0,0)[lb]{\smash{{\SetFigFont{12}{14.4}{\familydefault}{\mddefault}{\updefault}$\Sigma^T_4$}}}}
\put(-5400,2080){\makebox(0,0)[lb]{\smash{{\SetFigFont{12}{14.4}{\familydefault}{\mddefault}{\updefault}$\Sigma^T_5$}}}}
\put(-5020,1150){\makebox(0,0)[lb]{\smash{{\SetFigFont{12}{14.4}{\familydefault}{\mddefault}{\updefault}$\Sigma^T_6$}}}}
\put(-3630,2100){\makebox(0,0)[lb]{\smash{{\SetFigFont{12}{14.4}{\familydefault}{\mddefault}{\updefault}$\Sigma_2^S$}}}}
\put(-3920,1250){\makebox(0,0)[lb]{\smash{{\SetFigFont{12}{14.4}{\familydefault}{\mddefault}{\updefault}$\Sigma_3^S$}}}}
\put(-4480,280){\makebox(0,0)[lb]{\smash{{\SetFigFont{12}{14.4}{\familydefault}{\mddefault}{\updefault}$V_2$}}}}
\put(-6980,380){\makebox(0,0)[lb]{\smash{{\SetFigFont{12}{14.4}{\familydefault}{\mddefault}{\updefault}$V_1$}}}}
\put(-8410,380){\makebox(0,0)[lb]{\smash{{\SetFigFont{12}{14.4}{\familydefault}{\mddefault}{\updefault}$V'_2$}}}}
\put(-6120,-80){\makebox(0,0)[lb]{\smash{{\SetFigFont{12}{14.4}{\familydefault}{\mddefault}{\updefault}$\Sigma_7^T$}}}}
\put(-7920,-80){\makebox(0,0)[lb]{\smash{{\SetFigFont{12}{14.4}{\familydefault}{\mddefault}{\updefault}$\Sigma_8^T$}}}}%
\end{picture}%
\caption{A decomposition of $M$ into $V_1, V_2$ and $V'_2$ along embedded $2$-tori $\Sigma_1^T, \ldots, \Sigma_8^T$ and embedded $2$-spheres $\Sigma_1^S, \ldots, \Sigma_4^S$.
On the collar neighborhood $U'_T$ around the boundary torus of $M$, we impose several geometric conditions.
\label{fig:V1V2}}
\end{center}
\end{figure}
We will now outline the assertions of our proposition more precisely.
If $M$ is locally collapsed to a point, then $M$ must be closed and the collapse must be global.
This case is very well understood.
So assume that $M$ is not collapsed to a point.
In this case we decompose $M$ into three subsets $V_1, V_2$ and $V'_2$ (see Figure \ref{fig:V1V2}).
\begin{equation} \label{eq:V1V2decinexplanation} 
M = V_1 \cup V_2 \cup V'_2
\end{equation}
The subset $V_1$ roughly consists of all points around which we observe a local collapse to a $1$-dimensional space at scale $s_1 \rho_1$, e.g. $V_1$ contains all points whose local models are mentioned in (1) of the preceding list.
On the subset $V_2$ we observe local collapses to $2$-dimensional spaces and the geometry is locally modeled at scale $s_2 \rho_1$, for example, by spaces mentioned in (2) of the preceding list.
The subset $V'_2$ has the following properties:
On a neighborhood around each point $x \in V'_2$ we observe a local collapse to a half-open interval at scale $s_1 \rho_1(x)$, but there is a scale $r \ll \rho_1(x)$ at which we observe a collapse to a $2$-dimensional space.
The subsets $V_1, V_2, V'_2$ are separated from one another by embedded $2$-tori, which are denoted by $\Sigma^T_i$, and embedded $2$-spheres, which are denoted by $\Sigma^S_i$.
Note that the decomposition (\ref{eq:V1V2decinexplanation}) is not unique, even for fixed $\mu$.

\begin{figure}[t] 
\begin{center}
\setlength{\unitlength}{2863sp}%
\begingroup\makeatletter\ifx\SetFigFont\undefined%
\gdef\SetFigFont#1#2#3#4#5{%
  \reset@font\fontsize{#1}{#2pt}%
  \fontfamily{#3}\fontseries{#4}\fontshape{#5}%
  \selectfont}%
\fi\endgroup%
\begin{picture}(4500,3800)(3500,0)
\hspace{23mm}\includegraphics[width=13.8cm]{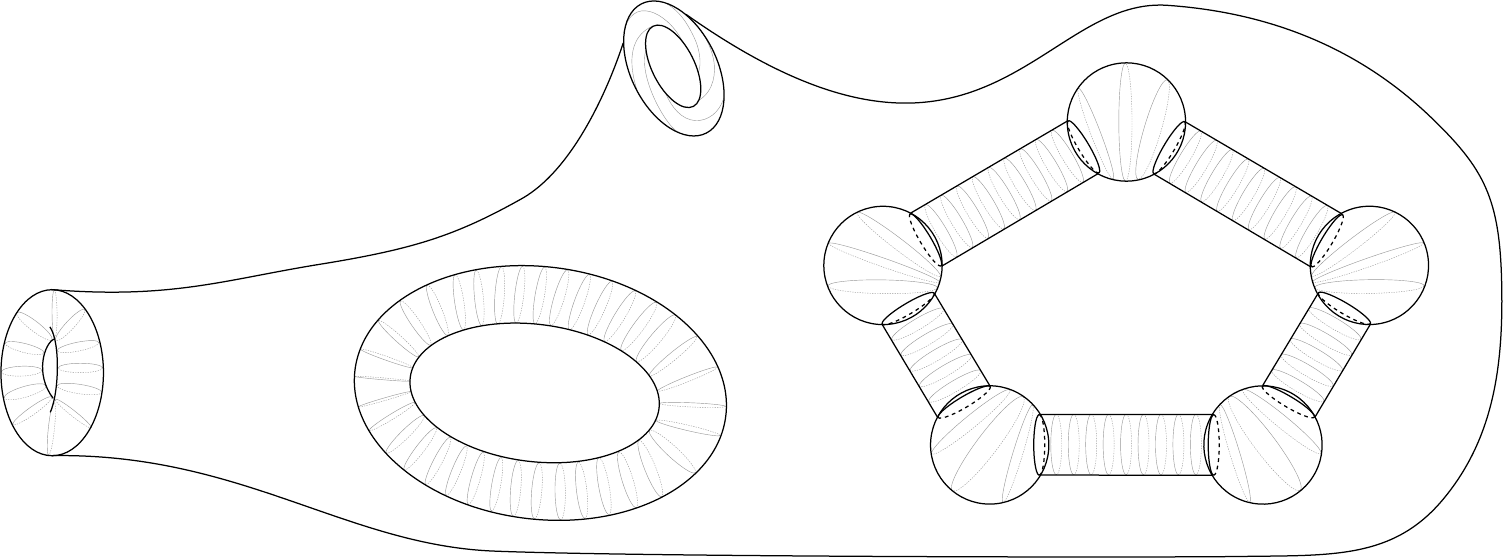}%
\put(-9330,1400){\makebox(0,0)[lb]{\smash{{\SetFigFont{12}{14.4}{\familydefault}{\mddefault}{\updefault}$\Sigma^T_1$}}}}
\put(-5480,2000){\makebox(0,0)[lb]{\smash{{\SetFigFont{12}{14.4}{\familydefault}{\mddefault}{\updefault}$V_{2, \textnormal{reg}}$}}}}
\put(-2840,1790){\makebox(0,0)[lb]{\smash{{\SetFigFont{12}{14.4}{\familydefault}{\mddefault}{\updefault}$V_{2, \partial}$}}}}
\put(-1980,1790){\makebox(0,0)[lb]{\smash{{\SetFigFont{12}{14.4}{\familydefault}{\mddefault}{\updefault}$V_{2, \partial}$}}}}
\put(-3100,1270){\makebox(0,0)[lb]{\smash{{\SetFigFont{12}{14.4}{\familydefault}{\mddefault}{\updefault}$V_{2, \partial}$}}}}
\put(-1780,1270){\makebox(0,0)[lb]{\smash{{\SetFigFont{12}{14.4}{\familydefault}{\mddefault}{\updefault}$V_{2, \partial}$}}}}
\put(-2480,1000){\makebox(0,0)[lb]{\smash{{\SetFigFont{12}{14.4}{\familydefault}{\mddefault}{\updefault}$V_{2, \partial}$}}}}
\put(-3300,2490){\makebox(0,0)[lb]{\smash{{\SetFigFont{12}{14.4}{\familydefault}{\mddefault}{\updefault}$\Xi_1^A$}}}}
\put(-1420,2420){\makebox(0,0)[lb]{\smash{{\SetFigFont{12}{14.4}{\familydefault}{\mddefault}{\updefault}$\Xi_5^A$}}}}
\put(-3970,950){\makebox(0,0)[lb]{\smash{{\SetFigFont{12}{14.4}{\familydefault}{\mddefault}{\updefault}$\Xi_2^A$}}}}
\put(-850,970){\makebox(0,0)[lb]{\smash{{\SetFigFont{12}{14.4}{\familydefault}{\mddefault}{\updefault}$\Xi_4^A$}}}}
\put(-2380,200){\makebox(0,0)[lb]{\smash{{\SetFigFont{12}{14.4}{\familydefault}{\mddefault}{\updefault}$\Xi_3^A$}}}}
\put(-7980,1100){\makebox(0,0)[lb]{\smash{{\SetFigFont{12}{14.4}{\familydefault}{\mddefault}{\updefault}$V_{2, \textnormal{reg}}$}}}}
\put(-4880,3340){\makebox(0,0)[lb]{\smash{{\SetFigFont{12}{14.4}{\familydefault}{\mddefault}{\updefault}$V_{2, \textnormal{cone}}$}}}}
\put(-4600,2600){\makebox(0,0)[lb]{\smash{{\SetFigFont{12}{14.4}{\familydefault}{\mddefault}{\updefault}$\Xi^T_1$}}}}
\put(-5650,750){\makebox(0,0)[lb]{\smash{{\SetFigFont{12}{14.4}{\familydefault}{\mddefault}{\updefault}$V_{2, \partial}$}}}}
\put(-4980,130){\makebox(0,0)[lb]{\smash{{\SetFigFont{12}{14.4}{\familydefault}{\mddefault}{\updefault}$\Xi^O_1$}}}}
\put(-4030,2230){\makebox(0,0)[lb]{\smash{{\SetFigFont{12}{14.4}{\familydefault}{\mddefault}{\updefault}$\Sigma_2^S$}}}}
\put(-4410,1900){\makebox(0,0)[lb]{\smash{{\SetFigFont{12}{14.4}{\familydefault}{\mddefault}{\updefault}$\Xi_2^E$}}}}
\put(-3830,500){\makebox(0,0)[lb]{\smash{{\SetFigFont{12}{14.4}{\familydefault}{\mddefault}{\updefault}$\Sigma_3^S$}}}}
\put(-3560,110){\makebox(0,0)[lb]{\smash{{\SetFigFont{12}{14.4}{\familydefault}{\mddefault}{\updefault}$\Xi_3^E$}}}}
\put(-1020,500){\makebox(0,0)[lb]{\smash{{\SetFigFont{12}{14.4}{\familydefault}{\mddefault}{\updefault}$\Sigma_4^S$}}}}
\put(-1280,130){\makebox(0,0)[lb]{\smash{{\SetFigFont{12}{14.4}{\familydefault}{\mddefault}{\updefault}$\Xi_4^E$}}}}
\put(-780,2180){\makebox(0,0)[lb]{\smash{{\SetFigFont{12}{14.4}{\familydefault}{\mddefault}{\updefault}$\Sigma_5^S$}}}}
\put(-380,1680){\makebox(0,0)[lb]{\smash{{\SetFigFont{12}{14.4}{\familydefault}{\mddefault}{\updefault}$\Xi_5^E$}}}}
\put(-2330,3030){\makebox(0,0)[lb]{\smash{{\SetFigFont{12}{14.4}{\familydefault}{\mddefault}{\updefault}$\Xi^E_1$}}}}
\put(-1900,2860){\makebox(0,0)[lb]{\smash{{\SetFigFont{12}{14.4}{\familydefault}{\mddefault}{\updefault}$\Sigma^S_1$}}}}
\end{picture}%
\caption{An example for a component of $V_2$.
This component has 6 boundary components: one toroidal component $\Sigma^T_1$ and 5 spherical components $\Sigma_1^S, \ldots, \Sigma_5^S$.
The spherical boundary components are connected by 5 components of $V_{2, \partial}$, which are diffeomorphic to solid cylinders $I \times D^2$.
Their annular boundary parts are denoted by $\Xi^A_1, \ldots, \Xi^A_5$.
The boundary circles of each of these annuli lie in the $\Sigma^S_i$ and bound equatorial annuli $\Xi^E_1 \subset \Sigma^S_1, \ldots, \Xi^E_5 \subset \Sigma^S_5$ within these boundary spheres.
The subset $V_{2, \partial}$ also contains a component that is diffeomorphic to a solid torus and bounded by a $2$-torus $\Xi^O_1$.
The subset $V_{2, \textnormal{cone}}$ consists of a single solid torus, which is bounded by a $2$-torus $\Xi_1^T$.
The closure of the complement of $V_{2, \textnormal{cone}} \cup V_{2, \partial}$ is denoted by $V_{2, \textnormal{reg}}$ and carries an $S^1$-fibration.
Thin gray circles illustrate the behavior of this fibration on the boundary components of $V_{2, \textnormal{reg}}$.
\label{fig:V2}}
\end{center}
\end{figure}
The topology of the components of $V_1$ and $V'_2$ is very controlled and can be classified easily.
In order to understand the topology and local geometry of $V_2$, we decompose $V_2$ into three subsets $V_{2, \textnormal{reg}}, V_{2, \textnormal{cone}}$ and $V_{2, \partial}$ (see Figure \ref{fig:V2}).
Roughly speaking, $V_{2, \textnormal{reg}}$ is the set of all points where the collapse is modeled on the example $S^1 \times \IR^2$ from (2) of the preceding list.
Hence this subset admits an $S^1$-fibration.
The set $V_{2, \textnormal{cone}}$ consists of approximately all points whose local model is a finite quotient of $S^1 \times B^2$ as described in (2) of the above list.
Around the points of this subset, the manifold is collapsed to a cone.
Note that since a cone is regular away from its tip, the components of $V_{2, \textnormal{cone}}$ have bounded diameter and are adjacent to $V_{2, \textnormal{reg}}$.
It can moreover be shown that the components of $V_{2, \textnormal{cone}}$ are diffeomorphic to a solid torus $S^1 \times D^2$ and hence bounded by $2$-tori, which we will denote by $\Xi_i^T$.

The set $V_{2, \partial}$ consists of all points whose neighborhoods are collapsed towards a $2$-dimensional space with boundary.
An example for a local model around such points would be the one involving the surface that is asymptotic to a thin cylinder in (2) of the preceding list (recall that this model is collapsed to a half plane).
It is possible to choose $V_{2, \partial}$ such that its components are either diffeomorphic to a solid cylinder $I \times D^2$ or a solid torus $S^1 \times D^2$ in such a way that the boundary circles of the $D^2$-factors correspond to the $S^1$-fibers of $V_{2, \textnormal{reg}}$.
The components that are diffeomorphic to a solid torus are bounded by $2$-tori, which we denote by $\Xi^O_i$.
Each component that is diffeomorphic to a solid cylinder is positioned within $V_2$ in such a way that its two diskal boundary parts are contained in spherical boundary components $\Sigma^S_i$ of $V_2$.
That means that if we denote their annular boundary components by $\Xi^A_i$, then the boundary circles of each $\Xi^A_i$ lie in the spherical boundary components of $V_2$.
Each spherical boundary component $\Sigma_{i'}^S$ of $V_2$ contains exactly two diskal boundary parts of components of $V_{2, \partial}$ or, in other words, two boundary circles of the annuli $\Xi^A_i$.
These two boundary circles bound an annulus within $\Sigma_{i'}^S$, denoted by $\Xi^E_i$.
So the spherical boundary components $\Sigma_i^S$ and the components of $V_{2, \partial}$ that are diffeomorphic to a solid cylinder, or the annuli $\Xi_i^A$, form chains, whose outer boundaries are homeomorphic to a $2$-torus.
The $S^1$-fibration on $V_{2, \textnormal{reg}}$ restricts to an $S^1$-fibration on these $2$-tori and to the standard $S^1$-fibrations of the annuli $\Xi^A_i$.
Summarizing our discussion, we conclude that the boundary of $V_{2, \textnormal{reg}}$ consists of the $2$-tori $\Sigma_i^T$ that are contained in $V_2$, the $2$-tori $\Xi^T_i$ and $\Xi^O_i$ and the union of the annuli $\Xi^A_i$ and $\Xi^E_i$.

We now state our precise result in Proposition \ref{Prop:MorganTianMain}.
The proposition is structured as follows:
After stating the assumptions (i)--(iii), we explain what happens in the case in which the manifold is collapsed to a point.
If this case does not occur, then the proposition asserts the decomposition of $M$ into subsets $V_1, V_2, V'_2$.
The topological structure of this decomposition is explained in assertions (a1)--(a4).
Next, we explain the decomposition of $V_2$ into $V_{2, \textnormal{reg}}, V_{2, \textnormal{cone}}, V_{2, \partial}$ and list its topological properties in (b1)--(b4).
Finally, in (c1)--(c3), we describe the geometric properties of local collapse in the different subsets $V_1, V_2, V_{2, \textnormal{reg}}$ and $V_{2, \textnormal{cone}}$.
We remark that Proposition \ref{Prop:MorganTianMain} is similar to \cite[Proposition 6.1]{Bamler-certain-topologies}.

\begin{Proposition} \label{Prop:MorganTianMain}
For every two continuous functions $\ov{r}, K : (0, 1) \to (0, \infty)$ and every $\mu > 0$ there are constants $w_0 = w_0(\mu, \ov{r}, K) > 0$ and $0 < s_2 (\mu, \ov{r}, K) < s_1(\mu, \ov{r}, K) < \frac1{10}$, monotonically increasing in $\mu$, such that:

Let $(M,g)$ be a compact manifold with boundary such that:
\begin{enumerate}[label=(\textit{\roman*})]
\item Each component $T$ of $\partial M$ is an embedded torus and for each such $T$ there is a closed subset $U'_T \subset M$ that is diffeomorphic to $T^2 \times I$ such that $T \subset \partial U'_T$ and such that the boundary components of $U'_T$ have distance of at least $2$.
Moreover, there is a fibration $p_T : U'_T \to I$ such that the $T^2$-fiber through every $x \in U'_T$ has diameter $< w_0 \rho_1(x)$.
\item For all $x \in M$ we have (with $\rho_1 (x)$ defined as in (\ref{eq:defofrho1inpartD})):
Whenever $B(x, \rho_1(x)) \subset \Int M$, then
\[ \vol B(x, \rho_1(x) ) < w_0 \rho_1^3(x). \]
\item For all $w \in (w_0, 1)$, $r < \ov{r}(w)$ and $x \in M$ we have: if $B(x,r) \subset \Int M$ and $\vol B(x,r) > w r^3$ and $r < \rho_1 (x)$, then $|{\Rm}|, r |\nabla \Rm |, r^2 | \nabla^2 \Rm | < K(w) r^{-2}$ on $B(x,r)$.
\end{enumerate}

Then $M$ is closed and $\diam M < \mu \rho_1(x)$ for all $x \in M$ and $M$ is diffeomorphic to an infra-nilmanifold or to a manifold that also carries a metric of non-negative sectional curvature, or the following holds:

There are finitely many embedded $2$-tori $\Sigma^T_i$ and $2$-spheres  $\Sigma^S_i \subset \Int M$ that are pairwise disjoint as well as closed subsets $V_1, V_2, V'_2 \subset M$ such that (see again Figure \ref{fig:V1V2}):
\begin{enumerate}[label=(a\textit{\arabic*})]
\item $M = V_1 \cup V_2 \cup V'_2$, the interiors of the sets $V_1, V_2$ and $V'_2$ are pairwise disjoint and $\partial V_1 \cup \partial V_2 \cup \partial V'_2 = \partial M \cup \bigcup_i \Sigma^T_i \cup \bigcup_i \Sigma^S_i$.
Obviously, no two components of the same set share a common boundary component.
\item $\partial V_1 = \partial M \cup \bigcup_i \Sigma^T_i \cup \bigcup_i \Sigma^S_i$.
In particular, $V_2 \cap V_2' = \emptyset$ and $V_2 \cup V_2'$ is disjoint from $\partial M$.
\item $V_1$ consists of components diffeomorphic to one of the following manifolds:
\[ T^2 \times I, \; S^2 \times I, \; \Klein^2 \widetilde{\times} I, \; \IR P^2 \widetilde{\times} I, \;  S^1 \times D^2, \; D^3, \]
a $T^2$-bundle over $S^1$, $S^1 \times S^2$ or the union of two (possibly different) components listed above along their $T^2$- or $S^2$-boundary.
\item Every component of $V_2'$ has exactly one boundary component and this component borders $V_1$ on the other side.
Moreover, every component of $V'_2$ is diffeomorphic to one of the following manifolds:
\[ S^1 \times D^2, \; D^3, \; \Klein^2 \widetilde{\times} I, \; \IR P^2 \td\times I. \]
\end{enumerate}

We can further characterize the components of $V_2$ (see Figure \ref{fig:V2}):
In $\Int V_2$ we find embedded $2$-tori $\Xi^T_i$ and $\Xi^O_i$ that are pairwise disjoint.
Furthermore, there are embedded closed annuli $\Xi^A_i \subset V_2$ whose interior is disjoint from the $\Xi^T_i$, $\Xi^O_i$ and $\partial V_2$ and whose boundary components lie in the components of $\partial V_2$ that are spheres.
Each spherical component of $\partial V_2$ contains exactly two such boundary components, which separate the sphere into two (polar) disks and one (equatorial) annulus $\Xi^E_i$.
We also find closed subsets $V_{2,\textnormal{reg}}, \linebreak[1] V_{2, \textnormal{cone}}, \linebreak[1] V_{2, \partial} \subset V_2$ such that
\begin{enumerate}[label=(b\textit{\arabic*})]
\item  $V_{2, \textnormal{reg}} \cup V_{2, \textnormal{cone}} \cup V_{2, \partial} = V_2$ and the interiors of these subsets are pairwise disjoint.
Moreover, $\partial V_{2, \textnormal{reg}}$ is the union of $\bigcup_i \Xi^T_i \cup \bigcup \Xi^O_i \cup \bigcup_i \Xi^A_i \bigcup_i \Xi^E_i$ with the components of $\partial V_2$ that are diffeomorphic to tori.
\item $V_{2, \textnormal{reg}}$ carries an $S^1$-fibration that is compatible with its boundary components and all its annular regions.
\item The components of $V_{2, \textnormal{cone}}$ are diffeomorphic to solid tori ($\approx S^1 \times D^2$) and each of these components is bounded by one of the $\Xi^T_i$ such that the fibers of $V_{2, \textnormal{reg}}$ on each $\Xi^T_i$ are not nullhomotopic inside $V_{2, \textnormal{cone}}$.
\item The components of $V_{2, \partial}$ are diffeomorphic to solid tori ($\approx S^1 \times D^2$) or solid cylinders ($\approx I \times D^2$).
The solid tori are bounded by the $\Xi^O_i$ such that the $S^1$-fibers of $V_{2, \textnormal{reg}}$ on the $\Xi^O_i$ are nullhomotopic inside $V_{2, \partial}$.
The diskal boundary components of each solid cylinder of $V_{2, \partial}$ are polar disks on spherical components of $\partial V_2$ and the annular boundary component is one of the $\Xi^A_i$.
Every polar disk and every $\Xi^A_i$ bounds such a component on exactly one side.
\end{enumerate}

We now explain the geometric properties of this decomposition:
\begin{enumerate}[label=(c\textit{\arabic*})]
\item If $\CC$ is a component of $V_1$, then there is a closed subset $U \subset \CC$ with smooth boundary, as well as a Riemannian $1$-manifold $J$ whose diameter is larger than $s_1 \rho_1(x)$ for each $x \in \CC$ and a fibration $p : U \to J$ such that
\begin{enumerate}
\item[({$\alpha$})] If $\CC \approx T^2 \times I$ or $S^2 \times I$, then $U = \CC$ and $J$ is a closed interval. \\
If $\CC \approx S^1 \times D^2, \Klein^2 \td\times I, D^3$ or $\IR P^3 \setminus B^3$, then $U \approx T^2 \times I$ (in the first two cases) or $U \approx S^2 \times I$ (in the latter two cases), $\partial \CC \subset \partial U$, $J$ is a closed interval and for all $x \in \CC \setminus U$ we have $\diam \CC \setminus U < \mu s_1 \rho_1(x)$. \\
If $\CC$ is the union of two such components as listed in (a3), then $U \approx T^2 \times I$ or $S^2 \times I$ depending on whether these two components have toroidal or spherical boundary and $\CC \setminus \Int U$ is diffeomorphic to the disjoint union of these two components.
Moreover for all $x \in \CC \setminus U$, the diameter of the component of $\CC \setminus U$ in which $x$ lies, has diameter $< \mu s_1 \rho_1(x)$. \\
If $\CC$ is diffeomorphic to a $T^2$-bundle over $S^1$ or to $S^1 \times S^2$, then $J$ is a circle and $U = \CC$.
\item[({$\beta$})] If $U$ is diffeomorphic to $T^2 \times I$, $S^2 \times I$ or $S^1 \times S^2$, then $p$ corresponds to the projection onto the interval or the circle factor.
\item[({$\gamma$})] $p$ is $1$-Lipschitz.
\item[({$\delta$})] For every $x \in U$, the fiber of $p$ through $x$ has diameter less than $\mu s_1 \rho_1(x)$.
\end{enumerate}
\item For every $x \in V_{2, \textnormal{reg}}$, the ball $(B(x, s_2 \rho_1(x)), s_2^{-2} \rho_1^{-2} (x) g, x)$ is $\mu$-close to a standard $2$-dimensional Euclidean ball $(B = B_1(0), g_{\eucl}, \ov{x} = 0)$.

Moreover, there is an open subset $U$ with $B(x,\frac12 s_2 \rho_1(x)) \subset U \subset \linebreak[1] B(x, \linebreak[0] s_2 \rho_1 (x))$ and a smooth map $p : U \to \IR^2$ such that
\begin{enumerate}
\item[({$\alpha$})] There are smooth vector fields $X_1, X_2$ on $U$ such that $dp (X_i) = \frac{\partial}{\partial x_i}$ and $X_1, X_2$ are almost orthonormal, i.e. $| \langle X_i, X_j \rangle - \delta_{ij} | < \mu$ for all $i,j = 1,2$.
\item[({$\beta$})] $U$ is diffeomorphic to $S^1 \times B^2$ such that $p : U \to p(U)$ corresponds to the projection onto $B^2$ and the $S^1$-fibers are isotopic in $U$ to the $S^1$-fibers of the fibration on $V_{2, \textnormal{reg}}$.
\item[({$\gamma$})] The $S^1$-fibers of $p$ and the $S^1$-fibers of $V_{2, \textnormal{reg}}$ on $U$ enclose an angle $< \mu$ with each other and an angle $\in (\frac\pi{2} - \mu, \frac\pi{2} + \mu)$ with $X_1$ and $X_2$.
\item[({$\delta$})] The $S^1$-fiber of the fibration on $V_{2, \textnormal{reg}}$ that passes through $x$ is isotopic in $U$ to the $S^1$-fibers of $p$.
\item[({$\varepsilon$})] The $S^1$-fibers of $p$ as well as the $S^1$-fibers of $V_{2, \textnormal{reg}}$ on $U$ have diameter less than $\min \{ (\vol U)^{1/3}, \mu s_2 \rho_1 (x) \}$.
\end{enumerate}
\item For every $x \in V_{2, \textnormal{cone}}$, the ball $B(x, \mu \rho_1(x))$ covers the component of $V_{2, \textnormal{cone}}$ in which $x$ lies.
\item For every $x \in V_{2, \partial}$ there is an $x' \in V_{2, \textnormal{reg}}$ with $\dist(x,x') < \mu \rho_1(x)$.
\item For every $x \in V'_2$, the ball $B(x, \mu s_1 \rho_1(x))$ covers the component of $V'_2$ in which $x$ lies.
\end{enumerate}
\end{Proposition}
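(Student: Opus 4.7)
The plan is to follow the strategy of Morgan–Tian \cite{MorganTian}, modified so as to extract the precise topological and geometric information demanded by (a1)–(a4), (b1)–(b4) and (c1)–(c3). The heart of the argument is a pointwise rescaling: at each $x \in M$ one examines the rescaled pointed ball $(B(x,\rho_1(x)), \rho_1^{-2}(x) g, x)$, which by hypothesis (ii) has volume $< w_0$ and by the definition of $\rho_1$ has sectional curvature $\geq -1$. Taking Gromov–Hausdorff limits as $w_0 \to 0$ along subsequences produces a pointed Alexandrov space of dimension $\leq 2$ with curvature bounded below, and the whole argument is organized around the local limit dimension.

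First I would dispose of the totally collapsed case: if the scale-$\rho_1(x)$ limit is $0$-dimensional at some $x$, then $\diam M < \mu \rho_1(x)$, $M$ is closed, and the combination of Gromov's almost-flat theorem with the Cheeger–Fukaya–Gromov collapsing theory yields that $M$ is an infra-nilmanifold or carries a metric of non-negative sectional curvature. In the remaining case I would define the decomposition $M = V_1 \cup V_2 \cup V_2'$ by cases on the local collapse dimension at the two scales $s_2 \rho_1$ and $s_1 \rho_1$: $V_2$ is (roughly) the closure of points whose scale-$s_2\rho_1$ ball is $\mu$-close to a ball in a $2$-dimensional Alexandrov surface; $V_1$ is the closure of the points whose scale-$s_1 \rho_1$ ball is $\mu$-close to an interval with no $2$-dimensional refinement at smaller scales; and $V_2'$ consists of the transitional points that look $1$-dimensional at scale $s_1 \rho_1$ but $2$-dimensional near a cusp-like end at a smaller scale. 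The scale gap $s_2 \ll s_1$ provides the play needed so that these defining conditions cover $M$ and overlap sufficiently.

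Next I would invoke the appropriate fibration theorem on each piece, using hypothesis (iii) to upgrade continuity to smoothness. On $V_1$, Yamaguchi's fibration theorem produces a $1$-Lipschitz map $p : U \to J$ with small $T^2$- or $S^2$-fibers, yielding (c1); the ends where this fibration degenerates are exactly the solid tori, $D^3$, $\Klein^2 \td\times I$ and $\IR P^2 \td\times I$ caps listed in (a3) and (a4). On $V_2$ I would use the stratification of a $2$-dimensional Alexandrov surface of curvature bounded below: interior regular points of the limit surface give an honest $S^1$-fibration, assembling into $V_{2,\textnormal{reg}}$; points lying over interior cone singularities of the limit give finite-quotient neighborhoods diffeomorphic to $S^1 \times D^2$, bounded by the $\Xi^T_i$, forming $V_{2,\textnormal{cone}}$; and points lying over the boundary of the limit surface give $V_{2,\partial}$, whose components are $I \times D^2$ (if the base boundary is an arc, contributing the annuli $\Xi^A_i$ and equatorial annuli $\Xi^E_i$ on the spherical boundary components of $V_2$) or $S^1 \times D^2$ (if the base boundary is a circle, bounded by the $\Xi^O_i$). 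The quantitative assertions (c2)–(c3) are then a smoothed reformulation of the Gromov–Hausdorff closeness of the rescaled ball to its Alexandrov model.

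The main obstacle, and the reason the proposition is more than a direct quotation of Morgan–Tian, is the globally coherent choice of the separating surfaces $\Sigma^T_i, \Sigma^S_i, \Xi^T_i, \Xi^O_i, \Xi^A_i, \Xi^E_i$ and the compatibility of the $S^1$-fibration of $V_{2,\textnormal{reg}}$ with the $T^2$- or $S^2$-fibrations on adjacent components of $V_1$. This is handled by exploiting the scale gap $s_2 \ll s_1$: in a region that is simultaneously collapsed to a segment at scale $s_1\rho_1$ and to a $2$-dimensional surface at scale $s_2 \rho_1$, the $T^2$-fibers of the larger-scale model refine the $S^1$-fibers of the smaller-scale model, so one can locate a cross-sectional $2$-torus transverse to both fibrations and use it as a separator; a standard isotopy then makes the two fibrations agree on a collar around it. The boundary collar $U'_T$ of hypothesis (i) is incorporated into the $V_1$-side of the decomposition in exactly the same manner. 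The resulting argument is organized in parallel to \cite[Proposition 6.1]{Bamler-certain-topologies}, which I would follow closely, with added care given to the monotonicity of the constants $w_0, s_1(\mu), s_2(\mu)$ in $\mu$ and to the assertion (a4) concerning $V_2'$.
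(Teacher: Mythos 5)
Your proposal follows essentially the same route as the paper's proof: the proposition is extracted from the arguments behind Morgan--Tian's Theorem 0.2, organized as in \cite[Proposition 6.1]{Bamler-certain-topologies}, with the totally collapsed case split off first, the decomposition $V_1 \cup V_2 \cup V'_2$ read off from the Morgan--Tian construction of $V_{n,1}, V_{n,2}$ (and $V_{2,\textnormal{reg}}, V_{2,\textnormal{cone}}, V_{2,\partial}$ from their solid tori near cone points, solid cylinders near boundary points and $3$-balls near corners), and the scale gap $s_2 \ll s_1 \ll \mu$ supplying the play between the one- and two-dimensional local models. One small correction: in the totally collapsed case you cannot invoke Gromov's almost-flat theorem, which needs two-sided curvature bounds; with only $\sec \geq -\rho_1^{-2}$ and $\diam M < \mu\rho_1$ the right tool is Fukaya--Yamaguchi \cite[Corollary 0.13]{FY}, which gives exactly the dichotomy ``infra-nilmanifold or admits a metric of non-negative sectional curvature''. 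This is also the point where Morgan--Tian's simplifying Assumption~1 (no non-negatively curved closed components) has to be circumvented, by showing one may assume $\rho_1(x) \leq \max\{C,\mu^{-1}\}\,\diam M$ so that their rescaled function $\rho_n$ can be taken $\leq \frac12 \diam M$; your proposal does not address where that assumption enters their proof.

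The one substantive gap is assertion (a4), the classification of the components of $V'_2$ as $S^1 \times D^2$, $D^3$, $\Klein^2 \td\times I$ or $\IR P^2 \td\times I$. This is not a direct quotation of Morgan--Tian, and your proposal only flags it as needing ``added care'' without supplying an argument. The paper proves it by first showing, via Toponogov comparison applied to the long thin collar $P \subset V_1$ adjacent to a component $A$ of $V'_2$ (which is collapsed to an interval of length $\gtrsim \mu^{-1/2}$ after rescaling, while $A$ itself has rescaled diameter $\lesssim \mu^{1/2}$), that any cover $\widehat{A} \to A$ restricting over $\partial A$ to circle/torus covers of degree $\leq 6$ has at most two boundary components; it then runs an orbifold-cover analysis of the base of the Seifert fibration on $A$ (capping off the relevant boundary circle with a disk carrying a degree-$6$ singularity and using that the resulting orbifold admits only finite covers) to pin down the four topologies. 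Without some argument of this kind the list in (a4) is not established, and (a4) is used repeatedly in the topological analysis of subsection \ref{subsec:topimplications}.
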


\begin{proof}
We follow the lines of the proof of \cite[Proposition 6.1]{Bamler-certain-topologies}.
Our proposition is a consequence of the arguments used for the proof of Theorem 0.2 in Morgan-Tian (\cite{MorganTian}).
In the following, we will point out the intermediate steps in this proof that imply the assertions of our proposition and we will explain how some of its arguments have to be modified slightly to fit our setting.

First note that our proposition and Theorem 0.2 in \cite{MorganTian} use different philosophies:
Our proposition asserts that there is a small $w_0 > 0$ with the property that \emph{every} ``$w_0$-collapsed'' manifold $(M,g)$ satisfies the desired topological and geometric assertions while Theorem 0.2 claims that whenever we have a sequence of manifolds $(M_n, g_n)$ that are ``$w_n$-collapsed'' with $\lim_{n \to \infty} w_n = 0$, then these assertions hold for sufficiently large $n$.
These two philosophies are equivalent, similarly as the $\varepsilon$-$\delta$-criterion for continuity is in general equivalent to the sequence criterion.
Under this equivalence, assumption (ii) of our proposition implies assumption 1. of Theorem 0.2, which reads
\begin{quote} \it
1. For each point $x \in M_n$ there exists a radius $\rho = \rho_n(x)$ such that the ball $B_{g_n} (x,\rho)$ has volume at most $w_n \rho^3$ and all the sectional curvatures of the restriction of $g_n$ to this ball are all at least $- \rho^{-2}$.
\end{quote}
Except for the higher derivative bounds, which are not really needed in the proof of Theorem 0.2, assumption (iii) of our proposition implies assumption 3. of Theorem 0.2, which reads
\begin{quote} \it
3. For every $w' > 0$ there exist $\ov{r} = \ov{r} (w') > 0$ and constants $K_m = K_m (w') < \infty$ for $m = 0,1, 2, \ldots$, such that for all $n$ sufficiently large, and any $0 < r \leq \ov{r}$, if the ball $B_{g_n} (x,r)$ has volume at least $w' r^3$ and sectional curvatures at least $- r^{-2}$, then the curvature and its $m^{th}$-order covariant derivatives at $x$, $m = 1,2, \ldots$, are bounded by $K_0 r^{-2}$ and $K_m r^{-m-2}$, respectively.
\end{quote}
Lastly, assumption (i) of our proposition translates to the following condition:
\begin{quote} \it
Each component $T$ of $\partial M_n$ is an incompressible torus [\ldots] such that the $T^2$-fiber through every $x \in U'_T$ has diameter $< w_n \rho_1(x)$.
\end{quote}
This condition does not imply assumption 2. of Theorem 0.2:
\begin{quote} \it
2. Each component of the boundary of $M_n$ is an incompressible torus of diameter at most $w_n$ and with a topologically trivial collar containing all points within distance $1$ of the boundary on which the sectional curvatures are between $- 5/16$ and $-3/16$.
\end{quote}
It will become evident later, however, that either condition is sufficient for our purposes.

Next, Morgan and Tian make the following simplifying assumption:
\begin{quote} \it
Assumption 1. For each $n$, no connected, closed component of $M_n$ admits a Riemannian metric of non-negative sectional curvature.
\end{quote}
In our proposition we don't want to make this assumption.
So we have to find alternative arguments whenever this assumption is used.
Assumption 1 is essentially used at two places in the proof of Theorem 0.2.
Firstly, it is used to rule out certain topologies in the description of the geometric decomposition of $(M,g)$.
This issue can be resolved by adding these topologies to the list of possible topologies, e.g. in assertion (a3).
Secondly, it is used in the proof of \cite[Lemma 1.5]{MorganTian} to show that the function $\rho_n (x)$ can be rechosen to be sufficiently regular and $\leq \frac12 \diam M$.
The regularity assumption is automatically satisfied by our choice $\rho_1(x)$ and by any multiple $\lambda \rho_1 (x)$ for $0 < \lambda \leq 1$.
We will now argue that, nevertheless, we can add the simplifying assumption that 
\[ \rho_1 (x) \leq \max \{ C, \mu^{-1} \} \diam M \qquad \text{for all} \qquad x \in M \]
for some universal constant $C < \infty$.
This bound will be enough for our purposes, because we can choose $\lambda = \frac12 \min \{ C^{-1}, \mu \}$ to ensure that the function $\rho_n(x)$ in \cite{MorganTian} is bounded by $\frac12 \diam M$.

So assume for the moment that $\rho_1 (x) > \max \{ C, \mu^{-1} \} \diam M$ for some $x \in M$ and some constant $C < \infty$ that we will determine later.
Since $M \subset B(x,\rho_1(x))$, this inequality holds for all $x \in M$ and it implies 
\[ \diam M < \min \{ C^{-1}, \mu \} \rho_1(x) \leq \mu \rho_1(x). \]
By condition (i), $M$ must be closed.
It now follows from \cite[Corollary 0.13]{FY} that we can choose $C$ uniformly such that the lower sectional curvature bound of $- \rho_1^{-2} (x)$ on $(M,g)$ together with the diameter bound imply that $M$ either supports a metric of non-negative sectional curvature or is infranil.
This implies that the assertion in the paragraph immediately after condition (iii) is satisfied and we are done.
So we may assume from now on that $\rho_1 (x) \leq \max \{ C, \mu^{-1} \} \diam M$ for all $x \in M$ or, equivalently, that the function $\rho_n (x)$ in \cite{MorganTian}, being equal to $\lambda \rho_1 (x)$, is bounded from above by $\frac12 \diam M$.

Next, we have to construct the sets $V_1, V_2, V'_2$ as well as $V_{2, \textnormal{reg}}, V_{2, \textnormal{cone}}, V_{2, \partial}$.
These sets will arise from the construction of the sets $V_{n,1}$ and $V_{n,2}$ in \cite{MorganTian}.
Note that the construction of $V_{n,1}$ and $V_{n,2}$ is carried out in several steps.
In the following we provide an overview over this construction and point out the necessary changes for the proof of our proposition.

In \cite[Proposition 5.2]{MorganTian}, Morgan-Tian define $X_{n,1} \subset M_n$ to be the set of all points at which $(M_n, g_n)$ is locally collapsed to an open interval.
The statement of Proposition 5.2 is that $X_{n,1}$ can be extended to a subset $X_{n,1} \subset U_{n,1} \subset M_n$ such that the components of $U_{n,1}$ are diffeomorphic to $S^2 \times (0,1)$, $T^2 \times (0,1)$ or a $2$-torus bundle over the circle such that the ends of $U_{n,1}$ are geometrically controlled.
It follows from the proof of this proposition that all points $x \in U_{n,1}$ satisfy the geometric characterization of assertion (c1) in our proposition.
Note that in our setting, due to the lack of Assumption 1, we have to include $2$-sphere bundles over the circle to the list of possible topologies of $U_{n,1}$.

Next, Morgan-Tian analyze the components of $A \subset M_n \setminus U_{n,1}$.
In \cite[Lemma 5.3]{MorganTian} they conclude that for each such $A$ there are three possibilities:
\begin{enumerate}[label=(\arabic*)]
\item $(M_n, g_n)$ is locally collapsed in $A$ to a $2$-dimensional space of area bounded from below.
\item $(M_n, g_n)$ is globally collapsed in $A$ to a half-open interval such that one of its endpoints corresponds to a point in $A$.
In this case $A$ is diffeomorphic to $T^2 \times I$ and adjacent to the boundary of $M_n$ or $A$ is diffeomorphic to $S^1 \times D^2$, $\Klein^2 \td\times I$, $D^2$ or $\IR P^2 \td\times I$.
\item $A$ is ``a component which is close to an interval but which expands to be close to a standard 2-dimensional ball'' (compare with \cite[Definition 5.4]{MorganTian}).
This means roughly that after decreasing $\rho_n(x)$ by a small factor, $A$ satisfies the conditions of (1).
\end{enumerate}
At this point we need to recall that in our setting we are using a different characterization of the metric around the boundary of $M$.
So we have to be careful with arguments that involve points close to the boundary of $M$.
It can however be shown that, for sufficiently small $w_0$, every component $A$ that is adjacent to $\partial M$ satisfies (2).
Using their previous conclusion, Morgan-Tian define $U'_{n,1}$ to be the union of $U_{n,1}$ with all such components $A$ that satisfy (2).
Note that, again, all points $x \in U'_{n,1}$ satisfy the geometric characterization of assertion (c1) in our proposition, since the important part of this assertion involves points $y$ that are sufficiently far away from the endpoints of the interval towards which we observe the local collapse.
We will later choose $V_1$ to be a subset of $U'_{n,1}$.
This will then establish assertion (c1).

After constructing $U'_{n,1}$, Morgan-Tian remove a small bit of each open end of $U'_{n,1}$ and call the new (closed) subset $W_{n,2}$ and the closure of its complement $W_{n,1}$ (see \cite[subsec 5.3]{MorganTian}).
The reason for doing this is that this way the ends of $W_{n,2}$ are equipped with fibrations by $2$-tori or $2$-spheres that are compatible with the boundary components of $W_{n,2}$ and the fibrations of the adjacent components of $W_{n,1}$.
For every component $A \subset M_n \setminus U'_{n,1}$, Morgan-Tian denote the corresponding component of $W_{n,2}$ by $\widehat{A} \supset A$.
Note that the change between $A$ and $\widehat{A}$ is generally negligible.
So if $A$ belongs to case (1) in the preceding list, then we will still interpret $\widehat{A}$ to be locally collapsed to a $2$-dimensional space; analogously for case (3).
We will later choose the subset $V_2 \subset M$ such that for each of its components $\mathcal{C} \subset V_2$ there is some component $A$ from case (1) such that $A \subset \mathcal{C} \subset \widehat{A}$.
The same is true for $V'_2$, with case (3) instead of case (1).

Next, Morgan-Tian analyze the geometry of the subset $W_{n,2}$ in \cite[subsec 5.4]{MorganTian}.
In order to do this, they use the following intuition:
Around every point $x \in W_{n,2}$ the Riemannian manifold $(M_n, g_n)$ is locally collapsed to some $2$-dimensional Alexandrov space $(X,d)$, which depends on $x$.
Every point $y \in X$ satisfies one of the following characterizations, which depend on certain parameters (compare with \cite[Theorem 3.22]{MorganTian}):
\begin{enumerate}[label=(\arabic*)]
\item $y$ is regular, i.e. after enlarging $(X,d)$ by some uniform factor, the geometry around $y$ is close to a $2$-dimensional Euclidean ball.
\item $y$ is conical, i.e. after rescaling, the geometry around $y$ looks like a subset of a $2$-dimensional cone.
\item $y$ is close to a regular boundary point, i.e. the local geometry around $y$ is close to a half plane.
\item $y$ is close to a corner, i.e. the local geometry around $y$ is close to a $2$-dimensional sector.
\end{enumerate}
Based on this classification of the points of the spaces that $W_{n,2}$ is locally collapsed to, Morgan-Tian derive an induced classification of the points of $W_{n,2}$.
As a result, they obtain a covering of $W_{n,2}$ by subsets $U_{2, \textnormal{generic}}$ (case (1)), finitely many ``$\varepsilon'$-solid tori near interior cone points'' (case (2)), $U_{\textnormal{cyl}}$, being the union of ``$\varepsilon'$-solid cylinders near flat $2$-dimensional boundary points''  (case (3)) and finitely many ``$3$-balls near $2$-dimensional boundary corners'' (case (4)).
The subset $U_{2, \textnormal{generic}}$ carries an $S^1$-fibration along which the collapse occurs.
For the exact statements see \cite[Lemmas 5.7, 5.9]{MorganTian}.
Then Morgan-Tian define the subsets $W'_{n,1}, W'_{n,2}$ by removing the ``$3$-balls near $2$-dimensional boundary corners'' from $W_{n,2}$ and adding their closures to $W_{n,1}$.

Eventually, in \cite[subsec 5.5]{MorganTian} Morgan-Tian construct the subset $V_{n,1}$.
In this construction, they first slightly deform the boundary between $W'_{n,1}$ and $W'_{n,2}$ such that the $S^1$-fibration on $W'_{n,2} \cap U_{2, \textnormal{generic}}$ is compatible with each boundary component.
After redefining $W'_{n,1}$ in that way, they set $V_{n,1} := W'_{n,1}$.
For our proposition, we define $V_1 \subset M$ to be the union of this new subset $W'_{n,1}$ minus the components that were added as deformations of ``$3$-balls near $2$-dimensional boundary corners'' when we passed from $W_{n,1}$ to $W'_{n,1}$.
We define the subsets $V_2, V'_2$ to be the unions of components in the closure of $M \setminus V_1$, depending on whether the corresponding component $A$ belonged to case (1) or (3) in the list before the previous list.
The surfaces $\Sigma^T_i$ and $\Sigma^S_i$ are defined to be the boundary components of $V_2 \cup V'_2$.
Assertions (a1)--(a3) follow immediately.
We will discuss the topology of the components of $V'_2$, as asserted in (a4), at the end of this proof.
For now we just note that each such component has exactly one boundary component.
Assertion (c5) follows from the construction process.
Note that in order to get the bound $\mu s_1 \rho_1(x)$ in this assertion, as opposed to a bound of the form $\frac1{10} \rho_1(x)$, we need to replace some of the numeric constants in Morgan-Tian's work, e.g. the distance of $1/25$ from the endpoints of $J$ (see the beginning of \cite[subsec 5.2]{MorganTian}), by constants depending on $\mu$.

Next, we need to construct the subsets $V_{2, \textnormal{reg}}, V_{2, \textnormal{cone}}$ and $V_{2, \partial}$.
For this we look at the construction of $V_{2,n}$ in \cite{MorganTian}.
The subset $V_{2,n}$ arises from $W'_{2,n}$ by removing deformations of certain ``$\varepsilon'$-solid tori near interior cone points'' and ``$\varepsilon'$-solid cylinders near flat $2$-dimensional boundary points''.
For our proposition we denote by $V_{2, \textnormal{cone}}$ the union of all these deformations of ``$\varepsilon'$-solid cylinders near flat $2$-dimensional boundary points'' within $V_2$ and by $V_{2, \partial}$ the union of all these deformations of ``$\varepsilon'$-solid cylinders near flat $2$-dimensional boundary points'' together with the deformations of ``$3$-balls near $2$-dimensional boundary corners''.
So the components of $V_{2, \textnormal{cone}}$ are solid tori; we denote their boundaries by $\Xi^T_i$.
Note that for each deformed ``$3$-balls near $2$-dimensional boundary corners'' and every spherical boundary component of $V_2$ there are exactly two diskal boundary components of deformed ``$\varepsilon'$-solid cylinders near flat $2$-dimensional boundary points'' that are contained in the boundary of this deformed $3$-ball or spherical boundary component.
So the deformations of the ``$\varepsilon'$-solid cylinders near flat $2$-dimensional boundary points'' and the ``$3$-balls near $2$-dimensional boundary corners'' form chains, which may or may not close up.
Chains that do not close up are homeomorphic to solid cylinders $\approx I \times D^2$ whose diskal boundary components are contained in spherical boundary components of $V_2$.
After smoothing out the corners equivariantly with respect to the adjacent $S^1$-fibration, the boundaries of these solid cylinders are smooth annuli; we denote them by $\Xi^A_i$.
Note that $\partial \Xi^A_i \subset \partial V_2$ and every spherical boundary component of $V_2$ contains exactly two circles of $\bigcup_i \partial \Xi^A_i$, which enclose an annulus $\Xi^E_i \subset \partial V_2$.
A chain that does close up is homeomorphic to a solid torus $\approx S^1 \times D^2$ and after smoothing equivariantly, its boundary $2$-torus is denoted by $\Xi^O_i$.
This establishes assertions (b1)--(b4).

Assertions (c3) and (c4) follow from the construction process and assertion (c2) follows from the construction process together with the statement and proof of \cite[Proposition 4.4]{MorganTian}.
Observe that the diameter bound on the fibers in $U$ in part ($\varepsilon$) of assertion (c2) follows from the fact around every such fiber of diameter $d$, we can find neighborhood inside $B(x, s_2 \rho_1(x))$ that is close to $S^1(d) \times B^2(10 d)$.

We now make a remark on the choice of the parameters $\mu$, $w_0$, $s_1$, $s_2$:
The constants in \cite{MorganTian} that determine the preciseness of the collapse or the closeness with respect to the Gromov-Hausdorff distance are mainly assumed to be fixed during the construction process of the subsets $V_{1,n}$ and $V_{2,n}$.
This is due to the fact that the purpose of \cite{MorganTian} was to establish a purely topological theorem.
Our proposition, however, also contains a geometric characterization of the collapse, as presented in assertions (c1)--(c5).
These assertions involve a degree of preciseness $\mu$, which can be chosen arbitrarily in the beginning of our proposition.
Our geometric characterization is more or less a byproduct of the proof in \cite{MorganTian} and the Lemmas and Propositions asserting the desired geometric statements, which can mainly be found in section 4 of \cite{MorganTian}, do allow the choice of arbitrarily small preciseness parameters.
Allowing these parameters to depend on $\mu$ will however entail a $\mu$-dependence of the collapsing degree $w_0$.
The constant $s_1$ has to be chosen much smaller than $\mu$, because in assertion (c1) we also want to describe the cases in which $\diam M \ll \rho_1(x)$, but still $\diam M > \mu \rho_1(x)$.
Likewise, $s_2 \ll s_1$, since at the boundary points of $V_2$, we need to be able to observe a local collapse towards a $1$-dimensional space at scale $s_1 \rho_1 (x)$, but a collapse towards a $2$-dimensional space at scale $s_2 \rho_1 (x)$.

Finally, we establish assertion (a4).
Consider a component $A$ of $V'_2$.
Let $x \in A$ be a base point and choose a collar $P \subset V_1$, $P \approx T^2 \times I$ or $P \approx S^2 \times I$ that is adjacent to $A$, i.e. $S = P \cup A \approx A$.
By assertion (c1) we may pick $P$ such that its diameter is $> \frac12 s_1 \rho_1 (x)$ and such that it satisfies the geometric characterizations of (c1)($\alpha$)--($\delta$).
So $P$ is collapsed to an interval of length $> \frac12 s_1 \rho_1 (x)$ along $T^2$ or $S^2$-fibers of diameter $< \mu s_1 \rho_1 (x)$.
Consider the rescaled metric $g' = \mu^{-1} s_1^{-1} \rho_1^{-1} (x) g$.
Its sectional curvatures are bounded from below by $- \mu s_1 > - \mu$ on $S$.
With respect to $g'$, the collar $P$ is collapsed to an interval of length $> \frac12 \mu^{-1/2}$, along fibers of diameter $< \mu^{1/2}$.
Moreover, by assertion (c5) we have $\diam_{g'} A < \mu^{1/2}$.
We now derive the following topological characterization of $A$ for sufficiently small $\mu$:
If $\pi : \widehat{A} \to A$ is a cover such that the restriction $\pi^{-1} (\partial A) \to \partial A$ is a disjoint union of covers of degree $\leq 6$, then $\partial \widehat{A}$ consists of at most two boundary components.
In fact, this cover induces a Riemannian cover $\pi' : (\widehat{S}, \widehat{g}') \to (S = P \cup A, \widehat{g}')$ and $\pi^{\prime -1} (P)$ consists of components that are collapsed to intervals of size $> \frac12 \mu^{-1/2}$ along fibers of size $< 6 \mu^{1/2}$.
The number of these intervals is equal to the number of boundary components of $\widehat{A}$.
So if $\widehat{A}$ had more than two boundary components, then the space $(\widehat{S}, \widehat{g}')$ would be collapsed to a space containing at least three intervals that intersect a subset of diameter $< 4 \mu^{1/2}$.
For small enough $\mu$ such a scenario is however impossible by Toponogov's Theorem.
So $\widehat{A}$ has at most two boundary components.

By \cite[Lemma 5.9]{MorganTian} and our previous remarks, we have a topological decomposition $A = V'_{2, \textnormal{reg}} \cup V'_{2, \textnormal{cone}} \cup V'_{2, \partial}$ with the same properties as described in assertions (b1)--(b4) (note that this fact can also be derived by applying our Proposition to a rescaled metric in which $A$ is locally collapsed to a $2$-dimensional Alexandrov space and $P$ is locally collapsed to a line).
The $S^1$-fibration on $V'_{2, \textnormal{reg}}$ can be extended to a Seifert fibration on $V'_{2, \textnormal{reg}} \cup V'_{2, \textnormal{cone}}$.
Let $\Sigma$ be the base-orbifold of this Seifert fibration and let $C_0 \subset \partial \Sigma$ be the boundary circle that corresponds to the boundary component of $V'_{2, \textnormal{reg}} \cup V'_{2, \textnormal{cone}}$ that intersects $\partial A$.
The subset $V'_{2, \partial}$ consists of solid tori and, if $P \approx S^2 \times I$, also one solid cylinder.
The solid tori correspond to the boundary circles of $\partial \Sigma \setminus C_0$ and the solid cylinder corresponds to a subinterval of $C_0$.
Let us now distinguish the cases $P \approx S^2 \times I$ and $P \approx T^2 \times I$:
\begin{enumerate}[label=(\arabic*)]
\item If $P \approx S^2 \times I$, then the topological characterization of $A$ implies that $\Sigma$ has no orbifold covers $\widehat{\Sigma} \to \Sigma$ of degree $> 2$.
So since $\partial \Sigma \neq \emptyset$, $\Sigma$ must be homeomorphic to a disk.
Moreover, $\Sigma$ has no orbifold singularity or exactly one singularity of degree $2$.
So $A$ is homeomorphic to the union of a solid torus, $V'_{2, \textnormal{reg}} \cup V'_{2, \textnormal{cone}} \approx S^1 \times D^2$, with a solid cylinder, $V'_{2, \partial} \approx I \times D^2$, in such a way that the $S^1$-fibers on the cylinder wrap once or twice around the $S^1$-factor of the solid torus.
In the first case $A \approx D^3$ and in the second case $A \approx \IR P^2 \td\times I$.
\item If $P \approx T^2 \times I$, then the topological characterization of $A$ implies the following:
Assume that $\pi : \widehat{\Sigma} \to \Sigma$ is an orbifold cover such that the restricted cover $\pi^{-1} (C_0) \to C_0$ consists only of circle covers of degree $\leq 6$.
Then $\pi^{-1} (C_0)$ contains at most two components.

Let $D_6$ be an orbifold that is homeomorphic to a disk and that has a single orbifold singularity of degree $6$.
Attach $D_6$ to $\Sigma$ along $\partial D_6$ and $C_0$ and denote the resulting orbifold by $\Sigma_0$.
Then the previous conclusions on $\Sigma$ imply that $\Sigma_0$ can only have finite orbifold covers.
So $\Sigma_0$ can either be bad or elliptic.
If $\Sigma_0$ is bad, then it is homeomorphic to a sphere and has at most two orbifold singularities.
In this case $\Sigma$ is homeomorphic to a disk and has at most one orbifold singularity, which implies $A \approx S^1 \times D^2$.
If $\Sigma_0$ is elliptic, then it must be homeomorphic to a sphere or a disk.
If it is homeomorphic to a sphere, then it can have at most $3$ orbifold singularities and if it has exactly $3$ singularities, then two of those singularities need to have degree $2$.
So in this case $\Sigma$ has at least $2$ singularities.
If $\Sigma$ has $\leq 1$ singularities, then $A \approx S^1 \times D^2$ as before.
If it has exactly two singularities, then those singularities must have degree $2$ and $\Sigma$ is a quotient of an annulus and $A \approx \Klein^2 \td\times I$.
If $\Sigma_0$ is homeomorphic to a disk, then it can only have one orbifold singularity.
So in this case $\Sigma$ is a smooth annulus and $V'_{2, \textnormal{reg}} \cup V'_{2, \textnormal{cone}} \approx T^2 \times I$ and it follows that $A \approx V'_{2, \partial} \approx S^1 \times D^2$. 
\end{enumerate}
\end{proof}

\subsection{Geometric consequences} \label{subsec:geometricconsequences}
We now identify parts in the decomposition of Proposition \ref{Prop:MorganTianMain} that become non-collapsed when we pass to the universal cover or to a local cover.

\begin{Lemma} \label{Lem:unwrapfibration}
There is a constant $\mu_1 > 0$ such that:
Assume that we're in the situation of Proposition \ref{Prop:MorganTianMain} and assume $\mu \leq \mu_1$.
Then there is a constant $w_1 = w_1 (\mu) > 0$, which only depends on $s_2(\mu, \ov{r}, K)$, such that the following holds:
Consider a subset $N \subset M$ and a point $x \in N$ such that $B(x,\rho_1(x)) \subset N$.
Assume that we are in one of the following cases:
\begin{enumerate}[label=(\roman*)]
\item $x \in \CC \subset N$ where $\CC$ is a component of $V_2$ with the property that the $S^1$-fiber of $\CC \cap V_{2, \textnormal{reg}}$ is incompressible in $N$.
\item $x \in \CC \subset N$ where $\CC$ is a component of $V_1$ that is diffeomorphic to $T^2 \times I$, $\Klein^2 \td\times I$, a $T^2$-bundle over $S^1$ or the union of two copies of $\Klein^2 \td\times I$ along their boundary and in all of these cases the generic $T^2$-fiber is incompressible in $N$.
\item $x \in \CC$ where $\CC$ is a component of $V_1$ that is diffeomorphic to $S^1 \times D^2$ or to a union of two (possibly different) copies of $S^1 \times D^2$ or $\Klein^2 \td\times I$.
Let $U \subset \CC$ be a subset as described in Proposition \ref{Prop:MorganTianMain}(c1).
Then we assume that $U \subset N$ and that the $T^2$-fiber of $U$ is incompressible in $N$.
\item $x \in \CC \subset N$ where $\CC$ is a component of $V'_2$ that is diffeomorphic to $\Klein^2 \td\times I$ and whose generic $T^2$-fiber is incompressible in $N$.
\item We are in the case $\diam M < \mu \rho_1(y)$ for all $y \in M$ as mentioned in the beginning of Proposition \ref{Prop:MorganTianMain}, $N = M$ and $M$ is either an infra-nilmanifold or a quotient of $T^3$.
\end{enumerate}
Now consider the universal cover $\td{N}$ of $N$ and choose a lift $\td{x} \in \td{N}$ of $x$.
Then we claim that
\[ \vol B(\widetilde{x}, \rho_1(x)) > w_1 \rho_1^3(x). \]
\end{Lemma}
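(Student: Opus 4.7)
The plan is to use the incompressibility hypothesis, together with the local almost-product descriptions of Proposition~\ref{Prop:MorganTianMain}, to ``unwrap'' the collapsing fibers in the universal cover and thereby produce a non-collapsed $3$-dimensional near-Euclidean region around $\td x$. After rescaling by $\rho_1^{-1}(x)$ we may assume $\rho_1(x)=1$, so $\sec\geq -1$ on $B(x,1)\subset N$, and it suffices to show $\vol B^{\td N}(\td x,1)\geq w_1(\mu)$. In each of cases (i)--(v), Proposition~\ref{Prop:MorganTianMain} exhibits a neighborhood $U$ of (or very close to) $x$ on which the metric is, after a rescaling by $s_1^{-1}$ or $s_2^{-1}$, within $\mu$ of an explicit product model: a $2$-Euclidean disk times $S^1$ in case~(i); an interval $J$ times $T^2$ (or the $\IZ_2$-quotient giving $\Klein^2\td\times I$) in cases~(ii)--(iv); and the whole of $M$, of diameter $<\mu$, modeled on an infranilmanifold or flat torus quotient in case~(v). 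The incompressibility hypothesis is exactly that $\pi_1(\mathrm{fiber})\hookrightarrow\pi_1(N)$ is injective; since the fiber is a deformation retract of a neighborhood of $x$ in $U$, this implies that $\pi_1(U)\hookrightarrow\pi_1(N)$ is also injective, so the component of $\pi^{-1}(U)\subset\td N$ containing $\td x$ is isometric to the universal cover $\td U$.

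In $\td U$ the collapse is undone: an incompressible $S^1$-fiber lifts to a line $\IR$ and an incompressible $T^2$-fiber lifts to a plane $\IR^2$. Since covering maps are local isometries, the almost-product description of $U$ lifts verbatim to $\td U$, so the rescaled metric on $\td U$ is within $\mu$ of a genuine $3$-dimensional product model: $B^2_{\eucl}\times\IR$ at scale $s_2$ in case~(i), or $\IR^2\times I$ at scale $s_1$ in cases~(ii)--(iv). Rescaling back, $B(\td x,1)\subset\td N$ therefore contains a near-Euclidean $3$-dimensional region of radius comparable to $s_2$ (respectively $s_1>s_2$), whose volume is bounded below by a constant that depends only on $s_2(\mu,\ov r,K)$. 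In case~(v), $\td M$ is topologically $\IR^3$ with a bounded-geometry left-invariant metric on a simply connected nilpotent Lie group (or genuinely $\IR^3$ with a flat metric in the torus case), and the lower bound on $\vol B(\td x,1)$ follows directly by comparison with the corresponding fixed left-invariant model, since the curvature bound $\sec\geq -1$ together with the scale normalization pins the metric down up to bounded distortion.

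The main obstacle is the quantitative step from ``$\mu$-close to a product'' to an explicit volume lower bound. This requires two inputs: the local $\mu$-closeness statements lift verbatim to $\td U$ because covers are local isometries, and $\sec\geq -1$ on $B(\td x,1)$ together with the near-product structure on $\td U$ forces, via a Cheeger--Gromov compactness / Bishop--Gromov comparison argument, a definite $3$-dimensional near-Euclidean ball around $\td x$ once $\mu_1$ is chosen small enough that all error terms arising in the conversion stay smaller than the gain from the extra unwrapped direction(s). A minor piece of additional bookkeeping is needed in case~(iii), where $x$ need not lie in $U$: the diameter bound $\diam(\CC\setminus U)<\mu s_1\rho_1(x)$ from Proposition~\ref{Prop:MorganTianMain}(c1)($\alpha$) allows us to connect $x$ to a point of $U$ well within the unit ball, reducing to the geometric situation already treated.
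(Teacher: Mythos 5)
Your topological setup is the same as the paper's: use incompressibility to see that the relevant fiber unwraps to a line or plane in the universal cover, so that an extra ``direction'' appears near $\td x$. The gap is in the quantitative core, namely the passage from ``the fiber unwraps'' to $\vol B(\td x,\rho_1(x))>w_1\rho_1^3(x)$. Proposition \ref{Prop:MorganTianMain}(c2) only gives Gromov--Hausdorff closeness of the \emph{base} to a $2$-dimensional Euclidean ball plus the existence of the almost-orthonormal fields $X_1,X_2$ and small fibers; it does \emph{not} say the metric is close to a product metric in any bilipschitz or smooth sense, and this description does not ``lift verbatim'' to a statement that $\td U$ is close to $B^2_{\eucl}\times\IR$. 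Establishing that the new unwrapped direction is quantitatively transverse to the two base directions is exactly where the work lies, and the tools you invoke cannot do it: Cheeger--Gromov compactness needs two-sided curvature (or injectivity radius) control, which is unavailable here since only $\sec\ge-\rho_1^{-2}$ is assumed (the upper bound in condition (iii) is conditional on non-collapsing, i.e.\ on the conclusion), and Bishop--Gromov gives volume \emph{upper} bounds from a lower Ricci bound, not the lower bound you need. The paper's proof instead builds a $(2,\delta)$-strainer around $x$ from (c2), lifts it, uses the unbounded chain of lifts of $x$ (consecutive distances $\le 2(\vol U)^{1/3}$) to find a lift $\td y$ at distance $\approx 2\sqrt{\mu_1}$, shows by Toponogov comparison that the direction to $\td y$ is almost orthogonal to the lifted strainer, and upgrades to a $(3,\delta)$-strainer at the midpoint; the volume bound then comes from the resulting $100$-bilipschitz distance chart to $\IR^3$ (in the style of Burago--Gromov--Perelman). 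Some version of this strainer/bilipschitz-chart argument, or an equivalent quantitative substitute, is indispensable and is missing from your proposal.

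Two further points. In cases (ii)--(iv) the unwrapping must be done in two stages --- first the intermediate cover $\widehat N=\td N/\langle q\rangle$ corresponding to one generator of the $T^2$-fiber, then the full universal cover --- because each stage produces one new strained direction at a different scale ($g'$ and then $g''=\mu_1^{-1}g'$); collapsing this into a single step ``$T^2$ lifts to $\IR^2$'' skips the place where the second direction is actually quantified. In case (v) your claim that $\sec\ge-1$ together with the diameter bound ``pins the metric down up to bounded distortion'' against a left-invariant model is false --- a lower sectional curvature bound does not determine the metric, and the metric on $M$ is not assumed left-invariant; the paper instead passes to a $T^2\times\IR$ cover and reduces to the argument of cases (ii)--(iv).
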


In other words, $x$ is $w_0$-good at scale $1$ relative to $N$ in the sense of \cite[Definition \ref{Def:goodness}]{Bamler-LT-Perelman}.

The proof of the Lemma follows the lines of \cite[sec 7]{Bamler-certain-topologies}, where a special case is established.
The proof makes use of comparison geometry.
For any three points $x_0, x_1, x_2$ in a metric space $(X, d)$ we can construct a triangle $\triangle \ov{x}_0 \ov{x}_1 \ov{x}_2 \subset \IH^2$ in the hyperbolic plane with the property that $\dist(\ov{x}_i, \ov{x}_j) = d(x_i, x_j)$ for all $i,j = 0,1,2$.
Its angles do not depend on the choice of the $\ov{x}_i$ and are called \emph{comparison angles}.
We will write $\cangle x_1 x_0 x_2 := \sphericalangle \ov{x}_1 \ov{x}_0 \ov{x}_2$.
Note that this construction can be carried out in any model space of constant curvature, but in this paper we will only be interested in the model space of constant curvature $-1$.
Using this notion, we define the notion of strainers as follows:

\begin{Definition}[$(m,\delta)$-strainer]
Let $\delta > 0$ and $m \geq 1$.
A $2m$-tuple $(a_1, b_1, \ldots, \linebreak[1] a_m, \linebreak[1] b_m)$ of points in a metric space $(X, d)$ is called an \emph{$(m,\delta)$-strainer around a point $x \in X$} if
\[ \cangle a_i x b_j, \; \cangle a_i x a_j, \; \cangle b_i x b_j > \tfrac{\pi}2 - \delta \qquad  \text{for all} \quad i \neq j, \quad i, j = 1, \ldots, m \]
and
\[ \cangle a_i x b_i > \pi - \delta \qquad \text{for all} \quad i = 1, \ldots, m. \]
The strainer is said to have \emph{size $r$} if $d(x, a_i) = d(x, b_i) = r$ for all $i = 1, \ldots, m$ or \emph{size $\geq r$} if $d(x, a_i), d(x,b_i) \geq r$ for all  $i = 1, \ldots, m$.
\end{Definition}

We will also need the following

\begin{Definition}[$(m+\frac12, \delta)$-strainer]
Let $\delta > 0$ and $m \geq 1$.
A $2m+1$-tuple $(a_1, b_1, \ldots, a_m, b_m, a_{m+1})$ of points in a metric space $(X, d)$ is called an \emph{$(m+\frac12,\delta)$-strainer around a point $x \in X$} if
\begin{alignat*}{3}
 \cangle a_i x b_j &> \tfrac{\pi}2 - \delta &\qquad \text{for all} \quad i &\neq j, \quad & i &= 1, \ldots, m+1, \quad j = 1, \ldots, m, \\
 \cangle a_i x a_j &> \tfrac{\pi}2 - \delta & \qquad \text{for all} \quad i &\neq j, \quad & i, j &= 1, \ldots, m+1, \\
 \cangle b_i x b_j &> \tfrac{\pi}2 - \delta &\qquad \text{for all} \quad i &\neq j, \quad & i, j &= 1, \ldots, m
\end{alignat*}
and
\[ \cangle a_i x b_i > \pi - \delta \qquad \text{for all} \quad i = 1, \ldots, m. \]
The strainer is said to have \emph{size $r$} if $d(x, a_i) = d(x, b_i) = r$ for all $i = 1, \ldots, m$ or $m+1$ or \emph{size $\geq r$} if $d(x, a_i), d(x,b_i) \geq r$ for all  $i = 1, \ldots, m$ or $m+1$.
\end{Definition}

In the following proofs (for each of the cases (i)--(v)) we will denote by $\delta_k(\mu_1)$ a positive constant that depends on $\mu_1 > 0$ and goes to zero as $\mu_1$ goes to zero.
In the end of each proof we will choose $\mu_1$ small enough so that all constants $\delta_k$ are sufficiently small.

\begin{proof}[Proof in case (i).]
We have either $x \in V_{2, \textnormal{reg}}$ or $x \in V_{2, \textnormal{cone}} \cup V_{2, \partial}$.
In the second case, there is an $x' \in B(x, \mu \rho_1(x)) \cap \CC \cap V_{2, \textnormal{reg}}$ and $\frac12 \rho_1(x) < \rho_1(x') < 2 \rho_1(x)$ (here we have assumed that $\mu < \frac12$).
So $B(x', \frac14 \rho_1(x')) \linebreak[1] \subset B(x, \linebreak[1] \rho_1(x)) \linebreak[1] \subset N$.
Let $\td{x}' \in \td{N}$ be a lift of $x'$ such that $\dist(x, x') = \dist(\td{x}, \td{x}')$.
Then $B(\td{x}', \frac14 \rho_1(x')) \subset B(\td{x}, \rho_1(x))$ and hence $\vol B(\td{x}, \rho_1(x)) \geq \vol B(\td{x}', \frac14 \rho_1(x'))$.
So if we relax the assumption $B(x, \rho_1(x)) \subset N$ to $B(x, \frac14 \rho_1(x)) \subset N$, then we can replace $x$ by $x'$.
This shows that under this relaxed assumption, we can assume without loss of generality that $x \in V_{2, \textnormal{reg}}$.

Consider now the subset $U$ with $B(x, \frac12 s_2 \rho_1(x)) \subset U \subset B(x, s_2 \rho_1(x)) \subset N$ and the map $p : U \to \IR^2$ from Proposition \ref{Prop:MorganTianMain}(c2).
For the rest of the proof of case (i), we will only work with the metric $g' = s_2^{-2} \rho_1^{-2}(x) g$ on $M$ as opposed to $g$, and we will bound the $g'$-volume of the $1$-ball around $x$ in the universal cover $\td{N}$ from below by a universal constant.
This will then imply the Lemma.
Observe that the sectional curvatures of the metric $g'$ are bounded from below by $-1$ on this ball.
In the following paragraphs, we carry out concepts that can also be found in \cite{BBI} or \cite{BGP}.

By the properties of $x$, we can find a $(2, \delta_1(\mu_1))$-strainer $(a_1, b_1, a_2, b_2)$ of size $\frac12$ around $x$ (here $\delta_1(\mu_1)$ is a suitable constant as mentioned above).
Recall that this entails that $\dist (a_i, x) = \dist (b_i, x) = \frac12$ for all $i = 1,2$.
In the universal cover $\widetilde{N}$, we can now choose lifts $\widetilde{x}, \widetilde{a}_i, \widetilde{b}_i$ such that $\dist(\widetilde{a}_i, \widetilde{x}) = \dist(a_i, x) = \frac12$ and $\dist(\widetilde{b}_i, \widetilde{x}) = \dist(b_i, x) = \frac12$.
Since the universal covering projection $\pi : \td{N} \to N$ is $1$-Lipschitz, we obtain furthermore $\dist(\widetilde{a}_i, \widetilde{b}_j) \geq \dist(a_i, b_j)$, $\dist(\widetilde{a}_1, \widetilde{a}_2) \geq \dist(a_1, a_2)$ and $\dist(\widetilde{b}_1, \widetilde{b}_2) \geq \dist(b_1, b_2)$.
So all the comparison angles in the universal cover are at least as large as those on $M$ and hence we conclude that $(\widetilde{a}_1, \widetilde{b}_1, \widetilde{a}_2, \widetilde{b}_2)$ is a $(2, \delta_1(\mu_1))$-strainer around $\td{x}$ of size $\frac12$.

Next, we extend this strainer to a $2\frac12$-strainer around $\td{x}$.
By the property of the map $p$ there is a sequence $\widetilde{x}_n$ of lifts of $x$ in $\td{N}$ that is unbounded, such that the consecutive distances of its members are at most $2 (\vol U)^{1/3}$.
We can assume that $2 (\vol U)^{1/3} < \mu_1$, because otherwise we have a lower bound on $\vol U$ and we are done.
So for sufficiently small $\mu_1$ we can find an $n$ such that with $\widetilde{y} = \td{x}_n \in \td{N}$ we have
\[ | \dist (\td{x}, \td{y}) - 2 \sqrt{\mu_1} | \leq \mu_1 \]
Note that $\td{x}$ and $\td{y}$ both project to $x$ under the universal covering projection $\pi : \td{N} \to N$.
Since $\pi$ is $1$-Lipschitz, it follows that for $i = 1, 2$
\[ \dist(\widetilde{y}, \widetilde{a}_i) \geq \dist (x, a_i) =\tfrac12  \qquad \text{and} \qquad \dist(\widetilde{y}, \widetilde{b}_i) \geq \tfrac12 . \]
So in the triangle $\triangle \td{y} \td{x} \td{a}_i$, the segment $|\td{y} \td{a}_i|$ is the longest, which means that it must be opposite to the largest comparison angle, i.e.
\[ \cangle \td{a}_i \td{x} \td{y} \geq \cangle \td{x} \td{y} \td{a}_i. \]
Since $\dist(\td{x}, \td{y}) \to 0$ as $\mu_1 \to 0$, we find using hyperbolic trigonometry, that
\begin{equation} \label{eq:sumofthreeangles}
 \cangle \td{a}_i \td{x} \td{y} + \cangle \td{x} \td{y} \td{a}_i + \cangle \td{y} \td{a}_i \td{x} > \pi -  \delta_2(\mu_1)
\end{equation}
and
\begin{equation} \label{eq:smallangleoppositetosmallside}
 \cangle \td{y} \td{a}_i \td{x} < \delta_2 (\mu_1).
\end{equation}
The last three inequalities imply
\[ 2 \cangle \td{a}_i \td{x} \td{y} > \pi - 2 \delta_2 (\mu_1). \]
The same is true with $\td{a}_i$ replaced by $\td{b}_i$.
So
\begin{equation} \label{eq:212atx}
 \cangle \widetilde{a}_i \widetilde{x} \widetilde{y} > \tfrac{\pi}2 - \delta_2(\mu_1) \qquad \text{and} \qquad \cangle \widetilde{b}_i \widetilde{x} \widetilde{y} > \tfrac{\pi}2 - \delta_2(\mu_1).
\end{equation}
Hence $(\td{a}_1, \td{b}_1, \td{a}_2, \td{b}_2, \td{y})$ is a $(2\frac12, \delta_2 (\mu_1))$-strainer around $\td{x}$ of size $\geq 2 \sqrt{\mu_1} - \mu_1$.

Since $| {\dist(\widetilde{y}, \widetilde{a}_i) - \dist(\widetilde{x}, \widetilde{a}_i)}| < 2 \sqrt{\mu_1}$ and $|{\dist(\widetilde{y}, \widetilde{b}_i) - \dist(\widetilde{x}, \widetilde{b}_i)}| < 2 \sqrt{\mu_1}$, we conclude that $(\td{a}_1, \td{b}_1, \td{a}_2, \td{b}_2)$ is a $(2, \delta_3(\mu_1))$-strainer around $\td{y}$ of size $\geq \frac12 - 2 \sqrt{\mu_1} -2 \mu_1$.
We now show that symmetrically $(\td{a}_1, \td{b}_1, \td{a}_2, \td{b}_2, \td{x})$ is a $2\frac12$-strainer around $\td{y}$ of arbitrarily good precision:
By comparison geometry
\[ \cangle \td{a}_i \td{x} \td{y} + \cangle \td{y} \td{x} \td{b}_i + \cangle \td{a}_i \td{x} \td{b}_i \leq 2 \pi. \]
Together with (\ref{eq:212atx}) and the strainer inequality at $\td{x}$, this yields
\[ \cangle \td{a}_i \td{x}  \td{y} < \tfrac{\pi}2 + \delta_1(\mu_1) + \delta_2(\mu_1). \]
Combining this bound with (\ref{eq:sumofthreeangles}) and (\ref{eq:smallangleoppositetosmallside}) yields
\[ \cangle \td{x} \td{y} \td{a}_i  > \tfrac{\pi}2 - \delta_1(\mu_1) - 3 \delta_2(\mu_1) = \tfrac{\pi}2 - \delta_4 (\mu_1). \]
The same estimate holds for $\cangle \td{x} \td{y} \td{b}_i$.
So $(\td{a}_1, \td{b}_1, \td{a}_2, \td{b}_2, \td{x})$ is indeed a $(2\frac12, \delta_4 (\mu_1))$-strainer around $\td{y}$.

Let $\widetilde{m}$ be the midpoint of a minimizing segment joining $\widetilde{x}$ and $\widetilde{y}$ (this segment is contained in $\td{N}$ for small enough $\mu_1$).
We will now show that $(\td{a}_1, \td{b}_1, \td{a}_2, \td{b}_2, \td{y}, \td{x})$ is a $3$-strainer around $\td{m}$ of arbitrarily good precision.
Since the distances of $\widetilde{a}_i$ and $\widetilde{b}_i$ to $\widetilde{m}$ differ from the corresponding distances to $\widetilde{x}$ by at most $\sqrt{\mu_1} + \mu_1$, we conclude that $(\widetilde{a}_1, \widetilde{b}_1, \widetilde{a}_2, \widetilde{b}_2)$ is a $(2, \delta_5 (\mu_1))$-strainer of size $\geq \frac12 - \sqrt{\mu_1} - \mu_1$ around $\td{m}$.
It remains to bound comparison angles involving the points $\td{x}$, $\td{y}$:
By the monotonicity of comparison angles, we have
\[ \cangle \td{m} \td{x} \td{a}_i \geq \cangle \td{y} \td{x} \td{a}_i > \tfrac{\pi}2 - \delta_2(\mu_1) \qquad \text{and} \qquad \cangle \td{m} \td{x} \td{b}_i \geq \cangle \td{y} \td{x} \td{b}_i > \tfrac{\pi}2 - \delta_2(\mu_1). \]
Now, if we apply the same argument as in the last paragraph, replacing $\td{y}$ by $\td{m}$, we obtain $\cangle \td{x} \td{m} \td{a}_i, \cangle \td{x} \td{m} \td{b}_i > \frac{\pi}2 - \delta_6(\mu_1)$.
For analogous estimates on the opposing angles, we then interchange the roles of $\td{x}$ and $\td{y}$.
Finally, $\cangle \td{x} \td{m} \td{y} = \pi$ is trivially true.

Set $\td{a}_3 = \td{y}$ and $\td{b}_3 = \td{x}$.
We have shown that $(\td{a}_1, \td{b}_1, \td{a}_2, \td{b}_2, \td{a}_3, \td{b}_3)$ is a $(3, \delta_7(\mu_1))$-strainer around $\td{m}$ of size $\geq \sqrt{\mu_1} - \mu_1 > \frac12 \sqrt{\mu_1}$ (for $\mu_1 < \frac12$) for a suitable $\delta_7(\mu_1)$.
We will now use this fact to estimate the volume of the $\lambda \sqrt{\mu_1}$-ball around $\td{m}$ from below for sufficiently small $\lambda$ and $\mu_1$.
We follow here the ideas of the proof of \cite[Theorem 10.8.18]{BBI}.
Define the function
\begin{multline*}
 f : B(\td{m}, \lambda \sqrt{\mu_1}) \longrightarrow \IR^3 \qquad 
  z \longmapsto (\dist(\td{a}_1, z) - \dist(\td{a}_1, \td{m}), \\ \dist(\td{a}_2, z) - \dist(\td{a}_2, \td{m}), \dist(\td{a}_3, z) - \dist(\td{a}_3, \td{m})).
\end{multline*}
We will show that $f$ is $100$-bilipschitz for sufficiently small $\mu_1$ and $\lambda$.
Obviously, $f$ is $3$-Lipschitz, so it remains to establish the lower bound $\frac1{100}$.
Assume that this was false, i.e. that there were $z_1, z_2 \in B(\td{m}, \lambda \sqrt{\mu_1})$ with $\dist(z_1, z_2) > 100 | f(z_1) - f(z_2) |$.
Then for all $i = 1, 2, 3$
\begin{equation} \label{eq:almostiso}
 \dist(z_1, z_2) > 100 | {\dist(a_i, z_1) - \dist(a_i, z_2)} |.
\end{equation}
By the previous conclusions and the fact that comparison angles can be computed in terms of the distance function, we find that given any $\delta > 0$, we can choose $\lambda > 0$ and $\mu_1 > 0$ sufficiently small, to ensure that $(\td{a}_1, \td{b}_1, \td{a}_2, \td{b}_2, \td{a}_3, \td{b}_3)$ is a $(3, \delta)$-strainer around $z_1$ and around $z_2$.
Now look at the comparison triangle corresponding to the points $z_1, z_2, \td{a}_i$.
By (\ref{eq:almostiso}), it is almost isosceles and hence by elementary hyperbolic trigonometry we conclude for $\lambda$ sufficiently small
\[ \tfrac{9}{10} \tfrac{\pi}2 < \cangle z_2 z_1 \td{a}_i, \; \cangle z_1 z_2 \td{a}_i < \tfrac{11}{10} \tfrac{\pi}2 . \]
Using comparison geometry
\[ \cangle z_1 z_2 \td{b}_i \leq 2 \pi - \cangle \td{a}_i z_2 \td{b}_i - \cangle z_1 z_2 \td{a}_i < \tfrac{11}{10} \tfrac{\pi}2 + \delta. \]
For $\lambda$ sufficiently small, we obtain furthermore by hyperbolic trigonometry
\[  \cangle \td{b}_i z_1 z_2 + \cangle z_1 z_2 \td{b}_i +\cangle z_2 \td{b}_i z_1 > \pi - \delta \qquad \text{and} \qquad \cangle z_2 \td{b}_i z_1 < \delta. \]
So
\[ \cangle \td{b}_i z_1 z_2 > \tfrac{9}{10} \tfrac{\pi}2 - 3 \delta. \]

Now join $z_1$ with $\td{a}_1, \td{b}_1, \td{a}_2, \td{b}_2, \td{a}_3$ by minimizing geodesics.
By comparison geometry, these geodesics enclose angles of at least $\frac{\pi}2 - \delta$ or $\pi - \delta$,  depending on the geodesics, between each other.
So their unit direction vectors at $z_1$ approximate the negative and positive directions of an orthonormal basis of $T_{z_1} \td{N}$.
By the same argument, the minimizing geodesic that connects $z_1$ with $z_2$ however encloses an angle of at least $\frac9{10} \frac{\pi}2 - 3\delta$ with each of these geodesics.
For sufficiently small $\delta$ this contradicts the fact that the tangent space at $z_1$ is $3$-dimensional.
So $f$ is indeed $100$-bilipschitz for sufficiently small $\lambda$ and $\mu_1$.

From the bilipschitz property we can conclude that
\[ \vol B(\td{m}, \lambda \sqrt{\mu_1} ) > c (\lambda \sqrt{\mu_1})^3 \]
for some universal $c > 0$.
Fixing $\mu_1 < \frac14$ and $\lambda < 1$ such that the argument above can be carried out, we obtain
\[ \vol B(\td{x}, 1) > \vol B(\td{m}, \lambda \sqrt{\mu_1}) > c (\lambda \sqrt{\mu_1})^3 = c' > 0. \]
By rescaling, this implies the desired inequality for the metric $g$.
\end{proof}

\begin{proof}[Proof in cases (ii)--(iv).]
By Proposition \ref{Prop:MorganTianMain}(c1), (c5) we know that there is an $x' \in \CC$ (or $x' \in \CC'$ in case (iv) where $\CC'$ is the component of $V_1$ adjacent to $\CC$) with $\dist(x, x') < \frac1{10} \rho_1(x)$ such that there is a subset $U$ with $B(x', \frac14 s_1 \rho_1(x')) \subset U \subset B(x', \frac12 s_1 \rho_1(x'))$ that is diffeomorphic to $T^2 \times I$ and incompressible in $N$ and the ball $(B(x', \frac12 s_1 \rho_1(x')), \linebreak[1] 4 s_1^{-2} \rho_1^{-2}(x') g, \linebreak[1] x')$ is $\delta_1(\mu_1)$-close to $((-1,1), g_{\eucl}, 0)$.
As in the proof in case (i) we can replace $x$ by $x'$ and hence assume without loss of generality that $B(x, \frac14 s_1 \rho_1(x)) \subset U \subset B(x, \frac12 s_1 \rho_1(x))$ and that $(B(x, \frac12 s_1 \rho_1(x)), \linebreak[1] 4 s_1^{-2} \rho_1^{-2}(x) g, \linebreak[1] x)$ is $\delta_1(\mu_1)$-close to $((-1,1), \linebreak[1] g_{\eucl}, \linebreak[1] 0)$.

Choose $q \in \pi_1(N)$ corresponding to a non-trivial simple loop in one of the cross-sectional tori of $U$ and denote by $\widehat{N}$ the covering of $N$ corresponding to the cyclic subgroup generated by $q$, i.e. if we also denote by $q$ the deck-transformation of $\widetilde{N}$ corresponding to $q$, then $\widehat{N} = \td{N} / \langle q \rangle$.
So we have a tower of coverings $\td{N} \to \widehat{N} \to N$.

Consider first the rescaled metric $g' = 4 s_1^{-2} \rho_1^{-2} (x) g$.
With respect to this metric we can construct a $(1, \delta_2(\mu_1))$ strainer $(a_1, b_1)$ around $x$ of size $\frac12$ on $M$ for a suitable $\delta_2(\mu_1)$.
By the same arguments as in case (i), but using the covering $\widehat{N} \to N$, we can find a point $\widehat{m} \in \widehat{N}$ at a distance of $2 \sqrt{\mu_1} + \mu_1$ from a lift $\widehat{x} \in \widehat{N}$ of $x$ and a $(2, \delta_2(\mu_1))$ strainer $(\widehat{a}_1, \widehat{b}_1, \widehat{a}_2, \widehat{b}_2)$ around $\widehat{m}$ of size $\geq \frac12 \sqrt{\mu_1}$.
Connect the points $\widehat{a}_i$ and $\widehat{b}_i$ with $\widehat{m}$ by minimizing geodesics and choose points $\widehat{a}'_i$ and $\widehat{b}'_i$ of distance $\frac12 \sqrt{\mu_1}$ from $\widehat{m}$.
By monotonicity of comparison angles, $(\widehat{a}'_1, \widehat{b}'_1, \widehat{a}'_2, \widehat{b}'_2)$ is a $(2, \delta_2(\mu_1))$-strainer of size $\sqrt{\mu_1}$.

Let $g'' = \mu_1^{-1} g'$.
Then $(\widehat{a}'_1, \widehat{b}'_1, \widehat{a}'_2, \widehat{b}'_2)$ has size $\frac12$ with respect to $g''$.
Using this strainer, the metric $g''$ and the covering $\td{N} \to \widehat{N}$, we can apply the same argument from case (i) again and obtain a $(3, \delta_3(\mu_1))$ strainer $(\td{a}_1, \td{b}_1, \td{a}_2, \td{b}_2, \td{a}_3, \td{b}_3)$ around a point $\td{m}' \in \td{N}$, which is $2\sqrt{\mu_1}+\mu_1$ close to a lift $\td{m}$ of $\widehat{m}$ in $\td{N}$.

As in case (i), for a sufficiently small $\mu_1$ we can deduce a lower volume bound $\vol_{g''} \td{B} (\td{m}', 1) > c'$.
With respect to $g'$, the point $\td{m}'$ is within a distance of $\sqrt{\mu_1} (2\sqrt{\mu_1} + \mu_1) + 2\sqrt{\mu_1} + \mu_1$ from a lift $\td{x}$ of $\widehat{x}$.
Hence for sufficiently small $\mu_1$
\[ \vol_{g'} B (\td{x}, 1) > \vol_{g'} B(\td{m}', \sqrt{\mu_1}) > c' \mu_1^{3/2} = c'' > 0. \]
The desired inequality follows by rescaling.
\end{proof}

\begin{proof}[Proof in case (v).]
In this case, there is a covering $\widehat{M} \to M$ such that $\widehat{M} \approx T^2 \times \IR$ and whose group of deck transformations is isomorphic to $\IZ$.
Let $\widehat{g}$ be the pullback of the Riemannian metric $g$ onto $\widehat{M}$ and let $\widehat{x} \in \widehat{M}$ be a lift of $x$.
Then the rescaled ball $(B(\widehat{x}, \rho_1(x)), \rho_1^{-2} (x) \widehat{g}, \widehat{x})$ inside $\widehat{M}$ is $\delta_1(\mu_1)$-close to $((-1,1), g_{\textnormal{eucl}}, 0)$ and there is a subset $U \subset \widehat{M}$ with $B(\widehat{x}, \frac12 \rho_1(x)) \subset U \subset B(\widehat{x}, \rho_1(x))$ that is diffeomorphic to $T^2 \times I$ and incompressible in $\widehat{M}$.
We can now apply the previous proof.
\end{proof}

\subsection{Topological consequences} \label{subsec:topimplications}
We now discuss the topological structure of the decomposition obtained in Proposition \ref{Prop:MorganTianMain}.
So let in the following $M$ be a compact manifold, possibly with boundary and consider a decomposition $M = V_1 \cup V_2 \cup V'_2$, along with the surfaces $\Sigma_i^T, \Sigma_i^S, \Xi_i^T, \Xi_i^O, \Xi_i^A, \Xi_i^E$ and subsets $V_{2, \text{reg}}, \linebreak[1] V_{2, \text{cone}}, \linebreak[1] V_{2, \partial}$ that satisfy all the topological assertions of Proposition \ref{Prop:MorganTianMain} (a1)--(a4), (b1)--(b4).
For future applications, we will discuss the following three cases:
\begin{description}
\item[case A] $M$ is closed, i.e. $\partial M = \emptyset$ and $M$ is irreducible and not a spherical space form. (See \cite[sec \ref{sec:3dtopology}]{Bamler-LT-topology} for definitions of basic topological terminologies.)
\item[case B] $M$ is irreducible, has a boundary and all its boundary components are tori that are incompressible in $M$,
\item[case C] $M \approx S^1 \times D^2$.
\end{description}

The main result of this subsection will be Proposition \ref{Prop:GGpp}.
We first need to make some preparations.

\begin{Definition} \label{Def:GG}
Let $\mathcal{G} \subset M$ be the union of
\begin{enumerate}[label=(\arabic*)]
\item all components of $V_2$ whose generic $S^1$-fiber is incompressible in $M$,
\item all components of $V_1$ that are diffeomorphic to $T^2 \times I$ or $\Klein^2 \td{\times} I$ and whose generic $T^2$-fiber is incompressible in $M$, or components of $V_1$ that are diffeomorphic to a $T^2$-bundle over $S^1$ or the union of two copies of $\Klein^2 \td{\times} I$ along their common torus boundary.
\item all components of $V'_2$ that are diffeomorphic to $\Klein^2 \td{\times} I$ and whose generic $T^2$-fiber is incompressible in $M$.
\end{enumerate}
We call the components of $V_1$, $V_2$ or $V'_2$ that are contained in $\mathcal{G}$ \emph{good (in $M$)}.
\end{Definition}
By Proposition \ref{Prop:MorganTianMain}(b4) good components of $V_2$ do not contain points of $V_{2, \partial}$.

\begin{Lemma} \label{Lem:bdrygoodisV2}
Consider the cases A--C.
Every component of $V_1$, $V_2$ or $V'_2$ that is contained in $\mathcal{G}$ and shares a boundary component with $\mathcal{G}$ (meaning that it has a boundary component that is contained in $\partial \mathcal{G}$) either belongs to $V_2$ and is not adjacent to $\partial M$ or belongs to $V_1$ and is adjacent to $\partial M$.
\end{Lemma}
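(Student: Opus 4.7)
The plan is to analyze the topology at each boundary component of a good piece and show that it continues into another good piece unless it lies on $\partial M$. By Proposition \ref{Prop:MorganTianMain}(a2), no component of $V_2 \cup V_2'$ meets $\partial M$, so the alternative ``$\mathcal{C} \in V_2$ and not adjacent to $\partial M$'' is automatic. Hence it suffices to establish two sub-claims: (I) no good component of $V_2'$ has a boundary lying in $\partial \mathcal{G}$; (II) every good component of $V_1$ that is not adjacent to $\partial M$ has all of its boundary components shared with other good components.

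For (I), let $\mathcal{C} \subset V_2'$ be good. By Definition \ref{Def:GG}(3), $\mathcal{C} \approx \Klein^2 \td\times I$ and its single torus boundary $\Sigma$, which is the generic $T^2$-fiber, is incompressible in $M$. By Proposition \ref{Prop:MorganTianMain}(a4), $\Sigma$ separates $\mathcal{C}$ from a component $\mathcal{C}_1 \subset V_1$. Among the $V_1$ topologies in (a3) for which $\Sigma$ can be a boundary torus, the case $\mathcal{C}_1 \approx S^1 \times D^2$ (or any compound piece that caps $\Sigma$ off by a solid torus) is excluded, because a meridional disk in $\mathcal{C}_1$ would compress $\Sigma$ inside $M$. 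The remaining options are $T^2 \times I$, $\Klein^2 \td\times I$, or a compound whose piece adjacent to $\Sigma$ is of this form; in each case the generic $T^2$-fiber of $\mathcal{C}_1$ near $\Sigma$ is isotopic to $\Sigma$ and hence incompressible in $M$, so $\mathcal{C}_1$ is good and $\Sigma$ lies in the interior of $\mathcal{G}$.

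For (II), let $\mathcal{C} \subset V_1$ be good and assume $\mathcal{C} \cap \partial M = \emptyset$. By Definition \ref{Def:GG}(2) every boundary component of $\mathcal{C}$ is a torus, isotopic to the incompressible generic $T^2$-fiber of $\mathcal{C}$, and by (a2) each such torus $\Sigma$ is shared with a component $A \subset V_2 \cup V_2'$. If $A \in V_2'$, then incompressibility of $\Sigma$ again rules out $A \approx S^1 \times D^2$, so by (a4) we have $A \approx \Klein^2 \td\times I$ with $\Sigma$ as its generic $T^2$-fiber, and $A$ is good. If $A \in V_2$, then by Proposition \ref{Prop:MorganTianMain}(b2) and (b4) the $S^1$-fibration of $V_{2, \textnormal{reg}}$ extends compatibly to $\Sigma$, so the generic $S^1$-fiber of $A$ is represented by a non-trivial simple loop on $\Sigma$; the $\pi_1$-injectivity of $\Sigma$ in $M$ then forces this loop to be non-contractible in $M$, so $A$ is good. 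Either way, $\Sigma$ is internal to $\mathcal{G}$.

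The main obstacle is the topological bookkeeping for the compound $V_1$ types listed in (a3) and for the various boundary surfaces of $V_2$ components, but the underlying mechanism is a single principle: incompressibility of $\Sigma$ rules out any solid-torus cap on the other side and, via $\pi_1$-injectivity, transfers to every non-trivial loop carried by $\Sigma$. The three cases A, B, C enter uniformly; in case C one observes additionally that $\partial M$ is compressible in $S^1 \times D^2$, which forbids any good $V_1$ component from being adjacent to $\partial M$ and hence renders the second alternative of the lemma vacuous in that case.
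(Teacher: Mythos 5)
Your proof is correct and takes essentially the same route as the paper's (much terser) argument: dispose of the $V_2$ alternative via Proposition \ref{Prop:MorganTianMain}(a2), and observe that incompressibility of a boundary torus of a good piece transfers to the generic fiber of the piece on the other side, so that neighbours of good $V_1$ components (and the $V_1$ neighbour of a good $V_2'$ component) are again good. The only blemishes are cosmetic: the compound topologies in (a3) are closed and hence cannot be adjacent to $\Sigma$ at all, and excluding a $V_1$--$V_1$ adjacency rests on (a1) rather than (a2).
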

\begin{proof}
This follows directly from the definition of $\mathcal{G}$ and Proposition \ref{Prop:MorganTianMain}(a2).
Observe that any component that is adjacent to a good component of $V_1$ is good.
\end{proof}

\begin{Lemma} \label{Lem:SStori}
Consider the cases A or B.
There is a subset $\mathcal{S} \subset M$ that is the disjoint union of finitely many embedded solid tori $\approx S^1 \times D^2$, bounded by some of the $\Sigma^T_i$, such that for any $\Sigma^T_i$ the following statement holds: $\Sigma^T_i$ is compressible in $M$ if and only if $\Sigma^T_i \subset \mathcal{S}$ (i.e. it either bounds a component of $\mathcal{S}$ or it is contained in its interior).

In particular, $M = \mathcal{G} \cup \mathcal{S}$.
\end{Lemma}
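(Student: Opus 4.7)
The plan is to take $\mathcal{S}$ to be a maximal family of pairwise disjoint embedded solid tori $\approx S^1 \times D^2$ in $M$, each bounded by one of the compressible $\Sigma^T_i$, and then verify the equivalence and the covering property. First I would associate a solid torus $S_i$ to each compressible $\Sigma^T_i$: by Papakyriakopoulos' Loop Theorem there is a properly embedded compressing disk $D$ on one side of $\Sigma^T_i$, and compressing $\Sigma^T_i$ along $D$ yields an embedded $2$-sphere. By irreducibility of $M$ and the fact that $M \neq S^3$ in cases A and B, this sphere bounds an embedded $3$-ball $B$, and $B$ together with a regular neighborhood of $D$ is an embedded solid torus $S_i$ with $\partial S_i = \Sigma^T_i$. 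The side on which $S_i$ sits is uniquely determined, since if both sides were solid tori then $M$ would be closed of Heegaard genus $1$---namely $S^3$, a lens space, or $S^2 \times S^1$---all excluded by the hypotheses of cases A and B.

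Next I would show that for any two compressible tori $\Sigma^T_i \neq \Sigma^T_j$ the solid tori $S_i, S_j$ are either disjoint or one contains the other. Since the boundaries $\Sigma^T_i$ and $\Sigma^T_j$ are disjoint, if $S_i \cap S_j \neq \emptyset$ then one of the boundaries lies in the interior of the other solid torus, and a short argument using the uniqueness of the solid-torus side yields the nesting. Define $\mathcal{S}$ to be the union of the maximal members of $\{S_i\}$; these are pairwise disjoint by the nesting dichotomy, and each is bounded by the corresponding $\Sigma^T_i$. The equivalence ``$\Sigma^T_i$ compressible iff $\Sigma^T_i \subset \mathcal{S}$'' then follows: the forward direction from the above construction (each $S_i$ is either itself maximal or sits inside a maximal one), and the reverse direction from the fact that $\pi_1(\Sigma^T_i) = \IZ^2$ cannot inject into $\pi_1(S^1 \times D^2) = \IZ$, so by the Loop Theorem $\Sigma^T_i$ admits a compressing disk inside its enclosing solid torus, hence in $M$.

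For the final statement $M = \mathcal{G} \cup \mathcal{S}$, I would show that any component $C$ of $V_1$, $V_2$, or $V'_2$ not contained in $\mathcal{S}$ is necessarily good. The main obstacle here is the case analysis over the topologies listed in Proposition \ref{Prop:MorganTianMain}(a3)--(a4). If $C$ has compressible generic fiber (an $S^1$-fiber in the case of $V_2$, a $T^2$-fiber in the case of $V_1$ or $V'_2$), this fiber lies on each toroidal boundary component $\Sigma^T_i$ of $C$ and is null-homotopic in $M$, making $\Sigma^T_i$ compressible; exploiting the fibered structure provided by Proposition \ref{Prop:MorganTianMain}(c1), (c2), I would argue that the fibered side of $\Sigma^T_i$ must be the solid-torus side (otherwise the Seifert-fibered piece $C$ with null-homotopic generic fiber would produce a second solid torus filling on the wrong side, contradicting uniqueness), so $C \subset S_i \subset \mathcal{S}$. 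If instead $C$ has a spherical boundary $\Sigma^S_j$, irreducibility forces each such sphere to bound a $3$-ball in $M$; following the chain of components connected to $C$ through spherical separators until reaching a toroidal boundary then shows the entire chain lies in a single ball whose outer toroidal boundary is compressible in $M$, again placing $C$ inside $\mathcal{S}$.
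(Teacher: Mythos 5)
Your construction of $\mathcal{S}$ rests on the claim that every compressible torus $\Sigma^T_i$ bounds an embedded solid torus in $M$: you compress along a disk $D$, observe that the resulting sphere bounds a ball $B$ by irreducibility, and assert that $B \cup N(D)$ is a solid torus with boundary $\Sigma^T_i$. This step has a genuine gap. The sphere obtained by compressing may bound a ball $B$ that \emph{contains} $\Sigma^T_i \cup D$ rather than lying on the far side of the surgery; in that case $B \cup N(D) = B$ is just a ball and no solid torus bounded by $\Sigma^T_i$ is produced. Indeed, a compressible torus in an irreducible $3$-manifold need not bound a solid torus at all: embed the exterior $E(K)$ of a nontrivial knot $K \subset S^3$ into a ball $B^3 \subset M$; then $T = \partial E(K)$ is compressible in $M$ (the meridian bounds a disk in $M \setminus \Int E(K)$), but neither side of $T$ is a solid torus. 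The correct dichotomy — which is \cite[Lemma \ref{Lem:compressingtorus}]{Bamler-LT-topology} and is what the paper actually uses — is that a compressible torus either bounds a solid torus or lies, together with its compressing disk, inside an embedded ball. Your proposal never addresses the second alternative, and ruling it out is where essentially all of the work in the paper's proof lies: one must use the specific structure of the decomposition (the maximal system of balls $B'_i$ bounded by the $\Sigma^S_i$, a maximality choice among the remaining compressible tori via \cite[Lemma \ref{Lem:compressingtorus}]{Bamler-LT-topology}, the identification of the adjacent component as a piece of $V_2$, and the Seifert-fibration criterion \cite[Lemma \ref{Lem:Seifertfiberincompressible}]{Bamler-LT-topology}) to show that no such "bad" compressible torus survives outside $\mathcal{S} \cup B'_1 \cup \ldots \cup B'_{m'}$, and then separately that the balls $B'_i$ are swallowed by $\mathcal{S}$. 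General $3$-manifold topology alone does not yield the conclusion.

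A secondary remark: even granting the first part, your argument for $M = \mathcal{G} \cup \mathcal{S}$ is only an outline of the required case analysis. In particular, for a component $\CC$ of $V_2$ with compressible $S^1$-fiber one cannot immediately conclude that the "fibered side" of each boundary torus is the solid-torus side; the paper instead forms the Seifert-fibered piece $N = \CC \cap V_{2,\textnormal{reg}}$ together with the solid cylinders/tori and balls filling its boundary annuli and spheres, and applies \cite[Lemma \ref{Lem:Seifertfiberincompressible}]{Bamler-LT-topology} and \cite[Lemma \ref{Lem:coverMbysth}]{Bamler-LT-topology} to locate the solid torus. The closed components of $V_1$ (e.g.\ unions of two copies of $S^1 \times D^2$ or of $\Klein^2 \td\times I$ with a solid torus) also require the separate exclusion via \cite[Lemma \ref{Lem:Kleinandsolidtorus}]{Bamler-LT-topology}, which your sketch does not mention.
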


Note that an important consequence of this Lemma is that in cases A and B we have $\mathcal{G} \neq \emptyset$.

\begin{proof}
Without loss of generality, we assume in the following that $M$ is connected.

First observe that by the irreducibility of $M$ in case A and the fact that $\partial M \neq \emptyset$ in case B, each sphere $\Sigma^S_i \subset M$ bounds a ball $B_i \subset M$ on exactly one side.
By \cite[Lemma \ref{Lem:coverMbysth}]{Bamler-LT-topology}, any two of those balls are either disjoint or one is contained in the other.
Let $B'_1, \ldots, B'_{m'}$ be a disjoint subcollection of those balls that are maximal with respect to inclusion.

Now, consider all tori $\Sigma^T_i$ that already bound a solid torus $S_i \subset M$.
In case B, we have $S_i \cap \partial M = \emptyset$.
Again by \cite[Lemma \ref{Lem:coverMbysth}]{Bamler-LT-topology}, any two such tori are either disjoint or one is contained in the other.
So there is a unique subcollection of those solid tori that are maximal with respect to inclusion.
Denote the union of those solid tori by $\mathcal{S}$.

We will now show by contradiction that every torus $\Sigma^T_i \subset M \setminus ( \mathcal{S} \cup B'_1 \cup \ldots \cup B'_{m'})$ is incompressible in $M$.
Observe that each such torus does not bound a solid torus in $M$.
For each compressible torus $\Sigma^T_i \subset M \setminus ( \mathcal{S} \cup B'_1 \cup \ldots \cup B'_{m'})$ we choose a compressing disk $D_i \subset M$ (see \cite[Proposition \ref{Prop:incompressibleequiv}]{Bamler-LT-topology} for a definition of a compressing disk and the statement that such a compressing disk always exists).
By \cite[Lemma \ref{Lem:compressingtorus}]{Bamler-LT-topology} and a maximum argument, we can find one such torus $\Sigma^T_j \subset M \setminus ( \mathcal{S} \cup B'_1 \cup \ldots \cup B'_{m'})$ with the following property:
For any other compressible torus $\Sigma^T_i \subset M \setminus ( \mathcal{S} \cup B'_1 \cup \ldots \cup B'_{m'})$ that lies in the same component of $M \setminus \Sigma^T_j$ as $D_j$, the disk $D_i$ lies in the same component of $M \setminus \Sigma^T_i$ as $\Sigma^T_j$.
(E.g. one may choose the torus $\Sigma_j^T$ that has the most other tori $\Sigma_i^T$ on the side opposite to $D_j$.)

Let $\CC$ be the component of $V_1$, $V_2$ or $V'_2$ whose boundary contains $\Sigma^T_j$ and that lies on the same side of $\Sigma^T_j$ as $D_j$.
We first argue that $\CC \not\subset V_1$:
If $\CC$ was a component of $V_1$, then $\CC \approx T^2 \times I$, $\Klein^2 \td\times I$ or $S^1 \times D^2$.
The third case cannot occur, because $\Sigma_j^T \not\subset \mathcal{S}$ and the second case is impossible, because the boundary torus of $\Klein^2 \td\times I$ is incompressible in $\Klein^2 \td\times I$.
So $\CC \approx T^2 \times I$.
Let $\Sigma^T_i$ be the other boundary component of $\CC$.
Since the components of $M \setminus \Sigma_j^T$ and $M \setminus \Sigma_i^T$ are diffeomorphic, and by \cite[Lemma \ref{Lem:compressingtorus}]{Bamler-LT-topology} the compressing disks $D_j$, $D_i$ have to lie on the same sides of their tori, we obtain a contradiction to the choice of $\Sigma_j^T$.
So $\CC \not\subset V_1$.
Moreover $\CC \not\subset V'_2$, because otherwise $\CC \approx S^1 \times D^2$ or $\Klein^2 \td\times I$ by Proposition \ref{Prop:MorganTianMain}(a4), which is impossible by the same reasons.
So $\CC \subset V_2$.

We will now analyze the boundary of $\CC$.
Consider $\Sigma^T_i \subset \partial\CC$ (possibly $\Sigma^T_i = \Sigma^T_j$).
If $\Sigma^T_i$ bounds a solid torus $S_i \subset \mathcal{S}$, then since $\Sigma^T_j \not\subset \mathcal{S}$, $S_i$ lies on the opposite side of $\CC$.
If $\Sigma^T_i$ does not bound a solid torus, then $\Sigma^T_i \subset M \setminus ( \mathcal{S} \cup B'_1 \cup \ldots \cup B'_{m'})$.
So if $\Sigma^T_i$ has compressing disks, then by \cite[Lemma \ref{Lem:compressingtorus}(a)]{Bamler-LT-topology} and again by the choice of $\Sigma^T_j$, these disks can only lie on the same side as $\CC$.
By \cite[Proposition \ref{Prop:incompressibleequiv}]{Bamler-LT-topology}, this implies that then $\Sigma^T_i$ is incompressible in the component of $M \setminus \Sigma^T_i$ that does not contain $\CC$.
For every spherical boundary component $\Sigma^S_i \subset \partial\CC$, the ball $B_i$ lies on the opposite side of $\CC$ and has to be one of the maximal balls $B'_i$ because otherwise $\Sigma^T_j$ would be contained in $B'_1 \cup \ldots \cup B'_{m'}$.

Set now $N = \CC \cap V_{2, \textnormal{reg}}$ and define $\mathcal{S}^*$ to be the closure of the union of $\CC \setminus N$ with the balls $B'_i$ whose boundary lies in $\partial \CC$.
So $N$ carries an $S^1$-fibration and is bounded by some of the tori $\Sigma^T_i$ and $\partial\mathcal{S}^*$.
The set $\mathcal{S}^*$ consists of components of $V_{2, \textnormal{cone}}$ (those are solid tori), components of $V_{2, \partial}$ that are solid tori and chains made out of components of $V_{2, \partial}$ that are solid cylinders and balls $B'_i$.
So (after smoothing out the edges of the chains) all components of $\mathcal{S}^*$ are solid cylinders.
We can hence apply \cite[Lemma \ref{Lem:Seifertfiberincompressible}]{Bamler-LT-topology} to conclude that there are two cases:
In the first case there is a boundary torus $T \subset \partial N$ that bounds a solid torus in $M$ on the same side as $N$.
In the second case every boundary torus of $N$ either bounds a solid torus on the side opposite to $N$ or it is incompressible in $M$.
The second case cannot occur, since $\Sigma^T_j$ is compressible and does not bound a solid torus.
So the first case applies.
Consider the component $T \subset \partial N$ that bounds a solid torus on the same side as $N$.
We find using \cite[Lemma \ref{Lem:coverMbysth}]{Bamler-LT-topology} that $T \not\subset \partial\mathcal{S}^*$, so $T = \Sigma^T_i$ for some $i$ and the solid torus is $S_i$.
But this would imply that $\Sigma^T_j \subset N \subset \CC \subset S_i$ in contradiction to our assumptions.

We have shown so far that every $\Sigma^T_i \subset M \setminus ( \mathcal{S} \cup B'_1 \cup \ldots \cup B'_{m'})$ is incompressible in $M$.

Now assume that there was some $B'_i$ that is not contained in $\mathcal{S}$.
Then $\partial B'_i \cap \mathcal{S} = \emptyset$ by \cite[Lemma \ref{Lem:coverMbysth}]{Bamler-LT-topology}.
By maximality of $B'_i$ and the fact that $M$ is not a spherical space form, $B'_i$ borders a component $\CC$ of $V_2$ on the other side.
Since $\CC$ has a spherical boundary component, $\CC \cap V_{2, \partial} \neq \emptyset$ and hence the $S^1$-fibers on $\CC \cap V_{2, \text{reg}}$ are contractible in $M$.
So every boundary torus of $\CC$ must be compressible and hence be contained in $\mathcal{S} \cup B'_1 \cup \ldots \cup B'_{m'}$ and, in case B, $\partial \CC \cap \partial M = \emptyset$.
Since $\CC \not\subset \mathcal{S} \cup B'_1 \cup \ldots \cup B'_{m'}$, all boundary tori of $\CC$ bound solid tori on the other side.
Define $N$ and $\mathcal{S}^*$ in the same way as above.
Then $N$ carries an $S^1$-fibration and $\mathcal{S}^*$ is a disjoint union of solid tori.
So every boundary component of $N$ bounds a solid torus on the other side.
In particular, by \cite[Lemma \ref{Lem:coverMbysth}]{Bamler-LT-topology}, no boundary component of $N$ bounds a solid torus on the same side.
Hence by \cite[Lemma \ref{Lem:Seifertfiberincompressible}]{Bamler-LT-topology}, the $S^1$-fibers on $N$ are incompressible in $M$ in contradiction to our previous conclusion.

We conclude that $B'_1 \cup \ldots \cup B'_{m'} \subset \mathcal{S}$ and one direction of the first claim follows.
The other direction is clear since $\pi_1(S^1 \times D^2) \cong \IZ$.
It remains to show that $M = \mathcal{G} \cup \mathcal{S}$.
Let $\CC$ be a component of $V_1$, $V_2$ or $V'_2$ whose interior is contained in $M \setminus \mathcal{S}$ and assume that $\CC \not\subset \mathcal{G}$.
Observe that since all $\Sigma^S_i$ are contained in $\mathcal{S}$, $\partial \CC$ only consists of tori.

Consider first the case $\CC \subset V_1$.
If $\CC$ has no boundary, then it must be diffeomorphic to either $S^1 \times S^2$ or the union of $D^3$ and $D^3$, $D^3$ and $\IR P^3 \setminus B^3$, two copies of $\IR P^3 \setminus B^3$ along their boundary, two copies of $S^1 \times D^2$ along their boundary or to the union of $\Klein^2 \td\times I$ and $S^1 \times D^2$ along their boundary.
The first four cases can be excluded immediately, since $M$ is assumed to be irreducible and not a spherical space form and the last two cases are excluded by \cite[Lemma \ref{Lem:coverMbysth}]{Bamler-LT-topology} and \cite[Lemma \ref{Lem:Kleinandsolidtorus}]{Bamler-LT-topology}, respectively.
So $\CC$ has a boundary and hence it is diffeomorphic to $T^2 \times I$, $\Klein^2 \td\times I$ or $S^1 \times D^2$.
The last case cannot occur, since otherwise $\CC \subset \mathcal{S}$.
In the other two cases, the boundary component(s) are compressible in $M$ and hence not contained in $\partial M$.
So they bound a component of $\mathcal{S}$ on the side opposite to $\CC$, i.e. $M = \CC \cup \mathcal{S}$.
But this is again ruled out by \cite[Lemma \ref{Lem:coverMbysth}]{Bamler-LT-topology} and \cite[Lemma \ref{Lem:Kleinandsolidtorus}]{Bamler-LT-topology}.

Similarly, in the case $\CC \subset V'_2$, $\CC$ would be diffeomorphic to $\Klein^2 \td\times I$ or $S^1 \times D^2$.
The second case can not occur since otherwise $\CC \subset \mathcal{S}$ and in the first case, $M$ would be the union of $\Klein^2 \td\times I$ with a solid torus, which is ruled out by \cite[Lemma \ref{Lem:Kleinandsolidtorus}]{Bamler-LT-topology}.

Finally, assume that $\CC \subset V_2$.
Since the generic fiber in $\CC$ is assumed to be compressible in $M$, all boundary tori of $\CC$ are compressible in $M$ and hence $\partial \CC$ is disjoint from $\partial M$.
So $M = \CC \cup \mathcal{S}$, which gives a contradiction already in case B.
In case A, define $N$ and $\mathcal{S}^*$ again in the same way as above.
$N$ carries an $S^1$-fibration, $\mathcal{S}^*$ is a disjoint union of solid tori and $M = N \cup \mathcal{S} \cup \mathcal{S}^*$.
By \cite[Lemma \ref{Lem:Seifertfiberincompressible}]{Bamler-LT-topology} and \cite[Lemma \ref{Lem:coverMbysth}]{Bamler-LT-topology}, we conclude that the $S^1$-fibers of $N$ are in fact incompressible in $M$ and hence $\CC \subset \mathcal{G}$.
\end{proof}

We now focus on the intersection $\mathcal{G} \cap \mathcal{S}$.
\begin{Lemma} \label{Lem:solidtoriinsideSS}
Consider the cases A--C.
In the cases A, B let $\mathcal{S}$ be the set defined in Lemma \ref{Lem:SStori} and in case C let $\mathcal{S} = M$.
Then $\mathcal{G} \cap \mathcal{S} \subset V_2$ and every component $\CC$ of $V_2$ that is contained in $\mathcal{G} \cap \mathcal{S}$ is bounded by tori that bound solid tori inside $\mathcal{S}$.
\end{Lemma}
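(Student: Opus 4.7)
The plan is to split the claim into its two parts and handle each separately. For the inclusion $\mathcal{G} \cap \mathcal{S} \subset V_2$, I would rule out any good component $\CC$ of $V_1$ or $V'_2$ being contained in $\mathcal{S}$ by examining the topological types listed in Definition~\ref{Def:GG}. The closed types (a $T^2$-bundle over $S^1$, or the union of two copies of $\Klein^2 \td\times I$) cannot embed in $\mathcal{S}$: in cases A and B they would have to be a connected component of the ambient solid torus in $\mathcal{S}$, which is impossible since that solid torus has non-empty boundary, and in case C the ambient $S^1 \times D^2$ has boundary too. The remaining types ($T^2 \times I$ and $\Klein^2 \td\times I$ in either $V_1$ or $V'_2$) each contain an embedded $T^2$-fiber that is incompressible in $M$ by the goodness hypothesis; but if such a component sat inside a solid torus $S \approx S^1 \times D^2$ of $\mathcal{S}$, then the induced map $\pi_1(T^2) \cong \IZ^2 \to \pi_1(S) \cong \IZ$ would have non-trivial kernel, forcing this fiber to be compressible in $S$ and hence in $M$, a contradiction.

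For the second part, let $\CC$ be a component of $V_2$ contained in $\mathcal{G} \cap \mathcal{S}$. First I would argue that $\partial \CC$ consists only of tori: by the remark following Definition~\ref{Def:GG} we have $\CC \cap V_{2, \partial} = \emptyset$, and Proposition~\ref{Prop:MorganTianMain}(b4) tells us that every spherical component of $\partial V_2$ carries diskal boundary parts of solid-cylinder components of $V_{2, \partial}$, so spherical components in $\partial \CC$ are excluded. Then for each boundary torus $\Sigma^T_i \subset \partial \CC$, I would use $\CC \subset \mathcal{S}$ to conclude $\Sigma^T_i \subset \mathcal{S}$, so that in cases A and B Lemma~\ref{Lem:SStori} gives the compressibility of $\Sigma^T_i$ in $M$, while in case C the $\pi_1$-rank argument of the first part applied to $M \approx S^1 \times D^2$ gives the same conclusion directly. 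Then \cite[Lemma~\ref{Lem:compressingtorus}]{Bamler-LT-topology} produces a solid torus $S'_i \subset M$ bounded by $\Sigma^T_i$.

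The main obstacle I anticipate is not conceptual but bookkeeping: verifying that this solid torus $S'_i$ actually lies inside $\mathcal{S}$, not merely inside $M$. In case C this is vacuous because $\mathcal{S} = M$. In cases A and B it follows from the maximality clause built into the construction of $\mathcal{S}$ in Lemma~\ref{Lem:SStori}: $\mathcal{S}$ is chosen as a maximal disjoint union of solid tori bounded by certain $\Sigma^T_i$, so any solid torus bounded by a $\Sigma^T_i \subset \mathcal{S}$ is either one of the chosen maximal ones or is nested inside one, hence is contained in $\mathcal{S}$. A secondary point worth checking carefully is the exclusion of spherical boundary components of good $V_2$-components above, which relies on reading Proposition~\ref{Prop:MorganTianMain}(b4) in the correct direction, namely that a spherical component of $\partial V_2$ always produces adjacent solid cylinders in $V_{2, \partial}$, so good components (whose interiors miss $V_{2, \partial}$) cannot realize such spheres on their boundary.
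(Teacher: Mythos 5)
Your overall route is the paper's: rule out good $V_1$- and $V'_2$-components of $\mathcal{S}$ via the impossibility of an incompressible torus inside a solid torus, observe that a good $V_2$-component meets no $V_{2,\partial}$ and hence has only toral boundary, and then convert compressibility of each boundary torus into a bounding solid torus contained in $\mathcal{S}$. The first part and the bookkeeping at the end (containment of the resulting solid torus in $\mathcal{S}$ via maximality, which is the same as invoking \cite[Lemma \ref{Lem:coverMbysth}]{Bamler-LT-topology}) are fine.

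There is, however, one genuine gap in the second part. You write that once $\Sigma^T_i$ is known to be compressible, \cite[Lemma \ref{Lem:compressingtorus}]{Bamler-LT-topology} ``produces a solid torus bounded by $\Sigma^T_i$.'' That lemma does not do this: for a compressible torus $T$ with compressing disk $D$ it only gives the dichotomy that either $T$ bounds a solid torus or $T \cup D$ is contained in an embedded ball, and the second alternative is a real possibility (compress $T$ along $D$ to get a sphere; if the ball it bounds lies on the side containing $T$, then $T$ bounds a knot complement rather than a solid torus). So an additional argument is needed to exclude the ball case, and this is precisely where the goodness of $\CC$ must be used a second time: the $S^1$-fiber direction of $\CC \cap V_{2,\textnormal{reg}}$ on $T$ is incompressible in $M$, hence not nullhomotopic, so $T$ cannot lie in an embedded ball. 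Your argument as written never invokes the incompressibility of the fibers in the second half (only the consequence $\CC \cap V_{2,\partial} = \emptyset$), and without that input the conclusion that every boundary torus bounds a solid torus is false in general. With this one-line exclusion inserted, your proof matches the paper's.
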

\begin{proof}
Note first that $\mathcal{G} \cap \mathcal{S}$ cannot contain any components of $V_1$ or $V'_2$, because such components would have at least one incompressible boundary torus.
Let now $\CC$ be a component of $V_2$ that is contained in $\mathcal{G} \cap \mathcal{S}$.
In particular, $\CC \cap V_{2, \partial} = \emptyset$ and hence the boundary of $\CC$ is a disjoint union of tori that are compressible in $\mathcal{S}$.
Consider a component $T \subset \partial \CC$ and let $D \subset \mathcal{S}$ be a compressing disk for $T$.
Then by \cite[Lemma \ref{Lem:compressingtorus}]{Bamler-LT-topology}, either $T$ bounds a solid torus, or $T \cup D$ is contained in an embedded ball.
But the latter case cannot occur, since the $S^1$-fiber direction of $\CC$ in $T$ is incompressible in $M$.
So $T$ bounds a solid torus, which by \cite[Lemma \ref{Lem:coverMbysth}]{Bamler-LT-topology} has to lie inside $\mathcal{S}$.
\end{proof}

\begin{Definition} \label{Def:GGp}
Let the subset $\mathcal{G}' \subset M$ to be the union of $\mathcal{G}$ with
\begin{enumerate}[label=(\arabic*)]
\item all components of $V_1$ that are diffeomorphic to $\Klein^2 \td{\times} I$ and adjacent to $\mathcal{G}$ or $\partial M$,
\item all components of $V_1$ that are diffeomorphic to $T^2 \times I$ and that are adjacent to $\mathcal{G} \cup \partial M$ on both sides,
\item all unions $\CC_1 \cup \CC_2$ where $\CC_1$ is a component of $V_1$ diffeomorphic to $T^2 \times I$ and adjacent to $\mathcal{G}$ or $\partial M$ on one side and adjacent to $\CC_2$, which is a component of $V'_2$ diffeomorphic to $\Klein^2 \td{\times} I$, on the other side.
\end{enumerate}
\end{Definition}

\begin{Lemma} \label{Lem:GGpisGGoutside}
Consider the cases A--C.
Every component of $V_1$, $V_2$ or $V'_2$ that is contained in $\mathcal{G}'$ and meets the boundary $\partial \mathcal{G}'$, already belongs to $\mathcal{G}$ or is adjacent to $\partial M$.
In other words, $\partial \mathcal{G}' \subset \partial \mathcal{G} \cup \partial M$.
In the cases A and B, the second option can be omitted.

In particular, any such component is either contained in $V_2$ if it is not adjacent to $\partial M$ or is contained in $V_1$ if it is adjacent to $\partial M$.
\end{Lemma}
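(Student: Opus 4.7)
The approach is a direct case analysis based on Definition \ref{Def:GGp}. Fix a component $\CC$ of $V_1$, $V_2$ or $V'_2$ with $\CC \subset \mathcal{G}'$ and $\CC \cap \partial \mathcal{G}' \neq \emptyset$; I want to show that either $\CC \subset \mathcal{G}$ or $\CC$ is adjacent to $\partial M$. If $\CC \subset \mathcal{G}$ there is nothing to do, so I assume $\CC$ arises as one of the ``extra'' components enumerated in items (1), (2), (3) of Definition \ref{Def:GGp}; in case (3), the added set is a union $\CC_1 \cup \CC_2$ of a $V_1$-piece and a $V'_2$-piece, so $\CC$ is either $\CC_1$ or $\CC_2$.

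For each possibility I would inspect every boundary face of $\CC$ in $M$ and check that it either faces another subset of $\mathcal{G}'$ (and so is interior to $\mathcal{G}'$) or else lies on $\partial M$. In type (1), $\CC \approx \Klein^2 \td\times I$ has a single toroidal face, which borders $\mathcal{G}$ or $\partial M$ by definition; in type (2), both toroidal faces of $\CC \approx T^2 \times I$ border $\mathcal{G} \cup \partial M$; in type (3), if $\CC = \CC_1 \approx T^2 \times I$ then its two faces border $\CC_2 \subset \mathcal{G}'$ and $\mathcal{G} \cup \partial M$ respectively, while if $\CC = \CC_2 \approx \Klein^2 \td\times I$ then its unique face borders $\CC_1 \subset \mathcal{G}'$. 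In every situation a face of $\CC$ that is not interior to $\mathcal{G}'$ must sit on $\partial M$; combined with $\CC \cap \partial \mathcal{G}' \neq \emptyset$ this forces $\CC$ to be adjacent to $\partial M$, yielding the inclusion $\partial \mathcal{G}' \subset \partial \mathcal{G} \cup \partial M$.

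For the strengthening in cases A and B, I would eliminate the ``adjacent to $\partial M$'' alternative as follows: case A is immediate since $\partial M = \emptyset$, and in case B the collar of each boundary torus of $M$ is a component of $V_1$ adjacent to $\partial M$ (by Proposition \ref{Prop:MorganTianMain}(a2)) whose $T^2$-fibers are isotopic to $\partial M$ and so incompressible in $M$; inspection of the topology list in Proposition \ref{Prop:MorganTianMain}(a3) together with the incompressibility of $\partial M$ rules out all topologies for this collar except $T^2 \times I$ and $\Klein^2 \td\times I$ (and their unions of the same type), each placed in $\mathcal{G}$ by Definition \ref{Def:GG}(2). Hence in case B a face of $\CC \in \mathcal{G}' \setminus \mathcal{G}$ lying on $\partial M$ would actually be shared with a good component, so that face is interior to $\mathcal{G}'$ and $\CC$ cannot meet $\partial \mathcal{G}'$ through it. The final ``in particular'' statement is then obtained by applying Lemma \ref{Lem:bdrygoodisV2} to the resulting $\CC \subset \mathcal{G}$, noting that its boundary must meet $\partial \mathcal{G} \cup \partial M$. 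The main subtlety is more bookkeeping than obstacle: one must remember that items (2) and (3) of Definition \ref{Def:GGp} glue several raw $V_i$-components, so $\CC$ has to be isolated as a single component of $V_1$, $V_2$ or $V'_2$ before locating its faces on $\partial \mathcal{G}'$.
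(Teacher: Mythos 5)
Your proof is correct and is essentially the paper's argument made explicit: the paper disposes of this lemma in one line as ``a direct consequence of the definition of $\mathcal{G}'$ and Lemma \ref{Lem:bdrygoodisV2}'', and your face-by-face inspection of the components added in Definition \ref{Def:GGp}, plus the observation that in case B any $V_1$-component touching the incompressible torus boundary $\partial M$ must be $T^2 \times I$ or $\Klein^2 \td\times I$ with incompressible fibers and hence already lies in $\mathcal{G}$, is exactly what that one line is asking the reader to check. One phrasing slip in your case-B paragraph: a face of $\CC$ lying on $\partial M$ is not ``shared with a good component'' (it is a boundary face of $M$, not an interface between two pieces); rather, $\CC$ itself would then be the boundary collar and hence good by your own preceding argument, contradicting $\CC \subset \mathcal{G}' \setminus \mathcal{G}$ — which yields the desired conclusion directly.
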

\begin{proof}
This is a direct consequence of the definition of $\mathcal{G}'$ and Lemma \ref{Lem:bdrygoodisV2}.
\end{proof}

We can now state the main result of this subsection.

\begin{Proposition} \label{Prop:GGpp}
In cases A and B let $\mathcal{G}''$ be the union of all components of $\mathcal{G}'$ that share points with $M \setminus \mathcal{S}$.
In case C, let $\mathcal{G}''$ be the component of $\mathcal{G}'$ that is adjacent to $\partial M$, if it exists.
Then the following is true:
\begin{enumerate}[label=(\alph*)]
\item In cases A and B, every connected component of $M$ contains exactly one component of $\mathcal{G}''$.
In case C, $\mathcal{G}''$ is connected and possibly empty.
\item Let $\mathcal{S}''$ be the closure of $M \setminus \mathcal{G}''$.
Then $\mathcal{S}''$ is a disjoint union of finitely many solid tori ($\approx S^1 \times D^2$) each of which is incompressible in $M$.
\item Each component of $V_1$, $V_2$ or $V'_2$ that is contained in $\mathcal{G}''$ and that shares a boundary component with $\mathcal{G}''$ is contained in $\mathcal{G}$.
If such a component is adjacent to $\partial M$, then it is also contained in $V_1$ and does not intersect $\partial\mathcal{G}'' \setminus \partial M$.
If such a component is not adjacent to $\partial M$, then it is contained in $V_2$.
\item $\mathcal{G} \setminus \mathcal{G}'' \subset V_2$.
\item For every component $\CC''$ of $\mathcal{S}''$ there is a component $\CC$ of $V_1$ that is contained in $\CC''$ and adjacent to $\partial \CC''$.
Moreover, $\CC$ is diffeomorphic to either $S^1 \times D^2$ (and hence $\CC'' = \CC$) or $T^2 \times I$.
In the latter case, the component $\CC'$ of $V_2$ or $V'_2$ that is adjacent to $\CC$ and contained in $\CC''$ (i.e. $\partial \CC \setminus \partial \CC'' \subset \CC'$), is not contained in $\mathcal{G}$.
More precisely, if $\CC' \subset V'_2$, then $\CC' \approx S^1 \times D^2$ and if $\CC' \subset V_2$, then the $S^1$-fibers of $\CC' \cap V_{2, \textnormal{reg}}$ are contractible in $\CC''$.
\end{enumerate}
\end{Proposition}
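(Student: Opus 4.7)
The plan is to define $\mathcal{G}''$ as a union of selected connected components of $\mathcal{G}'$, and then verify by a case analysis that $\mathcal{S}'' = \overline{M \setminus \mathcal{G}''}$ has the required structure. After reducing to $M$ connected, in cases~A and~B let $\mathcal{S}$ be the set from Lemma~\ref{Lem:SStori} and in case~C set $\mathcal{S} = M$. Then $M = \mathcal{G} \cup \mathcal{S}$, so $M \setminus \Int \mathcal{S} \subseteq \mathcal{G} \subseteq \mathcal{G}'$. A standard connectivity argument (removing a disjoint collection of embedded solid tori from a connected $3$-manifold preserves connectedness) shows $M \setminus \Int \mathcal{S}$ is connected in cases~A and~B, and hence lies in a unique connected component of $\mathcal{G}'$, which I choose as $\mathcal{G}''$. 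In case~C, I take $\mathcal{G}''$ to be the connected component of $\mathcal{G}'$ meeting $\partial M$, if any, and otherwise $\mathcal{G}'' = \emptyset$. Assertion~(a) follows immediately.

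For~(c) and~(d): if $\mathcal{C}$ is a good component of $V_1$ or $V'_2$, it has an incompressible torus boundary component (by Definition~\ref{Def:GG}) or is adjacent to $\partial M$. Since every $\Sigma_i^T$ strictly inside a component of $\mathcal{S}$ is compressible in $M$ by Lemma~\ref{Lem:SStori}, $\mathcal{C}$ cannot be contained in $\Int \mathcal{S}$, so it meets $M \setminus \Int \mathcal{S}$ or $\partial M$ and belongs to $\mathcal{G}''$; this establishes~(d). Combining~(d) with Lemma~\ref{Lem:GGpisGGoutside} (giving $\partial \mathcal{G}'' \subseteq \partial \mathcal{G} \cup \partial M$) and Lemma~\ref{Lem:bdrygoodisV2} yields~(c).

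The main task is~(b) and~(e). By~(d), each component $\mathcal{C}''$ of $\mathcal{S}''$ lies inside some component $S$ of $\mathcal{S}$. I identify its outermost layer adjacent to $\partial \mathcal{C}''$ by a peeling procedure: starting from $S$, I successively remove the good $V_2$ components of $\mathcal{G} \cap \mathcal{S}$ bordering the current outer boundary---by Lemma~\ref{Lem:solidtoriinsideSS}, each of their remaining boundary tori bounds a solid sub-torus inside $\mathcal{S}$---and recurse on each sub-torus. The peeling terminates at a layer that cannot be good (it would otherwise lie in $\mathcal{G}''$) and cannot be a bad $V_2$ or bad $V'_2$ component directly touching $\partial \mathcal{C}''$ from inside (else the intervening $V_1$ piece of type $T^2 \times I$ would be in $\mathcal{G}'$ via Definition~\ref{Def:GGp}(2), hence in $\mathcal{G}''$, contradicting its location). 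The remaining possibilities are exactly the $V_1$ collar types described in~(e), allowing also for the union $\mathcal{C}_1 \cup \mathcal{C}_2$ case of Definition~\ref{Def:GGp}(3). Incompressibility of $\mathcal{C}''$ in $M$ then follows from the goodness of the adjacent component $\mathcal{C}_{\textnormal{out}} \subset \mathcal{G}''$ on the outside of $\partial \mathcal{C}''$: the $S^1$-fiber class of $\mathcal{C}_{\textnormal{out}}$ on $\partial \mathcal{C}''$ cannot be purely meridional (otherwise it would bound in $\mathcal{C}'' \subset M$, contradicting the $\pi_1$-injectivity of the fiber), so its longitudinal component is nonzero; since the fiber has infinite order in $\pi_1(M)$, so does the core of $\mathcal{C}''$.

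The hardest part is the case analysis in the peeling step for~(e), in particular handling the degenerate configurations where no peeling is needed (so $\mathcal{C}'' = S$), where the terminating $T^2 \times I$ collar is actually a union $\mathcal{C}_1 \cup \mathcal{C}_2$ with a $V'_2$ component of type $\Klein^2 \td\times I$ via Definition~\ref{Def:GGp}(3), and where recursive peeling produces deeply nested sub-tori. In each case one must verify that the chosen $V_1$ collar is of the right type and that the incompressibility argument applies.
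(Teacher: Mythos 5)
Your proposal is correct and follows essentially the same route as the paper: the same definition of $\mathcal{G}''$ (the component of $\mathcal{G}'$ meeting $M \setminus \mathcal{S}$, resp.\ $\partial M$), the same appeal to Lemmas \ref{Lem:bdrygoodisV2}, \ref{Lem:solidtoriinsideSS} and \ref{Lem:GGpisGGoutside} for (b)--(d), and the same incompressibility argument via the $S^1$-fiber class on $\partial\CC''$. Your ``peeling'' for (e) is an explicit unwinding of the paper's shorter observation that the component outside $\partial\CC''$ lies in $V_2$ by (c), so the component inside is automatically in $V_1$ by Proposition \ref{Prop:MorganTianMain}(a2) --- this also replaces your slightly muddled ``intervening $V_1$ piece'' step --- and the deferred ``more precisely'' clauses of (e) follow in one line from (a4), the non-embeddability of $\Klein^2\td\times I$ in a solid torus, and the incompressibility of $\CC''$ you already established.
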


\begin{proof}
For assertion (a) observe that $M = \mathcal{G} \cup \mathcal{S}$ and that every component of $M$ contains exactly one component of $M \setminus \mathcal{S}$.

Assertion (b)--(d) follow from Lemmas \ref{Lem:bdrygoodisV2}, \ref{Lem:solidtoriinsideSS} and \ref{Lem:GGpisGGoutside}:
For assertion (b) observe each component of $\mathcal{C} \subset \mathcal{S}''$, $\mathcal{C} \approx S^1 \times D^2$, is either equal to $M$ (which may happen in case C), and hence trivially incompressible in $M$, or it is adjacent to a Seifert fibration with incompressible generic $S^1$-fibers.
In the second case, this would imply that there is an embedded loop $\gamma \subset\partial \mathcal{C} \subset \mathcal{C}$ with the property that the image of $[\gamma]$ under the sequence $\pi_1 (\partial \mathcal{C} ) \to \pi_1 (\mathcal{C} ) \to \pi_1 (M)$ has infinite order.
So the image of the second map, $\pi_1 (\mathcal{C} ) \cong \IZ \to \pi_1 (M)$, must be infinite and therefore injective, which means that $\mathcal{C}$ is incompressible in $M$.
Finally, for assertion (d) observe that $\mathcal{G} \setminus \mathcal{G}'' \subset \mathcal{S}$.

For assertion (e) observe that $\CC''$ is either adjacent to $\partial M$ or to $\mathcal{G}''$ and hence to $V_2$.
So the component $\CC$ of $V_1$, $V_2$ or $V'_2$ that is adjacent to $\partial \CC''$ inside $\CC''$, is contained in $V_1$.
Since $\CC$ is contained in a solid torus, it cannot be diffeomorphic to $\Klein^2 \td\times I$.
So it is diffeomorphic to $S^1 \times D^2$ or $T^2 \times I$.
The rest follows from the definition of $\mathcal{G}'$.
Observe that in the case $\CC' \subset V_2$, the $S^1$-fibers of $\CC' \cap V_{2, \textnormal{reg}}$ are compressible in $M$, since otherwise $\CC' \subset \mathcal{G}$ and hence (by Definition \ref{Def:GGp}(2)) $\mathcal{C} \subset \mathcal{G}'$, which contradicts the fact that $\mathcal{G}''$ is a union of components of $\mathcal{G}'$.
Since $\CC''$ is incompressible in $M$, we conclude that the $S^1$-fibers of $\CC' \cap V_{2, \textnormal{reg}}$ are even contractible in $\mathcal{S}''$.
\end{proof}

\section{Preparations for the main argument} \label{sec:Preparations}
In this section we list smaller Lemmas that will be used in the main argument in section \ref{sec:mainargument}.

\subsection{Torus structures and torus collars}
We will make use of the following terminology to describe the geometry of collar neighborhoods in an approximate sense.

\begin{Definition} \label{Def:torusstruc}
Let $a > 0$.
A subset $P \subset M$ of a Riemannian manifold $(M, g)$ is called a \emph{torus structure of width $\leq a$} if there is a diffeomorphism $\Phi : T^2 \times [0,1] \to P$ such that $\diam \Phi(T^2 \times \{ s \}) \leq a$ for all $s \in [0,1]$ (here ``$\diam$'' denotes the extrinsic diameter in $(M,g)$).
The \emph{length} of $P$ is the distance between the two boundary components of $P$. \\
If $h, r_0 > 0$, then we say that $P$ is \emph{$h$-precise (at scale $r_0$)} if it has width $\leq h r_0$ and length $> h^{-1} r_0$.
\end{Definition}
Note that every torus structure of width $\leq a$ and length $L_1$ can be shortened to a torus structure of width $\leq a$ and length $L_2$ for any $L_2 < L_1$.

In the proof of Proposition \ref{Prop:firstcurvboundstep2}, we will moreover make use of the following variation of this notion:
\begin{Definition} \label{Def:toruscollars}
Consider a constant $a> 0$, a Riemannian manifold $(M,g)$ and a smoothly embedded solid torus $S \subset M, S \approx S^1 \times D^2$.
We say that $S$ \emph{has torus collars of width $\leq a$ and length up to $b$}, if for every point $x \in \Int S$ with $\dist(x, \partial S) \leq b$ there is a set $P \subset S$ that is diffeomorphic to $T^2 \times I$ such that:
$P$ is bounded by $\partial S$ and another smoothly embedded $2$-torus $T \subset S$ with $x \in T$ and $\diam T \leq a$.
\end{Definition}
So if $P \subset S$ (such that $\partial S \subset \partial P$) is a torus structure of width $\leq a$ and length $b$, then $S$ has torus collars of width $\leq a$ and length up to $b$.

We mention two conclusions, which we will use frequently.
\begin{Lemma} \label{Lem:collardiameter}
Assume that $S$ has torus collars of width $\leq a$ and length up to $b$.
Let $x \in \Int S$ with $\dist(x, \partial S) < b - 2a$ and choose $P \subset S$ according to Definition \ref{Def:toruscollars}.
Then $\dist(x, \partial S) \leq \diam P \leq \dist(x, \partial S) + 4a$.
\end{Lemma}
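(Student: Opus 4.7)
The lower bound $\dist(x,\partial S)\le\diam P$ is immediate: $x\in T\subset P$ and $\partial S\subset\partial P\subset \ov{P}$, so for every $z\in\partial S$ we have $\dist(x,z)\le\diam P$, and taking the infimum over $z$ gives the claim.

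For the upper bound, the strategy is to exploit that $P$ is a collar between two tori, only one of which ($T$) is a priori controlled in diameter. My plan has two main ingredients. First, I need to bound $\diam\partial S$: applying the torus collar hypothesis to points $y\in\Int S$ arbitrarily close to $\partial S$ (legitimate since $\dist(y,\partial S)\le b$) produces tori $T_y\ni y$ of diameter $\le a$; a limiting/nesting argument then shows $\diam\partial S\le 2a$. Second, for any $y\in P$ I need the ``cylinder convexity'' estimate
\[
\dist(y,\partial S) + \dist(y,T) \le \dist(x,\partial S) + 2a.
\]
The lower inequality $\dist(y,\partial S)+\dist(y,T)\ge\dist(\partial S,T)$ is just the triangle inequality; the upper bound will follow by applying the torus collar hypothesis at $y$ to produce a torus $T_y\ni y$ of diameter $\le a$ parallel to both $\partial S$ and $T$, and combining this with $\dist(\partial S,T)\le\dist(x,\partial S)$ (since $x\in T$).

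Once these two estimates are available, take arbitrary $y_1,y_2\in P$. Summing the cylinder convexity estimate at $y_1$ and $y_2$ shows that at least one of
\[
\dist(y_1,\partial S)+\dist(y_2,\partial S) \quad\text{and}\quad \dist(y_1,T)+\dist(y_2,T)
\]
is at most $\dist(x,\partial S)+2a$. In the first case a path through $\partial S$ yields
\[
\dist(y_1,y_2)\le \dist(y_1,\partial S)+\diam\partial S+\dist(y_2,\partial S)\le \dist(x,\partial S)+4a,
\]
and in the second case the analogous path through $T$ combined with $\diam T\le a$ gives the same bound.

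The main obstacle will be establishing the cylinder convexity estimate. Definition \ref{Def:toruscollars} imposes no compatibility between the collars $P_y$ at different base points, so I expect to need a topological argument in $S\approx S^1\times D^2$: any two tori containing $T^2\times I$-collars to $\partial S$ are parallel, and by an isotopy/nesting argument disjoint such tori must bound a $T^2\times I$-region. Applying this to $T$ and $T_y$ positions $y$ inside a nested family of parallel tori of diameter $\le a$ running from $\partial S$ to $T$, from which the additive estimate on $\dist(y,\partial S)+\dist(y,T)$ follows.
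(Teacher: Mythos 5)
Your plan for the upper bound has a genuine gap at its foundation: both your first ingredient (bounding $\diam \partial S$) and your ``cylinder convexity'' estimate require invoking Definition \ref{Def:toruscollars} at an arbitrary point $y \in P$, and this is only permitted when $\dist(y, \partial S) \leq b$. You verify this for points near $\partial S$, but for a general $y \in P$ it is exactly what is \emph{not} known a priori: $P$ is merely some embedded copy of $T^2 \times I$ bounded by $\partial S$ and $T$, and nothing in the hypotheses prevents it from containing a long ``finger'' reaching into the region $\{\dist(\cdot,\partial S) > b\}$, where the collar hypothesis is silent. Note that your outline never uses the slack in the assumption $\dist(x,\partial S) < b - 2a$ (as opposed to $\leq b$); that slack exists precisely to close this loop. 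The paper's proof handles it by a bootstrap: fixing a minimizing geodesic $\gamma$ from $\partial S$ to $x$ (of length $< b-2a$), it shows that every $y \in P \setminus \partial S$ with $\dist(y,\partial S) \leq b$ is within $2a$ of $\gamma$ --- because the torus $T_y$ through $y$ must meet $\gamma$ or $T$ --- hence satisfies the \emph{strict} bound $\dist(y,\partial S) < b$; the set of such $y$ is then open and closed in the connected set $P \setminus \partial S$, so it is all of it. Once every point of $P$ lies within $2a$ of $\gamma$, the bound $\diam P \leq \ell(\gamma) + 4a$ is immediate, with no need for a separate bound on $\diam\partial S$ or a convexity estimate.

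Two smaller remarks. First, your convexity estimate degrades in the case $T_y \cap T \neq \emptyset$: there you only get $\dist(y,\partial S) + \dist(y,T) \leq \dist(x,\partial S) + 3a$, and feeding that into the branch of your final case distinction that routes through $\partial S$ (with $\diam \partial S \leq 2a$) yields $\dist(x,\partial S) + 5a$, overshooting the claimed $4a$; this is repairable by treating that case separately, but as written the constants do not close. Second, the nesting/parallelism facts you defer to (disjoint tori each cobounding a collar with $\partial S$ must be nested) are indeed needed and are of the type of Lemma \ref{Lem:2T2timesI}, but they are not the issue --- the paper uses the same separation property. The essential missing idea is the bootstrap that licenses applying the hypothesis throughout $P$.
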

\begin{proof}
The first inequality is clear.
For the second inequality consider a minimizing geodesic $\gamma$ joining $\partial S$ with $x$.
By minimality, $\gamma \subset S$ and all points of $\gamma$ have distance $< b - 2a$ from $\partial S$.
Let $y \in P \setminus \partial S$ and assume that $\dist(y, \partial S) \leq b$.
So there is an embedded $2$-torus $T' \subset S$ with $y \in T'$ and $\diam T' \leq a$ and a set $P'$ that is diffeomorphic to $T^2 \times I$ and bounded by $\partial S$ and $T'$.

If $T'$ is disjoint from $T$, then $P' \subset P \setminus T$ and $\gamma$ must intersect $T'$ as it connects the two boundary components of $P$, one of which coincides with a boundary component of $P'$ and the other, which is disjoint from $P'$.
We conclude that $T'$ intersects $\gamma$ or $T$.
In the first case, $\dist(y, \gamma) \leq a$ and in the second case $\dist(y, \gamma) \leq \dist(y, x) \leq 2a$.
So in fact we have the strict bound $\dist(y, \partial S) < b$ and the bound $\dist(y, \gamma) \leq 2a$.
The first bound implies that \emph{all} points of $P \setminus \partial S$ have distance less than $b$ from $\partial S$ and the second bound implies that all these points are no more than $2a$ away from $\gamma$.
This implies the diameter bound.
\end{proof}

\begin{Lemma} \label{Lem:2T2timesI}
Consider two subsets $P_1, P_2 \subset M$ of a smooth $3$-manifold that are diffeomorphic to $T^2 \times I$.
Assume that one boundary component, $T_1$, of $P_1$ is contained in the interior of $P_2$ and the other boundary component, $T'_1$, is disjoint from $P_2$.
Assume also that conversely one boundary component, $T_2$, of $P_2$ is contained in the interior of $P_1$ and the other boundary component, $T'_2$ is disjoint from $P_1$.
Then $P_1 \cup P_2$ is diffeomorphic to $T^2 \times I$
\end{Lemma}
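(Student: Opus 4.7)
The plan is to show that the region $R := P_2 \setminus \Int P_1$ is itself diffeomorphic to $T^2 \times I$ with boundary $T_1 \cup T'_2$, and then to exhibit $P_1 \cup P_2 = P_1 \cup_{T_1} R$ as the gluing of two copies of $T^2 \times I$ along a full boundary torus.

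First I would check that the four containment hypotheses force $\partial P_1 \cap \partial P_2 = \emptyset$, and hence that $P_1 \cup P_2$ is a compact $3$-submanifold of $M$ with $\partial(P_1 \cup P_2) = T'_1 \cup T'_2$. Next, since $\partial P_1 \cap P_2 = T_1$, the characteristic function $\mathbf{1}_{P_1}$ is locally constant on the open set $P_2 \setminus T_1$, equal to $1$ on $T_2 \subset \Int P_1$ and $0$ on $T'_2$; therefore $T_1$ separates $T_2$ from $T'_2$ in $P_2$, and as $T_1$ is a connected two-sided embedded torus it splits $P_2$ into exactly two pieces $R^-$ and $R$, with $\partial R = T_1 \cup T'_2$.

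The main step will be to prove that $T_1$ is incompressible in $P_2$, which I would carry out homologically. On the one hand, $R^-$ is a $3$-submanifold of $P_2$ with $\partial R^- = T_1 \cup T_2$, so $[T_1] = \pm [T_2]$ in $H_2(P_2;\IZ)$; since $T_2$ is a full boundary component of $P_2 \approx T^2 \times I$, $[T_2]$ generates $H_2(P_2) \cong \IZ$, and so $[T_1] \neq 0$. On the other hand, suppose $T_1$ admitted a compressing disk $D \subset P_2$; then surgery along $D$ produces an embedded $2$-sphere $S \subset P_2$, and the ``pocket'' bounded by the annular neighborhood $A$ of $\partial D$ in $T_1$ together with two parallel copies of $D$ is a sphere which, by irreducibility of $T^2 \times I$, bounds a $3$-ball $N \subset P_2$. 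A chain-level computation gives $T_1 - S = \pm\, \partial N$, so $[T_1] = [S]$ in $H_2(P_2)$; but $S$ itself bounds a $3$-ball in $P_2$ (again by irreducibility), so $[S] = 0$ and hence $[T_1] = 0$, contradicting the previous paragraph.

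Once incompressibility is established, the classification of closed incompressible surfaces in $T^2 \times I$ ensures that $T_1$ is ambient-isotopic in $P_2$, via an isotopy supported in $\Int P_2$, to a level torus $T^2 \times \{s\}$; under such an isotopy $R$ is carried onto $T^2 \times [s,1] \approx T^2 \times I$. Finally, $P_1 \cup P_2 = P_1 \cup_{T_1} R$ is the gluing of two copies of $T^2 \times I$ along the common full boundary torus $T_1$, which is evidently diffeomorphic to $T^2 \times I$. The hard part of the argument is the incompressibility step; once this is in place, the remainder is a standard consequence of $3$-manifold topology.
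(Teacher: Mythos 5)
Your proof is correct, but it reaches the key point --- incompressibility of $T_1$ in $P_2$ --- by a genuinely different route than the paper. The paper first observes that $P_1 \setminus P_2$ is a deformation retract of $P_1 \cup P_2$, so that the incompressibility of $T'_1$ in $P_1$ passes to $P_1 \cup P_2$; since $T_1$ is isotopic to $T'_1$ inside $P_1$, it is then incompressible in $P_1 \cup P_2$ and a fortiori in $P_2$, after which the citation of Hatcher's Proposition 1.7 gives $P_2 \setminus P_1 \approx T^2 \times [0,1)$ and the conclusion. You instead work entirely inside $P_2$: you show $T_1$ separates $T_2$ from $T'_2$, deduce $[T_1] = \pm [T_2] \neq 0$ in $H_2(P_2)$ from the cobordism $R^-$, and rule out a compressing disk because compression would make $T_1$ homologous to an embedded sphere, which is null-homologous by irreducibility of $T^2 \times I$. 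This buys you a self-contained argument that avoids the paper's deformation-retract claim (which is plausible but stated without justification, and would itself require a small argument about how $P_1 \cap P_2$ sits inside $P_1$); the cost is a slightly longer homological detour. Both proofs then rest on the same standard fact that a closed incompressible surface in $T^2 \times I$ is isotopic to a level torus, and your final gluing step $P_1 \cup_{T_1} R \approx T^2 \times I$ matches the paper's conclusion.
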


\begin{proof}
First observe that $P_1 \setminus P_2$ is a deformation retract of $P_1 \cup P_2$.
So since $T'_1$ is incompressible in $P_1$ and hence also in $P_1 \setminus P_2$, we find that it is also incompressible in $P_1 \cup P_2$.
This implies that $T_1$, being isotopic to $T'_1$, is incompressible in $P_1 \cup P_2$.
So $T_1$ is also incompressible in $P_2$.

By elementary $3$-manifold topology (see e.g. the proof of \cite[Proposition 1.7]{Hat}), this implies that $P_2 \setminus P_1 \approx T^2 \times [0,1)$ and hence $P_1 \cup P_2 = (P_2 \setminus P_1) \cup P_1 \approx T^2 \times I$.
\end{proof}

The next Lemma asserts that under the presence of a curvature bound, we can find a torus structure of small width around a cross-section of small diameter inside a given torus structure.
This fact will be used in the proof of Proposition \ref{Prop:firstcurvboundstep3}.
In the subsequent Lemma \ref{Lem:2loopstorus} we show that such a small cross-section exists if we can find two short loops that represent linearly independent homotopy classes inside the torus structure.

\begin{Lemma} \label{Lem:bettertorusstructure}
For any $K < \infty$, $L < \infty$ and $h > 0$ there is a constant $0 < \td\nu = \td\nu(K, L, h) < 1$ such that: \\
Let $(M,g)$ be a complete Riemannian manifold, consider a torus structure $P' \subset M$ of width $\leq 1$ and assume that $|{\Rm}| < K$ on $P'$.
Let $T \subset P'$  be an embedded $2$-torus that is incompressible in $P'$, that separates the two boundary components of $P'$ from one another, that has distance $\geq \frac12 L + 30$ from the boundary components of $P'$ and that satisfies $\diam T < \td\nu$.

Then there is a torus structure $P \subset P'$ of width $\leq h$ and length $> L$ such that $T \subset P'$ and such that the pair $(P', P)$ is diffeomorphic to $(T^2 \times [-2,2], T^2 \times [-1,1])$.
\end{Lemma}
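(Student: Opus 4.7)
The plan is to argue by contradiction, combining Cheeger--Gromov compactness with the structure theory of collapse under bounded sectional curvature. Suppose the conclusion fails for fixed $K,L,h>0$; then there is a sequence of counterexamples $(M_n,g_n,P'_n,T_n)$ with $\diam T_n\to 0$. Pick base points $x_n\in T_n$, noting that $B(x_n,L/2+20)\subset P'_n$ by the distance hypothesis. Because $T_n$ is incompressible in $P'_n$ and separates $\partial P'_n$, the inclusion $\pi_1(T_n)\hookrightarrow \pi_1(P'_n)\cong \IZ^2$ is an isomorphism; and since the non-contractible surface $T_n$ fits inside the tiny ball $B(x_n,1/n)$, standard topology forces $\inj(x_n)\to 0$, with both generators of $\pi_1(T_n)$ represented by arbitrarily short commuting loops at $x_n$.

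Next, pass to the local cover $\hat B_n\to B(x_n,L/2+20)$ that unwinds these $\IZ^2$-generators. Bounded curvature together with the injectivity lower bound inherited from the unwinding gives, via Cheeger--Gromov compactness, a smooth pointed limit $(\hat B_\infty,\hat g_\infty,\hat x_\infty)$. The deck actions $\IZ^2\curvearrowright \hat B_n$, whose generators have vanishing translation length at $\hat x_n$, converge to an isometric action of an abelian Lie group $\Gamma_\infty$ of dimension $2$; the key point, which is where incompressibility is used, is that the two $\IZ^2$-generators are linearly independent classes in $\pi_1(P'_n)\cong \IZ^2$ at every stage, so they cannot collapse into a single $1$-parameter subgroup. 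Since $\dim\hat B_\infty = 3$, the $\Gamma_\infty$-orbits are flat $2$-tori and $\hat B_\infty/\Gamma_\infty$ is a $1$-dimensional interval of length at least $L+20$. Thus $\hat B_\infty$ is smoothly foliated by flat $2$-tori transverse to this projection.

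Pulling this foliation back to $\hat B_n$ via the smooth convergence and pushing it down to $M_n$ produces, for all large $n$, a smooth torus foliation of a neighborhood of $T_n$ in $P'_n$ whose leaves have diameter less than $h$ (the leaf diameter is controlled by the short generators of $\Gamma_n$) and which spans a transverse interval of length greater than $L$. Every leaf is incompressible in $P'_n$ and separates $\partial P'_n$, so every leaf is isotopic to $T_n$; one may reselect the foliation so that $T_n$ lies between two distinguished leaves bounding the desired $P$. The diffeomorphism $(P'_n, P) \approx (T^2\times[-2,2], T^2\times[-1,1])$ then follows from standard $3$-manifold topology applied to nested $T^2\times I$ regions with mutually isotopic boundary tori, giving the required contradiction. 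The main obstacle is controlling the middle step: verifying that the collapse is genuinely $2$-dimensional, i.e.\ that $\Gamma_\infty$ has rank $2$ rather than $1$. This requires ruling out the degeneration in which the two short generators become nearly parallel in the Lie algebra of the limit, and is prevented precisely by the incompressibility of $T_n$ in $P'_n$ together with the fact that its two $\pi_1$-generators remain independent in $\IZ^2$ uniformly in $n$.
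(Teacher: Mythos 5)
Your strategy is genuinely different from the paper's: the paper proves this lemma by an elementary, quantitative Toponogov argument, using the \emph{separation} property of $T$ as a bottleneck --- every minimizing geodesic from a point $x$ in the middle region to one of two reference points $z_1,z_2\in\partial P'_1$ must cross the tiny torus $T$, and hyperbolic comparison then pins $x$ to within $\beta_1(\td\nu,L,K)$ of the single geodesic $z_1z_2$, so that level sets of a smoothing of $\dist(z_1,\cdot)$ give $P$. Your compactness/collapsing-theory route never uses separation quantitatively, and as written it has two genuine gaps.

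First, the Cheeger--Gromov compactness of the unwrapped covers $\hat B_n$ is not available for free: unwinding the image of $\pi_1(B(x_n,\cdot))$ in $\pi_1(P'_n)\cong\IZ^2$ does not preclude collapse in the cover (short loops that die in $\pi_1(P'_n)$, or collapse not seen by $\pi_1$ at all, persist upstairs), so the asserted ``injectivity lower bound inherited from the unwinding'' needs the Cheeger--Fukaya--Gromov local structure theory, which only controls balls of a uniform radius $\rho(K)$, not balls of radius $\tfrac12 L+20$. Second, and more importantly, you have misidentified where the difficulty lies. The rank of $\Gamma_\infty$ is not the main issue (independence in $\IZ^2$ plus covolume $\to 0$ handles that, modulo the Gromov short-generators argument you also omit when passing from $\diam T_n\to 0$ to two short independent loops). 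The real content of the lemma is the \emph{propagation} of smallness from $T_n$ out to distance $\tfrac12 L$: the parenthetical ``the leaf diameter is controlled by the short generators'' is precisely what must be proved. An isometry whose displacement is $\le\td\nu$ at $\hat x_n$ can have displacement growing linearly in $\dist(\cdot,\hat x_n)$ (rotational behavior; compare the solid-torus/cone models $V_{2,\textnormal{cone}}$ elsewhere in the paper, where a fiber short at the core is of unit size at bounded distance). Ruling this out over the full length $L$, with the correct dependence $\td\nu=\td\nu(K,L,h)$, is exactly the quantitative step the paper's bottleneck argument supplies and your limit argument does not; at minimum you would need to show that the generic $\Gamma_\infty$-orbits are two-dimensional throughout $\hat B_\infty$ (so that $B(x_n,\tfrac12 L+20)$ Gromov--Hausdorff converges to an interval on the whole region), and that requires an argument, not an assertion.
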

\begin{proof}
By chopping off the ends of $P'$, we first construct a torus structure $P'_1 \subset P'$ of width $\leq 1$ and length $< L + 100$ such that the boundary tori of $P'_1$ have distance at least $5$ from the boundary tori of $P'$ and such that $T$ has distance of at least $\frac12 L + 20$ from $\partial P'_1$.
Then still $T \subset P'_1$.
Choose points $z_1, z_2 \in \partial P'_1$ in each boundary component of $P'_1$ and let $\gamma \subset M$ be a minimizing geodesic from $z_1$ to $z_2$.
Then $\gamma \subset P'$ and $\gamma$ intersects $T$ in a point $z$.

By the same construction as above, we choose $P'_2 \subset P'_1$ such that the boundary tori of $P'_2$ have distance of at least $5$ from the boundary tori of $P'_1$ and such that $T$ has distance of at least $\frac12 L + 10$ from $\partial P'_2$.
We still have $T \subset P'_2$.
Let now $x \in P'_2$ be an arbitrary point.
Consider minimizing geodesics $\gamma_1, \gamma_2 \subset M$ from $x$ to $z_1$, $z_2$.
Then again $\gamma_1, \gamma_2 \subset P'$ and one of these geodesics have to intersect $T$; without loss of generality assume that this geodesic is $\gamma_1$ and choose a point $x_1 \in \gamma_1 \cap T$.
Let $y_1 \in \gamma$ be a point with $\dist(z_1, y_1) = \dist(z_1, x)$ (we can find such a point since $\dist(z_1,x) < \dist(z_1, z_2)$).
We now apply Toponogov's Theorem using the lower sectional curvature bound $- K$:
Observe that $\dist(z_1, x_1), \dist(z_1, z) < L + 100$ and $\dist(x_1, z) < \td\nu$.
So the comparison angle $\beta = \cangle x_1 z_1 z$ (in the model space of constant sectional curvature $-K$) is bounded by a quantity $\beta_0 = \beta_0 (\td\nu, L, K)$ that goes to zero in $\td\nu$ whenever $L$ and $K$ are kept fixed.
By Toponogov's Theorem, we have $\cangle x z_1 y_1 \leq \beta \leq \beta_0$ and since the comparison triangle $\td{\triangle} x z_1 y_1$ is isosceles and the lengths of the hinges are bounded by $L+100$, we conclude that $\dist(x, y_1) < \beta_1(\td\nu, L, K)$, where $\beta_1(\td\nu, L, K)$ is a quantity that goes to zero in $\td\nu$ if $L$ and $K$ are kept fixed.
This implies in particular that
\[ \dist(z_1, z_2) \leq \dist(z_1, x) + \dist( z_2, x) \leq \dist(z_1, z_2) + 2 \beta_1(\td\nu, L, K). \]
Hence, if $\td\nu$ is small enough depending on $L$ and $K$, then we have the following bound for the comparison angle at $x$:
\begin{equation} \label{eq:cangleatxtorusstruct}
 \cangle z_1 x z_2 > 0.9 \pi.
\end{equation}
For the rest of the proof, fix such a $\td\nu > 0$ for which also $\beta_1(\td\nu, L, K) < 0.1h$.

By (\ref{eq:cangleatxtorusstruct}) the function $p : \Int P'_2 \to \IR$, $p(x) = \dist(z_1, x)$ is regular in a uniform sense and hence we can find a smooth unit vector field $\chi$ on $\Int P'_2$ such that the directional derivative of $p$ is uniformly positive everywhere, i.e.  $\chi \cdot p > c > 0$.
We can moreover choose a smoothing $p'$ of $p$ with $| p- p' | < 0.1h$ and $\chi \cdot p' > 0$ everywhere (compare with the techniques used in \cite{Grove-Shiohama-1977} and \cite[sec 3.3]{Meyer-1989}).
Let $P = (p')^{-1}(I)$ be the preimage of a closed subinterval $I \subset p'(P'_2)$ whose endpoints have distance $3$ from the endpoints of $p' (P'_2)$.
This implies that the preimage $(p')^{-1}(t)$ of every point $t \in I$ is far enough from the boundary of $P'_2$ and hence is compact.
Then in particular $T \subset P$.
So $P \approx \Sigma \times I$, for some connected, closed surface $\Sigma$ and $p'$ is the projection onto the second factor.
Since $T \subset P$, it follows that $\pi_1(\Sigma)$ contains a subgroup isomorphic to $\IZ^2$, which implies that $\Sigma \approx T^2$.

We now estimate the diameter of $(p')^{-1}(t)$ for each $t \in I$.
Let again $x \in P$ and consider as above the geodesics $\gamma_1, \gamma_2$ as well as the point $y_1 \in \gamma$ with $\dist(z_1, y_1) = \dist (z_1, x) = p(x)$.
Additionally, we construct $y_2 \in \gamma$ with $\dist (z_2, y_2) = \dist(z_2, x)$.
Then $\dist(y_1, y_2) \leq 0.2 h$.
In the case in which $\gamma_1$ intersects $T$, we conclude as above that $\dist(x, y_1) \leq 0.1 h$.
Analogously, if $\gamma_2$ intersects $T$, we have $\dist(x, y_2) \leq 0.1 h$ and hence $\dist(x,y_1) \leq 0.3 h$.
Let $y' \in \gamma$ now be a point with $\dist(z_1, y') = p'(x)$.
Then $\dist(y', y_1) = | p (x) - p' (x) | < 0.1 h$ and hence $\dist(y', x) < 0.4 h$.
This implies that $\diam (p')^{-1}(t) < 0.8 h < h$ for all $t \in I$.
So $P$ has width $\leq h$.

Finally, we bound the length of $P$ from below.
Consider points $x_1, x_2 \in \partial P$ in each boundary component and let $y'_1, y'_2 \in\gamma$ be points with $\dist (z_1, y'_1) = p'(x_1)$ and $\dist (z_1, y'_2) = p'(x_2)$.
Then by the last paragraph
\begin{multline*}
 \dist(x_1, x_2) > \dist (y'_1, y'_2) - 2 \cdot 0.4 h = | p'(x_1) - p'(x_2) | - 0.8 h \\
  = \ell( p'(P'_2) ) - 2 \cdot 3 - 0.8 h > \ell( p (P'_2) ) - 6 - h.
\end{multline*}
where $\ell( p (P'_2) )$ denotes the length of the interval $p(P'_2)$.
By assumption $p(P'_2) \geq 2 (\frac12 L + 10) = L + 20$.
So $\dist(x_1, x_2) > L + 14 - h > L$ for $h < 1$.
This implies that $P$ has length $> L$.
\end{proof}

\begin{Lemma} \label{Lem:2loopstorus}
For every $K < \infty$ there is a constant $\td\varepsilon_1 = \td\varepsilon_1(K) > 0$ such that: \\
Let $(M,g)$ a complete Riemannian manifold with boundary that is diffeomorphic to $T^2 \times I$ and $p \in M$ such that $B(p, 1) \subset M \setminus \partial M$.
Assume that $| {\Rm} | < K$ and assume that there are loops $\gamma_1, \gamma_2$ based at $p$ that represent two linearly independent homotopy classes in $\pi_1(M) \cong \IZ^2$.
Now if $m = \max \{ \ell(\gamma_1), \ell(\gamma_2) \} < \td\varepsilon_1$, then there is an embedded incompressible torus $T \subset M$ that separates the two ends of $M$ such that $p \in T$ and $\diam T < 10 m$.
\end{Lemma}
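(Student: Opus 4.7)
My plan is to pass to the universal cover $\pi\colon(\td M,\td g)\to(M,g)$, which is diffeomorphic to $\IR^2\times I$, and to construct $T$ there from a 2-disk in the tangent space at a lift of $p$. Fix a lift $\td p\in\td M$ of $p$. The deck group is $\pi_1(M)\cong\IZ^2$; let $q_1,q_2\in\pi_1(M)$ correspond to $[\gamma_1],[\gamma_2]$, so they form a $\IZ$-basis. Lifting $\gamma_i$ starting at $\td p$ ends at $q_i\td p$, giving $d_{\td g}(\td p,q_i\td p)\leq\ell(\gamma_i)\leq m$. I would take $\td\varepsilon_1=\td\varepsilon_1(K)$ small compared to $1/\sqrt K$, so that for $m<\td\varepsilon_1$ the exponential map $\exp_{\td p}$ is a $(1+o(1))$-bilipschitz diffeomorphism from a Euclidean ball $B_0(r_0)\subset T_{\td p}\td M$ onto its image, with $r_0\gg 100m$ fixed in terms of $K$ (using that $\td M$ is simply-connected with $|{\Rm}|<K$, so the conjugate-radius-based injectivity bound at $\td p$ is at least $\pi/\sqrt K$). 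Set $v_i:=\exp_{\td p}^{-1}(q_i\td p)$, so $|v_i|\leq(1+o(1))m$; I then verify that $v_1,v_2$ are linearly independent in $T_{\td p}\td M$, since under the curvature bound the composition in normal coordinates satisfies $\exp_{\td p}^{-1}(q_1^aq_2^b\td p)=av_1+bv_2+O(Km^3(|a|+|b|)^2)$ for bounded $(a,b)$, and linear dependence would, via Diophantine approximation, produce a nontrivial $q\in\pi_1(M)$ whose displacement at $\td p$ is inconsistent with the discreteness of the $\IZ^2$-orbit at the relevant scale.

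Let $\Pi:=\spann\{v_1,v_2\}$ and define the local 2-disk $\td T_0:=\exp_{\td p}(\Pi\cap B_0(3m))\subset\td M$. To promote $\td T_0$ to an exactly $\IZ^2$-invariant closed embedded surface $\td T\subset\td M$, I use that the natural translates $q\td T_0$ for $q\in\IZ^2$ are compatible on overlaps up to error $O(Km^3)\ll m$, which follows from the near-additive composition law above together with the invariance of $\td g$ under $\pi_1(M)$. An equivariant partition-of-unity averaging of the locally defined signed-distance functions to $q\td T_0$ (taken normal to $(dq)\Pi$ in normal coordinates around $q\td p$) then yields an exactly $\IZ^2$-invariant smooth function $F$ on a neighborhood of the orbit whose regular zero set $\td T:=F^{-1}(0)$ is a smooth embedded $\IZ^2$-invariant surface containing $\td p$. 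Projecting, $T:=\pi(\td T)\subset M$ is an embedded 2-torus through $p$, and its diameter is bounded by the diameter of a $\IZ^2$-fundamental domain of $\td T$, which is a near-Euclidean parallelogram with sides $v_1,v_2$, so $\diam T\leq|v_1|+|v_2|+o(m)<10m$ for $m$ small.

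Incompressibility and separation of the two ends follow from $\pi_1$-injectivity: by construction the inclusion-induced map $\pi_1(T)\to\pi_1(M)$ sends a $\IZ$-basis of $\pi_1(T)$ (loops around the parallelogram sides $v_1,v_2$) to $q_1,q_2\in\pi_1(M)$, making it an isomorphism $\IZ^2\to\IZ^2$; and any $\pi_1$-injective embedded 2-torus in $T^2\times I$ is isotopic to a fiber, hence separates the two ends. The main obstacle will be the equivariant gluing: producing a globally defined, exactly $\IZ^2$-invariant embedded surface from the only-approximately-compatible translates $q\td T_0$ requires a careful quantitative partition-of-unity argument (or an implicit-function-theorem perturbation of $\td T_0$), keeping the cumulative error well below $m$ so that regularity of the zero level set persists; the linear-independence verification for $v_1,v_2$ is a related technical point that must be handled uniformly in $K$.
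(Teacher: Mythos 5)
Your construction hinges on the claim that $\exp_{\td p}$ is a controlled bilipschitz diffeomorphism on a ball $B_0(r_0)\subset T_{\td p}\td M$ of radius $r_0=r_0(K)\gg m$, which you justify by saying that $\td M$ is simply connected with $|{\Rm}|<K$ and hence has injectivity radius at least $\pi/\sqrt K$ at $\td p$. That is the gap: simple connectivity together with a two-sided curvature bound controls only the conjugate radius, not the injectivity radius --- Klingenberg's estimate also requires a lower bound on the length of the shortest geodesic loop at $\td p$, and such loops can be short while being contractible (Berger spheres, or left-invariant metrics on the Heisenberg group, are simply connected, have bounded curvature and arbitrarily small injectivity radius). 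In the present setting a lower bound $\inj_{\td M}(\td p)\geq r_0(K)$ amounts to the assertion that the universal cover of $M$ is non-collapsed at a definite scale at $\td p$; this is essentially equivalent to the existence of the small incompressible cross-sectional torus you are trying to produce, i.e.\ it is the hard part of the lemma rather than an input to it (compare Lemma \ref{Lem:unwrapfibration}(ii), where the non-collapsedness of the universal cover of such a piece is established by a separate strainer argument).

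The paper's proof sidesteps this by invoking the Cheeger--Fukaya--Gromov structure theory directly: it produces a neighborhood $V\supset B(p,\rho(K))$, a nearby invariant metric $g'$, and a nilpotent group $N$ acting isometrically on a local cover on which the injectivity radius bound $\rho(K)$ is part of the CFG package (not a consequence of Klingenberg). The two short independent loops force the $N$-orbit of $p$ to be two-dimensional, the relevant finite cover becomes a warped product of flat tori over an interval, and $T$ is simply the projection of the orbit torus through $p$, which is automatically embedded, of diameter $<10m$, and incompressible. If you wish to retain your normal-coordinates construction, you would first have to prove the non-collapsing of $\td M$ at $\td p$ by some such structural argument --- at which point the orbit torus is already available and the equivariant gluing, as well as the Diophantine verification that $v_1,v_2$ are linearly independent, becomes unnecessary.
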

\begin{proof}
By the results of Cheeger, Fukaya and Gromov \cite{CFG}, there are universal constants $\rho = \rho(K) > 0$, $k < \infty$ such that we can find an open neighborhood $B(p, \rho) \subset V \subset M$ and a metric $g'$ on $V$ with $0.9 g < g' < 1.1 g$ with the following properties: 
There is a Lie group $H$ with at most $k$ connected components whose identity component $N$ is nilpotent and that acts isometrically and faithfully on the universal cover $(\td{V}, \td{g}')$.
The fundamental group $\Lambda = \pi_1(V)$ can be embedded into $H$ such the action of $H$ on $(\td{V}, \td{g}')$ restricted to $\Lambda$ is the action by deck transformations.
Moreover, $H$ is generated by $\Lambda$ and $N$.
Lastly, the injectivity radius of $(\td{V}, \td{g}')$ at any lift $\td{p} \in \td{V}$ of $p$ is larger than $\rho$.

Consider the dimension $d$ of the orbit $\td{T}$ of a lift $\td{p}$ under the action of $N$.
Since $V$ has to be non-compact, we have $d \leq 2$.
On the other hand, assuming $\td{\varepsilon}_1 < \rho$, the loops $\gamma_1, \gamma_2$ generate an infinite subgroup in $\Lambda = \pi_1(V)$ that does not have a finite index subgroup isomorphic to $\IZ$.
So $d = 2$.
Since $N \cap \Lambda$ is nilpotent and acts discontinuously on $\td{T}$, we have $N \cap \Lambda \cong \IZ^2$ and all orbits of the $N$-action are $2$ dimensional.
Consider the cover $\widehat\pi : (\widehat{V}, \widehat{g}') \to (V, g')$ corresponding to $N \cap \Lambda$.
Then $\widehat{V} \approx T^2 \times (0,1)$ and $\widehat{V} \to V$ has at most degree two.
The action of $N$ on $(\widehat{V}, \widehat{g}')$ exhibits $(\widehat{V}, \widehat{g}')$ as a warped product of a flat torus over an interval.
We can find loops $\gamma'_1, \gamma'_2$ based at a lift $\widehat{p}$ of $p$ each of which have $\widehat{g}'$-length $< 2 (1.1)^{1/2} m$.
This implies that the $T^2$-orbit $\widehat{T}$ of $\widehat{p}$ under $N$ has $g'$-diameter $< 4 \cdot 1.1^{1/2} m$.
Let $T = \widehat\pi (\widehat{T})$ be the projection of $\widehat{T}$.
Then $\diam_{g} T < 4 \cdot 1.1^{1/2} \cdot 0.9^{-1/2} m < 10 m$ and $\widehat\pi$ restricted to $\widehat{T}$ induces a map $f : T^2 \to M$ with $f(T^2) = T$, which has at most two sheets.

We show that the intersection number of $f$ with the line $\{ \mathop{\text{pt}} \} \times I \subset M \approx T^2 \times I$ is non-zero:
Consider the composition of $f$ with the projection $M \approx T^2 \times I \to T^2$.
This is a smooth and incompressible map of the form $T^2 \to T^2$. 
Hence, its degree is non-zero, which is equal to the sought intersection number.
We conclude that $T \subset M$ is a $2$-torus that separates the two boundary components of $M$.
\end{proof}

The next Lemma will be used in the proof of Lemma \ref{Lem:shortloopingeneralcase}.
\begin{Lemma} \label{Lem:1loop2d}
For every $K < \infty$, there are constants $\td\varepsilon_2 = \td\varepsilon_2 (K) > 0$ and $\Gamma' = \Gamma' (K) < \infty$ such that the following holds: \\
Let $(M,g)$ a $2$-dimensional, orientable Riemannian manifold and $p \in M$ a point such that the $1$-ball around $x$ is relatively compact in $M$.
Assume that $| {\Rm} |  < K$ on $M$ and assume that there is a loop $\gamma : S^1 \to M$ based at $p$ that is non-contractible in $M$ and has length $\ell (\gamma) < \td\varepsilon_2$.
Then there is an embedded loop $\gamma' \subset M$ that is also based at $p$, homotopic to $\gamma$ and that satisfies the following properties:
$\ell(\gamma') < 2 \ell(\gamma)$ and the geodesic curvatures of $\gamma'$ are bounded by $\Gamma'$.
\end{Lemma}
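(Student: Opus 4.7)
The plan is to mimic the proof of Lemma~\ref{Lem:2loopstorus}, invoking the Cheeger--Fukaya--Gromov structure theorem \cite{CFG} to find an approximate $\IR$-symmetry near $p$, and taking $\gamma'$ to be the orbit of $p$ under this symmetry. In dimension $2$ with bounded curvature, a single short noncontractible loop is exactly the input that triggers a $1$-parameter collapsing direction, and the corresponding orbit is automatically smooth, embedded, has constant geodesic curvature, and represents the short homotopy class.

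First I would apply \cite{CFG}: there are universal constants $\rho=\rho(K)>0$ and $k<\infty$, an open neighborhood $B(p,\rho)\subset V\subset M$, and a metric $g'$ on $V$ with $0.9g<g'<1.1g$, together with a Lie group $H$ with at most $k$ components whose identity component $N$ is nilpotent (hence abelian, since $\dim M=2$), acting isometrically and faithfully on the universal cover $(\widetilde V,\widetilde g')$, so that $\Lambda=\pi_1(V)\hookrightarrow H$ acts by deck transformations, $H=\Lambda\cdot N$, and the injectivity radius of $(\widetilde V,\widetilde g')$ at any lift $\widetilde p$ of $p$ is at least $\rho$. I fix $\widetilde\varepsilon_2=\widetilde\varepsilon_2(K)$ with $(1.1)^{1/2}\widetilde\varepsilon_2<\rho/2$.

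Next I would show that the $N$-orbit $\mathcal{O}=N\cdot\widetilde p$ has positive dimension. The class $g=[\gamma]\in\Lambda$ displaces $\widetilde p$ in $\widetilde g'$ by less than $(1.1)^{1/2}\widetilde\varepsilon_2<\rho/2$, well below the injectivity radius. By a Margulis-type step that is part of \cite{CFG}, such a short translation cannot come from a purely discrete action: after possibly multiplying by a short element of $N$ and passing to a bounded-index subgroup, $g$ must lie in $N$, so $\dim\mathcal{O}\geq 1$. Since $\dim\widetilde V=2$, I can choose a $1$-parameter subgroup $N_0\cong\IR\subset N$ such that $N_0\cdot\widetilde p$ is $1$-dimensional and $N_0\cap\Lambda\cong\IZ$ is generated by an element whose $\widetilde g'$-displacement of $\widetilde p$ is at most $(1.1)^{1/2}\ell(\gamma)$, and whose class in $\pi_1(V)$ agrees with $[\gamma]$ up to sign (after possibly shrinking $\widetilde\varepsilon_2$ further to absorb the finite component group of $H$). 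I define $\gamma'$ to be the projection of this $N_0$-orbit of $\widetilde p$ to $V$.

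Finally I would verify the remaining properties. As the orbit of a $1$-parameter group of $\widetilde g'$-isometries modulo a cyclic discrete action, $\gamma'$ is smoothly embedded, passes through $p$, and is homotopic in $V\subset M$ to $\gamma$. Its $g'$-geodesic curvature is constant and universally bounded by some $\Gamma''(K)$ coming from the uniform $C^{1,\alpha}$-control on the invariant metric $g'$ in \cite{CFG}; the passage from $g'$ to $g$ via $0.9g<g'<1.1g$ and the corresponding first-derivative bounds changes this only by a bounded factor, yielding the advertised $\Gamma'=\Gamma'(K)$. The length bound follows from $\ell_{\widetilde g'}(\gamma')\leq(1.1)^{1/2}\ell(\gamma)$ and $g'\geq 0.9g$, giving $\ell_g(\gamma')\leq(1.1/0.9)^{1/2}\ell(\gamma)<2\ell(\gamma)$. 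The main obstacle is the Margulis-type step guaranteeing $\dim\mathcal{O}\geq 1$ and that the generator of $N_0\cap\Lambda$ actually represents $[\gamma]$; this is the standard subtle ingredient in applications of \cite{CFG} and is used in essentially the same form in cases~(ii)--(iv) of Lemma~\ref{Lem:unwrapfibration} and in the proof of Lemma~\ref{Lem:2loopstorus}.
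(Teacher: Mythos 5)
Your overall strategy coincides with the paper's: apply \cite{CFG} near $p$, observe that the short non-contractible loop forces the local nilpotent structure to have $1$-dimensional orbits, and take $\gamma'$ to be the orbit through $p$. However, there is a genuine gap at the one step that carries the analytic content of the lemma, namely the bound on the geodesic curvature of $\gamma'$. You assert that this bound ``comes from the uniform $C^{1,\alpha}$-control on the invariant metric $g'$''. That is a non-sequitur: regularity of the metric never bounds the geodesic curvature of a very short embedded circle (a circle of radius $r$ in the flat plane has geodesic curvature $r^{-1}$), and the orbit here is precisely a collapsed circle of length comparable to $\ell(\gamma)$. The $0.9g<g'<1.1g$ and $|\nabla-\nabla'|<0.1$ bounds only let you transfer a curvature bound between $g$ and $g'$; they do not produce one. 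The actual argument is the following: since all $N$-orbits are $1$-dimensional, $\Lambda\subset N$ and $(V,g')$ is a warped product $dt^2+\varphi^2(t)\,d\theta^2$ over an interval $(-a,b)$ with $a,b>0.9\rho$, and the geodesic curvature of the fiber circle through $p$ equals $|\varphi'|/\varphi$ at $t=0$ (equivalently, $|\nabla\log|X||$ for the Killing field $X$). The curvature bound gives $|\varphi''\varphi^{-1}|<K'$, and a Riccati/Jacobi-field comparison using that $\varphi>0$ on an interval of definite length $\geq 0.9\rho$ on \emph{both} sides of $t=0$ yields $|\varphi'\varphi^{-1}|<C(K')$ at $t=0$. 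Without this step the lemma is simply not proved; this is exactly where the hypothesis that the whole $1$-ball around $p$ lies in $M$ (so that the fibration does not degenerate near $p$, unlike at the tip of a smoothed cone) enters.

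The same estimate is also what rescues your length bound: the displacement $d_{\td g'}(\td p,g\cdot\td p)\leq(1.1)^{1/2}\ell(\gamma)$ bounds the \emph{distance} between $\td p$ and its translate, not the length of the orbit arc joining them, since the orbit is not minimizing; one gets $\ell_{g'}(\gamma')\leq e^{C\ell/2}\,\ell_{g'}(\gamma)$ only after knowing $|\varphi'\varphi^{-1}|<C$, and then chooses $\td\varepsilon_2$ small. Your ``Margulis-type step'' identifying $[\gamma]$ with an element of $N$ is in the right spirit and matches what the paper does implicitly, so I would not count that against you; but the geodesic curvature bound must be argued via the warping function, not via metric regularity.
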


\begin{proof}
Similarly as in the proof of Lemma \ref{Lem:2loopstorus} there is a universal constant $\rho = \rho(K) > 0$ such that we can find a neighborhood $B(p, \rho) \subset V \subset M$ and a metric $g'$ on $V$ with $0.9 g < g' < 1.1 g$ such that the same conditions as before hold.
Note that $g'$ can moreover be chosen such that $|\nabla - \nabla'| < 0.1$ and such that the curvature of $g'$ is bounded by a universal constant $K' = K' (K) < \infty$ (see \cite{CFG}).
Hence, it suffices to construct a loop $\gamma'$ with $\ell_{g'} (\gamma') < 1.5 \ell_{g'} (\gamma)$ and on which the geodesic curvatures with respect to $g'$ are bounded.

As in the proof of Lemma \ref{Lem:2loopstorus} we conclude that either $(V, g')$ is a flat torus (in which case the Lemma is clear), or all orbits under the action of $N$ on the universal cover $(\td{V}, \td{g}')$ of $(V,g')$ are 1 dimensional.
In the latter case, this implies that $\Lambda \subset N$ and $(V, g')$ is a warped product of a circle over an interval $(-a,b)$ with $a, b > 0.9 \rho$ (we assume that $p$ lies in the fiber over $0 \in (-a, b)$).
Let $\varphi : (-a,b) \to (0,\infty)$ be the warping function.
By the curvature bound on $g'$ we have $| \varphi'' \varphi^{-1} | < K'$ on $(-a,b)$ (compare for example with \cite[Chp 3, sec 2.3]{Petersen}).

We now argue that this bound implies
\begin{equation} \label{eq:varphiprimevarphiinverse}
| \varphi' \varphi^{-1} | < C = C (K') \quad \text{on} \quad (- \tfrac12 a, \tfrac12 b).
\end{equation}
Let $x \in (- \frac12 a, \frac 12 b)$.
If $\varphi' (x) = 0$, then there is nothing to show.
Assume next that $\varphi' (x) > 0$.
Choose $x' \in [x - 0.1 \rho, x) \subset (-a, b)$ minimal with the property that $\varphi \leq \varphi (x)$ on $[x', x]$.
Then
\begin{equation} \label{eq:varphiKprimexprimex}
| \varphi '' | < K' \varphi (x) \qquad \text{on} \qquad [x', x].
\end{equation}
If $|x - x'| < 0.1 \rho$, then we must have $\varphi (x') = \varphi(x)$.
So, by the mean value inequality, $\varphi' (x'') = 0$ for some $x'' \in [x',x']$ and hence by (\ref{eq:varphiKprimexprimex}) we get $\varphi' (x) < \frac12 (0.1 \rho)^2 K'  \varphi (x)$.
If $|x - x'| = 0.1 \rho$, then, by the mean value inequality and the fact that $\varphi(x') > 0$, we have
\[ \varphi' (x'') = \frac{\varphi(x) - \varphi(x')}{x - x'} < \frac{\varphi(x)}{ 0.1 \rho}. \]
Using (\ref{eq:varphiKprimexprimex}), we get
\[ \varphi' (x) < \frac{\varphi(x)}{ 0.1 \rho} + \frac12 (0.1\rho)^2 K' \varphi(x). \]
This shows (\ref{eq:varphiprimevarphiinverse}) wherever $\varphi'  > 0$.
By exchanging the roles of $a$ and $b$, this bound follows similarly wherever $\varphi' < 0$.

The bound (\ref{eq:varphiprimevarphiinverse}) implies that the geodesic curvature on the circle $\gamma'$ through $p$ is bounded by $C$ and for sufficiently small $\ell_{\gamma'} (\gamma)$ we have $\ell_{g'} (\gamma') < 1.5 \ell_{g'} (\gamma)$.
\end{proof}

\subsection{Existence of short loops and compressing disks of bounded area}
In this subsection we establish the existence of short geodesic loops on surfaces of large diameter, but controlled area.
The main result of this subsection, Lemma \ref{Lem:shortloopingeneralcase}, will be used in the proof of Propositions  \ref{Prop:firstcurvboundstep3} and \ref{Prop:structontimeinterval}.
In the proof of Proposition  \ref{Prop:firstcurvboundstep3} it will enable us to apply Lemmas \ref{Lem:bettertorusstructure} and \ref{Lem:2loopstorus} and hence to find torus structures of small width.
\begin{Lemma} \label{Lem:annulus}
Let $\Sigma$ be a topological annulus and let $g$ be a symmetric, non-negative definite $2$-tensor on $\Sigma$ (i.e. a possibly degenerate Riemannian metric).
Assume that with respect to $g$ any smooth arc connecting the two boundary components of $\Sigma$ has length $\geq a$ and every embedded, closed loop of non-zero winding number in $\Sigma$ has length $\geq b$.
Then $\area \Sigma \geq a b$.
\end{Lemma}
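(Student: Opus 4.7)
The plan is to apply the coarea formula to the distance function from one boundary component of $\Sigma$. First I would reduce to the case where $g$ is a smooth nondegenerate Riemannian metric by replacing $g$ with $g_\varepsilon = g + \varepsilon g_0$, where $g_0$ is an auxiliary smooth Riemannian metric on $\Sigma$. Both hypotheses persist because lengths only increase under $g_\varepsilon$, and $\area_{g_\varepsilon}(\Sigma) \to \area_g(\Sigma)$ by dominated convergence as $\varepsilon \to 0$; hence it suffices to prove the bound for $g_\varepsilon$.

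So assume now that $g$ is smooth and nondegenerate, and label the two boundary components $\partial_1 \Sigma, \partial_2 \Sigma$. Set $f(x) = \dist_g(x, \partial_1 \Sigma)$; this is $1$-Lipschitz with $f = 0$ on $\partial_1 \Sigma$ and $f \geq a$ on $\partial_2 \Sigma$ by the first hypothesis. For a given small $\delta > 0$, I would use mollification in local charts together with a partition of unity (a la Greene-Wu) to produce a smooth approximation $\td{f} \in C^\infty(\Sigma)$ with $|\td{f} - f| < \delta$ and $|\nabla \td{f}|_g \leq 1 + \delta$. Then $\td{f} < \delta$ on $\partial_1 \Sigma$ and $\td{f} > a - \delta$ on $\partial_2 \Sigma$, so for every regular value $t \in (2\delta, a - 2\delta)$ of $\td{f}$---almost all such $t$ by Sard---the level set $\td{f}^{-1}(t)$ is a smooth compact $1$-submanifold of $\Int \Sigma$ that separates $\partial_1 \Sigma$ from $\partial_2 \Sigma$.

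The key topological step is to show that at least one component of this level set has winding number $\pm 1$ in the annulus. Since any embedded circle in an annulus is either contractible (bounding a disk in $\Sigma$) or isotopic to a core, it suffices to rule out the case that every component bounds a disk. But if each of the finitely many circles $C_1, \ldots, C_k \subset \td{f}^{-1}(t)$ bounded a disk in $\Sigma$, then an induction on inclusions among these disks shows that $\Sigma \setminus \td{f}^{-1}(t)$ still possesses a single component containing neighborhoods of both $\partial_1 \Sigma$ and $\partial_2 \Sigma$, contradicting the separation property. By the second hypothesis, any component of winding number $\pm 1$ has length $\geq b$, so $\mathcal{H}^1(\td{f}^{-1}(t)) \geq b$ for a.e. $t \in (2\delta, a - 2\delta)$.

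The coarea formula then yields
\[
(1 + \delta)\, \area_g(\Sigma) \;\geq\; \int_\Sigma |\nabla \td{f}|_g \, dA_g \;=\; \int_{\IR} \mathcal{H}^1(\td{f}^{-1}(t))\, dt \;\geq\; (a - 4\delta)\, b,
\]
and letting $\delta \to 0$ followed by $\varepsilon \to 0$ gives $\area_g(\Sigma) \geq ab$. The main obstacle is combining the smooth approximation (so that Sard applies and the level sets are genuine $1$-manifolds) with a Lipschitz bound on $\nabla \td{f}$ tight enough that the factor $(1+\delta)$ vanishes in the limit; the topological separation argument, while elementary, is the other place where a small amount of care is required.
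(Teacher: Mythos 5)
Your proof is correct, but it follows a genuinely different route from the one in the paper. The paper's argument is the classical length--area (extremal length) computation: after the same reduction to a nondegenerate metric, it invokes uniformization to write $g = r^{-2} f^2(r,\theta)\, g_{\eucl}$ on a round annulus $A(r_1,r_2)$, reads off $\int r^{-1} f\, dr \geq a$ from radial curves and $\int f \, d\theta \geq b$ from concentric circles, and concludes by Cauchy--Schwarz in two lines. You instead avoid uniformization entirely and run the coarea formula on a Greene--Wu smoothing of the distance function to one boundary circle, using Sard's theorem to get smooth level curves and an elementary nesting argument to show that almost every level set contains an embedded circle of non-zero winding number, hence of length $\geq b$. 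Both arguments are complete; your topological separation step (maximal disks among the nested family, so that removing all the contractible components cannot disconnect the two boundary circles) is sound, and the $(1+\delta)$ gradient bound and the loss of $O(\delta)$ in the range of regular values both wash out in the limit. The trade-off: the paper's proof is shorter but leans on conformal coordinates, whereas yours is more robust and would apply essentially verbatim to the multiply connected domains of the subsequent Lemma \ref{Lem:separtingloopsinmultannulus}, which the paper instead has to reduce to the annulus case by a separate induction.
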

\begin{proof}
Let $g'$ be an arbitrary metric on $\Sigma$.
If the Lemma is true for $g + \varepsilon g'$ for all $\varepsilon > 0$, then we obtain the result for $g$ by letting $\varepsilon \to 0$.
So we can assume in the following that $g$ is a Riemannian metric.

We can furthermore assume that $\Sigma = A(r_1, r_2) \subset \IC$ with $0 \leq r_1 < r_2 \leq \infty$ and that $g = r^{-2} f^2(r, \theta) g_{\textnormal{eucl}}$ for polar coordinates $(r, \theta)$.
By assumption, we have
\[ \int_{r_1}^{r_2} r^{-1} f(r, \theta) dr \geq a \qquad \text{for all} \qquad \theta \in [0, 2\pi] \]
and
\[ \int_0^{2\pi} f(r, \theta) d\theta \geq b \qquad \text{for all} \qquad r \in (r_1, r_2). \]
Hence
\[  \int_0^{2\pi} \int_{r_1}^{r_2}r^{-1} f(r, \theta) dr d\theta \geq 2 \pi a \]
and
\[ \int_{r_1}^{r_2} \int_0^{2\pi} r^{-1} f(r, \theta) d\theta dr \geq b \int_{r_1}^{r_2} r^{-1} dr = b \log \Big( \frac{r_2}{r_1} \Big). \]
So
\begin{multline*}
 2 \pi a b \log \Big( \frac{r_2}{r_1} \Big) \leq \bigg( \int_0^{2\pi} \int_{r_1}^{r_2} r^{-1} f(r, \theta) dr d \theta \bigg)^2 \\ \leq \bigg( \int_0^{2\pi} \int_{r_1}^{r_2} r^{-1} f^2(r, \theta) dr d\theta \bigg) \bigg( \int_0^{2\pi} \int_{r_1}^{r_2} r^{-1} dr d \theta \bigg)
 = 2 \pi (\area \Sigma) \log \Big( \frac{r_2}{r_1} \Big) \qedhere
\end{multline*}
\end{proof}
\vspace{3mm}

The following lemma is an application of the previous one.

\begin{Lemma} \label{Lem:separtingloopsinmultannulus}
Let $\Sigma \subset \IR^2$ be a compact, smooth domain whose boundary circles are denoted by $C_1, \ldots, C_{m}, C'_1, \ldots, C'_{m'}$ with $m, m' \geq 1$.
Moreover, let $g$ be a symmetric, non-negative definite $2$-tensor on $\Sigma$ (i.e. a degenerate Riemannian metric).

Choose constants $a, b > 0$ and assume that $\area \Sigma \leq ab$ and $\dist (C_i, C'_{i'}) > a$ for any $i = 1, \ldots, m$ and $i' = 1, \ldots, m'$ (both times with respect to $g$).
Then we can find a collection of pairwise disjoint, smoothly embedded loops $\gamma_1, \ldots, \gamma_n \subset \Int \Sigma$ with the property that $\gamma_1 \cup \ldots \cup \gamma_n$ separates $C_1 \cup \ldots \cup C_m$ from $C'_1 \cup \ldots \cup C'_{m'}$ and
\[ \ell(\gamma_1) + \ldots + \ell (\gamma_n) < b. \]
\end{Lemma}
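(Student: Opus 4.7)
The plan is to argue by a coarea-type slicing with a smoothed distance function from $C_1 \cup \ldots \cup C_m$. First I would replace $g$ by the smooth Riemannian metric $g_\varepsilon := g + \varepsilon g_{\eucl}$ for arbitrarily small $\varepsilon > 0$: the hypothesis $\dist_g(C_i, C'_{i'}) > a$ is preserved (in fact strengthened, since $\dist_{g_\varepsilon} \geq \dist_g$), the area is perturbed only by $O(\varepsilon)$, and any smooth separating family of $g_\varepsilon$-length $< b - \delta$ is automatically a family of $g$-length $< b$ for small enough $\varepsilon$, so one may assume throughout that $g$ is Riemannian. Pick $a' \in (a, \min_{i,i'} \dist_g(C_i, C'_{i'}))$ and consider the $1$-Lipschitz function $d(x) := \dist_g(x, C_1 \cup \ldots \cup C_m)$, which vanishes on $\bigcup_i C_i$ and is $\geq a'$ on $\bigcup_{i'} C'_{i'}$. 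By standard smoothing of $d$ (e.g.\ a partition-of-unity convolution in local coordinates, in the spirit of Greene--Wu), for every $\eta > 0$ one obtains a smooth function $f : \Sigma \to \IR$ with $|f - d| < \eta$ and $|\nabla_g f|_g \leq 1 + \eta$ pointwise on $\Sigma$.

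The coarea formula then yields
\[
 \int_{\eta}^{a'-\eta} \ell_g(f^{-1}(t)) \, dt \;\leq\; \int_{\{\eta < f < a'-\eta\}} |\nabla_g f|_g \, d\area_g \;\leq\; (1+\eta)\, \area_g \Sigma \;\leq\; (1+\eta)\, ab.
\]
Since $a' > a$, I would fix $\eta > 0$ small enough that $(1+\eta) ab / (a' - 2\eta) < b$, which by averaging produces a value $t_0 \in (\eta, a'-\eta)$ with $\ell_g(f^{-1}(t_0)) < b$. The bounds $f < \eta$ near $\bigcup_i C_i$ and $f > a'-\eta$ near $\bigcup_{i'} C'_{i'}$ force $f^{-1}(t_0) \subset \Int \Sigma$, and by Sard's theorem one may simultaneously arrange that $t_0$ is a regular value of $f$, so that $f^{-1}(t_0)$ is a smoothly embedded compact $1$-submanifold of $\Int \Sigma$, i.e.\ a finite disjoint union of smooth embedded loops $\gamma_1, \ldots, \gamma_n$.

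Because $f^{-1}(t_0)$ bounds the open set $\{f < t_0\}$, which contains a neighborhood of $C_1 \cup \ldots \cup C_m$ but is disjoint from $C'_1 \cup \ldots \cup C'_{m'}$, the family $\gamma_1 \cup \ldots \cup \gamma_n$ separates the two groups of boundary components, with total length $\sum_j \ell(\gamma_j) = \ell_g(f^{-1}(t_0)) < b$, as required. The one genuinely delicate ingredient is the smoothing step: one must approximate the distance function by a smooth function whose $g$-gradient norm is almost $1$, which is the reason for the preliminary reduction to a smooth Riemannian $g$ via $g_\varepsilon$; the remainder is a routine combination of the coarea inequality, an averaging argument exploiting the strict gap between $a$ and $\dist_g(C_i, C'_{i'})$, and Sard's theorem. (Note that this argument is a direct generalization of \ref{Lem:annulus}, with the 1-dimensional distance slicing replaced by slicing of the distance function from an arbitrary boundary subfamily.)
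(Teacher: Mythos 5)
Your argument is correct, but it is genuinely different from the one in the paper. The paper proves Lemma \ref{Lem:separtingloopsinmultannulus} by induction on the total number $m+m'$ of boundary circles: the base case is the annulus estimate of Lemma \ref{Lem:annulus}, and the inductive step splits $\Sigma$ into a region $\Sigma_1$ near $C_1\cup\ldots\cup C_m$ and a region $\Sigma_2$ near $C'_1\cup\ldots\cup C'_{m'}$ via a carefully chosen threshold $a_1$, fills in holes to form extended subdomains $\Sigma'_{1,i}$, $\Sigma'_{2,i''}$, and then either applies Lemma \ref{Lem:annulus} to the $\Sigma'_{1,i}$ or recurses on the $\Sigma'_{2,i''}$ after a combinatorial argument showing each of these has strictly fewer boundary circles. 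Your coarea slicing of a smoothed distance function replaces this entire induction by a single analytic step: after the (legitimate) reduction to a smooth Riemannian metric via $g+\varepsilon g_{\eucl}$ — the same perturbation the paper itself uses in the proof of Lemma \ref{Lem:annulus} — the coarea inequality, the averaging over the gap between $a$ and $\min_{i,i'}\dist(C_i,C'_{i'})$, and Sard's theorem produce a regular level set, which is automatically a disjoint union of embedded circles separating the two boundary families, with the required total length bound. What your route buys is uniformity in the number of boundary circles and a cleaner separation statement (a level set of a function that is small on one family and large on the other); what it costs is the smoothing lemma for Lipschitz functions with almost-sharp gradient control, which you correctly identify as the one delicate ingredient and which the paper's purely combinatorial induction avoids. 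Both proofs exploit the strictness of $\dist(C_i,C'_{i'})>a$ in the same essential way (the paper through the auxiliary $\varepsilon$ in the definition of $\Sigma_1,\Sigma_2$, you through the choice of $a'>a$).
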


\begin{figure}[t] 
\begin{center}
\setlength{\unitlength}{2863sp}%
\begingroup\makeatletter\ifx\SetFigFont\undefined%
\gdef\SetFigFont#1#2#3#4#5{%
  \reset@font\fontsize{#1}{#2pt}%
  \fontfamily{#3}\fontseries{#4}\fontshape{#5}%
  \selectfont}%
\fi\endgroup%
\begin{picture}(4500,5800)(3500,0)
\hspace{43mm}\includegraphics[width=9cm]{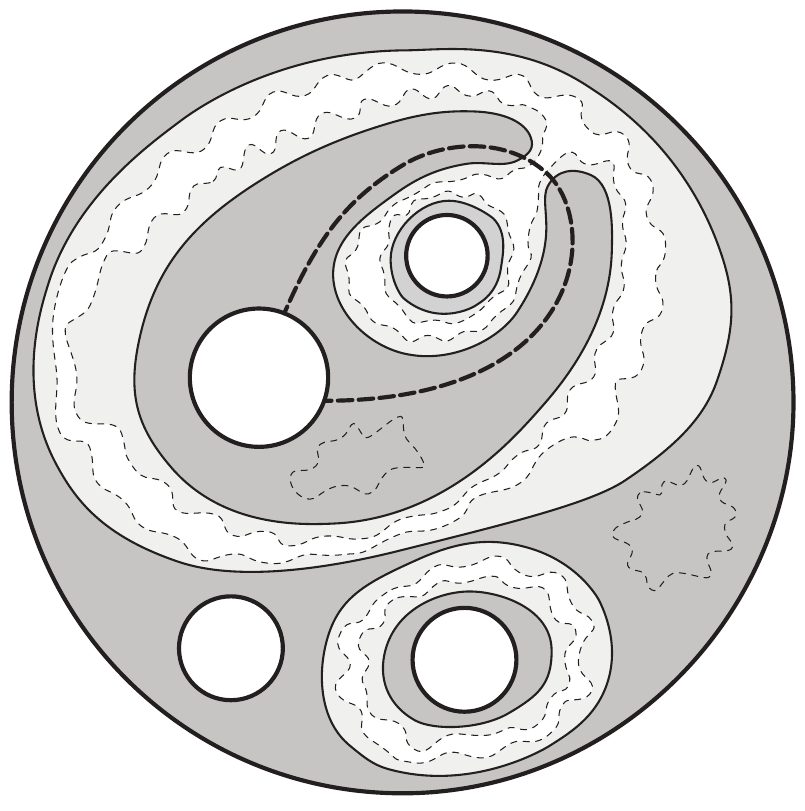}%
\put(-2600,2900){\makebox(0,0)[lb]{\smash{{\SetFigFont{12}{14.4}{\familydefault}{\mddefault}{\updefault}$\sigma$}}}}
\put(-2850,3980){\makebox(0,0)[lb]{\smash{{\SetFigFont{12}{14.4}{\familydefault}{\mddefault}{\updefault}$\partial \Sigma$}}}}
\put(-4400,3940){\makebox(0,0)[lb]{\smash{{\SetFigFont{12}{14.4}{\familydefault}{\mddefault}{\updefault}$\Sigma'_{1,i}$}}}}
\put(-3930,4430){\makebox(0,0)[lb]{\smash{{\SetFigFont{12}{14.4}{\familydefault}{\mddefault}{\updefault}$C^{**}_i$}}}}
\put(-5330,3360){\makebox(0,0)[lb]{\smash{{\SetFigFont{12}{14.4}{\familydefault}{\mddefault}{\updefault}$\Sigma_{1,i}$}}}}
\put(-4410,3180){\makebox(0,0)[lb]{\smash{{\SetFigFont{12}{14.4}{\familydefault}{\mddefault}{\updefault}$C_i$}}}}
\put(-5200,1710){\makebox(0,0)[lb]{\smash{{\SetFigFont{12}{14.4}{\familydefault}{\mddefault}{\updefault}$C^{**}_{i''}$}}}}
\put(-760,2610){\makebox(0,0)[lb]{\smash{{\SetFigFont{12}{14.4}{\familydefault}{\mddefault}{\updefault}$\Sigma'_{2,i''}$}}}}
\put(-1140,3310){\makebox(0,0)[lb]{\smash{{\SetFigFont{12}{14.4}{\familydefault}{\mddefault}{\updefault}$\Sigma_{2,i''}$}}}}
\put(-2780,1000){\makebox(0,0)[lb]{\smash{{\SetFigFont{12}{14.4}{\familydefault}{\mddefault}{\updefault}$\partial \Sigma$}}}}
\put(-4500,1100){\makebox(0,0)[lb]{\smash{{\SetFigFont{12}{14.4}{\familydefault}{\mddefault}{\updefault}$\partial \Sigma$}}}}
\put(-5600,1000){\makebox(0,0)[lb]{\smash{{\SetFigFont{12}{14.4}{\familydefault}{\mddefault}{\updefault}$\partial \Sigma$}}}}%
\end{picture}%
\caption{The planar domain $\Sigma$ and the subsets $\Sigma_{1,i}, \Sigma'_{1,i}, \Sigma_{2,i''}$ and $\Sigma'_{2,i''}$ of $\Sigma$, as used in the proof of Lemma \ref{Lem:separtingloopsinmultannulus}.
The subset $\Sigma'_{1,i}$ is diffeomorphic to a closed annulus and is bounded by $C_i$ and $C^{**}_i$.
The arc $\sigma : [0,1] \to \Sigma$ does not intersect $\Sigma_2$.
Its endpoints are contained in $C_i \subset \partial \Sigma$.
The arc cannot be homotoped into $C_i$, because it surrounds another boundary component of $\Sigma$.
So the boundary circle $C^{**}_{i''} \subset \Sigma'_{2,i''}$ separates the interior of $\Sigma'_{2,i''}$ from at least two boundary circles of $\Sigma$.
\label{fig:planardomain}}
\end{center}
\end{figure}
\begin{proof}
We will proceed by induction on $m + m'$.
For $m + m' = 2$, we are done by Lemma \ref{Lem:annulus}.
So assume without loss of generality that $m' \geq 2$.

Let $a_1$ be the infimum of all $a' \geq 0$ such that there is a smooth arc $\sigma : [0,1] \to \Sigma$ of length $2a'$ that either connects two distinct circles $C_{i_1}, C_{i_2}$ or that has both its endpoints lie in the same boundary circle $C_i$ and cannot be homotoped into $C_i$ while keeping its endpoints fixed.
Pick $\varepsilon > 0$ such that $\dist(C_i, C'_{i'}) > a + 4 \varepsilon$ for all $i = 1, \ldots, m$ and $i' = 1, \ldots, m'$ and such that $3\varepsilon < a_1$ in the case in which $a_1 > 0$.
Moreover, fix such a smooth arc $\sigma : [0,1] \to \Sigma$ with the properties as described above, that has length $2a'$ with $a' < a_1 + \varepsilon$ (compare with Figure \ref{fig:planardomain}).

Next consider the subsets
\begin{alignat*}{1}
\Sigma_1 &= \{ x \in \Sigma \;\; : \;\; \dist (x, C_1 \cup \ldots \cup C_m) < a_1 - \varepsilon \}, \\
\Sigma_2 &= \{ x \in \Sigma \;\; : \;\; \dist (x, C'_1 \cup \ldots \cup C'_{m'}) < a - a_1 + 3 \varepsilon \}.
\end{alignat*}
Then $\Sigma_1 \cap \Sigma_2 = \emptyset$ and $\sigma ([0,1]) \cap \Sigma_2 = \emptyset$.
By definition of $a_1$, the set $\Sigma_1$ is either empty (for $a_1 = 0$) or a disjoint union of connected, half-open domains $\Sigma_{1,1}, \ldots, \Sigma_{1,m}$ such that $C_i \subset \partial \Sigma_{1,i}$ for all $i = 1, \ldots, m$.

Fix some $i \in \{ 1, \ldots, m \}$ for this paragraph.
We claim that every smoothly embedded loop $\alpha : S^1 \to \Sigma_{1,i}$ inside $\Sigma_{1,i}$ can be homotoped inside $\Sigma$ to a multiple of a parameterization of $C_i$.
Fix such a loop $\alpha$.
To show our claim, we will construct a continuous map $H : S^1 \times [0,1] \to \Sigma$ such that $H (S^1 \times \{ 0 \} ) \subset C_i$ and $H(t,1) = \alpha(t)$.
Subdivide $S^1$ into sub-intervals $[t_1, t_2], [t_2, t_3], \ldots, [t_N, t_1] \subset S^1$ by parameters $t_1 = t_{N+1}, t_2, \ldots, t_N \in S^1$ that are arranged counterclockwise on $S^1$ such that $\ell (\alpha |_{[t_j, t_{j+1}]}) < \eps$ for all $j =1, \ldots, N$.
For each $j = 1, \ldots, N$ choose a smooth arc $\gamma^*_j : [0,1] \to \Sigma_{1,i}$ between $C_i$ and $\alpha (t_j)$ of length $\ell (\gamma^*_j) < a_1 - \eps$.
Then the concatenation $\sigma_j$ of $\gamma^*_j, \alpha |_{[t_j, t_{j+1}]}, \gamma^*_{j+1}$ has length less than $2 a_1 - \eps$.
So, by the definition of $a_1$, the arc $\sigma_j$ can be homotoped into $C_i$ while keeping its endpoints fixed.
It follows that we can find a continuous map $\varphi_j : [t_j, t_{j+1}] \times [0,1] \to \Sigma$ that agrees with $\gamma^*_j$ on $\{ t_j \} \times [0,1]$, with $\gamma^*_{j+1}$ on $\{ t_{j+1} \} \times [0,1]$, with $\alpha |_{[t_j, t_{j+1}]}$ on $[t_j, t_{j+1}] \times \{ 1 \}$ and such that $\varphi_j : ([t_j, t_{j+1}] \times \{ 0 \} ) \subset C_i$.
The homotopy $H$ can now be constructed by combining $\varphi_1, \ldots, \varphi_N$, hence proving our claim.

Consider in this paragraph the case $a_1 > 0$.
Let $\psi_1 \in C^\infty_c ( \Sigma_{1} )$ be some cutoff function such that
\[ \psi_1 \equiv 1 \qquad \text{on} \qquad \Sigma_1^* := \{ x \in \Sigma \;\; : \;\; \dist (x, C_1 \cup \ldots \cup C_m) < a_1 - 2\varepsilon \} \subset \Sigma_1. \]
Let $u_1 \in [0,1)$ be a regular value of $\psi_1$ and set
\[ \Sigma_1^{**} = \psi_1^{-1} ( [u_1, 1] ). \]
Then $\Sigma_1^{**}$ is a compact, possibly disconnected, planar domain and $C_1 \cup \ldots \cup C_m \subset \Sigma_1^* \subset \Sigma_1^{**} \subset \Sigma_1$.
For each $i =1 , \ldots, m$ let $\Sigma_{1,i}^{**} \subset \Sigma_1^{**}$ be the component of $\Sigma_1^{**}$ that contains $C_i$.
Note that then $\Sigma_{1,i}^{**} \subset \Sigma_{1,i}$.
The boundary circles of $\Sigma_{1,i}^{**}$ are freely homotopic to a multiple of $C_i$ within $\Sigma$ by our conclusion from the last paragraph.
It follows that there is exactly one boundary circle $C_i^{**} \subset \partial \Sigma_{1,i}^{**} \setminus C_i$ that separates $C_i$ from all the other boundary circles of $\Sigma$.
Therefore, all the boundary circles of $\Sigma^{**}_{1,i}$, other than $C_i, C^{**}_i$, bound closed disks in $D^2$ whose interiors are disjoint from $\Sigma^{**}_{1,i}$.
Let $\Sigma'_{1,i}$ now be the union of $\Sigma^{**}_{1,i}$ with all these disks.
Then $\Sigma'_{1,i}$ is diffeomorphic to a closed annulus, which is bounded by $C_i$ and $C^{**}_i$.
By construction, these boundary circles have distance $\dist (C_i, C^{**}_i) \geq a_1 - 2 \eps$ from each other.
Since each $\Sigma'_{1,i}$ arose from $\Sigma^{**}_{1,i}$ by adding disks that are disjoint from $\partial \Sigma$, we find that the resulting annuli $\Sigma'_{1,1}, \ldots, \Sigma'_{1,m}$ are pairwise disjoint and disjoint from $\Sigma_2$.

Next, construct a cutoff function $\psi_2 \in C^\infty_c (\Sigma_2)$ such that
\[ \psi_2 \equiv 1 \qquad \text{on} \qquad \Sigma_2^* := \{ x \in \Sigma \;\; : \;\; \dist (x, C'_1 \cup \ldots \cup C'_{m'}) < a - a_1 + 2 \eps \} \subset \Sigma_2, \]
let $u_2 \in [0,1)$ be a regular value of $\psi_2$ and set
\[ \Sigma_2^{**} = \psi_2^{-1} ([u_2, 1]). \]
Then
\[ C'_1 \cup \ldots \cup C'_{m'} \subset \Sigma^*_2 \subset \Sigma_2^{**} \subset \Sigma_2  \]
and
\[ \Sigma^{**}_2 \cap \big( \Sigma'_{1,1} \cup \ldots \cup \Sigma'_{1,m} \big) = \emptyset. \]
Let $\Sigma^{**}_{2,1}, \ldots, \Sigma^{**}_{2,m''}$, $m'' \leq m'$ be the components of $\Sigma^{**}_2$ that contain boundary circles of $\Sigma$.
As before, for each $i'' = 1, \ldots, m''$ consider the components of $\Sigma \setminus \Sigma^{**}_{2,i'}$ that are diffeomorphic to open disks, and denote the union of these components with $\Sigma^{**}_{2,i'}$ by $\Sigma'_{2,i''}$.
Then the subsets $\Sigma'_{1,1}, \ldots, \Sigma'_{1,m}, \Sigma'_{2,1}, \ldots, \Sigma'_{2,m''}$ are pairwise disjoint and hence
\begin{equation} \label{eq:areasSigma1plusareasSigma2}
 \area \Sigma'_{1,1} + \ldots + \area \Sigma'_{1,m} + \area \Sigma'_{2,1} + \ldots + \area \Sigma'_{2,m''} < \area \Sigma \leq ab.
\end{equation}
Moreover, the boundary circles of each $\Sigma'_{2,i''}$ that are not contained in $\partial \Sigma$ have distance $> a - a_1 + 2 \eps$ from $\partial \Sigma \cap \Sigma'_{2,i''}$.

Consider now the case in which $a_1 > 0$ and $\area \Sigma'_{1,1} + \ldots + \area \Sigma'_{1,m} < (a_1 - 2\varepsilon) b$.
Since the two boundary circles of each annulus $\Sigma'_{1,i}$ are at least $a_1 - \varepsilon$ apart from one another, we can use Lemma \ref{Lem:annulus} to find a geodesic loop $\gamma_i \subset \Sigma'_{1,i}$ for each $i =1 , \ldots, m$ that separates both boundary circles $C_i, C^{**}_i$ of $\Sigma'_i$ such that
\[ \ell (\gamma_i) \leq \frac{\area \Sigma'_{1,i}}{a_1 - 2\varepsilon} \qquad \text{for all} \qquad i = 1, \ldots, m. \]
Then $\ell(\gamma_1) + \ldots + \ell(\gamma_m) < b$ and $\gamma_1 \cup \ldots \cup \gamma_m$ separates $C_1 \cup \ldots \cup C_m$ from $C'_1 \cup \ldots \cup C'_{m'}$.
So in this case we are done.

Next consider the opposite case.
That is $a_1 = 0$ or $\area \Sigma'_{1,1} + \ldots + \area \Sigma'_{1,m} \geq (a_1 - 2\varepsilon) b$.
Then, by (\ref{eq:areasSigma1plusareasSigma2}), we must have
\[ \area \Sigma'_{2,1} + \ldots + \area \Sigma'_{2,m''} < (a - a_1 + 2\varepsilon) b. \]
In the following we will use the induction hypothesis on each domain $\Sigma'_{2,i''}$.
In order to do this, we first ensure that each $\Sigma'_{2,i''}$ has at most $m + m' -1$ boundary circles.

Fix some $i'' \in \{1, \ldots, m'' \}$.
Note that every boundary circle of $\Sigma'_{2, i''}$ that does not belong to $\partial \Sigma$ separates the interior of $\Sigma'_{2,i''}$ from at least one component of $\partial \Sigma \setminus \Sigma'_{2,i''}$.
Moreover, for every component of $\partial \Sigma \setminus \Sigma'_{2,i''}$ there is exactly one such boundary circle of $\Sigma'_{2, i''}$.
So, in order to prove that $\Sigma'_{2,i''}$ has at most $m + m' - 1$ boundary circles, we just need to show that there is a boundary circle $\Sigma'_{2, i''}$ that separates the interior of $\Sigma'_{2,i''}$ from at least two components of $\partial \Sigma \setminus \Sigma'_{2,i''}$.
To do this, we recall the arc $\sigma : [0,1] \to \Sigma$, whose image is disjoint from $\Sigma_2$ and hence also from $\Sigma'_{2,i''}$.
Let $C^{**}_{i''} \subset \partial \Sigma'_{2,i''} \setminus \partial \Sigma$ be the boundary circle that separates the interior of $\Sigma'_{2,i''}$ from the image of $\sigma$.
If $\sigma$ connects two distinct boundary circles $C_{i_1}, C_{i_2}$, then $C^{**}_{i''}$ does indeed separate the interior of $\Sigma'_{2,i''}$ from at least two components of $\partial \Sigma \setminus \Sigma'_{2,i''}$.
So assume now that we are in the second case, namely that both endpoints of $\sigma$ are contained in the same boundary circle $C_i \subset \partial \Sigma$ and that $\sigma$ cannot be homotoped into $C_i$ while keeping its endpoints fixed.
If $C_i$ was the only boundary circle of $\Sigma$ that is contained in the component of $\Sigma \setminus C^{**}_{i''}$ that does not contain $\Sigma'_{2,i''}$, then this component would be diffeomorphic to a half-open annulus.
This, however, would contradict the fact that $\sigma$ cannot be homotoped into $C_i$.
So there must be more than one boundary component of $\partial \Sigma$ that is separated by $C^{**}_i$ from the interior of $\Sigma'_{2,i''}$.
This finishes the proof that each $\Sigma'_{2,i''}$ has at most $m + m' -1$ boundary circles.

The conclusion from the previous paragraph enables us to use the induction hypothesis on each $\Sigma'_{2,i''}$ and to construct smoothly embedded, pairwise disjoint loops $\gamma_1, \ldots, \gamma_n \subset \Sigma'_{2,1} \cup \ldots \cup \Sigma'_{2,m''}$ such that $(\gamma_1 \cup \ldots \cup \gamma_n) \cap \Sigma'_{2,i''}$ separates $(C'_1 \cup \ldots \cup C'_{m'}) \cap \Sigma'_{2, i''}$ from $\partial \Sigma'_{2,i''} \setminus \partial \Sigma$, for each $i'' = 1, \ldots, m''$, and such that
\[ \ell (\gamma_1) + \ldots + \ell(\gamma_n) < \frac{\area \Sigma'_{2,1}}{a - a_1 + 2\varepsilon} + \ldots +  \frac{\area \Sigma'_{2,m''}}{a - a_1 + 2\varepsilon} < b. \]
This finishes the proof.
\end{proof}

The next Lemma provides a compressing disk of bounded area in a solid torus given a ``compressing planar domain'' of bounded area in a larger solid torus.
Such ``compressing planar domain'' of bounded area are produced by \cite[Proposition \ref{Prop:maincombinatorialresult}(a)]{Bamler-LT-topology}.

\begin{Lemma} \label{Lem:makediskfrommultiannulus}
For every $A, K < \infty$ there is an $\td{h}_0 = \td{h}_0 (A, K) < \infty$ such that the following holds:

Consider a Riemannian manifold $(M, g)$, a smoothly embedded solid torus $S \subset M$, $S \approx S^1 \times D^2$ and a collar neighborhood $P \subset S$, $\partial S \subset \partial P$, $P \approx T^2 \times I$ of $\partial S$ that is an $\td{h}_0$-precise torus structure at scale $1$.
Note that $S' = \Int S \setminus \Int P$ is a solid torus.
Assume also that $|{\Rm}|, |{\nabla\Rm}| < K$ on the $1$-neighborhood around $P$.

Let now $\Sigma \subset \IR^2$ be a compact, smooth domain and $f : \Sigma \to S$ a smooth map of $\area f < A$ such that $f (\partial \Sigma) \subset \partial S$ and such that $f$ restricted to only the exterior circle of $\Sigma$ is non-contractible in $\partial S$.

Then there is a smooth map $f' : D^2 \to M$ such that $f' (\partial D^2) \subset \partial S'$, such that $f' |_{\partial D^2}$ is non-contractible in $\partial S'$ and such that $\area f' < \area f + 1$.
\end{Lemma}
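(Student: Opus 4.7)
The plan is to perturb $f$ to be transverse to $\partial S'$, use a homology calculation in $H_1(P)\cong H_1(\partial S')$ to produce a preimage circle of $\partial S'$ with non-contractible image, and cap its Jordan disk to obtain $f'$ of controlled area.

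First I would apply Sard's theorem to perturb $f$ slightly so that it becomes transverse to $\partial S'$, at a negligible area cost. Write $\Gamma:=f^{-1}(\partial S')$, a finite disjoint union of smoothly embedded circles in $\Int\Sigma$. Let $\Sigma_0$ be the closure of the component of $\Sigma\setminus\Gamma$ that contains $C_{\textnormal{out}}$, so $f(\Sigma_0)\subset P$. Its boundary consists of $C_{\textnormal{out}}$, certain inner $\partial\Sigma$-circles $C_j^{\textnormal{inn}}$ (whose $f$-images are contractible in $\partial S$ by hypothesis), and certain $\gamma'_{i'}\in\Gamma$. Under the isomorphism $H_1(P)\cong H_1(\partial S')$ furnished by the torus structure $P\approx T^2\times I$, the vanishing of the boundary of the $2$-chain $f_*[\Sigma_0]$ in $H_1(P)$, combined with $[f(C_j^{\textnormal{inn}})]=0$ and $[f(C_{\textnormal{out}})]\neq 0$, yields $\sum_{i'}[f(\gamma'_{i'})]=-[f(C_{\textnormal{out}})]\neq 0$ in $H_1(\partial S')$. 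Thus at least one $\gamma^*\in\Gamma$ has non-contractible image in $\partial S'$; I would select $\gamma^*$ to be innermost in the sense that its Jordan disk $D^*\subset\IR^2$ contains no other circle of $\Gamma$ with non-contractible image.

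To construct $f':D^2\to M$, the domain $D^*\cap\Sigma$ is planar, with outer boundary $\gamma^*$ (mapped non-contractibly into $\partial S'$) and inner boundaries $C_{j_1},\ldots,C_{j_p}$ (inner $\partial\Sigma$-circles of $\Sigma$ within $D^*$, mapped contractibly into $\partial S$). On this domain, $f$ has area at most $A$. I would fill each inner hole $C_{j_i}$ by a small-area disk in $\partial S$ using the contractibility of $f(C_{j_i})$: by the Cheeger-Fukaya-Gromov nilpotent collapsing structure on $P$ (available from $|{\Rm}|,|{\nabla\Rm}|<K$ on the $1$-neighborhood of $P$ together with the width bound $\td{h}_0$, exactly as exploited in the proofs of Lemmas \ref{Lem:2loopstorus} and \ref{Lem:1loop2d}), the intrinsic area of $\partial S$ is bounded by $c(K)\td{h}_0^2$, so each filling has area at most $c(K)\td{h}_0^2$. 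Smoothing and gluing produces the desired $f'$ with $f'(\partial D^2)\subset\partial S'$ non-contractible.

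The main obstacle is that the number $p$ of inner boundary circles within $D^*$ is a priori unbounded in terms of $A$ and $K$, so the naive estimate $\area f'\leq A+p\cdot c(K)\td{h}_0^2$ does not close. The resolution would use a preliminary modification of $f$ supported in small collar neighborhoods of $\partial\Sigma$ that collapses each $C_j^{\textnormal{inn}}$ to a point via an area-controlled homotopy, using the nilpotent structure to charge the collapse cost against the area already accumulated by $f$ in these collars, thereby bounding the total collapse cost by a constant multiple of $A$ independent of $p$. After this reduction, $\Sigma$ effectively has no inner boundaries and a single filling suffices; choosing $\td{h}_0=\td{h}_0(A,K)$ small enough ensures the total area increase is below $1$, giving $\area f'<\area f+1$ as required.
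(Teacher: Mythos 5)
Your reduction to finding a circle $\gamma^*\subset f^{-1}(\partial S')$ whose image is non-contractible in $\partial S'$ is sound and is essentially a homological rephrasing of the intersection-number argument the paper uses. The second half, however, has a genuine gap that your proposed fix does not close. To fill the inner boundary circles $C_{j_i}$ of $D^*\cap\Sigma$ with disks of controlled area you would need a bound on the \emph{lengths} $\ell(f|_{C_{j_i}})$: an area bound on $f$ gives no control on the length of its restriction to $\partial\Sigma$, and a contractible loop of unbounded length in a torus of small diameter need not bound a disk of small area — every isoperimetric filling estimate requires a length input. Moreover the intrinsic area of $\partial S$ is not in fact bounded by $c(K)\td{h}_0^2$: the hypotheses only control the extrinsic diameter of the cross-sectional tori, not their induced metric. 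Finally, the proposed remedy for the unbounded number $p$ of holes (``charge the collapse cost against the area already accumulated by $f$ in these collars'') has no mechanism behind it; the area of $f$ near $\partial\Sigma$ can be arbitrarily small while $f|_{C_j^{\textnormal{inn}}}$ is arbitrarily long, so there is nothing to charge against.

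The missing idea is a coarea argument in the long, thin torus structure, and it is the heart of the paper's proof. Each component $Q$ of $f^{-1}(P)$ has $f$-area $<A$ while its boundary circles are mapped to the two ends of $P$, which lie at distance $>\td{h}_0^{-1}$ from each other; Lemma \ref{Lem:separtingloopsinmultannulus} (built on Lemma \ref{Lem:annulus}) then produces \emph{new} embedded loops in $f^{-1}(P)$, separating the $\partial S$-side from the $\partial S'$-side, whose total $f$-length is $<A\td{h}_0$. One excises the sub-disks these loops bound — which removes all the inner boundary circles of $\Sigma$ in one stroke, so the multiplicity problem also disappears — and fills the short loops using an isoperimetric inequality $\area\le C\ell^2$ valid for contractible loops of length $<\varepsilon(K)$ in $P$ (this is where the curvature bounds and the Cheeger--Fukaya--Gromov structure actually enter). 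Choosing $\td{h}_0\le\min\{\varepsilon/A,\varepsilon,(2\sqrt{C}A)^{-1}\}$ then makes the total added area less than $1$. Without some device of this kind that converts the length $\td{h}_0^{-1}$ of $P$ into shortness of the loops to be filled, the area estimate cannot close.
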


\begin{proof}
We first argue that there are constants $\varepsilon = \varepsilon(K) > 0$ and $C < \infty$ such that the following isoperimetric inequality holds:
Assume that $\td{h}_0 \leq \varepsilon$.
Then for any smooth loop $\gamma : S^1 \to P$ of length $\ell(\gamma) < \varepsilon$ that is contractible in $P$ there is map $h : D^2 \to M$ with $h |_{S^1} = \gamma$ and
\begin{equation} \label{eq:isoperimetricintorusstruc}
 \area h \leq C \ell(\gamma)^2.
\end{equation}

By the results of Cheeger, Fukaya and Gromov (see \cite{CFG}) there are universal constants $\rho = \rho(K) > 0$, $K' = K'(K) < \infty$ such that for every $p \in P$ we can find an open neighborhood $B(p, \rho) \subset V \subset M$ and a metric $g'$ on $V$ with $0.9 g < g' < 1.1 g$ whose curvature is bounded by $K'$ such that the injectivity radius in the universal cover $(\td{V}, \td{g}')$ of $(V, g')$ at every lift $\td{p} \in \td{V}$ of $p$ is larger than $\rho$.
Let $\pi : \td{V} \to V$ be the covering projection.

Now assume that $\varepsilon (K) \leq \min \{ \frac1{10} \rho(K), \frac1{10} K^{\prime -1/2} (K) \}$ and pick $p \in \gamma (S^1)$.
Since $\varepsilon \leq \frac1{10}\rho$, we can find a chunk $P'$ of $P \approx T^2 \times I$ (i.e. $P' \subset P$ corresponds to a subset of the form $T^2 \times I'$ for some subinterval $I' \subset I$), such that $P'$ contains the image of $\gamma$ and such that $P' \subset B(p, \rho) \subset V$.
Then $\gamma$ is also contractible in $P'$ and hence we can lift it to a loop $\td\gamma : S^1 \to \td{V}$ based at a lift $\td{p} \in \td{V}$ of $p$.
Using the exponential map at $\td{p}$ with respect to the metric $g'$, we can then construct a map $\td{h} : D^2 \to \td{V}$ with $\td{h} |_{\partial D^2} = \td\gamma$ and $\area_{g'} \td{h} \leq \frac12 C \ell_{g'} (\td\gamma)^2$, where $C < \infty$ is a universal constant (note that since $\ell_{g'} (\td\gamma) < (1.1)^{1/2} \ell_g (\gamma) < 2\varepsilon \leq \frac1{15} K^{\prime -1/2}$, we have upper and lower bounds on the Jacobian of this exponential map).
Now $h = \pi \circ \td{h}$ satisfies (\ref{eq:isoperimetricintorusstruc}), which proves the claim.

\begin{figure}[t] 
\begin{center}
\setlength{\unitlength}{2863sp}%
\begingroup\makeatletter\ifx\SetFigFont\undefined%
\gdef\SetFigFont#1#2#3#4#5{%
  \reset@font\fontsize{#1}{#2pt}%
  \fontfamily{#3}\fontseries{#4}\fontshape{#5}%
  \selectfont}%
\fi\endgroup%
\begin{picture}(4500,5800)(3500,0)
\hspace{43mm}\includegraphics[width=9cm]{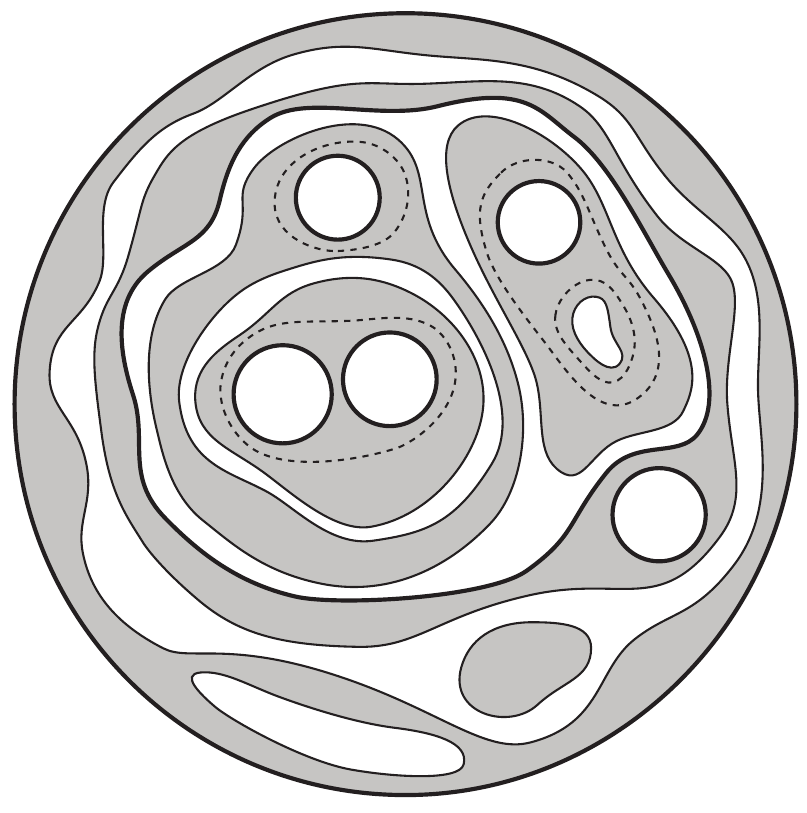}%
\put(-3300,3050){\makebox(0,0)[lb]{\smash{{\SetFigFont{12}{14.4}{\familydefault}{\mddefault}{\updefault}$\partial\Sigma$}}}}
\put(-4090,2950){\makebox(0,0)[lb]{\smash{{\SetFigFont{12}{14.4}{\familydefault}{\mddefault}{\updefault}$\partial\Sigma$}}}}
\put(-1850,2740){\makebox(0,0)[lb]{\smash{{\SetFigFont{12}{14.4}{\familydefault}{\mddefault}{\updefault}$Q_3$}}}}
\put(-3640,4380){\makebox(0,0)[lb]{\smash{{\SetFigFont{12}{14.4}{\familydefault}{\mddefault}{\updefault}$\partial \Sigma$}}}}
\put(-2130,1980){\makebox(0,0)[lb]{\smash{{\SetFigFont{12}{14.4}{\familydefault}{\mddefault}{\updefault}$C_l$}}}}
\put(-1730,1780){\makebox(0,0)[lb]{\smash{{\SetFigFont{12}{14.4}{\familydefault}{\mddefault}{\updefault}$Q_{j_0}$}}}}
\put(-3480,2680){\makebox(0,0)[lb]{\smash{{\SetFigFont{12}{14.4}{\familydefault}{\mddefault}{\updefault}$\gamma_2$}}}}
\put(-4050,4110){\makebox(0,0)[lb]{\smash{{\SetFigFont{12}{14.4}{\familydefault}{\mddefault}{\updefault}$\gamma_1$}}}}
\put(-4100,2010){\makebox(0,0)[lb]{\smash{{\SetFigFont{12}{14.4}{\familydefault}{\mddefault}{\updefault}$Q_1$}}}}
\put(-3340,2210){\makebox(0,0)[lb]{\smash{{\SetFigFont{12}{14.4}{\familydefault}{\mddefault}{\updefault}$Q_2$}}}}
\put(-2480,4700){\makebox(0,0)[lb]{\smash{{\SetFigFont{12}{14.4}{\familydefault}{\mddefault}{\updefault}$\gamma_3$}}}}
\put(-2100,3740){\makebox(0,0)[lb]{\smash{{\SetFigFont{12}{14.4}{\familydefault}{\mddefault}{\updefault}$\gamma_4$}}}}
\put(-1350,2100){\makebox(0,0)[lb]{\smash{{\SetFigFont{12}{14.4}{\familydefault}{\mddefault}{\updefault}$\partial \Sigma$}}}}
\put(-2170,4200){\makebox(0,0)[lb]{\smash{{\SetFigFont{12}{14.4}{\familydefault}{\mddefault}{\updefault}$\partial \Sigma$}}}}
\put(-5550,1000){\makebox(0,0)[lb]{\smash{{\SetFigFont{12}{14.4}{\familydefault}{\mddefault}{\updefault}$C_0$}}}}
\put(-6180,2300){\makebox(0,0)[lb]{\smash{{\SetFigFont{12}{14.4}{\familydefault}{\mddefault}{\updefault}$\partial \Sigma$}}}}%
\end{picture}%
\caption{The domains used in the proof of Lemma \ref{Lem:makediskfrommultiannulus}.
The preimage $\Sigma^* = f^{-1} (P) = Q_1 \cup \ldots \cup Q_p$ is the shaded region, which is bounded by $\partial \Sigma$ and the circles $C_0, \ldots, C_q$.
The boundary circles $\partial \Sigma$ and the circle $C_l$ are highlighted in bold.
The regions $Q_1, \ldots, Q_{n'}$, which are surrounded by $C_l$, are labeled in the picture.
The loops $\gamma_1, \ldots, \gamma_n$ are the dashed loops.
\label{fig:Qjs}}
\end{center}
\end{figure}
We can now prove the Lemma.
We first choose
\[ \td{h}_0 (A, K) = \min \Big\{ \frac{\varepsilon(K)}{A}, \; \varepsilon (K), \; \frac1 {2\sqrt{C} A} \Big\}. \]
Let $\sigma \subset S \setminus P$ be a loop that generates the fundamental group $\pi_1(S) \cong \IZ$.
Note that by our assumptions, $f$ has non-zero intersection number with $\sigma$.

We first perturb $f$ slightly to make it transversal to $\partial S'$.
This can be done such that we still have $\area f < A$ and that $\area f$ increases by less than $\frac12$.
So without loss of generality, we may assume in the following that $f$ is transversal to $\partial S'$.
We will construct $f'$ such that $\area f' < \area f + \frac12$.

So $\Sigma^* = f^{-1} (P) \subset \Sigma$ is a (possibly disconnected) compact smooth subdomain of $\Sigma$, which contains $\partial \Sigma$.
Note that $f(\partial \Sigma^* \setminus \partial \Sigma) \subset \partial S'$.
Denote the components of $\Sigma^*$ by $Q_1, \ldots, Q_p$.
Let $C_0 \subset \partial\Sigma$ be the outer boundary circle of $\Sigma$ and let $C_1 , \ldots, C_q \subset \partial\Sigma^* \subset \Sigma$ be the boundary circles of $\Sigma^*$ that are not boundary circles of $\Sigma$.
Each such circle $C_l \subset \Sigma \subset \IR^2$ bounds a disk $D_l \subset \IR^2$.
Set $\Sigma'_l = D_l \cap \Sigma$, which is a domain with non-empty interior.
Any two disks $D_{l_1}, D_{l_2}$ are either disjoint or one disk is contained in the other.
The same is true for the domains $\Sigma'_l$.
We can hence pick $\Sigma'_l$ minimal with respect to inclusion such that $f |_{\Sigma'_l}$ has non-zero intersection number with $\sigma$.
Such a $\Sigma'_l$ always exists, since $\Sigma'_0 = \Sigma$.
Let $j_0 \in \{1, \ldots, p \}$ be the index for which $C_l \subset \partial Q_{j_0}$.

We claim that $C_l$ is an \emph{interior} boundary circle of $Q_{j_0}$.
Assume the converse.
The intersection number of $f |_{Q_{j_0}}$ with $\sigma$ is zero, so if $C_l$ is an exterior boundary circle of $Q_{j_0}$, then the intersection number of $f$ restricted to the closure of $\Sigma'_l \setminus Q_{j_0}$ is non-zero.
This closure however is the disjoint union of some $\Sigma'_{l'} \subsetneq \Sigma'_l$.
Hence we can pick a $\Sigma'_{l'} \subsetneq \Sigma'_l$ on which $f$ has non-zero intersection number with $\sigma$.
This contradicts the minimal choice of $\Sigma'_l$ and proves that $C_l$ is an interior boundary circle of $Q_{j_0}$.
A direct consequence of this fact is that $l \neq 0$ and hence $C_l \not\subset \partial \Sigma$.
This implies $f(C_l) \subset \partial S'$.
We fix $l$ for the rest of the proof.

Next we show that for any circle $C_{l'} \subset \Sigma'_l \setminus C_l$ the restriction $f|_{C_{l'}}$ is contractible in $P$.
Note that for any such index $l'$ we have $\Sigma'_{l'} \subsetneq \Sigma'_l$ and hence $f|_{\Sigma'_{l'}}$ has intersection number zero with $\sigma$.
Moreover, every interior boundary circle of $\Sigma'_{l'}$ is also an interior boundary circle of $\Sigma$ (recall that by definition $\Sigma'_{l'} = D_{l'} \cap \Sigma$).
So, using the assumption of the lemma and the fact that $\partial S$ is a deformation retract of $P$, we find that $f$ restricted to any interior boundary circle of $\Sigma'_{l'}$ is contractible in $P$.
So, since $f |_{\Sigma'_{l'}}$ has intersection number zero with $\sigma$, this shows the desired fact.

We now use the facts that $f( C_l ) \subset \partial S'$ and $f( \partial\Sigma'_l \setminus C_l ) \subset \partial S$ (the latter fact is true since $\partial\Sigma'_l \setminus C_l \subset \partial \Sigma$).
We conclude that $f^{-1}(P) = Q_1 \cup \ldots \cup Q_p$ separates $C_l$ from $\partial\Sigma'_l \setminus C_l$.
Hence, after possibly rearranging the $Q_j$, we can find a $p' \in \{ 0, \ldots, p  \}$ such that $Q_1, \ldots, Q_{p'} \subset \Sigma'_l$, such that $Q_1 \cup \ldots \cup Q_{p'}$ is a neighborhood of $\partial \Sigma'_l \setminus C_l$ and such that each $Q_j$, $j =1, \ldots, p'$, contains at least one boundary circle of $\Sigma'_l$.
Observe that $f$ maps the boundary circles of each $Q_j$, $j = 1, \ldots, p'$, that are not contained in $\partial \Sigma'_l \setminus C_l$, and hence are not contained in $\partial \Sigma$, into $\partial S'$.
We now apply Lemma \ref{Lem:separtingloopsinmultannulus} to each $Q_j$ equipped with the pull-back $f^* (g)$ where we group the boundary circles of $Q_j$ into those that are contained in $\partial \Sigma'_l \setminus C_l$ and those that are not.
Doing this, we obtain pairwise disjoint, embedded loops $\gamma_1, \ldots, \gamma_n \subset Q_1 \cup \ldots \cup Q_{p'} \subset \Sigma'_l$ whose union separates $\partial \Sigma'_l \setminus C_l$ from $C_l$ and for which
\[ \ell(f|_{\gamma_1}) + \ldots + \ell(f|_{\gamma_n}) < \frac{\area f|_{Q_1}}{\td{h}_0^{-1}} + \ldots + \frac{\area f|_{Q_1}}{\td{h}_0^{-1}} < A \td{h}_0 . \]

For each $k = 1, \ldots, n$ let $D'_k \subset \IR^2$ be the disk that is bounded by $\gamma_k$.
By the separation property of the $\gamma_k$, the union $\Sigma'_l \cup D'_1 \cup \ldots \cup D'_n$ is equal to the disk $D_l$, i.e. the disks $D'_k$ cover the ``holes'' of $\Sigma'_l$.
Any two disks $D'_{k_1}, D'_{k_2}$ are either disjoint or one is contained in the other.
So after possibly rearranging these disks, we can find an $n' \in \{ 0, \ldots, n \}$ such that the disks $D'_1, \ldots, D'_{n'}$ are pairwise disjoint and such that still $\Sigma'_l \cup D'_1 \cup \ldots \cup D'_{n'} = D_l$.
For each $k = 1, \ldots, n'$, consider the intersection $D'_k \cap Q_{j'}$ of the disk $D'_k$ with the domain $Q_{j'}$ that contains $\gamma_k$ ($j' \in \{ 1, \ldots, p' \}$).
Each interior boundary circle of $D'_k \cap Q_{j'}$ is either an interior boundary circle of $\Sigma$, and hence $f$ restricted to this circle is contractible in $P$ or it is equal to one of the circles $C_{l'}$ and is contained in $\Sigma'_l \setminus C_l$.
By what we have proved earlier, $f |_{C_{l'}}$ is contractible in $P$.
Since $f(D'_k \cap Q_{j'}) \subset P$, it follows that $f |_{\gamma_k}$ is contractible in $P$.

Observe that $\ell (f|_{\gamma_k}) < A \td{h}_0 \leq \varepsilon$.
So we can use the isoperimetric inequality (\ref{eq:isoperimetricintorusstruc}) from the beginning of this proof to construct a map $f'_k : D_k \to M$ with $\area f'_k \leq C \ell( f |_{\gamma_k} )^2$ and $f'_k |_{\gamma_k} = f |_{\gamma_k}$.
Let now $f' : D_l \to M$ be the map that is equal to $f$ on $\Sigma'_l \setminus (D'_1 \cup \ldots \cup D'_{n'})$ and equal to $f'_k$ on each $D'_k$ ($k = 1, \ldots, n'$).
Then
\begin{multline*}
 \area f' < \area f + C \big( \ell(f |_{\gamma_1})^2 + \ldots + \ell ( f |_{\gamma_{n'}})^2 \big) \\
 < \area f + C \big( \ell(f |_{\gamma_1}) + \ldots + \ell ( f |_{\gamma_n}) \big)^2  \leq \area f + C A^2 \td{h}_0^2 \leq \area f + \tfrac12.
\end{multline*}
This proves the desired result (after smoothing $f'$).
\end{proof}

The following Lemma is the main result of this subsection.
It will be used to find short loops in the proofs of Propositions \ref{Prop:firstcurvboundstep3} and \ref{Prop:structontimeinterval}.
In the proof of Proposition \ref{Prop:structontimeinterval} it will also be used to ensure that these loops bound compressing disks of bounded area.

\begin{Lemma} \label{Lem:shortloopingeneralcase}
For every $\alpha > 0$ and every $A, K < \infty$ there are constants $\td{L}_0 = \td{L}_0 (\alpha, A) < \infty$ and $\td\alpha_0 = \td\alpha_0 (A, K) > 0$, $\td\Gamma = \td\Gamma (K) < \infty$ such that:

Let $(M, g)$ be a Riemannian manifold and $S \subset M$, $S \approx S^1 \times D^2$ a smoothly embedded solid torus that is incompressible in $M$.
Let $P \subset S$ be a torus structure of width $\leq 1$ and length $L \geq \td{L}_0$ with $\partial S \subset \partial P$ (i.e. the pair $(S, S \setminus \Int P)$ is diffeomorphic to $(S^1 \times D^2(1), S^1 \times D^2 (\frac12))$).

Consider a compact, smooth domain $\Sigma \subset \IR^2$ and a smooth map $f : \Sigma \to S$ with $f(\partial \Sigma) \subset \partial S$ such that $f$ restricted to the outer boundary circle of $\Sigma$ is non-contractible in $\partial S$ and $f$ restricted to all other boundary circles of $\Sigma$ is contractible in $\partial S$.
Moreover, assume that $\area f < A$.

\begin{enumerate}[label=(\alph*)]
\item Then there is a closed loop $\gamma : S^1 \to P$ that is non-contractible in $P$, but contractible in $S$ and that has length $\ell(\gamma) < \alpha$ and distance of at least $\frac13 L - 2$ from $\partial P$.
\item Assume that additionally $\alpha \leq \td\alpha_0$, that $|{\Rm}|, |{\nabla\Rm}| < K$ on the $1$-neighborhood around $P$, that $P$ has width $\leq \td{L}_0^{-1}$ and that $\pi_2(M) = 0$.
Then $\gamma$ in part (a) can be chosen such that its geodesic curvatures are bounded by $\td\Gamma$ and such that there is a map $h : D^2 \to M$ with $h |_{S^1} = \gamma$ of $\area h < \area f + 1$.
\end{enumerate}

\end{Lemma}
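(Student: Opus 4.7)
For part (a), my plan is to combine a coarea argument with a homological computation in the universal cover $\td S \approx \IR \times D^2$ of $S$. Since each inner boundary loop of $\Sigma$ is contractible in $\partial S$, we have $f_* = 0 \colon \pi_1(\Sigma) \to \pi_1(S)$, so $f$ lifts to $\td f \colon \Sigma \to \td S$; and the outer loop $f|_{C_0}$, being non-contractible in $\partial S$ but contractible in $S$ via $f|_\Sigma$, represents $m \cdot a$ in $\pi_1(\partial S) = \langle a \rangle \oplus \langle b \rangle$ (meridian and longitude) with $m \neq 0$. On $P$ take the $1$-Lipschitz function $d(x) = \dist_M(x, \partial S)$, smoothed if needed, and let $\phi = d \circ f$ on $\Sigma^* = f^{-1}(P)$. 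The coarea inequality gives
\[
\int_0^L \mathcal{H}^1_{f^*g}(\phi^{-1}(s))\, ds \leq \area f < A,
\]
so setting $\td L_0(\alpha, A) := 3A/\alpha + 6$, for any $L \geq \td L_0$ some regular $s \in (L/3, 2L/3)$ has $\mathcal{H}^1(\phi^{-1}(s)) < \alpha$.

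Since $\phi = 0$ on $\partial \Sigma$ and $\phi \geq L$ on $f^{-1}(\partial^+ P)$, this level set lies in $\Int \Sigma^*$ and is a disjoint union of smoothly embedded closed loops $\delta_1, \ldots, \delta_N$. Each $\td f(\delta_j)$ is a closed curve in $\td P \approx \IR \times S^1 \times [0,1]$, whose class in $H_1(\td P) = \IZ$ is the meridian winding; and the boundary-of-chain identity applied to the $2$-chain $\td f|_{\phi^{-1}([0, s])}$ in $\td P$ yields $\sum_j [\td f(\delta_j)] = -m \neq 0$. Picking $\delta_{j_0}$ with nonzero winding and setting $\gamma := f(\delta_{j_0})$, we see $\gamma$ lifts to a closed loop in $\td P$, so has zero longitude component in $\pi_1(P)$, while its meridian component is nonzero. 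Hence $\gamma$ is non-contractible in $P$, contractible in $S$, of length $\leq \mathcal{H}^1(\phi^{-1}(s)) < \alpha$, and with $\gamma \subset d^{-1}(s)$ for $s \in (L/3, 2L/3)$ the triangle inequality gives $\dist(\gamma, \partial P) \geq L/3 \geq L/3 - 2$.

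For part (b), first apply (a) with target length $\alpha/10$ to obtain $\gamma_0$ through some $p_0 \in P$. The cross-sectional torus through $p_0$ has diameter $\leq \td L_0^{-1}$, so contains a short loop through $p_0$ (length $\leq 2 \td L_0^{-1}$) representing a class of $\pi_1(P)$ linearly independent from the meridian class of $\gamma_0$. For $\alpha \leq \td\alpha_0(A, K)$ sufficiently small, Lemma \ref{Lem:2loopstorus} produces an embedded incompressible torus $T$ through $p_0$ of arbitrarily small diameter, and Lemma \ref{Lem:bettertorusstructure} upgrades $P$ to a torus structure $P'' \subset P$ containing $T$, of width $\leq \td h_0(A, K)$ and length $\geq \max(\td h_0^{-1}, 2A/\alpha)$ (after enlarging $\td L_0$ further if required). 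Lemma \ref{Lem:makediskfrommultiannulus} applied to $(S, P'')$ yields a disk $f' \colon D^2 \to S$ whose boundary $\eta$ lies on the inner torus of $P''$, is non-contractible there and hence in $P$, has length $< \alpha/2$ (bounded by $A$ over the length of $P''$ as in the proof of that lemma), and area $< \area f + 1/2$. Smoothing $\eta$ by Lemma \ref{Lem:1loop2d} (applied intrinsically on that torus, which inherits bounded intrinsic curvature and bounded second fundamental form in $M$ from the Cheeger--Fukaya--Gromov construction underlying $T$) yields $\gamma$ of length $< \alpha$ with geodesic curvatures in $M$ bounded by some $\td\Gamma(K)$. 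Capping $f'$ with the thin homotopy annulus on the torus from $\eta$ to $\gamma$ (of negligible area) gives the disk $h \colon D^2 \to M$ with $h|_{S^1} = \gamma$ and $\area h < \area f + 1$.

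The main obstacle is the topological step in part (a): a homology argument purely within $P$ only shows that the sum of $\pi_1(P)$-classes of the level-set loops is the pure meridian $m \cdot a$; an individual loop with nonzero meridian component could still have nonzero longitude component and hence fail to be contractible in $S$. Lifting to $\td S$ forces every $\td f(\delta_j)$ to be closed (as $f|_{C_0}$ already lies in $\langle a\rangle$ and the inner boundary loops are contractible in $S$), which kills longitude winding on \emph{each} loop individually and makes the selection of a contractible-in-$S$ loop automatic.
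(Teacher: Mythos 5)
Your part (a) is correct and takes a genuinely different route from the paper. Where the paper cuts $P$ into three pieces, measures intersection numbers against a core curve $\sigma \subset S \setminus P$ and extracts short separating loops via Lemma \ref{Lem:separtingloopsinmultannulus}, you run a coarea argument on $d(\cdot)=\dist(\cdot,\partial S)$ and do the bookkeeping homologically in $\pi^{-1}(P)\subset\td{S}$. The lifting step is exactly the right way to kill the longitude component of each level-set loop individually, and the observation that $\{d\le s\}\cap S\subset P$ for $s<L$ (any path from the core solid torus to $\partial S$ must traverse $P$) keeps the $2$-chain inside $\td{P}$, so the computation in $H_1(\td{P})\cong\IZ$ is legitimate. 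Modulo the standard smoothing of $d$ and the perturbation needed to apply the coarea inequality to the possibly degenerate metric $f^*g$ (as in Lemma \ref{Lem:annulus}), this is a clean and somewhat more elementary proof of (a) that also yields a slightly better distance bound.

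Part (b), however, has a genuine gap. Lemma \ref{Lem:makediskfrommultiannulus} gives no control whatsoever on the length of the boundary loop of the compressing disk it produces: in its proof that boundary is $f|_{C_l}$ for some boundary circle $C_l$ of $f^{-1}(P)$ mapping to $\partial S'$, and the short loops supplied by Lemma \ref{Lem:separtingloopsinmultannulus} are used only to cap off the interior holes, not as the disk's boundary. So your claim that $\eta$ has length $<\alpha/2$, ``bounded by $A$ over the length of $P''$,'' is unjustified, and everything downstream (applying Lemma \ref{Lem:1loop2d} to $\eta$, the thin homotopy annulus of negligible area) collapses with it. This is precisely why the paper's proof of (b) first produces the compressing disk, takes it to be area-minimizing, and then re-runs the part (a) level-set machinery \emph{on that disk} to locate a short loop bounding a sub-disk; the stability of the minimal disk (via Gulliver's immersion theorem and Schoen's estimates) is what supplies the curvature control needed to smooth the loop and bound its geodesic curvature. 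Your alternative of smoothing on the embedded torus $T$ from Lemma \ref{Lem:2loopstorus} would additionally require a bound on the second fundamental form of $T$ in $M$, which that lemma does not assert and which you would have to extract separately from the Cheeger--Fukaya--Gromov construction. To repair (b) you would essentially have to reinstate the paper's minimal-disk step.
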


\begin{proof}
We first explain the general setup.
Assume that without loss of generality $\alpha < 0.1$ and set
\begin{equation} \label{eq:constantsinshortlooplem} 
\td{L}_0 (\alpha, A) = \max \Big\{ 12 \frac{A+1}{\alpha} + 3, \; \frac3{\alpha} + 12 \Big \}.
\end{equation}
We divide $P$ into three torus structures $P_1, P_2, P_3$ of width $\leq 1$ (in part (a)) or width $\leq \td{L}_0^{-1}$ (in part (b)) and length $> \frac13 L - 1$ in such a way that: $\partial S \subset \partial P_1$ and $P_i$ shares a boundary with $P_{i+1}$.
Then any point in $P_2$ has distance of at least $\frac13 L - 1$ from $\partial P$.
For later use, we define the solid tori
\[ S_1 = S, \qquad S_2 = \ov{S \setminus P_1}, \qquad S_3 =  \ov{S \setminus  (P_1 \cup P_2)}. \]
Moreover, let $P' \subset P_1$ be a torus structure of length $> \frac13 L - 4$ and width $\leq 1$ if we are in the setting of part (a) or of width $\leq \td{L}^{-1}_0$ if we are in the setting of part (b) such that $\partial S \subset \partial P'$ and such that $\dist (P', S_2) > 2$.
We also choose an embedded loop $\sigma \subset S \setminus P$ that generates $\pi_1(S) \cong \IZ$.

Next, we explain the strategy of the proof.
In the setting of part (b), we obtain by (\ref{eq:constantsinshortlooplem}) that, assuming $\td\alpha_0 \leq \td{h}_0 (A, K)$ (here $\td{h}_0$ is the constant from Lemma \ref{Lem:makediskfrommultiannulus}), the torus structure $P'$ is $\td{h}_0 (A, K)$-precise.
Hence Lemma \ref{Lem:makediskfrommultiannulus} yields a map $f' : D^2 \to M$ of $\area f' < \area f +1$ such that $f' |_{\partial D^2}$ parameterizes a non-contractible loop in $P'$.
Without loss of generality, we may assume that $f'$ is in fact an area minimizing map.
We then set $\Sigma_0 = D^2$ and $f_0 = f' : \Sigma_0 \to M$.
If we are in the setting of part (a), then we simply set $\Sigma_0 = \Sigma$ and $f_0 = f$.
So in either setting, $f_0 (\partial \Sigma_0) \subset P' \subset P_1$ and $f_0$ restricted to only the outer circle of $\Sigma_0$ is non-contractible in $P_1$.
Moreover, $\area f_0 < \area f +1$ and $f_0$ has non-zero intersection number with $\sigma$ (since $\pi_2 (M) = 0$ in part (b)).
In the following, we will construct the loop $\gamma$ from the map $f_0 : \Sigma_0 \to M$ for part (a) and (b) at the same time.
It will then only require a short argument that the additional assertions of part (b) hold.

Let $\varepsilon > 0$ be a small constant that we will determine later ($\varepsilon$ may depend on $M$ and $g$).
We can find a small (in the $C^1$-sense) homotopic perturbation $f_1 : \Sigma_0 \to M$ of $f_0 : \Sigma_0 \to M$ with $f_1 |_{\partial \Sigma_0} = f_0 |_{\partial \Sigma_0}$ that is not more than $\varepsilon$ away from $f_0$ such that the following holds: $f_1$ is transverse to $\partial P_2$ on the interior of $\Sigma_0$ and its area is still bounded: $\area f_1 < A +1$.
Note that $f_1$ still has non-zero intersection number with $\sigma$.

\begin{figure}[t] 
\begin{center}
\setlength{\unitlength}{2863sp}%
\begingroup\makeatletter\ifx\SetFigFont\undefined%
\gdef\SetFigFont#1#2#3#4#5{%
  \reset@font\fontsize{#1}{#2pt}%
  \fontfamily{#3}\fontseries{#4}\fontshape{#5}%
  \selectfont}%
\fi\endgroup%
\begin{picture}(4500,5800)(3500,0)
\hspace{43mm}\includegraphics[width=9cm]{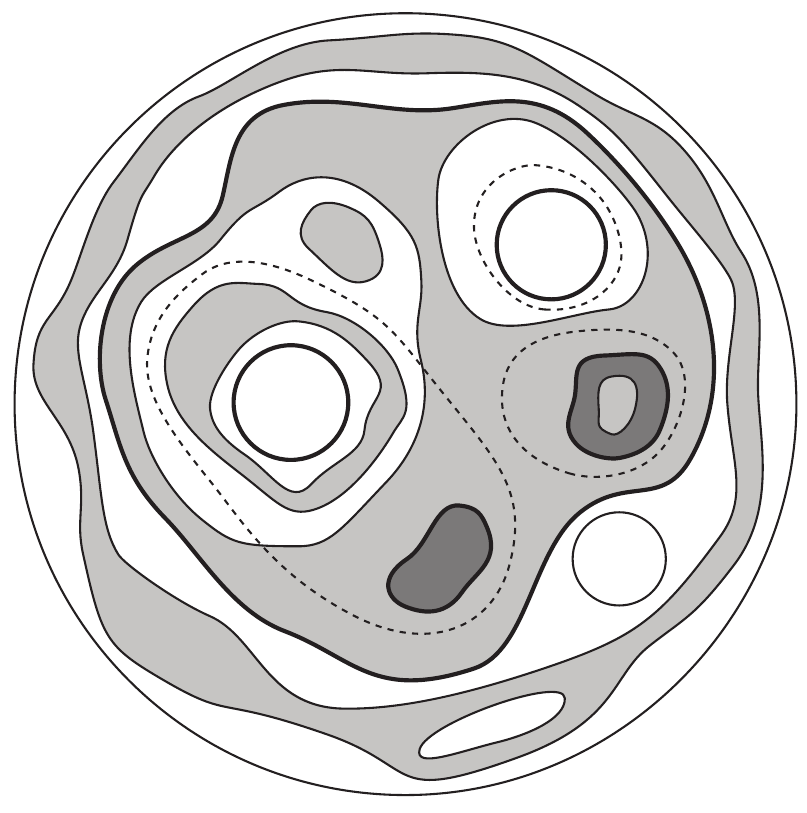}%
\put(-4040,3080){\makebox(0,0)[lb]{\smash{{\SetFigFont{12}{14.4}{\familydefault}{\mddefault}{\updefault}$C_1 \hspace{-1mm}\subset$}}}}
\put(-3990,2800){\makebox(0,0)[lb]{\smash{{\SetFigFont{12}{14.4}{\familydefault}{\mddefault}{\updefault}$\partial \Sigma_0$}}}}
\put(-4500,3400){\makebox(0,0)[lb]{\smash{{\SetFigFont{12}{14.4}{\familydefault}{\mddefault}{\updefault}$Q_3$}}}}
\put(-3130,1980){\makebox(0,0)[lb]{\smash{{\SetFigFont{12}{14.4}{\familydefault}{\mddefault}{\updefault}$C_3$}}}}
\put(-3880,1480){\makebox(0,0)[lb]{\smash{{\SetFigFont{12}{14.4}{\familydefault}{\mddefault}{\updefault}$\gamma_2$}}}}
\put(-2470,3250){\makebox(0,0)[lb]{\smash{{\SetFigFont{12}{14.4}{\familydefault}{\mddefault}{\updefault}$\gamma_3$}}}}
\put(-2050,3010){\makebox(0,0)[lb]{\smash{{\SetFigFont{12}{14.4}{\familydefault}{\mddefault}{\updefault}$C_4$}}}}
\put(-3530,4110){\makebox(0,0)[lb]{\smash{{\SetFigFont{12}{14.4}{\familydefault}{\mddefault}{\updefault}$Q_4$}}}}
\put(-4800,1210){\makebox(0,0)[lb]{\smash{{\SetFigFont{12}{14.4}{\familydefault}{\mddefault}{\updefault}$Q_1$}}}}
\put(-3870,4780){\makebox(0,0)[lb]{\smash{{\SetFigFont{12}{14.4}{\familydefault}{\mddefault}{\updefault}$Q_j = Q_2$}}}}
\put(-2480,4700){\makebox(0,0)[lb]{\smash{{\SetFigFont{12}{14.4}{\familydefault}{\mddefault}{\updefault}$\gamma_1$}}}}
\put(-1650,1800){\makebox(0,0)[lb]{\smash{{\SetFigFont{12}{14.4}{\familydefault}{\mddefault}{\updefault}$\partial \Sigma_0$}}}}
\put(-2160,4190){\makebox(0,0)[lb]{\smash{{\SetFigFont{12}{14.4}{\familydefault}{\mddefault}{\updefault}$C_2 \hspace{-1mm} \subset$}}}}
\put(-2090,3890){\makebox(0,0)[lb]{\smash{{\SetFigFont{12}{14.4}{\familydefault}{\mddefault}{\updefault}$\partial \Sigma_0$}}}}
\put(-4610,4400){\makebox(0,0)[lb]{\smash{{\SetFigFont{12}{14.4}{\familydefault}{\mddefault}{\updefault}$C_0$}}}}
\put(-6180,2300){\makebox(0,0)[lb]{\smash{{\SetFigFont{12}{14.4}{\familydefault}{\mddefault}{\updefault}$\partial \Sigma_0$}}}}%
\end{picture}%
\caption{The domains and loops used in the proof of Lemma~\ref{Lem:shortloopingeneralcase}.
The light gray domains represent $Q_1, \ldots, Q_p$.
The dark gray domains represent the components of $f^{-1}_1 (\Int S_3)$.
The loops $C_0, C_1, \ldots, C_q$ are drawn in bold and the loops $\gamma_1, \ldots, \gamma_n$ are the dashed loops.
The loop $C_0$ is the outer boundary component of $Q_j$ and the loops $C_1, \ldots, C_q$ are each either contained in $\partial \Sigma_0$ or bound components of $f^{-1}_1 (\Int S_3)$.
The domain $Q''$ is bounded by the loops $C_0, C_1, \ldots, C_q$.
\label{fig:Qjsgammas}}
\end{center}
\end{figure}
Consider all components $Q_1, \ldots, Q_p$ of $f_1^{-1} (S_2) \subset \Sigma_0 \subset \IR^2$ (see Figure \ref{fig:Qjsgammas} for an illustration).
Note that $p > 0$.
Each $Q_j$ can be extended to a disk $D_j \subset \IR^2$ by filling in its inner circles.
Let $Q'_j = D_j \cap \Sigma_0$ for each $j = 1, \ldots, p$.
Then any two disks, $D_{j_1}, D_{j_2}$ are either disjoint or one disk is contained in the other.
The same statement holds for the sets $Q'_j$.
So, after possibly changing the order of these disks, we can find a $p' \in \{ 1, \ldots, p \}$ such that the subsets $Q_1, \ldots, Q_{p'}$ are pairwise disjoint and such that $Q'_1 \cup \ldots \cup Q'_p = Q'_1 \cup \ldots \cup Q'_{p'}$.
It follows that $f_1$ restricted to $Q'_1 \cup \ldots \cup Q'_{p'}$ has the same non-zero intersection number with $\sigma$ as $f_1$.
So there are indices $j \in \{ 1, \ldots, p \}$ such that $f_1 |_{Q'_j}$ has non-zero intersection number with $\sigma$.
We can then choose an index $j \in \{ 1, \ldots, p \}$ with this property and such that $Q'_j$ is minimal, i.e. $f_1 |_{Q'_j}$ has non-zero intersection number with $\sigma$, but $f_1 |_{Q'_{j'}}$ has intersection number zero with $\sigma$ whenever $Q'_{j'} \subsetneq Q'_j$.

Let $C_0 = \partial D_j$ and observe that $f_1(C_0) \subset \partial S_2$.
Consider the set $Q'_j \setminus f_1^{-1} (\Int S_3)$ and observe that this set contains $C_0$.
Let $Q''$ be the component of $Q'_j \setminus f_1^{-1} (\Int S_3)$ that contains $C_0$.
Denote by $C_1, \ldots, C_q \subset \partial Q''$ all its other boundary circles.
These circles are either inner boundary circles of $\Sigma_0$ and are mapped into $\partial S_1$ under $f_1$ (in part (a)) or they belong to $f_1^{-1} (\partial S_3)$ and hence are mapped into $\partial S_3$ under $f_1$.
So for each $l = 1, \ldots, q$ the image $f(C_l) \subset \partial S_1 \cup \partial S_3$ has distance of at least $\frac13 L - 1$ from $f(C_0) \subset \partial S_2$.
We can hence apply Lemma \ref{Lem:separtingloopsinmultannulus} to $Q''$ equipped with the pull-back $f_1^* (g)$ and obtain pairwise disjoint, embedded loops $\gamma_1, \ldots, \gamma_n \subset \Int Q''$ such that $\gamma_1 \cup \ldots \cup \gamma_n$ separates $C_0$ from $C_1, \ldots, C_q$ and such that
\[ \ell ( f_1 |_{\gamma_1} ) + \ldots + \ell (f_1 |_{\gamma_n} ) < \frac{A + 1}{\frac13 L - 1} \leq \frac{A + 1}{\frac13 \td{L}_0 - 1} \leq \tfrac14 \alpha. \]
Every loop $\gamma_k$ bounds a topological disk $D'_k$ for $k = 1, \ldots, n$ and any two such disks, $D'_{k_1}, D'_{k_2}$, are either disjoint or one disk is contained in the other.
So, after possibly changing the order of these disks, we can find a number $n' \in \{ 1, \ldots, n \}$ such that $D'_1, \ldots, D'_n$ are pairwise disjoint and such that $D'_1 \cup \ldots \cup D'_{n'} = D'_1 \cup \ldots \cup D'_n$.
Note moreover, that $C_0$ is disjoint from this union.
So using the separation property of the $\gamma_k$ we conclude
\[ C_1, \ldots, C_q \subset D'_1 \cup \ldots \cup D'_n = D'_1 \cup \ldots \cup D'_{n'}. \]
This implies that $Q'' \cup D'_1 \cup \ldots \cup D'_{n'} = D_j$.
The intersection number of $f_1$ with $\sigma$ on $(D'_1 \cup \ldots \cup D'_{n'}) \cap \Sigma_0$ is the same as the intersection number on $D_j \cap \Sigma_0 = Q'_j$, which is non-zero.
So we can find an index $k \in \{ 1, \ldots, n' \}$ such that $f_1 |_{D'_k \cap \Sigma_0}$ has non-zero intersection number with $\sigma$.
Set $D' = D'_k$ and $\gamma' = \gamma_k$.

We now argue that $\gamma'$ has a point in common with $f_1^{-1} (S_2) = Q_1 \cup \ldots \cup Q_p$.
If not, then $D' \cap f_1^{-1} (S_2) \subsetneq Q'_j$ is the disjoint union of some of the $Q_{j'}$.
Similarly as before, we can adjust the order of these components and find a $p'' \in \{ 1, \ldots, p \}$ such that $Q'_1, \ldots, Q'_{p''}$ are pairwise disjoint and $D' \cap f_1^{-1} (S_2) = Q'_1 \cup \ldots \cup Q'_{p''}$.
By the minimality property of $Q'_j$, the intersection number of $f_1$ with $\sigma$ is zero on $Q'_1, \ldots, Q'_{p''}$.
So $f_1$ restricted to $D' \cap f_1^{-1}(S_2)$ has intersection number zero with $\sigma$, contradicting the fact that $f_1$ restricted to $D' \cap \Sigma_0$ does not.
Hence $\gamma'$ has to intersect $f_1^{-1} (S_2)$ and thus $f_1(\gamma') \cap S_2 \neq \emptyset$.
Since $\gamma' \subset Q'' = Q'_j \setminus f_1^{-1} (\Int S_3)$, we have $f_1(\gamma') \cap \Int S_3 = \emptyset$.
It follows that $f_1(\gamma') \cap P_2 \neq \emptyset$.
This implies that all points of $f_1(\gamma')$ have distance of at least $\frac13 L - 1 - \alpha > \frac13 L - 2$ from $\partial P$ and that $f_1(\gamma') \subset P$.
Since the intersection number of $f_1 |_{D' \cap \Sigma_0}$ with $\sigma$ is non-zero, $f_1 |_{\gamma'}$ has to be non-contractible in $P$, but contractible in $S$.
This establishes part (a) of the Lemma with $\gamma = f_1 |_{\gamma'}$.

Assume now for the rest of the proof that we are in the setting of part (b).
Then $Q'_j = D_j$ is a disk and $D' \subset \Sigma_0$.
Moreover, $f_1$ is an $\varepsilon$-perturbation of the (stable) area minimizing map $f_0 : \Sigma_0 \approx D^2 \to M$.
By \cite[Theorem 8.1]{Gul}, $f_0$ is an immersion on $\Int \Sigma_0$.
So we can additionally assume that the perturbation $f_1$ is a graph over $f_0$.

Consider the following regions: Let $B(P_2, 1)$ and $B(P_2,2) \subset P \setminus P'$ be the (open) $1$ and $2$-tubular neighborhoods of $P_2$ and let $\Sigma_1$ and $\Sigma_2$ be the components of $f^{-1}_0 (B(P_2,1))$ and $f_0^{-1}(B(P_2, 2))$ that contain $\gamma'$, i.e. $\gamma' \subset \Sigma_1 \subset \Sigma_2 \subset \Int \Sigma_0$.
By the results of \cite[Theorem 3]{Sch}, we obtain a bound on the second fundamental form of $f_0 (\Sigma_1)$ that only depends on $K$: $|{A_{f_0(\Sigma_1)}}| < K'(K)$.
Moreover, this bound and the bound on the curvature on $B(P_2, 1)$ gives us a curvature bound $K'' = K''(K) < \infty$ of the metric $f^*_0 (g)$ on $\Sigma_1$ that only depends on $K$.
Since $f_1$ was assumed to be a graph over $f_0 (\Sigma)$ and a $C^1$-small perturbation of $f_0$, we conclude that
\[ \ell (f_0 |_{\gamma'}) \leq 2 \ell (f_1 |_{\gamma'}) < \tfrac12 \alpha \]
if $\varepsilon$ is small enough depending on these bounds.
The loop $\gamma'$ is non-contractible in $\Sigma_1$, because otherwise $f_1 |_{\gamma'}$ would be contractible in $P$.
So we can apply Lemma \ref{Lem:1loop2d} to conclude that if $\td{\alpha}_0 < \td\varepsilon_2 (K'')$, then there is an embedded loop $\gamma'' \subset \Sigma_1$ that intersects $\gamma'$, is homotopic to $\gamma'$ in $\Sigma_1$ and that has the following properties:
$\ell( f_0 |_{\gamma''} ) \leq 2 \ell( f_1 |_{\gamma'} ) < \alpha$ and the geodesic curvature on $\gamma''$ in $(\Sigma_1, f^*_0 (g))$ is bounded by $\Gamma'(K'')$.
The loop $\gamma''$ still bounds a disk $D'' \subset \Sigma_0$ whose area under $f_0$ is bounded by $\area f +1$.
Let now $\gamma = f_0 |_{\gamma''}$.
Then the geodesic curvature on $\gamma$ in $(M, g)$ is bounded by some constant $\td\Gamma = \td\Gamma ( \Gamma'(K''(K)), K'(K)) = \td\Gamma (K) < \infty$.
This establishes assertion (b).
\end{proof}

\section{The main argument} \label{sec:mainargument}
In this section we will frequently use notions that were introduced in \cite{Bamler-LT-Perelman}.
For example, for any Ricci flow with surgery $\MM$, any time $t$, point $x \in \MM(t)$ and scale $r_0 > 0$, we set (compare with (\ref{eq:defofrho1inpartD}) and \cite[Definition \ref{Def:rhoscale}]{Bamler-LT-Perelman})
\[ \rho_{r_0} (x,t) = \sup \big\{  r \in (0, r_0] \;\; : \;\; \sec_t \geq - r^{-2} \quad \text{on} \quad B(x,t,r) \big\} \]

\subsection{The geometry on late, but short time-intervals} \label{subsec:firstcurvbounds}
In the following, we will analyze long-time existent Ricci flows with surgery $\MM$ as defined in \cite[section \ref{sec:IntroRFsurg}]{Bamler-LT-Perelman}.
Using the tools from \cite[sections \ref{sec:Perelman}, \ref{sec:maintools}]{Bamler-LT-Perelman}, we will give a bound on the curvature and the geometry in regions of the manifold on a time-interval of small, but uniform size.
This description is achieved in three steps, the last step and main result of this subsection being Proposition \ref{Prop:firstcurvboundstep3}.

In the first step, Lemma \ref{Lem:firstcurvboundstep1}, we will bound the curvature by a uniform constant $K_1 t_0^{-1}$ away from finitely many, pairwise disjoint embedded solid tori $S_1(t_0), \ldots, \linebreak[1] S_m(t_0)$ and on a time-interval of the form $[(1-\tau_1) t_0, t_0]$, where $t_0$ is large and $\tau_1$ a uniform constant (see assertions (a), (b)).
We will moreover find a curvature bound of the form $K' t_0^{-1}$ on those solid tori $S_i (t_0)$ whose normalized diameter is bounded by some given constant $L$ (see assertion (c)).
Here the normalized diameter is the diameter divided by $t_0^{1/2}$ and $K'$ is a constant that depends on the normalized diameter of $S_i (t_0)$.
We will moreover show that if a solid torus $S_i (t_0)$ has large normalized diameter at time $t_0$, then it also has large normalized diameter at all times of the interval $[(1-\tau_1) t_0, t_0]$, i.e. solid tori cannot ``grow too fast'' (see assertion (d)).
The function $\Delta_1$ will hereby serve as a lower bound for the normalized diameter at earlier times.
Note that the constant $\tau_1$, determining the size of the time-interval $[(1-\tau_1) t_0, t_0]$, does not depend on $L$ or the normalized diameter of $S_i (t_0)$.
We will also construct long collar torus structures inside each solid torus (see assertion (e)) and show that they become $\nu$-collapsed in at least one direction for any given $\nu > 0$ (see assertion (g)), but $\ov{w}$-non-collapsed in a local universal cover (see assertion (f)).
Note that $\nu$ can be chosen independently of $L$ or the diameter of $S_i$ and $\ov{w}$ is a universal constant, not depending on these quantities as well.
These independences will become important in the subsequent steps.
In the following lemma, the collar torus structures will be the closures of complements $S_i (t_0) \setminus W_i$, where $W_i \subset S_i (t_0)$ is a solid torus within $S_i (t_0)$.
Finally, we mention that, for technical reasons, we will also prove a version of these statements in which the analysis takes place in a subset $U \subset \MM (t_0)$.

\begin{Lemma}[first step] \label{Lem:firstcurvboundstep1}
There are a continuous function $\delta : [0, \infty) \to (0, \infty)$ and constants $K_1 < \infty$ and $\tau_1, \ov{w}, w^\#, \mu^\# > 0$ as well as continuous, non-decreasing functions $\Delta_1, K'_1 : (0, \infty) \to (0, \infty)$ with $\Delta_1(d) \to \infty$ as $d \to \infty$ and a non-increasing function $\tau'_1 : (0, \infty) \to (0, \infty)$ such that the following holds: \\
For every $L < \infty$ and $\nu > 0$ there are constants $T_1 = T_1(L) < \infty$, $w_1 = w_1(L, \nu) > 0$ such that: \\
Let $\MM$ be a Ricci flow with surgery on the time-interval $[0, \infty)$ with normalized initial conditions that is performed by $\delta(t)$-precise cutoff.
Consider the constant $T_0 < \infty$ and the function $w : [T_0, \infty) \to (0, \infty)$ from \cite[Proposition \ref{Prop:thickthindec}]{Bamler-LT-Perelman}.
Assume that $t_0 = r_0^2 \geq \max \{ T_1, 2 T_0 \}$ and that $w(t) < w_1$ for all $t \in [\frac12 t_0, t_0]$.
Assume moreover that all components of $\MM (t_0)$ are irreducible and not diffeomorphic to spherical space forms and that all surgeries on the time-interval $[\frac12 t_0, t_0]$ are trivial. \\
Let $U \subset \MM(t_0)$ be a subset with either $U = \MM(t_0)$ or
\begin{enumerate}[label=$-$]
\item $U$ is a smoothly embedded solid torus ($\approx S^1 \times D^2$).
\item There is a closed subset $U' \subset U$ that is diffeomorphic to $T^2 \times I$ with $\partial U \subset \partial U'$, whose boundary components have time-$t_0$ distance of at least $2 r_0$ from one another, and a $T^2$-fibration $p : U' \to I$ such that the fiber through every $x \in U'$ has time-$t_0$ diameter $< \mu^\# \rho_{r_0} (x, t_0)$.
\item All points of $\partial U$ are $w^\#$-good at scale $r_0$ and time $t_0$ (see \cite[Definition \ref{Def:goodness}]{Bamler-LT-Perelman} for the definition of the goodness property).
\end{enumerate} \vspace{2mm}
Then there are sub-Ricci flows with surgery $S_1, \ldots, S_m \subset \MM$ on the time-interval $[(1-\tau_1) t_0, t_0]$ such that their final time-slices $S_1(t_0), \ldots, S_m (t_0)$ form a collection of pairwise disjoint, incompressible solid tori ($\approx S^1 \times D^2$) in $\Int U$.
Moreover, there are subsets $W_i \subset S_i(t_0)$ ($i = 1, \ldots, m$) such that for all $i = 1, \ldots, m$
\begin{enumerate}[label=(\alph*)]
\item The pair $(S_i (t_0), W_i)$ is diffeomorphic to $(S^1 \times D^2(1), S^1 \times D^2(\frac12))$.
\item For all $x \in U$ with $\dist_{t_0} (x, U \setminus (S_1(t_0) \cup \ldots \cup S_m(t_0))) \leq 100 r_0$, the point $(x, t_0)$ survives until time $(1- \tau_1) t_0$ and for all $t \in [(1- \tau_1) t_0, t_0]$ we have
\[  |{\Rm}| (x,t) < K_1 t_0^{-1} . \]
\item We have $\diam_{t_0} S_i(t_0) > 100 r_0$ and if $\diam_{t_0} S_i(t_0) \leq L r_0$, then we have the curvature bound
\begin{multline*}
 \qquad\qquad | {\Rm_t} | < K'_1 ( r_0^{-1} \diam_{t_0} S_i (t_0)) t_0^{-1} \quad \text{on} \quad S_i(t) \quad \\
  \text{for all} \quad t \in \big[ (1-\tau'_1( r_0^{-1} \diam_{t_0} S_i(t_0)) t_0, t_0 \big].
\end{multline*}
Furthermore, $S_i$ is non-singular on the time-interval $[ (1-\tau'_1( r_0^{-1} \diam_{t_0} S_i \linebreak[1] (t_0)) t_0, \linebreak[1] t_0]$.
\item For all $t \in [(1-\tau_1) t_0, t_0]$ we have
\[  \diam_t S_i(t) > \min \big\{ \Delta_1 (r_0^{-1} \diam_{t_0} S_i(t_0)), L \big\} r_0. \]
\item At time $t_0$, the closure of $S_i(t_0) \setminus W_i$ is a torus structure of width $\leq r_0$ and length 
\[ \dist_{t_0} (\partial S_i (t_0), \partial W_i) = \min \big\{ \diam_{t_0} S_i(t_0) - 2r_0 , Lr_0 \big\}. \]
\item All points of $S_i(t_0) \setminus W_i$ are locally $\ov{w}$-good at scale $r_0$ and time $t_0$.
\item For every point $x \in S_i(t_0) \setminus W_i$, there is a loop $\sigma \subset \MM(t_0)$ that is based at $x$, that is incompressible in $\MM(t_0)$ and that has length $\ell_{t_0} (\sigma) < \nu r_0$.
\item In the case in which $U = \MM(t_0)$, we have for all $t \in [(1-\tau_1) t_0, t_0]$:
\begin{enumerate}[label=$-$]
\item There is a closed subset $U'_{i,t} \subset S_i (t)$ that is diffeomorphic to $T^2 \times I$ with $\partial S_i (t) \subset \partial U'_{i,t}$ whose boundary components have time-$t$ distance of at least $2 \sqrt{t}$ from each other and a $T^2$-fibration $p_{i,t} : U'_{i,t} \to I$ such that the fiber through every $x \in U'_{i,t}$ has time-$t$ diameter $< \mu^\# \rho_{\sqrt{t}} (x, t)$.
\item The points on $\partial S_i$ are $w^\#$-good at scale $\sqrt{t}$ and time $t$.
\end{enumerate}
\end{enumerate}
\end{Lemma}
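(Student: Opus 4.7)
\smallskip

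The plan is to combine the topological decomposition produced by Proposition~\ref{Prop:MorganTianMain} and Proposition~\ref{Prop:GGpp} with the four main analytic tools of \cite{Bamler-LT-Perelman} in a careful hierarchy. First, rescale so that $r_0 = 1$ and apply \cite[Proposition~\ref{Prop:thickthindec}]{Bamler-LT-Perelman}: the thick part carries a uniform curvature bound automatically, so all work takes place on the thin part $\MM_{\thin}(t_0) \cap U$. On this region I would invoke Proposition~\ref{Prop:MorganTianMain} at a sufficiently small parameter $\mu$ (to be fixed small depending on the universal goodness constants needed below), producing the decomposition $V_1 \cup V_2 \cup V'_2$ with its embedded tori and spheres. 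In the case $U = \MM(t_0)$ this is case A or B of \S\ref{subsec:topimplications}; in the case that $U$ is a solid torus with the prescribed collar $U'$ this is case C, and the hypothesis on $U'$ together with the $w^\#$-goodness of $\partial U$ ensures that the decomposition can be extended smoothly across $\partial U$ without creating spurious bad pieces near the boundary. Proposition~\ref{Prop:GGpp} then yields the good part $\mathcal{G}''$ and a finite disjoint union $\mathcal{S}'' = S_1(t_0) \cup \cdots \cup S_m(t_0)$ of incompressible solid tori; these are the candidates for the $S_i(t_0)$ of the statement.

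Next I would establish the curvature bound (b) outside the $S_i$. On every component of $V_1, V_2, V'_2$ contained in $\mathcal{G}$, Lemma~\ref{Lem:unwrapfibration} applies (cases (i)--(iv)) and shows that each point is $w_1(\mu)$-good at scale $r_0$, relatively to $\MM(t_0)$. This hypothesis feeds directly into \cite[Proposition~\ref{Prop:curvcontrolgood}]{Bamler-LT-Perelman}, producing a uniform bound $|{\Rm}| < K_1 t_0^{-1}$ on a uniform time-interval $[(1-\tau_1)t_0, t_0]$, provided the triviality-of-surgery assumption on $[\tfrac12 t_0,t_0]$ holds. For the remaining components of $\mathcal{G}'' \setminus \mathcal{G}$, which by Proposition~\ref{Prop:GGpp}(d)--(e) are $T^2 \times I$ (or $T^2 \times I$ glued to a $\Klein^2 \td\times I$ piece of $V'_2$) sandwiched between $\mathcal{G}$-regions, the curvature bound is transported across via \cite[Proposition~\ref{Prop:curvboundinbetween}]{Bamler-LT-Perelman}: the relevant boundary tori are geometrically controlled because they sit in $\mathcal{G}$, on which curvature has just been bounded. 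In the case that $U$ is a solid torus, the same Proposition bridges the $w^\#$-good region near $\partial U$ to the rest of $U$, at the cost of absorbing the constants $w^\#,\mu^\#$ into the universal parameters.

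Having bounded the curvature outside the $S_i$, I would then analyze each $S_i(t_0)$. Every $S_i$ is adjacent to a $\mathcal{G}$-region on which the curvature is controlled on $[(1-\tau_1)t_0,t_0]$ and its boundary torus is sufficiently collapsed (being a cross-section of a $V_2$-component with incompressible $S^1$-fibers or a boundary torus of a $V_1$ piece of type $T^2 \times I$). This is precisely the hypothesis of \cite[Proposition~\ref{Prop:curvcontrolincompressiblecollapse}]{Bamler-LT-Perelman}, which yields a distance-dependent curvature bound inside $S_i$. If $\diam_{t_0} S_i(t_0) \leq L r_0$, this bound becomes uniform in the form $K'_1(L) t_0^{-1}$ on a time-interval $[(1-\tau'_1(L))t_0,t_0]$, giving (c); additionally, smallness of the curvature ensures triviality of surgery on this interval. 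The non-expansion estimate (d) is then obtained from \cite[Proposition~\ref{Prop:slowdiamgrowth}]{Bamler-LT-Perelman} applied to the good and collapsed boundary $\partial S_i$: the diameter of $S_i(t)$ cannot collapse too quickly going backward, so any $S_i$ of large diameter at $t_0$ must already have had diameter at least $\Delta_1$ throughout $[(1-\tau_1)t_0,t_0]$. The interior solid torus $W_i$ in (a) and (e) is defined by pushing a collar of length $\min\{\diam_{t_0}S_i(t_0)-2r_0, Lr_0\}$ inward from $\partial S_i(t_0)$; the width bound on this collar follows from Proposition~\ref{Prop:MorganTianMain}(c1)/(c2) applied to the $V_1$ or $V_2 \cap V_{2,\textnormal{reg}}$ component adjacent to $\partial S_i$, since $\mu$ was chosen small. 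Assertions (f) and (g) are then restatements of the local goodness and the existence of an incompressible fiber supplied by Lemma~\ref{Lem:unwrapfibration}, and (h) is an inductive consistency check needed to re-apply the lemma at times $t < t_0$.

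The main obstacle will be the parameter bookkeeping: the universal constants $K_1, \tau_1, \ov{w}, w^\#, \mu^\#$ must be chosen independently of $L$ and $\nu$, so the chain $\mu \mapsto w_1(\mu) \mapsto $ (constants in \cite[Prop.~\ref{Prop:curvcontrolgood}]{Bamler-LT-Perelman}) must be closed off before $L$ and $\nu$ enter the picture (which they do only through $T_1$ and $w_1$ in the statement). Dually, the functions $K'_1, \tau'_1, \Delta_1$ are permitted to depend on the diameter, so the chain \cite[Props.~\ref{Prop:curvcontrolincompressiblecollapse},~\ref{Prop:slowdiamgrowth}]{Bamler-LT-Perelman} $\mapsto$ diameter regime can be closed off separately. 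The technical work is to ensure that, after these separate closures, the collar structure in (e) and the goodness in (f)--(g) can be produced uniformly, and that the alternative form of the hypothesis (``$U$ is a solid torus with controlled collar'' versus ``$U = \MM(t_0)$'') is handled by a single proof, using the $w^\#$-good, $\mu^\#$-collapsed collar $U'$ as a substitute for the thick part in the first part of the argument.
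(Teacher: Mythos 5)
Your outline follows the paper's proof almost step for step: Morgan--Tian decomposition at small $\mu$, Proposition~\ref{Prop:GGpp} to isolate the solid tori, Lemma~\ref{Lem:unwrapfibration} feeding the goodness hypothesis into the curvature-control propositions of \cite{Bamler-LT-Perelman}, \cite[Proposition~\ref{Prop:curvboundinbetween}]{Bamler-LT-Perelman} to bridge the $T^2\times I$ pieces of $\mathcal{G}''\setminus\mathcal{G}$, \cite[Proposition~\ref{Prop:slowdiamgrowth}]{Bamler-LT-Perelman} for assertion (d), and the two-pass treatment of the parameter $w^\#$ for the solid-torus case. Two steps, however, are asserted where the paper has to do real work.

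First, assertion (e). You propose to ``push a collar of length $\min\{\diam_{t_0}S_i(t_0)-2r_0, Lr_0\}$ inward'' and get the width bound from Proposition~\ref{Prop:MorganTianMain}(c1); but this presupposes that the fibered $V_1$-region adjacent to $\partial S_i(t_0)$ actually extends that far into $S_i(t_0)$, which is exactly what must be proved. A priori the adjacent $T^2\times I$ component of $V_1$ could be short and terminate at a bad $V_2$-component (contractible $S^1$-fibers) sitting close to $\partial S_i(t_0)$. The paper excludes this (Claim 4 of its proof) by a volume-comparison contradiction: a point $x_1$ in such a bad component at controlled distance from the good part inherits, via the distance-dependent curvature bound of Claim 3, a definite lower volume bound for its ball in the universal cover; but since the local model $U_2\approx S^1\times B^2$ around $x_1$ has contractible fiber, $U_2$ lifts injectively, so the same lower bound holds downstairs, contradicting $\vol_{t_0}B(x_1,t_0,\rho_{r_0}(x_1,t_0))<w(t_0)\rho_{r_0}^3$. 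Without this, the length claim in (e) --- on which the entire second step (Proposition~\ref{Prop:firstcurvboundstep2}) later relies --- is unsupported.

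Second, assertion (g). Lemma~\ref{Lem:unwrapfibration} gives non-collapsing in the universal cover (i.e.\ assertion (f)); it does not produce loops of length $<\nu r_0$ for arbitrary $\nu$. The paper's argument is again a volume comparison: if no non-contractible loop of length $<\nu r_0$ were based at $x$, the $\nu r_0$-ball at $x$ would have the same volume as the corresponding ball in the universal cover, which is bounded below by $\nu^3 c^*(L)\,w_1^{**}r_0^3$, while the collapsing hypothesis bounds it above by $w(t_0)r_0^3$; choosing $w_1(L,\nu)<\nu^3 c^* w_1^{**}$ yields the contradiction, and this is precisely where the $\nu$-dependence of $w_1$ enters. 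You correctly flag that $w_1$ must depend on $\nu$, but the mechanism is this comparison, not Lemma~\ref{Lem:unwrapfibration}.
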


The following proof makes use of the decomposition of the thin part $\MM_{\textnormal{thin}} (t_0)$ into subsets $V_1, V_2, V'_2$, as described in Proposition \ref{Prop:MorganTianMain}.
Using the results of the topological analysis of this decomposition from subsection \ref{subsec:topimplications}, we will identify the good components of $V_1, V_2, V'_2$, which exhibit a collapse along incompressible fibers, and we will understand their distribution.
With the help of Lemma \ref{Lem:unwrapfibration}, \cite[Proposition \ref{Prop:curvcontrolgood}]{Bamler-LT-Perelman} (``bounded curvature around good points'') and \cite[Proposition \ref{Prop:curvcontrolincompressiblecollapse}]{Bamler-LT-Perelman} (``Bounded curvature at bounded distance from sufficiently collapsed and good regions''), we will then find a uniform curvature bound on these good components.
The bad components of $V_1, V_2, V'_2$ are covered by solid tori or they form subsets that are diffeomorphic to $T^2 \times I$ or $\Klein^2 \td\times I$ and are adjacent to good components.
Using \cite[Proposition \ref{Prop:curvcontrolincompressiblecollapse}]{Bamler-LT-Perelman} (``Bounded curvature at bounded distance from sufficiently collapsed and good regions''), we will extend our uniform curvature bound to the second type of bad components, therefore establishing uniform curvature control outside of finitely many solid tori $S_i (t_0)$.
Within these solid tori, we will apply \cite[Proposition \ref{Prop:curvcontrolincompressiblecollapse}]{Bamler-LT-Perelman} (``Bounded curvature at bounded distance from sufficiently collapsed and good regions'') to obtain a distance-dependent curvature bound on the collars of the $S_i (t_0)$.
In order to show that these collars are long torus structures, we will show that for each $i$ the distance of the boundary $\partial S_i (t_0)$ to $V_2 \cap S_i (t_0)$ becomes arbitrarily large.
A comparison geometry argument will furthermore show that these collars become arbitrarily collapsed in one direction.
Finally, the statement that solid tori of large diameter also have large diameter at slightly earlier times will be a consequence of \cite[Proposition \ref{Prop:slowdiamgrowth}]{Bamler-LT-Perelman} (``Controlled diameter growth of regions whose boundary is sufficiently collapsed and good'').

\begin{proof}
Observe that it suffices to construct the functions $\Delta_1$ and $K'_1$ in such a way that they satisfy all the claimed properties except for continuity, since all properties stay true after decreasing the values of $\Delta_1$ and increasing the values of $K'_1$.

The function $\delta(t)$ will be assumed to be bounded by the corresponding functions from \cite[Corollary \ref{Cor:Perelman68} and Propositions \ref{Prop:thickthindec}, \ref{Prop:curvcontrolgood}, \ref{Prop:curvcontrolincompressiblecollapse}, \ref{Prop:curvboundinbetween} and \ref{Prop:slowdiamgrowth}]{Bamler-LT-Perelman}.
We also set
\[ \mu^\# = \min \{ w_0(\min \{ \mu_1, \tfrac1{10} \}, \ov{r}( \cdot, 1), K_2 (\cdot, 1)), \mu_1, \tfrac1{10} \}, \]
where $w_0$ is the constant from \cite[Proposition \ref{Prop:MorganTianMain}]{Bamler-LT-Perelman}, $\mu_1$ is the constant from \cite[Lemma \ref{Lem:unwrapfibration}]{Bamler-LT-Perelman} and $\ov{r}, K_2$ are the functions from \cite[Corollary \ref{Cor:Perelman68}]{Bamler-LT-Perelman}.
If $U = \MM(t_0)$, then we set $\mu^\circ = \mu^\#$ and if $U$ is a solid torus, we set $\mu^\circ = \min \{ \mu_1, \frac1{10} \}$.

Next, we make a remark on the constant $w^\#$.
This constant appears in the conditions of the Lemma in the case in which $U$ is a solid torus and in assertion (h), which holds in the case $U = \MM(t_0)$.
In the following proof, both of these cases will be dealt with simultaneously.
In the case in which $U = \MM(t_0)$, the constant $w^\#$ will be determined and will never be used.
In the case in which $U$ is a solid torus, $w^\#$ will be assumed to be given and all universal constants, that are determined in this case, may depend on it.
Note that this does not create a circular argument since one could carry out the following proof first for the case $U = \MM(t_0)$, obtaining a set of constants and functions 
\begin{equation} \label{eq:constantsfromstep1}
 K_1, \tau_1, \ov{w}, \Delta_1, K'_1, \tau'_1, T_1, w_1
\end{equation}
as well as $w^\#$ and then one could carry out the proof again in the case in which $U$ is a solid torus, obtaining another set of constants and functions as listed in (\ref{eq:constantsfromstep1}).
The final set of constants and functions will then be the minima of the two values obtained for each $\tau_1, \ov{w}, \Delta_1, \tau'_1, T_1, w_1$ in each case and the maxima of the two values obtained for each $K_1, K'_1, T_1$ in each case.

We now carry out the main argument.
Apply \cite[Proposition \ref{Prop:thickthindec}]{Bamler-LT-Perelman} to obtain a decomposition $\MM(t) = \MM_{\thick}(t) \cup \MM_{\thin}(t)$ for all $t \in [\frac12 t_0, t_0]$.
Consider for a moment the case in which $U$ is a solid torus.
Since $U$ cannot contain an incompressible torus, none of the boundary tori of $\MM_{\thick}(t_0)$ can be contained in $U$.
Let $T' \subset U'$ be a $T^2$-fiber of $p$ with $\dist_{t_0} (\partial U, T') = r_0$.
Then $\diam_{t_0} T' < \mu^\# r_0 \leq \frac1{10} r_0$.
Assuming $w(t_0) < \frac1{10}$, every component of $\partial \MM_{\thick} (t_0)$ has diameter $< \frac1{10} r_0$ (see \cite[Proposition \ref{Prop:thickthindec}(b)]{Bamler-LT-Perelman}).
This implies that if $T' \cap \MM_{\thick} (t_0) \neq \emptyset$, then $U$ is contained in the $2r_0$-tubular neighborhood of $\MM_{\thick}(t_0)$.
In this case we have a curvature bound on $U$ on a small time-interval with final time $t_0$ (see \cite[Proposition \ref{Prop:thickthindec}(c) and (d)]{Bamler-LT-Perelman}) and we are done by setting $m = 0$.
On the other hand, if $T' \cap \MM_{\thick} (t_0) = \emptyset$ then $U$ is contained in the $2r_0$-tubular neighborhood of $\MM_{\thin}(t_0)$.
We will assume from now on that in the case in which $U$ is a solid torus, $U$ is contained in a $2r_0$-tubular neighborhood of $\MM_{\thin}(t_0)$.

We will now apply Proposition \ref{Prop:MorganTianMain} with $\mu \leftarrow \mu^\circ$.
Observe for this that next to each component of $\partial \MM_{\thick} (t_0)$ there is a torus structure of width $\leq 10 w(t_0) r_0$ and length $2 r_0$ inside $\MM_{\thick} (t_0)$.
In the case in which $U = \MM (t_0)$, we apply Proposition \ref{Prop:MorganTianMain} with $M$ being the union of $\MM_{\thin} (t_0)$ with these torus structures and $g \leftarrow r_0^{-2} g(t_0)$.
If $U$ is a solid torus, we apply Proposition \ref{Prop:MorganTianMain} with $M \leftarrow U$.
So either by the assumption of the Lemma or by \cite[Proposition \ref{Prop:thickthindec}]{Bamler-LT-Perelman} for sufficiently small $w(t_0)$, condition (i) of Proposition \ref{Prop:MorganTianMain} is satisfied.
Condition (ii) follows from \cite[Proposition \ref{Prop:thickthindec}(e)]{Bamler-LT-Perelman} assuming $w(t_0) <  \min \{ \frac1{10} w_0 (\mu^\circ, \ov{r} (\cdot, 1), \linebreak[1] K_2 (\cdot, 1)), \frac1{10} \}$.
Condition (iii) is a consequence of \cite[Corollary \ref{Cor:Perelman68}]{Bamler-LT-Perelman} if  $t_0 > T_{\ref{Cor:Perelman68}} (w_0 (\mu^\circ, \ov{r} (\cdot, 1), K_2 (\cdot, \linebreak[1] 1)), \linebreak[1] 1, 2)$.
Note that we can assume that $T_{\ref{Cor:Perelman68}}$ is monotone in the first parameter.
Now look at the conclusions of Proposition \ref{Prop:MorganTianMain}.
We first consider the case in which a component $M'$ of $M$ is diffeomorphic to an infra-nilmanifold or a manifold that carries a metric of non-negative sectional curvature and we have $\diam_{t_0} M' < \mu^\circ \rho_{r_0} (x, t_0)$ for all $x \in M'$.
By the assumptions of the Lemma, $M'$ is not a spherical space form or a quotient of $S^1 \times S^2$, so it is either an infra-nilmanifold or a quotient of $T^3$.
By Lemma \ref{Lem:unwrapfibration}(v), all points in $M'$ are $w_1(\mu^\circ)$-good at scale $r_0$ and time $t_0$.
Thus for large $t_0$ we obtain a curvature bound on all of $M'$ at time $t_0$ and slightly before by \cite[Proposition \ref{Prop:curvcontrolgood}]{Bamler-LT-Perelman}.
For the rest of the proof, we can exclude these components from $M$ and assume that we have a decomposition $M = V_1 \cup V_2 \cup V'_2$ satisfying the properties (a1)--(c5) of Proposition \ref{Prop:MorganTianMain}.

Next, we apply the discussion of subsection \ref{subsec:topimplications}---in particular Proposition \ref{Prop:GGpp}---to this decomposition.
Consider the set $\mathcal{G} \subset M$ that we obtained there in Definition \ref{Def:GG}.
\begin{Claim1}
There are universal constants $w^*_1, {w'_1}^*, \alpha^*_1 > 0$ and $K^*_1, T^*_1 < \infty$ such that if $t_0 > \max \{ T^*_1, 2 T_0 \}$ and $w(t) < w^*_1$ for all $t \in [\frac12 t_0, t_0]$, then
\[ |{\Rm}| < K^*_1 t_0^{-1} \quad \text{on} \quad P(x, t_0, 2 r_0, - (\alpha^*_1 r_0)^2) \quad  \text{for all} \quad x \in \mathcal{G} \cup \MM_{\thick}(t_0) \cup \partial U,
\]
where the parabolic neighborhood $P(x, t_0, r_0, - (\alpha^*_1 r_0)^2)$ is always non-singular.
Moreover, all points of $\mathcal{G} \cup \partial U$ are ${w'_1}^*$-good at scale $r_0$ and time $t_0$.
\end{Claim1}
\begin{proof}
If $x \in \mathcal{G}$, then $x$ is $w_1(\mu^\circ)$-good at scale $r_0$ and time $t_0$ by Lemma \ref{Lem:unwrapfibration}.
If $x \in \partial U$, then $x$ is $w^\#$-good at scale $r_0$ and time $t_0$.
So in these cases, the curvature bound follows from \cite[Proposition \ref{Prop:curvcontrolincompressiblecollapse}]{Bamler-LT-Perelman} for sufficiently large $t_0$ and small $w(t_0)$.
If $x \in \MM_{\thick}(t_0)$, then the curvature bound is a direct consequence of \cite[Proposition \ref{Prop:thickthindec}(d)]{Bamler-LT-Perelman}.
\end{proof}

Now consider the set $\mathcal{G}' \supset \mathcal{G}$ as in Definition \ref{Def:GGp}.
In the next claim, we extend the curvature control onto $\mathcal{G}'$.
Recall that $\partial \mathcal{G}' \subset \partial \mathcal{G} \cup \partial U$.
\begin{Claim2}
There are constants $w^*_2, \alpha^*_2 > 0$ and $K^*_2, T^*_2 < \infty$ such that:
If $t_0 > \max \{ T^*_2, 2 T_0 \}$ and $w(t) < w^*_2$ for all $t \in [\frac12 t_0, t_0]$, then
\[ |{\Rm}| < K^*_2 t_0^{-1} \quad \text{on} \quad P(x, t_0, \alpha^*_2 r_0, - (\alpha^*_2 r_0)^2) \quad \text{for all} \quad x \in \mathcal{G}' \cup \MM_{\thick}(t_0) \cup \partial U, \]
where the parabolic neighborhood $P(x, t_0, \alpha^*_2 r_0, - (\alpha^*_2 r_0)^2)$ is always non-singular.
\end{Claim2}
\begin{proof}
We only need to consider the case in which $x \in \mathcal{G}' \setminus \mathcal{G}$ and $\dist_{t_0} (x, \partial \mathcal{G} \cup \partial U) > r_0$.
Let $N$ be the component of $\mathcal{G}' \setminus \mathcal{G}$ that contains $x$.
Then $\partial N \subset \partial \mathcal{G} \cup \partial U$ and $B(x, t_0, \rho_{r_0} (x, \linebreak[1] t_0)) \subset N$.
So we can apply Lemma \ref{Lem:unwrapfibration}(ii) and (iv) to conclude that for any $\td{x} \in \td{N}$ in the universal cover of $N$ we have $\vol_{t_0} B(\td{x}, t_0, \rho_{r_0} (x, t_0)) \linebreak[1] > w_1(\mu^\circ) \rho^3_{r_0} (x, t_0)$.
This implies that $x$ is $\td{c} w_1(\mu^\circ)$-good at any scale $r \leq r_0$ relative to $N$, for some universal constant $\td{c} > 0$ (see the remarks in \cite[subsection \ref{subsec:goodness}]{Bamler-LT-Perelman}).

Since all points in $\partial N$ survive until time $(1-(\alpha^*_1)^2) t_0$ and all surgeries on $[\frac12 t_0, t_0]$ are trivial, we can extend $N$ to a sub-Ricci flow with surgery $N' \subset \MM$ on the time-interval $[(1-(\alpha^*_1)^2) t_0, t_0]$.
We now apply \cite[Proposition \ref{Prop:curvboundinbetween}]{Bamler-LT-Perelman} for $r_0 \leftarrow \min \{ \alpha_1^*, (K_1^*)^{-1/2} \} r_0$, $U \leftarrow N'$ and $w \leftarrow \td{c} w$ to obtain the desired curvature bound for sufficiently large $t_0$.
\end{proof}

Next, we find a curvature bound in controlled distance to $\mathcal{G}'$, which however deteriorates with larger distances.
\begin{Claim3}
For every $A < \infty$ there are constants $w^*_3 = w^*_3(A), \alpha^*_3 = \alpha^*_3(A) > 0$ and $K^*_3 = K^*_3(A), T^*_3 = T^*_3(A) < \infty$ such that if $t_0 > \max \{ T^*_3, 2 T_0 \}$ and $w(t) < w^*_3$ for all $t \in [\frac12 t_0, t_0]$, then
\[ |{\Rm}| < K^*_3 t_0^{-1} \quad \text{on} \quad P(x, t_0, A r_0, - (\alpha^*_3 r_0)^2) \quad \text{for all} \quad x \in \mathcal{G}' \cup \MM_{\thick}(t_0) \cup \partial U, \]
where the parabolic neighborhoods $P(x, t_0, A r_0, - (\alpha^*_3 r_0)^2)$ are non-singular.
\end{Claim3}
\begin{proof}
The case $x \in \mathcal{G}' \cup \partial U$ can be reduced to the case $x \in \partial \mathcal{G}' \subset \partial \mathcal{G} \cup \partial U$.
In this case the claim follows from \cite[Proposition \ref{Prop:curvcontrolincompressiblecollapse}]{Bamler-LT-Perelman} for $A \leftarrow A+1$ together Claim 1 and a distance distortion estimate.
The case $x \in \MM_{\thick}(t_0)$ follows directly from \cite[Proposition \ref{Prop:thickthindec}(d)]{Bamler-LT-Perelman}.
\end{proof}

Now consider the sets $\mathcal{G}'' \subset \mathcal{G}'$ and $\mathcal{S}'' \subset M$ as introduced in Proposition \ref{Prop:GGpp}.
Recall that $\mathcal{S}''$ is a disjoint union of smoothly embedded solid tori.
The next claim is rather geometric.
It ensures that there are no components of $V_2$ outside of $\mathcal{G}''$ in controlled distance to $\mathcal{G}''$ if $w(t)$ is assumed to be sufficiently small.
\begin{Claim4}
For every $A < \infty$ there is a $w^*_4 = w^*_4(A) > 0$ and a $T^*_4 = T^*_4(A) < \infty$ such that if $t_0 > \max \{ T^*_4, 2 T_0 \}$ and $w(t) < w^*_4(A)$ for all $t \in [\frac12 t_0, t_0]$, then the following holds: \\
For every component $\CC''$ of $\SS''$ there is a component $\CC$ of $V_1$ with $\CC \subset \CC''$ and $\partial \CC'' \subset \partial \CC$.
Moreover, one of the following cases applies:
\begin{enumerate}[label=(\alph*)]
\item $\CC \approx S^1 \times D^2$ or 
\item $\CC \approx T^2 \times I$ and $\CC$ is adjacent to a component $\CC'$ of $V'_2$ that is diffeomorphic to $S^1 \times D^2$ and that is contained in $\CC''$ (this implies that $\partial \CC \setminus \partial \CC'' \subset \partial \CC'$) or
\item $\CC \approx T^2 \times I$ and the boundary components of $\CC$ have time-$t_0$ distance of at least $A r_0$ from each other.
\end{enumerate}
So, in particular, the components of $V_2$ that are not contained in $\mathcal{G}''$ have time-$t_0$ distance of at least $A r_0$ from $\mathcal{G}''$.
\end{Claim4}
\begin{proof}
By Proposition \ref{Prop:GGpp}(e), we only have to consider the case in which $\CC \approx T^2 \times I$ and $\CC$ is adjacent to a component $\CC'$ of $V_2$ on the other side.
Observe that then, again by Proposition \ref{Prop:GGpp}(e), the $S^1$-fibers of $\CC' \cap V_{2, \textnormal{reg}}$ are contractible in $\MM(t_0)$.
Assume that the boundary components of $\CC'$ have time-$t_0$ distance of less than $A r_0$ from each other.
Then we can find points $x_0 \in \partial \mathcal{G} \cup \partial U$ and $x_1 \in \CC'$ with $\dist_{t_0} (x_0, x_1) < A r_0$.
Without loss of generality, we can assume that $x_1 \in \CC' \cap V_{2, \text{reg}}$ (e.g. by assuming $x_1 \in \partial \CC$).
Let $\td{x}_0, \td{x}_1 \in \td{\MM}(t_0)$ be lifts of $x_0, x_1$ in the universal cover with $\dist_{t_0}(\td{x}_0, \td{x}_1) = \dist_{t_0}(x_0, x_1)$.
Using Claim 1, we can deduce a lower bound on $\rho_{r_0} (x_0, t_0)$ and hence find a universal constant $w_1^{**} > 0$ such that $\vol_{t_0} B(\td{x}_0, t_0, r_0) >  w^{**}_1 r_0^3$.
Using Claim 3 (applied with $A \leftarrow 2A+1$), we find a curvature bound on $B(\td{x}_1, t_0, (A+1) r_0)$ for large $t_0$.
So by volume comparison we have $\vol_{t_0} B(\td{x}_1, t_0, \rho_{r_0} (x_1, t_0)) > w^{**}_2 \rho_{r_0}^3(x_1, t_0)$ for some $w_2^{**} = w_2^{**}(A) > 0$.

We will now derive a contradiction to the local collapsedness around $x_1$ for small enough $w(t_0)$.
By Proposition \ref{Prop:MorganTianMain}(c2), there is a universal constant $0 < s = s_2 (\mu^\circ, \ov{r}( \cdot, 1), K_2(\cdot, 1)) < \frac1{10}$ and a subset $U_2$ with
\[ B(x_1, t_0, \tfrac12 s \rho_{r_0} (x_1, t_0)) \subset U_2 \subset B(x_1, t_0, s \rho_{r_0}(x_1, t_0)) \]
that is diffeomorphic to $B^2 \times S^1$ such that the $S^1$-directions are isotopic to the $S^1$-fibers in $\CC' \cap V_{2, \text{reg}}$ and hence contractible in $\MM(t_0)$.
So if $\td{U}_2 \subset \td{\MM}(t_0)$ is the lift of $U_2$ that contains $\td{x}_1$, then the universal covering projection is injective on $\td{U}_2$.
Hence
\begin{multline*}
 \vol_{t_0} B(x_1, t_0, \rho_{r_0}(x_1, t_0))  \geq \vol_{t_0} U_2 = \vol_{t_0} \td{U}_2 \geq \vol_{t_0} B(\td{x}_1, t_0, \tfrac12 s \rho_{r_0}(x_1, t_0)) \\
 \geq \tfrac18 \td{c} s^3 \vol_{t_0} B(\td{x}_1, t_0, \rho_{r_0} (x_1, t_0))
 \geq \tfrac18 \td{c} w_2^{**} s^3 \rho_{r_0}^3 (x_1, t_0).
\end{multline*}
Since $\dist_{t_0} (x_1, \MM_{\thin}(t_0)) < 2 r_0$, we obtain a contradiction if we choose $w_4^*(A) < \frac18 \td{c} w_2^{**}(A) s^3$.
This finishes the proof.
\end{proof}

Next we show that the diameter of each component of $\SS''$ cannot grow too fast on a time-interval of small, but uniform size.
\begin{Claim5}
There is a constant $\alpha^*_5 > 0$ and for every $A < \infty$ there are constants $B^*_5 = B^*_5(A), T^*_5 = T^*_5(A) < \infty$ and $w^*_5(A) > 0$ such that if $t_0 > \max \{ T^*_5, 2 T_0 \}$ and $w(t) < w^*_5$ for all $t \in [\frac12 t_0, t_0]$, then we have: \\
Let $\CC$ be a component of $\SS''$.
Then there is a unique sub-Ricci flow with surgery $N \subset \MM$ on the time-interval $[t_0 - (\alpha^*_5 r_0)^2, t_0]$ with $\CC = N(t_0)$ such that the following holds:
If $\diam_{t_0} N(t_0) > B^*_5 r_0$, then $\diam_t N(t) > A r_0$ for all $t \in [t_0 - (\alpha^*_5 r_0)^2, t_0]$.
\end{Claim5}
\begin{proof}
By Claim 1 and the fact that all surgeries on $[\frac12 t_0, t_0]$ are trivial, we can extend $\CC$ to a sub-Ricci flow with surgery $N \subset \MM$ on the time-interval $[t_0 - (\alpha_1^* r_0)^2, t_0]$.

The rest of the claim is a consequence of \cite[Proposition \ref{Prop:slowdiamgrowth}]{Bamler-LT-Perelman}.
Choose $x_0 \in \partial \CC \subset \mathcal{G} \cup \partial U$.
So $x_0$ is ${w'_1}^*$-good at scale $r_0$.
Let $\tau^* = \min \{ (\alpha^*_1)^2, (K_1^*)^{-1}, \tau_0({w'_1}^*) \}$ where $\tau_0$ is the constant from \cite[Proposition \ref{Prop:slowdiamgrowth}]{Bamler-LT-Perelman}.
Note that by Proposition \ref{Prop:MorganTianMain}(c1$\delta$) and by the fact that $\mu^\circ \leq \frac1{10}$, we have $\partial N (t_0) \subset B(x_0, t_0, \frac1{10} r_0)$.
Using Claim 1, the fact that $\tau^* \leq (K_1^*)^{-1}$ and distance distortion estimates, this implies $\partial N (t) \subset B(x_0, t, r_0)$ for all $t \in [(1- \tau^*) t_0, t_0]$.
We can now apply \cite[Proposition \ref{Prop:slowdiamgrowth}(d)]{Bamler-LT-Perelman} with $U \leftarrow N$, $r_0 \leftarrow r_0$, $x_0 \leftarrow x_0$, $w \leftarrow {w'_1}^*$, $A \leftarrow A$ to conclude that if for any $\tau \in (0, \tau^*]$ we have $N \subset B(x_0, t_0 - \tau r_0^2, A r_0)$, then $\CC = N(t_0) \subset B(x_0, t_0, A'({w'_1}^*, A) r_0)$.
This implies the claim for sufficiently large $t_0$ and small $w(t)$ (depending on $A$).
\end{proof}

Note that without loss of generality we can assume that the functions $w_3^*(A), \linebreak[1] \alpha_3^* (A), \linebreak[1] w_4^*(A), \linebreak[1] w_5^*(A)$ are non-increasing and the functions $K_3^*(A), \linebreak[1] T_3^*(A), \linebreak[1] T_4^*(A), \linebreak[1] B_5^*(A), \linebreak[1] T_5^*(A)$ are non-decre\-asing in $A$.
In the following, we will define the sub-Ricci flows with surgery $S_i$ and the sets $W_i$ and show that they satisfy the assertions (a)--(h).
In order to do this, we will denote the components of $\mathcal{S}''$ by $S''_1, \ldots, S''_{m''}$ and choose a subcollection $S^*_1, \ldots, S^*_m$ of the $S''_1, \ldots, S''_{m''}$ in the next paragraph.
The final time-slices  $S_1(t_0), \ldots, S_m(t_0)$ will arise from the sets $S^*_1, \ldots, S^*_m$ by removing a collar neighborhood of diameter $\leq 1.5 r_0$.
Fix from now on the constant $L$ and assume that $L > 102$.

Assume first that $t_0 > \max \{ T_1^*, T_3^*(L+2), 2 T_0 \}$ and $w(t) < \min \{ w_1^*, w_3^*(L+2) \}$ for all $t \in [\frac12 t_0, t_0]$.
If $d_i = r_0^{-1} \diam_{t_0} S''_i < L+2$ for some $i$, then by Claim 3, all points in $S''_i$ survive until time $t_0 - (\alpha^*_3(d_i) r_0)^2$ and we have $|{\Rm}| < K_3^*(d_i) t_0^{-1}$ on $S''_i \times [t_0 - (\alpha^*_3(d_i) r_0)^2, t_0]$.
Given the fact that the sets $S_1, \ldots, S_m$ are chosen in the way described above, this establishes the second part of assertion (c) for $K' (d) = K_3^*(d+2)$ and $\tau'_1 (d) = (\alpha^*_3 (d+2))^2$.
Moreover, assuming $\tau_1 < \tau'_1(102)$, we can remove all $S''_i$ with $\diam_{t_0} \leq 102 r_0$ and define the sets $S^*_1, \ldots, S^*_m$ to be the sets $S''_i$ with $\diam_{t_0} S''_i > 102 r_0$.
So, by a reapplication of Claim 3 assertion (b) is verified and by Claim 1 the second part of assertion (h) is true for some small but universal $\tau_1$.
Also, the first part of assertion (c) holds.
Note that by assertion (b) and the fact that the surgeries on $[\frac12 t_0, t_0]$ are trivial, we can extend every set $S^*_i$ to a sub-Ricci flow with surgery on the time-interval $[(1-\tau_1) t_0, t_0]$, which we will in the following also denote by $S^*_i \subset \MM$.

Now assume that also $t_0 > T_4^*(L+2)$ and $w(t) < w_4^*(L+2)$ for all $t \in [\frac12 t_0, t_0]$.
For each $S^*_i$ there is a component $\CC_i$ of $V_1$ that is contained in $S^*_i (t_0)$ and that shares a boundary with it.
Consider the cases (a)--(c) from Claim 4.
In cases (b), (c) we set $P_i = \CC_i$.
In case (a), we can apply Proposition \ref{Prop:MorganTianMain}(c1)($\alpha$) to find a torus structure $P_i \subset \CC_i$ such that $\partial \CC_i \subset \partial P_i$ and such that $\diam_{t_0} \CC_i \setminus P_i \leq \frac1{10} r_0$.
Observe that in all cases, the torus structure $P_i$ has width $\leq \mu^\circ r_0 \leq \frac1{10} r_0$.
In case (c) it has length $\geq (L+3) r_0$ by Claim 4 and in cases (a), (b) it has length $> \diam_{t_0} S^*_i(t_0) - \diam_{t_0} (S^*_i(t_0) \setminus P_i) - \frac1{10} r_0 > \diam_{t_0} S^*_i (t_0) - \frac2{10} r_0$ at time $t_0$.
Chop off $P_i$ on both sides such that the new boundary tori have distance of exactly $r_0$ from the corresponding boundary tori of $P_i$ and call the result $P'_i$.
Then define $S_i (t_0)$ to be the union of $P'_i$ with the component of $S^*_i(t_0) \setminus P'_i$ whose closure is diffeomorphic to a solid torus.
By assertion (b) we can extend $S_i(t_0)$ to a sub-Ricci flow with surgery $S_i \subset \MM$ on the time-interval $[(1-\tau_1) t_0, t_0]$.
Note that in all cases the torus structure $P'_i$ has length $\geq \min \{ L r_0, \diam_{t_0} S_i (t_0) - 2 r_0 \}$ at time $t_0$.
We can hence chop off $P'_i$ on the side that is contained in the interior of $S_i(t_0)$ and produce a torus structure $P''_i$ of width $\leq \frac1{10} r_0$ and length $= \min \{ L r_0, \diam_{t_0} S_i (t_0) - 2 r_0 \}$.
Let $W_i$ be the closure of $S_i (t_0) \setminus P''_i$.
Then assertion (e) holds.
Moreover, the first part of assertion (h) follows from Proposition \ref{Prop:MorganTianMain}(c1).
Assertion (a) is clear.
Observe also that $\diam_{t_0} S^*_i(t_0) - \frac{11}{10} r_0 < \diam_{t_0} S_i(t_0) \leq \diam_{t_0} S^*_i(t_0)$.

We discuss assertion (f).
Let $x \in P''_i$.
By Lemma \ref{Lem:unwrapfibration}(ii) and (iii), we conclude that $x$ is $w_1(\mu^\circ)$-good at scale $r_0$ and time $t_0$ relative to $P_i$.
Since $B(x, t_0, \linebreak[1] \rho_{r_0} (x, \linebreak[1] t_0)) \linebreak[1] \subset \linebreak[1] P_i$ this implies that $x$ is also locally $w_1(\mu^\circ)$-good at scale $r_0$ and time $t_0$.

Next we establish assertion (d).
Observe that choosing $\tau_1$ small enough, we have at least $\diam_t S_i(t) > 50 r_0$ for all $t \in [(1-\tau_1) t_0, t_0]$ by assertion (b) and the fact that $\diam_{t_0} S_i (t_0) > 100 r_0$.
Assume now that $t_0 > T_5^*(L)$ and $w(t) < w_5^*(L)$ for all $t \in [\frac12 t_0, t_0]$.
By Claim 5 for $\CC = S_i(t_0)$ we conclude that for all $A \leq L$ we have: if $\diam_{t_0} S^*_i (t_0) > B_5^*(A) r_0$, then $\diam_t S^*_i (t) > A r_0$ for all $t \in [t_0 - (\alpha^*_5 r_0)^2, t_0]$.
So assertion (d) holds for the function
\[ \Delta_1 (d) = \sup \{ A > 0 \;\; : \;\; B_5^*(A+2) < d \} \cup \{ 50 \}. \]
Note that $\Delta_1$ is monotonically non-decreasing and $\lim_{d \to \infty} \Delta_1(d) = \infty$.

Finally, we prove assertion (g).
Assume that $t_0 > T_3^* (2L+10)$ and that $w(t) < w_3^*(2L + 10)$ for all $t \in [\frac12 t_0, t_0]$.
Let $x \in P''_i$ and choose an arbitrary point $x_0 \in \partial S^*_i(t_0)$.
Let $\td{x}$, $\td{x}_0$ be lifts of $x$, $x_0$ in the universal cover $\td{\MM}(t_0)$ with $\dist_{t_0} (\td{x}, \td{x}_0) = \dist_{t_0} (x, x_0)$.
As in the proof of Claim 4 we have
\[ \vol_{t_0} B(\td{x}_0, t_0, r_0) > w_1^{**} r_0^3 \]
for some universal $w_1^{**} > 0$.
By Claim 3, we have curvature control $| {\Rm_{t_0}} | < K_3^*(2 L+10) t_0^{-1}$ on $B(x, t_0, (L+5) r_0) \subset B(x_0, t_0, (2 L+10) r_0)$.
In particular, there is a $\rho^* = \rho^*(L) > 0$ such that $\rho_{r_0}(x, t_0) > \rho^* r_0$.
Without loss of generality, we can assume that $\nu < \min \{ \rho^*, 1 \}$.
Hence, by volume comparison there is some $c^* = c^*(L) > 0$ such that
\begin{multline*}
 \vol_{t_0} B(\td{x}, t_0, \nu r_0) \geq \nu^3 c^* \vol_{t_0} B(\td{x}, t_0, (L+5)r_0) \\ > \nu^3 c^* \vol_{t_0} B(\td{x}_0, t_0, r_0) > \nu^3 c^* w_1^{**} r_0^3.
\end{multline*}
On the other hand, 
\[ \vol_{t_0} B(x, t_0, \nu r_0) < \vol_{t_0} B(x, t_0, \rho_{r_0}(x,t_0)) < w(t_0) \rho_{r_0}^3(x_0,t_0) < w(t_0) r_0^3. \]
Assume first that there is no loop based at $x$ that is non-contractible in $\MM(t_0)$ and has length $< \nu r_0$.
Then
\[ w(t_0) r_0^3 > \vol_{t_0} B(x, t_0, \nu r_0) = \vol_{t_0} B(\td{x}, t_0, \nu r_0) > \nu^3 c^* w_1^{**} r_0^3. \]
So if $w(t_0) < \nu^3 c^* w_1^{**}$, we obtain a contradiction.
We conclude that if $w(t_0)$ is sufficiently small depending on $L$ and $\nu$, there is a non-contractible loop $\sigma \subset \MM(t_0)$ based at $x$ that has length $\ell_{t_0} (\sigma) < \nu r_0$.
This implies $\sigma \subset P_i \subset S^*_i (t_0)$ and hence $\sigma$ is even incompressible in $\MM(t_0)$.
\end{proof}

In the second step, Proposition \ref{Prop:firstcurvboundstep2}, we extend the uniform curvature control from Lemma \ref{Lem:firstcurvboundstep1}(b) further into the regions $S_i(t_0) \setminus W_i(t_0)$.
The following proposition will mostly use the same notation as the previous Lemma \ref{Lem:firstcurvboundstep1} and the assertions of this proposition will roughly correspond to those of Lemma \ref{Lem:firstcurvboundstep1}.
After stating the proposition, we will explain its most important innovations.

\begin{Proposition}[second step] \label{Prop:firstcurvboundstep2}
There are a positive continuous function $\delta : [0, \infty) \to (0, \infty)$, constants $K_2 < \infty$, $\tau_2 > 0$ and functions $\Lambda_2, K'_2, \tau'_2 : (0, \infty) \to (0, \infty)$ with the property that $\tau'_2$ is non-increasing, $K'_2$ and $\Lambda_2$ are non-decreasing and $\Lambda_2(d) \to \infty$ as $d \to \infty$ such that: \\
For every $L < \infty$ and $\nu > 0$ there are constants $T_2 = T_2(L) < \infty$, $w_2 = w_2 (L, \nu) > 0$ such that: \\
Let $\MM$ be a Ricci flow with surgery on the time-interval $[0, \infty)$ with normalized initial conditions that is performed by $\delta(t)$-precise cutoff.
Consider the constant $T_0 < \infty$ and the function $w : [T_0, \infty) \to (0, \infty)$ obtained in \cite[Proposition \ref{Prop:thickthindec}]{Bamler-LT-Perelman} and assume that
\begin{enumerate}[label=(\roman*)]
\item $r_0^2 = t_0 \geq \max \{ 4 T_0, T_2 \}$,
\item $w(t) < w_2$ for all $t \in [\frac14 t_0, t_0]$,
\item all components of $\MM(t_0)$ are irreducible and not diffeomorphic to spherical space forms and all surgeries on the time-interval $[\frac14 t_0, t_0]$ are trivial.
\end{enumerate}
Then there are sub-Ricci flows with surgery $S_1, \ldots, S_m \subset \MM$ on the time-interval $[(1-\tau_2) t_0, t_0]$ such that $S_1(t_0), \ldots, \linebreak[1] S_m(t_0)$ is a collection of pairwise disjoint, incompressible solid tori in $\MM(t_0)$ and there are sub-Ricci flows with surgery $W_i \subset S_i$ ($i=1,\ldots, m$) on the time-interval $[(1-\tau_2) t_0, t_0]$ such that for all $i = 1, \ldots, m$:
\begin{enumerate}[label=(\alph*)]
\item The pair $(S_i (t_0), W_i (t_0))$ is diffeomorphic to $(S^1 \times D^2(1), S^1 \times D^2(\frac12))$.
\item The set $\MM (t_0) \setminus (W_1 (t_0) \cup \ldots \cup W_m (t_0))$ is non-singular on the time-interval $[(1 - \tau_2) t_0, t_0]$ and
\[ \qquad \qquad | {\Rm} | < K_2 t_0^{-1} \qquad \text{on} \qquad \big(\MM (t_0) \setminus (W_1(t_0) \cup \ldots \cup W_m (t_0)) \big) \times [(1-\tau_2) t_0, t_0]. \]
\item If $\diam_{t_0} S_i (t_0) \leq L r_0$, then $S_i$ is non-singular on the time-interval $[(1- \tau'_2( r_0^{-1} \diam_{t_0} S_i ) ) t_0, t_0 ]$ and we have the curvature bound
\[  \qquad\quad |{\Rm}| < K'_2 ( r_0^{-1} \diam_{t_0} S_i ) t_0^{-1}  \quad \text{on} \quad S_i (t_0) \times  [(1- \tau'_2( r_0^{-1} \diam_{t_0} S_i (t_0) ) ) t_0, t_0 ].
\]
\item The set $S_i(t_0) \setminus \Int W_i(t_0)$ is a torus structure of width $\leq r_0$ and length
\[ \dist_{t_0} (\partial S_i (t_0), \partial W_i (t_0)) = \min \big\{ \Lambda_2 ( r_0^{-1} \diam_{t_0} S_i (t_0)), L \big\} r_0. \]
\item For every point $x \in S_i (t_0) \setminus W_i (t_0)$, there is a loop $\sigma \subset \MM(t_0)$ based at $x$ that is incompressible in $\MM(t_0)$ and has length $\ell_{t_0}(\sigma) < \nu r_0$.
\end{enumerate}
\end{Proposition}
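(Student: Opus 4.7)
I would apply Lemma \ref{Lem:firstcurvboundstep1} with substantially enlarged parameters $L^* = L^*(L) \gg L$ and $\nu^* = \nu^*(L,\nu) \ll \nu$, both chosen at the end, producing sub-Ricci flows with surgery $S^*_1,\ldots,S^*_{m^*} \subset \MM$ on $[(1-\tau_1)t_0,t_0]$ and subsets $W^*_i \subset S^*_i(t_0)$ with all properties (a)--(h) of the first step, including the ambient curvature bound $|{\Rm}| < K_1 t_0^{-1}$ outside of $S^*_1(t_0) \cup \cdots \cup S^*_{m^*}(t_0)$. For those indices $i$ with $\diam_{t_0} S^*_i(t_0) \leq Lr_0$, assertion (c) of the first step already gives full curvature control on $S^*_i$, so I set $S_i = S^*_i$ with $W_i \subset S_i$ a suitable solid sub-torus, and assertions (a)--(e) hold immediately on these components. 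The core work concerns those large $S^*_i$ with $\diam_{t_0} S^*_i(t_0) > Lr_0$, for which the ambient bound must be propagated inward through the collar $S^*_i(t_0) \setminus W^*_i$ of length $L^*r_0$.

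For such a large $S^*_i$, let $x$ lie inside the collar at $t_0$-distance at least $\tfrac{1}{10}L^*r_0$ from both $\partial S^*_i(t_0)$ and $\partial W^*_i$. The cross-sectional $2$-torus through $x$ has a longitudinal generator represented by the short incompressible loop $\sigma \subset \MM(t_0)$ of length $<\nu^* r_0$ produced by Lemma \ref{Lem:firstcurvboundstep1}(g), and a meridional generator which is contractible in $S^*_i$ only via a disk intersecting $W^*_i$. I would choose a neighborhood $N$ of $x$ with $B(x,t_0,r_0) \subset N \subset \MM(t_0)$ and $N \cap W^*_i = \emptyset$; both torus generators then remain non-trivial in $\pi_1(N)$, because any contracting disk in $\MM$ for the meridian must cross $\partial W^*_i$, and the longitude is non-trivial in $\pi_1(\MM)$ by incompressibility of $S^*_i$. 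Lifting $x$ to $\td{x}$ in the universal cover $\td{N}$ and running the strainer argument of the proof of Lemma \ref{Lem:unwrapfibration} cases (ii)--(iii), I obtain a universal constant $w' > 0$ with $\vol_{t_0} B(\td{x},t_0,r_0) > w' r_0^3$, so that $x$ is $w'$-good at scale $r_0$ relative to $N$ in the sense of \cite[Definition \ref{Def:goodness}]{Bamler-LT-Perelman}.

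With local goodness established uniformly on the inner portion of each large collar at time $t_0$, I would invoke \cite[Proposition \ref{Prop:curvboundnotnullinarea}]{Bamler-LT-Perelman} (``curvature control in large regions that are locally good everywhere''), using the bound $|{\Rm}| < K_1 t_0^{-1}$ on the complement of $S^*_i$ from the first step as the geometric boundary data. This yields a bound $|{\Rm}| < K_2 t_0^{-1}$ on an inward-offset torus collar at time $t_0$. To propagate this to a time-interval $[(1-\tau_2)t_0,t_0]$, I invoke Lemma \ref{Lem:firstcurvboundstep1}(d), (h): the lower bound $\diam_{t'} S^*_i(t') > \Delta_1(d)r_0$ at earlier times and the persistence of a geometrically controlled toroidal collar allow the argument of the previous paragraph to be re-run at every $t' \in [(1-\tau_2)t_0, t_0]$, at the cost only of a mild deterioration in $w'$ which is absorbed into the choice of $\tau_2$. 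Defining $W_i \subsetneq W^*_i$ to be a slightly enlarged solid torus whose complement in $S^*_i(t_0)$ is a torus structure of length $\min\{\Lambda_2(d),L\}r_0$, and renaming $S^*_i$ as $S_i$, the assertions (a)--(e) then follow by construction, with $\nu^*$ chosen small enough to yield (e) from the first step.

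The central obstacle will be verifying non-triviality of the meridian in $\pi_1(N)$: one must check that every disk in $\MM$ bounding the meridian truly enters $W^*_i$ and cannot be rerouted outside $S^*_i$ or through a nearby region, which relies on incompressibility of $S^*_i$ in $\MM$ together with the fact that $W^*_i$ is itself a solid torus whose longitudinal direction coincides with that of $S^*_i$. All subsequent steps reduce to parameter management, in which $L^*, \nu^*, \Lambda_2, K'_2, \tau'_2$ must be chosen consistently so that the collar is long enough for the strainer comparison to succeed, yet short enough that the deteriorated constants remain uniform in $L$ and $\nu$.
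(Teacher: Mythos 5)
Your overall skeleton matches the paper's: apply Lemma \ref{Lem:firstcurvboundstep1} with enlarged parameters, dispose of the small solid tori via its assertion (c), establish local goodness in the long collars of the large ones, and conclude with \cite[Proposition \ref{Prop:curvboundnotnullinarea}]{Bamler-LT-Perelman}. At time $t_0$ your strainer argument is in fact unnecessary, since Lemma \ref{Lem:firstcurvboundstep1}(f) already asserts that all points of $S_i(t_0)\setminus W_i$ are locally $\ov{w}$-good at scale $r_0$ and time $t_0$; the issue you flag as the ``central obstacle'' (non-triviality of the meridian) is not where the difficulty lies.

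The genuine gap is in the backwards-in-time propagation, which you dismiss as ``a mild deterioration in $w'$ absorbed into the choice of $\tau_2$.'' To feed \cite[Proposition \ref{Prop:curvboundnotnullinarea}]{Bamler-LT-Perelman} you need the goodness-or-curvature-bound dichotomy at \emph{every} time $t'\in[(1-\tau_2)t_0,t_0]$ at points deep inside the collar, say at distance $\sim L^* r_0$ from $\partial S_i(t')$. None of the time-$t_0$ structure transports there: the short loops from (g) and the thin cross-sectional tori live at time $t_0$, and you have no curvature bound on the collar interior at earlier times (that bound is precisely what is being proved), so their lengths at time $t'$ are uncontrolled. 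Assertion (h) only provides a fibered collar of depth $2\sqrt{t}$ next to $\partial S_i(t')$, far short of where you need it. The only source of structure at time $t'$ is a fresh application of Lemma \ref{Lem:firstcurvboundstep1} with $U\leftarrow S_i(t')$ (this is exactly what (h) is designed to enable), and this produces a \emph{new} collection of solid tori $S^*_j(t')\subset S_i(t')$ with no a priori relation to $S_i$ and $W_i$: some could sit close to $\partial S_i(t')$, have small diameter and hence short collars, destroying the goodness exactly where you need it. Ruling this out requires the continuity argument of the paper (Claims 1--2 of its proof): one shows that $S_i(t)$ retains torus collars of width $\leq 2r_0$ and controlled length for all $t$ in the interval, by a maximal-time argument that combines the curvature bound near wide cross-sections with the lower diameter bound of Lemma \ref{Lem:firstcurvboundstep1}(d) to force any offending $W^*_j$ far from $\partial S_i(t)$. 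Your proposal names the ingredients (d) and (h) but omits this assembly, and without it the argument does not close.
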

The most important statement of this proposition is the fact that the uniform curvature bound in (b) also holds on $S_i (t_0) \setminus W_i (t_0)$ and on a time-interval whose size does not depend on $r_0^{-1} \diam_{t_0} S_i(t_0)$.
Since this bound enables us to estimate the metric distortion of the regions $S_i(t_0) \setminus W_i(t_0)$ on this time-interval, we don't need to list the lower diameter estimate from Lemma \ref{Lem:firstcurvboundstep1}(d).
We have also omitted the statement from Lemma \ref{Lem:firstcurvboundstep1}(f) since we won't make use of it anymore.

Observe that we cannot only establish the curvature bound in assertion (c) on a time-interval of universal size and that the length of the torus structure in assertion (d) cannot be bounded from below by a constant depending on the diameter of $S_i$ at time $(1-\tau_2) t_0$.
The reason for this has to do with the fact that the geometry on $S_i$ could be close to that of a cigar soliton times $S^1$.
In fact, after rescaling by a proper constant, regions of the cigar soliton of large diameter can shrink rapidly under the Ricci flow.
It is the content of \cite[Proposition \ref{Prop:slowdiamgrowth}]{Bamler-LT-Perelman}, which we have applied in the proof of Lemma \ref{Lem:firstcurvboundstep1}, that however the opposite behavior cannot occur, i.e. regions of bounded diameter can not grow too fast in a short time.

The main idea behind the following proof is quite straight-forward:
We will choose the constant $\tau_2$ in such a way that we can conclude by Lemma \ref{Lem:firstcurvboundstep1}(d) that whenever a solid torus $S_i (t_0)$ has large normalized diameter at time $t_0$, then $S_i (t)$ also has large normalized diameter at all times $t \in [(1-2\tau_2) t_0, t_0]$.
We will then apply Lemma \ref{Lem:firstcurvboundstep1}(f) at all times $t \in [(1-2\tau_2) t_0, t_0]$ (with $U \leftarrow S_i (t)$) and deduce that the points in a collar of $S_i (t)$ are locally $\ov{w}$-good at scale $r_0$ or the curvature is bounded there.
The desired curvature bound on $S_i (t) \setminus W_i (t)$ for all $t \in [(1-\tau_2) t_0, t_0]$ now follows using \cite[Proposition \ref{Prop:curvboundnotnullinarea}]{Bamler-LT-Perelman} (``Curvature control in large regions that are locally good everywhere'').

However, our approach has the following caveat:
Whenever we apply Lemma \ref{Lem:firstcurvboundstep1} at some time $t \in [(1-2\tau_2) t_0, t_0]$, we obtain a new set of solid tori $S^*_1(t), \ldots, \linebreak[1] S^*_{m^*}(t)$ within $S_i (t)$.
A priori, there is no relation between these solid tori with the ambient solid torus $S_i(t)$, which we obtained when we applied Lemma \ref{Lem:firstcurvboundstep1} at time $t_0$.
For example, there could be several $S^*_i (t)$, some of those solid tori could be very far away from $\partial S_i (t)$ and have small diameter and hence short collar torus structures.

In order to gain more geometric understanding on the location of these solid tori, we will first show that $S_i (t)$ has long torus collars of width $\leq 2 r_0$ for all $t \in [(1-2\tau_2) t_0, t_0]$.
This fact will follow from a continuity argument:
We will start at time $t_0$ and observe the behavior of the long torus structure $S_i(t_0) \setminus \Int W_i$ of width $\leq r_0$ when going backwards in time.
Consider the first (i.e. maximal) time $t \in [(1-2\tau_2) t_0, t_0]$ at which this torus structure is destroyed, e.g. because a region within this torus structure develops a width that is bigger than some time-dependent threshold between $r_0$ and $2r_0$.
Then there are two cases:
In the first case this region is close to the complement of the solid tori $S^*_i (t)$ obtained when applying Lemma \ref{Lem:firstcurvboundstep1} at time $t$.
And in the second case it has to be close to one of the solid tori $W^*_i$ obtained by this application.
In the first case, we have a curvature bound around the region at which the width of the torus structure is too large.
So at some slightly later time $t' > t$ this width would be large as well, contradicting the maximal choice of $t$.
In the second case, the diameter of the corresponding $S^*_i (t)$ can be bounded and $W^*_i$ has to ``cap off'' the torus structure in question.
This fact would imply an upper diameter bound on $S_i (t)$ that contradicts the lower bound from Lemma \ref{Lem:firstcurvboundstep1}(d).

Having established the existence of long torus collars of $S_i (t)$ for all $t \in [(1-2\tau_2) t_0, t_0]$, we now have enough geometric control to conclude that whenever we apply Lemma \ref{Lem:firstcurvboundstep1} at some time $t \in [(1-2\tau_2) t_0, t_0]$, then the resulting solid tori $S^*_i (t)$ or at least the smaller solid tori $W^*_i$ stay far enough away from $\partial S_i (t)$.
So at every point of $S_i (t)$ within a certain distance of $\partial S_i (t)$ the curvature is either bounded or the $\ov{w}$-goodness condition holds at scale $r_0$.
Eventually, we can apply \cite[Proposition \ref{Prop:curvboundnotnullinarea}]{Bamler-LT-Perelman}.

\begin{proof}
We will fix several constants and functions that we will use in the course of the proof.
First we choose $\delta(t)$ such that it is bounded by the corresponding functions from Lemma \ref{Lem:firstcurvboundstep1} and \cite[Proposition \ref{Prop:curvboundnotnullinarea}]{Bamler-LT-Perelman}.
Consider moreover the functions $\Delta_1$, $K'_1$, $\tau'_1$ from Lemma \ref{Lem:firstcurvboundstep1} and choose $D^* < \infty$ such that
\[ D^* > 100 \qquad \text{and} \qquad \Delta_1(D^*) > 100. \]
Define the functions $L_1^*, \ldots, L_5^*(d) : [D^*, \infty) \to (1, \infty)$ by
\begin{alignat*}{1}
 L_1^*(d) &= \tfrac14 \Delta_1(d) - 10  \displaybreak[1] \\ 
 L_2^*(d) &= \tfrac12 \min \{ d-3, L_1^*(d) \}  \displaybreak[1] \\
 L_3^* (d) &= \min \{ L_1^*(d) - 1, L_2^* (d) \}  \displaybreak[1] \\
 L_4^* (d) &= L_3^*(d) - 1  \displaybreak[1] \\
 L_5^* (d) &= \tfrac12 L_4^*(d)
\end{alignat*}
Observe, that $L_1^*, \ldots , L_5^*$ are continuous, monotonically non-decreasing and $L^*_i (d) \to \infty$ as $d \to \infty$.
Using these functions we define
\[ \Lambda_2 (d) = \begin{cases} \min \{ L^*_5(d) - 1, d-2 \} & \text{if $d \geq D^*$} \\ 1 & \text{if $d < D^*$} \end{cases} \]
Then $\Lambda_2(d)$ is also non-decreasing and $\Lambda_2(d) \to \infty$ as $d \to \infty$.

Given the constant $L$, we pick
\[ L^\circ = L^\circ (L) > \max \{ D^*, 10L + 100 \}. \]
Finally, using the constants $w_1$ and $T_1$ from Lemma \ref{Lem:firstcurvboundstep1}, we assume
\[ w_2 (L, \nu) < w_1 (L^\circ, \nu) \qquad \text{and} \qquad T_2 (L, \nu) > 2 T_1 (L^\circ, \nu) . \]

We first apply Lemma \ref{Lem:firstcurvboundstep1} at time $t_0 \leftarrow t_0$ with $U \leftarrow \MM(t_0)$, $L \leftarrow L^\circ$ and $\nu \leftarrow \nu$.
We obtain sub-Ricci flows with surgery on the time-interval $[(1-\tau^*_0) t_0, t_0]$, which we denote by $S'_1, \ldots, S'_{m'} \subset \MM$ and subsets, which we denote by $W''_i \subset S'_i (t_0)$ for $i = 1, \ldots, m'$.
By Lemma \ref{Lem:firstcurvboundstep1}(c), if $\diam_{t_0} S'_i (t_0) \leq D^* r_0$, then $S'_i$ is non-singular on the time-interval $[(1-\tau'_1(D^*)) t_0, t_0]$ and we have a curvature bound there.
Let now $S_1, \ldots, S_m$ be the subcollection of the $S'_1, \ldots, S'_{m'}$ for which $d_i = r_0^{-1}\diam_{t_0} S'_i (t_0) > D^*$ and pick the sets $W'_i \subset S_i(t_0)$ accordingly.
Consider the torus structures $P'_i = \Int S_i(t_0) \setminus W'_i$ of width $\leq r_0$ and length 
\[ \min \{ d_i - 2, L^\circ \} r_0 \geq \min \{ \Lambda_2(d_i), L + 1\} r_0. \]
Chop off each $P'_i$ on the side that is not adjacent to $\partial S_i (t_0)$ producing torus structures $P_i$ of width $\leq r_0$ and length exactly $\min \{ \Lambda_2(d_i), L \} r_0$.
Set $W_i = \Int S_i(t_0) \setminus \Int P_i$.
We will later be able to extend $W_i$ to a sub-Ricci flow with surgery on a small, but uniform time-interval. 

\begin{Claim0}
There are universal constants $\tau^*_0, w^*_0 > 0$ and $K^*_0 < \infty$ such that: \\
For all $x \in \MM(t_0)$ with $\dist_{t_0} (x, \MM(t_0) \setminus (S_1(t_0) \cup \ldots \cup S_m(t_0))) \leq 100 r_0$ the point $(x, t_0)$ survives until time $(1-\tau^*_0) t_0$ and
\[
|{\Rm}|(x,t) < K^*_0 t_0^{-1} \qquad \text{for all} \qquad t \in [(1-\tau^*_0) t_0, t_0].
\]
Moreover, assertions (a), (c), (d) and (e) of this Proposition hold.
\end{Claim0}

\begin{proof}
The first statement is a direct consequence of Lemma \ref{Lem:firstcurvboundstep1}(b) and (c).
Here we assume that $\tau^*_0 < \min \{ \tau_1, \tau'_1(D^*) \}$ and $K^*_0 > \max \{ K_1, K'_1(D^*) \}$.

Assertion (a) is clear and assertion (c) is a consequence of Lemma \ref{Lem:firstcurvboundstep1}(c).
Assertion (d) follows by the choice of $\Lambda_2$ and assertion (e) by Lemma \ref{Lem:firstcurvboundstep1}(g) and the fact that $W'_i(t_0) \subset W_i$.
\end{proof}

So it remains to extend the curvature bound from Claim 0 to the subsets $S_i (t_0) \setminus W_i$ on a uniform time-interval.
The proof of this fact will involve the application of Lemma \ref{Lem:firstcurvboundstep1} at times $t \in [(1-\tau_0^*) t_0, t_0]$ for $U \leftarrow S_i(t)$, $L \leftarrow L^\circ$ and $\nu \leftarrow \nu$.
By assertion (h) from the previous application of Lemma \ref{Lem:firstcurvboundstep1}, the extra conditions of Lemma \ref{Lem:firstcurvboundstep1} in the solid torus case are satisfied.
The remaining conditions hold by the choice of $w_2$ and $T_2$.

The desired curvature bound is established in the following Claims 1--5.
In Claims 1--3 we will first derive a local goodness bound for points in controlled distance from $\partial S_i(t)$ for any $t$ of a uniform time-interval.
An important tool will hereby be the notion of ``torus collars of length up to'' a certain constant as introduced in Definition \ref{Def:toruscollars}.
In Claim 4 we will derive a curvature bound using this local goodness bound together with \cite[Proposition \ref{Prop:curvboundnotnullinarea}]{Bamler-LT-Perelman}.
Claim 5 will translate this result into the final form.

In the following, fix some $i = 1, \ldots, m$ and recall that $d_i = r_0^{-1} \diam_{t_0} S_i(t_0) > D^*$.

\begin{Claim1}
There are universal constants $K_1^* < \infty$ and $0 < \tau_1^* < \tau^*_0$ such that for all $t \in [(1-\tau_1^*) t_0, t_0]$ the following holds:
Consider numbers
\[ 0 < \td{L} \leq \min \{ L_1^*(d_i), 4 (L+2) \}, \qquad 1 \leq a \leq 2 \]
and assume that $S_i(t)$ does not have torus collars of width $\leq ar_0$ and length up to $\td{L} r_0$, but it has torus collars of width $\leq a r_0$ and length up to $(\td{L}-1) r_0$ if $\td{L} > 1$. \\
Then $|{\Rm}| (x, t) < K_1^* t_0^{-1}$ for all $x \in S_i(t)$ with $\dist_t (x, \partial S_i(t)) < (\td{L} + 10) r_0$.
\end{Claim1}
\begin{proof}
Observe first that in the case $\td{L} \leq 1$ we are done by Claim 0 and a sufficiently small choice of $\tau^*_1$.
So assume in the following that $\td{L} > 1$.

Assume that $\tau_1^* < \tau_0^*$ and fix some $t \in [(1-\tau_1^*) t_0, t_0]$.
So we can apply Lemma \ref{Lem:firstcurvboundstep1} at time $t$ with $U \leftarrow S_i(t)$ and $L \leftarrow L^{\circ}$ and obtain the sub-Ricci flows with surgery $S_1^*, \ldots, S^*_{m^*} \subset \MM$ and the subsets $W^*_i \subset S^*_i( t)$.
Observe that the parameter $r = \sqrt{t}$ changes only slightly, i.e. we may assume that for the right choice of $\tau_1^*$ we have $0.9 r_0 < r \leq r_0$.
Moreover, we assume that $\tau^*_1$ is chosen small enough that $\diam_t \partial S_i(t) < 2 \diam_{t_0} \partial S_i(t) \leq2 r_0$.

If $\dist_t (S^*_{i^*}(t), \partial S_i (t)) \geq (\td{L} - 30) r_0$ for all $i^* = 1, \ldots, m^*$, then we are done by Lemma \ref{Lem:firstcurvboundstep1}(b) applied at time $t$.
So all we need to do is to assume that there is an $i^* \in \{ 1, \ldots, m^* \}$ with 
\begin{equation} \label{eq:torcollarcontradiction}
 \dist_t (S_{i^*}^*(t), \partial S_i(t)) < (\td{L} - 30) r_0
\end{equation}
and derive a contradiction.

Observe that by Lemma \ref{Lem:firstcurvboundstep1}(c) and (e) applied at time $t$ we have $\dist_t (\partial S^*_{i^*} (t), \linebreak[1] \partial W^*_{i^*}) \geq \min \{ 97 r, L^\circ r \} \geq 50 r_0$.
So we can choose a point $y \in S_{i^*}^*(t) \setminus W^*_{i^*}$ such that $\dist_t( y, \partial S^*_{i^*}(t)) = 3 r_0$.
Then we have at least $\dist_t (y, \partial S_i(t)) < (\td{L} - 20)r_0$ and hence by our assumption, there is a set $P \subset S_i(t)$ that is bounded by $\partial S_i(t)$ and a torus $T \subset S_i(t)$ with $y \in T$ and $\diam_t T \leq a r_0 \leq 2 r_0$.
By the choice of $y$ we have $T \subset S_{i^*}^*(t)$.
This implies
\begin{equation} \label{eq:SpPSs}
 S_i(t) = P \cup S_{i^*}^*(t)
\end{equation}
and we conclude, using assertion (d) of Lemma \ref{Lem:firstcurvboundstep1} applied at time $t$, that
\[ \diam_t S^*_{i^*} (t) + \diam_t P \geq \diam_t S_i(t) > \min \{ \Delta_1(d_i), L^{\circ} \} r_0. \]
Observe now that by Lemma \ref{Lem:collardiameter}, we know that
\begin{equation} \label{eq:diamboundonP}
 \diam_t P < (\td{L} - 20 + 4a) r_0 < (\td{L} - 10) r_0 < \min \{ L_1^*(d_i), \linebreak[1] 4 (L+2) \} r_0.
\end{equation}
So using the fact that $r \leq r_0$, we obtain
\begin{multline*}
 \diam_t S^*_{i^*}(t) > \big( \min \{ \Delta_1(d_i), L^{\circ} \} - \min \{ L_1^* (d_i), 4(L+2) \}  \big) r_0 \\
 \geq \min \big\{ \Delta_1(d_i) - L_1^*(d_i) , L^\circ - 4 (L + 2) \big\} r .
\end{multline*}
Observe that the right hand side is larger than $10 r$.
We conclude further using Lemma \ref{Lem:firstcurvboundstep1}(e) applied at time $t$ that
\begin{multline}
 \dist_t (W^*_{i^*}, \partial S_i(t)) \geq \dist_t (\partial S^*_{i^*}(t), \partial W^*_{i^*}) = \min \{ \diam_t S^*_{i^*}(t) - 2 r, L^\circ r \} \\
\geq 0.9 \min \big\{ \Delta_1(d_i) - L_1^*(d_i) - 2, L^\circ - 4 (L + 3), L^\circ \big\} r_0 \\
> \min \big\{ L_1^* (d_i) + 1, 4(L+2) \big\} r_0 \geq \td{L} r_0. \label{eq:distSpWs}
\end{multline}

So far we have only used the first assumption, which states that $S_i (t)$ \emph{does} have torus collars of width $\leq a r_0$ and length up to $(\td{L} - 1)r_0$.
We will now show that in contradiction to the second assumption of the claim, $S_i(t)$ also has torus collars of width $\leq a r_0$ and length up to $\td{L} r_0$.
So assume that $x \in S_i(t)$ with $(\td{L}-1) r_0 < \dist_t(x, \partial S_i(t)) \leq \td{L} r_0$.
By (\ref{eq:SpPSs}), the diameter bound (\ref{eq:diamboundonP}) on $P$ and (\ref{eq:distSpWs}), we conclude $x \in S_{i^*}^*(t) \setminus W_{i^*}^*$.
So, we can find a set $P^* \subset S_{i^*}^*(t) \setminus W_{i^*}^*$ that is diffeomorphic to $T^2 \times I$ and bounded by $\partial S_{i^*}^*(t)$ and a $2$-torus $T^* \subset S^*_{i^*} (t) \setminus W^*_{i^*}$ with $x \in T^*$ and $\diam_t T^* \leq r \leq r_0 \leq a r_0$.
Again by (\ref{eq:diamboundonP}) we find $T^* \cap P = \emptyset$.
It follows from Lemma \ref{Lem:2T2timesI}, that $P \cup P^*$ is diffeomorphic to $T^2 \times I$.
This finishes the contradiction argument and shows that (\ref{eq:torcollarcontradiction}) does not hold for any $i^* = 1, \ldots, m^*$.
\end{proof}

\begin{Claim2}
There are universal constants $0 < \tau_2^* < \tau^*_0$ and $T_2^* < \infty$ such that if $t_0 > T_2^*$ then at all times $t \in [(1-\tau_2^*) t_0, t_0]$ the set $S_i(t)$ has torus collars of width $\leq 2r_0$ and length up to $\min \{ L_2^*(d_i), 2(L+2) \} r_0$.
\end{Claim2}
\begin{proof}
Choose $\tau_2^* < \tau_1^*$ such that $\exp(2K_1^* \tau_2^*) < 2$.
By Lemma \ref{Lem:firstcurvboundstep1}(e) we already know that \emph{at time $t_0$}, the set $S_i(t_0)$ has torus collars of width $\leq r_0$ and length up to $L_i r_0$ where
\[ L_i = 2\min \{ L_2^* (d_i) , 2(L+2) \} \leq \min \{ d_i-2, L^\circ \}. \]
Let $t^* \in [(1-\tau_2^*) t_0, t_0]$ be minimal with the property that for all $t \in (t^*, t_0]$ the set $S_i(t)$ has torus collars of width $\leq \exp (2K_1^* t_0^{-1} (t_0 - t)) r_0$ and length up to $\exp (-2K_1^* t_0^{-1} (t_0 - t)) L_i r_0$ at time $t$.
We are done if $t^* = (1-\tau_2^*) t_0$.
So consider the case $t^* > (1-\tau_2^*) t_0$.

Let $\varepsilon > 0$ be a small constant, which we will determine later.
It will not be a universal constant.
By the choice of $t^*$, we find times $t_1 \leq t^* \leq t_2$ with $t_2 - t_1 < \varepsilon$ such that at time $t_2$ the set $S_i(t_2)$ has torus collars of width $\leq \exp (2K_1^* t_0^{-1} (t_0 - t_2))r_0$ and length up to $\exp ( -2 K_1^* t_0^{-1} (t_0 - t_2) ) L_i r_0$, but at time $t_1$ it does not have torus collars of width $\leq \exp (2 K_1^* t_0^{-1} (t_0 - t_1))r_0$ and length up to $\exp ( - 2 K_1^* t_0^{-1} (t_0 - t_1) ) L_i r_0$.

Choose $\td{L} \leq \exp ( - 2 K_1^* t_0^{-1} (t_0 - t_1) ) L_i$ such that at time $t_1$ the set $S_i(t_1)$ does not have torus collars of width $\leq \exp (2 K_1^* t_0^{-1} (t_0 - t_1))r_0$ and length up to $\td{L}r_0$, but it does have torus collars of width $\leq \exp (2 K_1^* t_0^{-1} (t_0 - t_1))r_0$ and length only up to $(\td{L} - 1) r_0$ if $\td{L} > 1$.
Observe that $\td{L} \leq \min \{ L^*_1 (d_i), 4(L+2) \}$ and $\exp (2 K_1^* t_0^{-1} (t_0 - t_1))r_0 < 2r_0$.
So we can apply Claim 1 to conclude that
\[ |{\Rm}| (x,t_1)  < K_1^* t_0^{-1} \qquad \text{if} \qquad x \in S_i(t_1) \quad \text{and} \quad  \dist_{t_1} (x, \partial S_i(t_1)) < (\td{L} + 10) r_0. \]
Let $Q < \infty$ be an upper bound on the curvature at all surgery points in $\MM$ on the time-interval $[t_1, t_2]$.
Then all strong $\delta(t)$-necks around surgery points as described in \cite[Definition \ref{Def:precisecutoff}(4)]{Bamler-LT-Perelman} are defined on a time-interval of length $> \frac1{100} Q^{-1/2}$ and the curvature $|{\Rm}|$ there is bounded from below by $> c' \delta^{-2}(t)$ for some $t \in [t_1, t_2]$ and a universal $c' > 0$.
So if we choose $\varepsilon < \frac1{100} Q^{-1/2}$ and assume $t_0$ to be large enough, then we can exclude surgery points of the form $(x,t)$ with $\dist_{t_1} (x, \partial S_i(t_1)) < (\td{L} + 10) r_0$ and $t \in [t_1, t_2]$.
Moreover, again by choosing $\varepsilon$ sufficiently small, we can assume that curvatures at points that survive until time $t_2$ cannot grow by more than a factor of $2$ such that we have
\begin{equation} \label{eq:rm2K1star}
 | {\Rm} | (x,t) < 2 K_1^* t_0^{-1} \quad \text{if} \quad (x,t) \in S_i(t) \times [t_1, t_2] \; \text{and} \; \dist_{t_1} (x, \partial S_i(t)) < (\td{L} + 10) r_0. 
\end{equation}
(We remark, that we could have also excluded surgery points using property (2) of the canonical neighborhood assumptions in \cite[Definition \ref{Def:CNA}]{Bamler-LT-Perelman}.)

Now let $x \in S_i(t_1)$ be a point with $\dist_{t_1} (x, \partial S_i(t_1)) \leq \td{L} r_0$.
Then by the curvature bound we conclude
\[ \dist_{t_2} (x, \partial S_i(t_2)) \leq \exp(2 K_1^* t_0^{-1} (t_2 - t_1)) \td{L} r_0 \leq \exp(-2 K_1^* t_0^{-1} (t_0 - t_2)) L_i r_0 . \]
So there is a collar $P \subset S_i(t_2)$ that is bounded by $\partial S_i(t_2)$ and an embedded $2$-torus $T \subset S_i(t_2)$ with $x \in T$ and
\begin{equation} \label{eq:diamt2exp}
\diam_{t_2} T \leq \exp ( 2K_1^* t_0^{-1} (t_0 - t_2)) r_0.
\end{equation}
By Lemma \ref{Lem:collardiameter} we have under the assumption that $\varepsilon$ is so small that $\exp (2 K^*_1 \varepsilon) L_i < L_i + 1$
\begin{equation} \label{eq:diamPlessL9}
 \diam_{t_2} P \leq \exp (2 K^*_1 t_0^{-1} (t_2 - t_1)) \td{L} r_0 + 8 r_0 < (\td{L} + 9) r_0. 
\end{equation}
Again by assuming $\varepsilon$ small, we conclude that the distance distortion on $P$ for times $[t_1, t_2]$ is bounded by $r_0$ and hence $\MM$ is non-singular on $P \times [t_1, t_2]$.
So at time $t_1$ the set $P$ is still bounded by $\partial S_i(t_1)$ and $T$.
Moreover, by (\ref{eq:rm2K1star}), (\ref{eq:diamt2exp}), (\ref{eq:diamPlessL9}) and a standard distance distortion estimate, we have
\[ \diam_{t_1} T \leq \exp (2 K^*_1 t_0^{-1} (t_2 - t_1)) \diam_{t_2} T \leq \exp ( 2 K_1^* t_0^{-1} (t_0 - t_1)) r_0. \]

We have just shown that $S_i(t_1)$ does indeed have torus collars of width $\leq \exp ( 2 K_1^* t_0^{-1} (t_0 - t_1)) r_0$ and length up to $\td{L} r_0$ contradicting our assumption.
\end{proof}

\begin{Claim3}
There are constants $0 < \tau_3^* < \tau_0^*$, $w_3^* > 0$ and $K_3^*, T_3^* < \infty$ such that:
Assume that $t_0 > T_3^*$.
Then for every $t \in [(1-\tau_3^*)t_0, t_0]$ and every point $x \in S_i(t)$ with $\dist_t (\partial S_i(t), x) < \min \{ L_3^*(d_i), 2(L+2) \} r_0$ either $|{\Rm}|(x,t) < K^*_3 t_0^{-1}$ or $x$ is locally $w_3^*$-good at scale $r_0$ and time $t$.
\end{Claim3}
\begin{proof}
Assume that $\tau_3^* < \min \{ \tau^*_0, \tau^*_1, \tau_2^* \}$ and fix some time $t \in [(1-\tau_3^*) t_0, t_0]$.
We will argue as in the first part of the proof of Claim 1 with $\td{L} = \min \{ L^*_3 (d_i), 2 (L+2) \} + 1 \leq \min \{ L^*_1(d_i), 4 (L+2) \}$ and $a = 2$.
Observe hereby that by Claim 2 the set $S_i(t)$ has torus collars of width $\leq 2 r_0$ and length up to $\min \{ L^*_3 (d_i), 2 (L+2) \} \leq \min \{ L^*_2(d_i), 2(L+2) \}$.

So we apply again Lemma \ref{Lem:firstcurvboundstep1} at time $t$ with $U \leftarrow S_i(t)$ and $L \leftarrow L^\circ$ and obtain pairs of subsets $(S^*_1(t), W^*_1), \ldots, (S^*_{m^*}(t), W^*_{m^*})$ of $S_i(t)$.
If $\dist_t ( S^*_{i^*} (t), \partial S_i (t) ) \geq (\td{L} - 30) r_0$ for all $i^* = 1, \ldots, m^*$, then we obtain a curvature bound as before using Lemma \ref{Lem:firstcurvboundstep1}(b).
If not, i.e. if (\ref{eq:torcollarcontradiction}) is satisfied for some $i^* = 1, \ldots, m^*$, then we obtain from (\ref{eq:distSpWs}) that
\[ \dist_t ( W^*_{i^*}, \partial S_i(t) ) > \min \{ L^*_2 (d_i), 2 (L+2) \} r_0. \]
This implies that every $x \in S_i(t)$ with $\dist_t (\partial S_i(t), x) < \min \{ L_2^*(d_i), 2(L+2) \} r_0$ is either contained in $S_i(t) \setminus (S^*_1(t) \cup \ldots \cup S^*_{m^*} (t))$, in which case we obtain a curvature bound from Lemma \ref{Lem:firstcurvboundstep1}(b), or contained in $S^*_{i^*} (t) \setminus W^*_{i^*}$, in which case $x$ is locally $\ov{w}$-good at scale $r$ and time $t$ by Lemma \ref{Lem:firstcurvboundstep1}(f).
\end{proof}

\begin{Claim4}
There are universal constants $K_4^*, T_4^* < \infty$ and $0 < \tau_4^* < \tau_0^*$ such that if $t_0 > T_4^*$, then $|{\Rm}| (x,t) \linebreak[1] < K^*_4 t_0^{-1}$ for all $t \in [(1-\tau_4^*) t_0, t_0]$ and $x \in S_i(t)$ with 
\[ \dist_t(\partial S_i(t), x) \leq \min \{ L^*_4( d_i), 2 (L + 1 ) \} r_0 \]
and none of these points are surgery points.
\end{Claim4}
\begin{proof}
We use Claim 3 and \cite[Proposition \ref{Prop:curvboundnotnullinarea}]{Bamler-LT-Perelman} with $U \leftarrow S_i$, $r_0 \leftarrow r_0$, $r_1 \leftarrow (\tau^*_3)^{1/2} r_0$, $A \leftarrow K_0^* \tau^*_3$, $w \leftarrow w^*_3$, $b \leftarrow \min \{ L^*_3( d_i), 2 (L+2) \} r_0$ to conclude that for all $t \in [(1-\frac12 \tau^*_3) t_0, t_0]$ and $x \in S_i(t)$ with $\dist_t(\partial S_i(t), x) \leq \min \{ L^*_2 (d_i), 2(L+1) \} r_0 \leq (b -1)r_0$ we have
\[ |{\Rm}| (x,t) < K_{\ref{Prop:curvboundnotnullinarea}} (w^*_3, K_0^* \tau^*_3) \big( r_0^{-2} + (\tfrac12 \tau^*_3 t_0)^{-1} \big) .  \]
This implies the claim.
\end{proof}

\begin{Claim5}
There are constants $K_5^*, T_5^* < \infty$ and $0 < \tau_5^* < \tau^*_0$ such that if $t_0 > T^*_5$, then for all $x \in S_i(t_0)$ for which 
\[ \dist_{t_0}(\partial S_i(t), x) \leq \min \{ L^*_5 ( d_i) ,  L + 1 \} r_0 \]
the point $(x,t_0)$ survives until time $(1-\tau^*_5) t_0$ and for all $t \in [(1-\tau^*_5) t_0, t_0]$ we have $|{\Rm}|(x,t) < K^*_5 t_0^{-1}$.
\end{Claim5}
\begin{proof}
This follows by a distance distortion estimate and Claim 4.
We just need to choose $\tau_5^* < \tau_4^*$ so small that distances don't shrink by more than a factor of $2$ on a time-interval of size $\leq \tau_5^* t_0$ and in a region in which the curvature is bounded by $K_4^* t_0^{-1}$.
\end{proof}

To conclude the proof of the Proposition, we just need to use Claim 5 and observe that the torus structure $\Int S_i (t_0) \setminus \Int W_i$ has width $\leq r_0$ and length $\leq \min \{ L_5^* (d_i) - 1, L \} r_0$.
Hence, all its points satisfy the distance bound from Claim 5.
So assuming $\tau_2 < \tau^*_5$, we obtain a curvature bound on the non-singular neighborhood $(S_i (t_0) \setminus \Int W_i) \times [(1-\tau_2) t_0, t_0]$ and we can extend $W_i$ to a sub-Ricci flow with surgery $W_i \subset \MM$ on the time-interval $[(1-\tau_2) t_0, t_0]$.
\end{proof}

In the third step, Proposition \ref{Prop:firstcurvboundstep3}, we impose the additional assumption that embedded, incompressible solid tori in $\MM (t_0)$ have ``compressing planar domains'' of bounded area, as produced by \cite[Proposition \ref{Prop:maincombinatorialresult}(a)]{Bamler-LT-topology}.
With the help of Lemmas \ref{Lem:shortloopingeneralcase}, \ref{Lem:2loopstorus} and \ref{Lem:bettertorusstructure} and Proposition \ref{Prop:firstcurvboundstep2}(e), these compressing domains can be used to pick torus structures $P_i \subset S_i (t_0) \setminus W_i$ of arbitrarily good precision whenever the diameters of the corresponding solid tori $S_i (t_0)$ are large enough.
Each torus structure $P_i$ encloses a solid torus, which is a thickening of the solid torus $W_i$ and the final time-slice of a sub-Ricci flow with surgery denoted by $U_i \subset \MM$.
The remaining assertions of the following proposition are similar to those of Proposition \ref{Prop:firstcurvboundstep2}, the main difference being that the dependences of the involved parameters are rewritten in a way that is more suitable for the following subsection.

\begin{Proposition}[third step] \label{Prop:firstcurvboundstep3}
There are a positive continuous function $\delta : [0, \infty) \linebreak[1] \to (0, \infty)$ and constants $K < \infty$, $\tau > 0$ and for every $A < \infty$ there are non-increasing functions $D_A, K'_A : (0,2] \to (0, \infty)$ such that for every $\eta > 0$ there are $w_3 = w_3(\eta, A) > 0$, $T_3 = T_3(\eta, A) < \infty$ such that: \\
Let $\MM$ be a Ricci flow with surgery on the time-interval $[0, \infty)$ with normalized initial conditions that is performed by $\delta(t)$-precise cutoff.
Consider the constant $T_0 < \infty$, the function $w : [T_0, \infty) \to (0, \infty)$ as well as the decomposition $\MM(t) = \MM_{\thick} (t) \cup \MM_{\thin} (t)$ for all $t \in [T_0, \infty)$ obtained in \cite[Proposition \ref{Prop:thickthindec}]{Bamler-LT-Perelman}.
Assume that
\begin{enumerate}[label=(\roman*)]
\item $r_0^2 = t_0 \geq \max \{ 4T_0, T_3 \}$,
\item $w(t) < w_3$ for all $t \in [\frac14 t_0, t_0]$,
\item all components of $\MM(t_0)$ are irreducible and not diffeomorphic to spherical space forms and all surgeries on the time-interval $[\frac12 t_0, t_0]$ are trivial,
\item for every smoothly embedded, solid torus $S \subset \Int \MM_{\textnormal{thin}}(t_0)$, $S \approx S^1 \times D^2$ that is incompressible in $\MM(t_0)$ there is a compact, smooth domain $\Sigma \subset \IR^2$ and a smooth map $f : \Sigma \to S$ with $f(\partial \Sigma) \subset \partial S$ such that $f$ restricted to the outer boundary circle of $\Sigma$ is non-contractible in $\partial S$ and $f$ restricted to all other boundary circles of $\Sigma$ is contractible in $\partial S$ and $\area_{t_0} f < A t_0$.
\end{enumerate}
Then there are closed subsets $P_1, \ldots, P_m \subset \MM(t_0)$ and sub-Ricci flows with surgery $U_1, \ldots, U_m \subset \MM$ on the time-interval $[(1-\tau) t_0, t_0]$ as well as numbers $h_1, \linebreak[1] \ldots, \linebreak[1] h_m \in [\eta, 2]$ such that the sets $P_1 \cup U_1(t_0), \ldots, P_m \cup U_m(t_0)$ are pairwise disjoint and such that for all $i = 1, \ldots, m$
\begin{enumerate}[label=(\alph*)]
\item The set $U_i(t_0)$ is a smoothly embedded, incompressible solid torus ($\approx S^1 \times D^2$), $P_i \approx T^2 \times I$ and $P_i, U_i(t_0)$ share a torus boundary, i.e. $P_i \cap U_i(t_0) = \partial U_i(t_0)$.
So $(P_i \cup U_i(t_0), U_i(t_0)) \approx (S^1 \times D^2(1), S^1 \times D^2(\frac12))$.
\item For any $t \in [(1-\tau) t_0, t_0]$ and any $x \in \MM (t)$ with $\dist_{t} (x,  \MM(t) \setminus (U_1(t) \cup \ldots \cup U_m(t))) \leq r_0$, the point $(x,t)$ is non-singular and we have
\[ |{\Rm}| (x,t) < K t_0^{-1}. \]
In particular, this means that the set $\MM(t_0) \setminus (U_1(t_0) \cup \ldots \cup U_m(t_0))$ is non-singular on the time-interval $[(1-\tau) t_0, t_0]$ and
\[ \qquad\qquad | {\Rm} | < K t_0^{-1} \qquad \text{on} \qquad \big( \MM(t_0) \setminus (U_1(t_0) \cup \ldots \cup U_m(t_0)) \big) \times [(1-\tau) t_0, t_0]. \]
\item The set $P_i$ is an $h_i$-precise torus structure at scale $r_0$ and at any time of the time-interval $[(1-\tau) t_0, t_0]$.
\item If $h_i > \eta$, then 
\begin{multline*}
 \qquad\qquad \diam_{t_0} (P_i \cup U_i (t_0)) < D(h_i) r_0 \qquad \text{and} \\ \qquad  | {\Rm_{t_0}} | < K'(h_i) t_0^{-1} \quad \text{on} \quad P_i \cup U_i (t_0).
\end{multline*}
\end{enumerate}
\end{Proposition}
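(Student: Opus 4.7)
The plan is to invoke Proposition \ref{Prop:firstcurvboundstep2} to produce a first collection of sub-Ricci flows $S_1,\ldots,S_m$ and $W_1,\ldots,W_m$ on $[(1-\tau)t_0,t_0]$, and then to refine each resulting torus structure $S_i(t_0)\setminus\Int W_i(t_0)$ of large diameter using hypothesis~(iv) together with the torus-cross-section lemmas of the preceding subsections. I will fix $K=K_2$, $\tau=\tau_2$, $\delta\leq\delta_2$, and apply Proposition \ref{Prop:firstcurvboundstep2} with parameters $L=L(\eta,A)<\infty$, $\nu=\nu(\eta,A)>0$ determined below. For each $i$ with $d_i:=r_0^{-1}\diam_{t_0}S_i(t_0)\leq L$ I simply set $U_i=W_i$, $P_i=S_i(t_0)\setminus\Int W_i(t_0)$, $h_i=1$; assertions (a), (b), (c) then follow directly from Proposition \ref{Prop:firstcurvboundstep2}(a),(c),(d), with the functions $D_A,K'_A$ determined by $L(\eta,A)$.

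The heart of the argument treats the case $d_i>L$. For each such $i$, $P':=S_i(t_0)\setminus\Int W_i(t_0)$ is a torus structure of width $\leq r_0$ and length $\geq Lr_0$ inside the incompressible solid torus $S_i(t_0)$, with $|{\Rm}|,|{\nabla\Rm}|<K r_0^{-2}$ nearby by Proposition \ref{Prop:firstcurvboundstep2}(b) and \cite[Corollary \ref{Cor:Perelman68}]{Bamler-LT-Perelman}. Condition~(iii) together with the sphere theorem yields $\pi_2(\MM(t_0))=0$, and hypothesis~(iv) supplies a map $f:\Sigma\to S_i(t_0)$ of rescaled area $<A$ satisfying the hypotheses of Lemma \ref{Lem:shortloopingeneralcase}. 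For $\alpha=\alpha(\eta,A)>0$ sufficiently small and $L\geq\td L_0(\alpha,A)$, Lemma \ref{Lem:shortloopingeneralcase}(a) supplies a loop $\gamma$ based at some $p\in P'$ that is non-contractible in $P'$ but contractible in $S_i(t_0)$, of length $<\alpha r_0$ and at distance $\geq\tfrac13 Lr_0-2r_0$ from $\partial P'$. Proposition \ref{Prop:firstcurvboundstep2}(e) at $p$ then provides an $\MM(t_0)$-incompressible loop $\sigma$ of length $<\nu r_0$; choosing $\nu$ sufficiently small compared to the curvature scale forces $\sigma$ to lie in a small ball around $p$ contained in $P'$, and since $\sigma$ is incompressible in $\MM(t_0)$ while $\gamma$ dies already in $\pi_1(S_i(t_0))$, the classes of $\gamma$ and $\sigma$ span $\pi_1(P')\cong\IZ^2$.

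Applying Lemma \ref{Lem:2loopstorus} (with $\max\{\alpha,\nu\}<\td\varepsilon_1(K)$) now produces an embedded incompressible torus $T\subset P'$ through $p$ separating the ends of $P'$ and satisfying $\diam_{t_0}T<10\max\{\alpha,\nu\}r_0$. I then apply Lemma \ref{Lem:bettertorusstructure} with target width $h=\eta$ and a target length $L^\sharp=L^\sharp(\eta)$ to obtain a torus structure $P_i\subset P'$ containing $T$ of width $\leq\eta r_0$ and length $>L^\sharp r_0$; this requires $10\max\{\alpha,\nu\}<\td\nu(K,L^\sharp,\eta)$ and that $T$ is far enough from $\partial P'$---both conditions are secured by choosing $\alpha,\nu$ small and $L$ large relative to $L^\sharp$. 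The solid torus $U_i(t_0)$ enclosed by $P_i$ on the side of $W_i(t_0)$ contains $W_i(t_0)$, so it extends to a sub-Ricci flow on $[(1-\tau)t_0,t_0]$ via the curvature bound of Proposition \ref{Prop:firstcurvboundstep2}(b). Setting $h_i=\eta$, assertions (a), (b), (d) hold and (c) is vacuous, since $U_i(t_0)\supset W_i(t_0)$ gives the curvature bound on $\MM(t_0)\setminus\bigcup_i U_i(t_0)$.

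The main obstacle will be the quantifier bookkeeping: the thresholds $\td\varepsilon_1,\td\nu,\td L_0$ from Lemmas \ref{Lem:bettertorusstructure}, \ref{Lem:2loopstorus}, \ref{Lem:shortloopingeneralcase} must be simultaneously matched against the choices of $\alpha,\nu,L,L^\sharp$, which in turn determine the constants $w_3(\eta,A)$ and $T_3(\eta,A)$ demanded of Proposition \ref{Prop:firstcurvboundstep2}. Unwinding this chain carefully---and verifying that the short loop $\sigma$ from Proposition \ref{Prop:firstcurvboundstep2}(e) indeed sits inside $P'$ for sufficiently small $\nu$ given the uniform curvature bound there---is the only delicate point once the above strategy is in place.
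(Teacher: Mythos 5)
Your overall strategy is the one the paper uses: apply Proposition \ref{Prop:firstcurvboundstep2}, keep the small solid tori as they are, and for the large ones combine hypothesis (iv) with Lemma \ref{Lem:shortloopingeneralcase}(a), the short incompressible loop from Proposition \ref{Prop:firstcurvboundstep2}(e), Lemma \ref{Lem:2loopstorus} and Lemma \ref{Lem:bettertorusstructure} to extract a thin cross-sectional torus and a precise torus structure $P_i$. That part of the argument is sound.

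There is, however, a genuine gap in your treatment of assertion (c), caused by the quantifier order in the statement. The functions $D_A, K'_A$ are chosen \emph{before} $\eta$ and therefore may not depend on $\eta$; but you propose a binary assignment $h_i\in\{\eta,1\}$ with threshold $L=L(\eta,A)$ and say the functions $D_A,K'_A$ are ``determined by $L(\eta,A)$''. This cannot work: take a solid torus with $r_0^{-1}\diam_{t_0}S_i(t_0)$ just below $L(\eta,A)$. You assign it $h_i=1>\eta$, so assertion (c) demands $\diam_{t_0}(P_i\cup U_i(t_0))<D_A(1)\,r_0$ for a constant $D_A(1)$ independent of $\eta$ --- yet the diameter is of order $L(\eta,A)\,r_0$, which blows up as $\eta\to 0$. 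The fix, which is the one piece of bookkeeping the paper's proof adds to your outline, is to let $h_i$ interpolate with the diameter: one defines $h'_i\in(0,1]$ as (roughly) the largest precision for which the length $L_i$ of the collar $S_i(t_0)\setminus\Int W_i(t_0)$ still exceeds the thresholds $\td L_0,\ 2(h'_i)^{-1}+100,\ 10\,\td\nu^{-1}$ needed for Lemmas \ref{Lem:shortloopingeneralcase}, \ref{Lem:2loopstorus} and \ref{Lem:bettertorusstructure}, and sets $h_i=\max\{h'_i,\eta\}$. Then $h_i>\eta$ forces $L_i\le L^*_A(h_i)$ for an $\eta$-independent function $L^*_A$, whence via the monotonicity of $\Lambda_2$ from Proposition \ref{Prop:firstcurvboundstep2}(d) one gets $d_i\le D_A(h_i)$ and the curvature bound $K'_2(D_A(h_i))t_0^{-1}$, with $D_A,K'_A$ depending only on $A$. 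A second, smaller issue: for solid tori of very small diameter the collar $S_i(t_0)\setminus\Int W_i(t_0)$ need not have length $>r_0$, so it need not be a $1$-precise torus structure; the paper removes these tori from the list entirely and absorbs them into the region where $|\Rm|<Kt_0^{-1}$, which is why $K$ and $\tau$ must be taken as $\max\{K_2,K'_2(D^*)\}$ and $\min\{\tau_2,\tau'_2(D^*)\}$ for a fixed threshold $D^*$ rather than simply $K_2$ and $\tau_2$.
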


\begin{proof}
We first define the constants $K, \tau$, the functions $D_A, K'_A$ and the quantities $w_3$, $T_3$.
Consider the functions $\Lambda_2, K'_2, \tau'_2$ and the constants $K_2, \tau_2$ from Proposition \ref{Prop:firstcurvboundstep2}.
Choose $D^* < \infty $ such that $\Lambda_2 (D^*) > 1000$ and set
\[ K = \max \{ K_2, K'_2 (D^*) \} \qquad \text{and} \qquad \tau = \min \big\{ \tau_2, \tau'_2 (D^*), \tfrac1{10} K_2^{-1} \big\}. \]

Now fix the constant $A < \infty$.
Before defining $D_A$ and $K'_A$, we need to fix a few other quantities and functions, which will be important in the course of the proof.
Using the constants $\td{L}_0$ from Lemma \ref{Lem:shortloopingeneralcase}, $\td\nu$ from Lemma \ref{Lem:bettertorusstructure} and $\td\varepsilon_1$ from Lemma \ref{Lem:2loopstorus}, we set
\[ \ov{L}_{A} = \max \Big\{ \td{L}_0 \big( \min \big\{ \tfrac1{10} \td\nu ( K_2, 1, 1 ), \td\varepsilon_1 (K_2) \big\}, \; A \big), \; 10 \big(\td\nu ( K_2, 1, 1 ) \big)^{-1} \Big\}.  \]
Then we define the functions $L^{**}_A, L^*_A : (0,2] \to (0, \infty)$ by
\begin{alignat*}{1}
 L^{**}_A(h) &= \max \Big\{ \ov{L}_A, \; \td{L}_0 \big( \min \big\{ \tfrac1{10} \td\nu ( K_2, 2h^{-1}, \tfrac12 h ), \td\varepsilon_1(K_2) \big\}, \; A \big) , \\
 & \qquad\qquad \qquad\qquad\qquad\qquad 3 h^{-1} + 500, \; 10 \big(\td\nu ( K_2, 2 h^{-1}, \tfrac12 h ) \big)^{-1}, \Lambda_2 (1) \Big\}. \\
 L^*_A(h) &= \inf \big\{ L^{**}_A (h'') \;\; : \;\; 0 < h'' \leq \tfrac12 h \big\}.
\end{alignat*}
Then $L^*_A$ is non-increasing.
Using this function, we define the functions $D_A, K'_A$ by
\begin{alignat*}{1}
 D_A(h) &= \sup \{ d > 0 \;\; : \;\; \Lambda_2(d) \leq L^*_A (h) \}. \\
 K'_A(h) &= K'_2(D_A(h))
\end{alignat*}
Observe that $D_A(h)$ is well-defined since $L^*_A (h) \geq \Lambda_2(1)$ for all $h \in (0,2]$.
Moreover, note that $D_A$ and $K'_A$ are non-decreasing.

Now also fix the constant $\eta > 0$ and set
\[
 L^\circ = \max \{ L^*_A(\eta) + 1, D_A(\eta), 1000 \} \qquad \text{and} \qquad \nu^\circ = \min \Big\{ \frac{1}{L^\circ}, \td\varepsilon_1 (K_2) \Big\} .
\]
Then we define
\[ w_3(\eta, A) = w_2(L^\circ, \nu^\circ) \qquad \text{and} \qquad T_3(\eta, A) = T_2 (L^\circ). \]

By this choice of $w_3$ and $T_3$, we can apply Proposition \ref{Prop:firstcurvboundstep2} with $L \leftarrow L^\circ$, $\nu \leftarrow \nu^\circ$ and obtain sub-Ricci flows with surgery $S_1, \ldots, S_m \subset \MM$ on the time-interval $[(1-\tau) t_0, t_0]$ and subsets $W_i \subset S_i(t_0)$.
For each $i = 1, \ldots, m$ set $d_i = r_0^{-1} \diam_{t_0} S_i$.
Then $P'_i = S_i \setminus \Int W_i$ are torus structures of width $\leq r_0$ and length $L_i r_0$ for
\[ L_i = \min \{ \Lambda_2 ( d_i), L^\circ \}. \]
We can assume without loss of generality that $L_i \geq 1000$ for all $i=1, \ldots, m$: If $L_i < 1000$ for some $i = 1, \ldots, m$, then by Proposition \ref{Prop:firstcurvboundstep2}(d) $\Lambda_2(d_i) < 1000 \leq L^\circ$.
So by monotonicity of $\Lambda_2$ we have $d_i \leq D^*$ and by Proposition \ref{Prop:firstcurvboundstep2}(c) and the choice of $\tau$, $K$ the flow $S_i$ is non-singular on the time-interval $[(1-\tau) t_0, t_0]$ and we have $| {\Rm} | < K t_0^{-1}$ on $S_i(t_0) \times [(1-\tau)t_0, t_0]$.
Hence we can remove the pair $S_i$ and $W_i$ from the list.
We summarize
\[
1000 \leq L_i \leq L^\circ \qquad \text{for all} \qquad i = 1, \ldots, m.
\]

Next, we define numbers $h'_i > 0$ that will give rise to the $h_i$.
If $L_i \leq \ov{L}_A$, then we just set $h'_i = 2$.
Observe that by definition of $L^*_A$, we then immediately get $L_i \leq L^*_A(h_i)$.
In the case in which $L_i > \ov{L}_A$, we choose an $h'_i \in (0,2]$ such that the following three equations are satisfied:
\begin{alignat}{1}
L_i &> \td{L}_0 \big( \min \big\{ \tfrac1{10} \td\nu ( K_2, 2(h'_i)^{-1}, \tfrac12 h'_i ), \td\varepsilon_1(K_2) \big\}, \; A \big) \notag  \\
L_i &> 3 (h'_i)^{-1} + 500 \label{eq:Listwicehinverse} \\
L_i &>  10 (\td\nu ( K_2, 2 (h'_i)^{-1}, \tfrac12 h'_i ))^{-1} \label{eq:Li10nuinverse}
\end{alignat}
The conditions $L_i > \ov{L}_A$ and $L_i \geq 1000$ ensure that we can find such an $h'_i$ (for example, we can choose $h'_i = 2$).
We can furthermore assume that $h'_i$ is so small such that for any $h'' \leq \frac12 h'_i$, at least one of these conditions is not fulfilled.
So for any $h'' \leq \frac12 h'_i$ we have $L_i \leq L^{**}_A (h'')$ and thus $L_i \leq L^*_A (h'_i)$.
Furthermore, observe that by (\ref{eq:Li10nuinverse}) we have
\begin{equation} \label{eq:lowerboundon110nu}
 \tfrac1{10} \td\nu (K_2, 2(h'_i)^{-1}, \tfrac12 h'_i) > \frac{1}{L_i} \geq \frac1{L^\circ} \geq \nu^\circ.
\end{equation}
Lastly, we define
\[ h_i = \max \{ h'_i, \eta \}. \]
So for all $i = 1, \ldots, m$ for which $h_i > \eta$ we have $L_i \leq L^*_A(h'_i) = L^*_A (h_i)$.
Using the definition of $L_i$ and the inequality $L^\circ \geq L^*_A(\eta) \geq L^*_A (h_i)$, we then conclude that $\Lambda_2(d_i) \leq L^*_A (h_i)$.
Hence $d_i \leq D_A(h_i)$ and by Proposition \ref{Prop:firstcurvboundstep2}(c) we have the curvature bound $|{\Rm_{t_0}}| < K'(h_i) t_0^{-1}$ on $S_i(t_0)$.
Since $P_i \cup U(t_0)$ will be strictly contained in $S_i(t_0)$, this establishes assertion (d).

For the next paragraphs fix $i = 1, \ldots, m$.
We will now construct the sets $P_i$ and the sub-Ricci flows with surgery $U_i \subset \MM$.
$U_i$ and $P_i$ will be a modification of the sets $W_i$ and $P'_i$.
The torus structure $P_i$ will be a subset of $P'_i$.

First consider the case $L_i \leq \ov{L}_A$.
Then $h_i = 2$.
Recall that the closure of $P'_i$ is a torus structure of width $\leq r_0$ and length $L_i r_0 \geq 1000 r_0$ at time $t_0$.
So we can choose a torus structure $P_i \subset P'_i$ such that the pair $(P'_i, P_i)$ is diffeomorphic to $(T^2 \times [-2,2], T^2 \times [-1,1])$ of width $\leq r_0$ and length $> r_0$ at time $t_0$ such that $P_i$ has time-$t_0$ of at least $10r_0$ from $\partial P'_i$.
Note that $P_i$ is $1$-precise at scale $\sqrt{t_0}$ and time $t_0$.
Next observe that by Proposition \ref{Prop:firstcurvboundstep2}(b), $\MM$ is non-singular on $P'_i$ and we have $|{\Rm}| < K_2 t_0^{-1}$ on $P'_i \times [(1-\tau) t_0, t_0]$.
Since $P_i$ is far enough away from $\partial P'_i$ at time $t_0$, any time-$t_0$ minimizing geodesic between points in $P_i$ is contained in $P'_i$.
So we can use a distance distortion estimate to conclude that $P_i$ is $e^{K_2 \tau} h_i$-precise at scale $\sqrt{t_0}$ and at every time $t \in [(1-\tau) t_0, t_0]$.
Since by assumption $K_2 \tau < \frac1{10}$, this implies assertion (c) in the case $L_i \leq \ov{L}_A$.

Assume in the following that $L_i > \ov{L}_A$.
Then we first apply assumption (iv) for $S \leftarrow S_i(t_0)$ to obtain the domain $\Sigma$ and the smooth map $f : \Sigma \to S_i (t_0)$.
Next, we use Lemma \ref{Lem:shortloopingeneralcase}(a) with $M \leftarrow \MM(t_0)$, $S \leftarrow S_i (t_0)$, $P \leftarrow P'_i$ and $f \leftarrow f$ to obtain a loop $\gamma_i \subset P'_i$ that is non-contractible in $P'_i$, but contractible in $S_i(t_0)$, that has length
\[ \ell_{t_0} (\gamma_i) < \min \big\{ \tfrac1{10} \td\nu (K_2, 2(h'_i)^{-1}, \tfrac12 h'_i), \td\varepsilon_1 (K_2) \big\} r_0 \]
and that has time-$t_0$ distance of at least $(\frac13 L_i - 2) r_0$ from $\partial P'_i$.
Let $p_i \in \gamma_i$ be an arbitrary base point.
By Proposition \ref{Prop:firstcurvboundstep2}(e), there is a closed loop $\sigma_i \subset P'_i$ based at $p_i$ that is non-contractible in $S_i$ and has length (see (\ref{eq:lowerboundon110nu})) 
\[ \ell_{t_0} (\sigma_i) < \nu^\circ r_0 \leq \min \big\{ \tfrac1{10} \td\nu ( K_2, 2 (h'_i)^{-1}, \tfrac12 h'_i ), \td\varepsilon_1 (K_2) \big\} r_0. \]
In particular, $\gamma_i$ and $\sigma_i$ represent two linearly independent homotopy classes in $\pi_1(P'_i) \cong \IZ^2$.
By Lemma \ref{Lem:2loopstorus} there is an embedded torus $T_i \subset P'_i$ with $p_i \in T_i$ that is incompressible in $P'_i$, separates its two ends and has diameter
\[ \diam_{t_0} T_i < \td\nu ( K_2, 2 (h'_i)^{-1}, \tfrac12 h'_i ) r_0. \]
Observe that $T_i$ has distance of at least $(\frac13 L_i - 3) r_0 \geq ( (h'_i)^{-1} + 100) r_0$ from $\partial P'_i$ (see (\ref{eq:Listwicehinverse})).
We can hence apply Lemma \ref{Lem:bettertorusstructure} and obtain a torus structure $P_i \subset P'_i$ of width $\leq \frac12 h'_i r_0$ and length $> 2 (h'_i)^{-1} r_0$ such that the pair $(P'_i, P_i)$ is diffeomorphic to $(T^2 \times [-2, 2], T^2 \times [-1,1])$ and such that $P_i$ has time-$t_0$ distance of at least $10r_0$ from $\partial P'_i$.
Assertion (c) follows now similarly as in the case $L_i \leq \ov{L}_A$.

Finally, we let $U_i (t_0)$ be the closure of the component of $S_i(t_0) \setminus P_i$ that is diffeomorphic to a solid torus.
Moreover, we extend $U_i (t_0)$ to a sub-Ricci flow with surgery $U_i \subset \MM$ on the time-interval $[(1-\tau) t_0, t_0]$.
Then assertions (a), (b) hold by construction and assertions (c), (d) were established before.
\end{proof}

\subsection{The geometry on late and long time-intervals} \label{subsec:lateandlongtimei}
In this subsection, we relate the conclusions from Proposition \ref{Prop:firstcurvboundstep3} applied at each time of a larger time-interval $[t_0, t_\omega ]$, $t_\omega \leq L t_0$, $L \gg 1$ towards one another and obtain a geometric description of the flow on this time-interval.
More specifically, we will show that if the diameter of one of the solid tori $P_i \cup U_i (t_\omega)$ from Proposition \ref{Prop:firstcurvboundstep3} applied at time $t_\omega$ is large, then this solid torus persists and stays large when going backwards in time until time $t_0$.
At time $t_0$, we will find a loop $\gamma_i \subset P_i$, non-contractible in $P_i$, whose length is small and whose geodesic curvature is controlled on the time-interval $[t_0, t_\omega]$ and that bounds a disk $h_i : D^2 \to \MM ( t_0)$ of area $< (A+1) t_0$ at time $ t_0$.

\begin{Proposition} \label{Prop:structontimeinterval}
There is a positive continuous function $\delta : [0, \infty) \to (0, \infty)$ and for every $L, A < \infty$, $\alpha > 0$ there are constants $K_4 = K_4 (L, \linebreak[1] A, \linebreak[1] \alpha), \linebreak[1] \Gamma_4 = \Gamma_4 (L, A), T_4 = T_4 (L, A, \alpha) < \infty$ and $w_4 = w_4 (L, A, \alpha) > 0$ (observe that $\Gamma_4$ does not depend on $\alpha$) such that: \\
Let $\MM$ be a Ricci flow with surgery on the time-interval $[0, \infty)$ with normalized initial conditions that is performed by $\delta(t)$-precise cutoff.
Consider the constant $T_0 < \infty$, the function $w : [T_0, \infty) \to (0, \infty)$ as well as the decomposition $\MM (t) = \MM_{\thick} (t) \cup \MM_{\thin} (t)$ for all $t \in [T_0, \infty)$ obtained in \cite[Proposition \ref{Prop:thickthindec}]{Bamler-LT-Perelman} and assume that
\begin{enumerate}[label=(\roman*)]
\item $t_\omega > r_0^2 = t_0 \geq \max \{ 4T_0, T_4 \}$ and $t_\omega \leq L t_0$,
\item $w(t) < w_4$ for all $t \in [\frac14 t_0, t_\omega]$,
\item for every $t \in [\frac14 t_0, t_\omega]$ all components of $\MM(t)$ are irreducible and not diffeomorphic to spherical space forms and all surgeries on the time-interval $[\frac14 t_0, t_\omega]$ are trivial,
\item for every time $t \in [t_0, t_\omega]$ and every smoothly embedded, solid torus $S \subset \Int \MM_{\textnormal{thin}}(t)$, $S \approx S^1 \times D^2$ that is incompressible in $\MM(t)$, there is a compact smooth domain $\Sigma \subset \IR^2$ and a smooth map $f : \Sigma \to S$ with $f(\partial \Sigma) \subset \partial S$ such that $f$ restricted to the outer boundary circle of $\Sigma$ is non-contractible in $\partial S$ and $f$ restricted to all other boundary circles of $\Sigma$ is contractible in $\partial S$ and $\area_t f < A t$.
\end{enumerate}
Then there is a collection of sub-Ricci flows with surgery $U_1, \ldots, U_m \subset \MM$ on the time-interval $[t_0, t_\omega]$ such that for all $t \in [t_0, t_\omega]$, the sets $U_1 (t), \ldots, U_m(t) \subset \MM (t)$ are pairwise disjoint, incompressible, solid tori.
Moreover, for each $i = 1, \ldots, m$ there is a collar $P_i \subset \MM (t_\omega) \setminus \Int (U_1(t_\omega) \cup \ldots \cup U_m (t_\omega))$ of $U_i (t_\omega)$ that is diffeomorphic to $T^2 \times I$ and non-singular on the time-interval $[t_0, t_\omega]$ and there is a smooth map $h_i : D^2 \to \MM (t_0)$ such that the image of the boundary loop $\gamma_i = h_i |_{\partial D^2}$ is contained in $P_i$ and such that:
\begin{enumerate}[label=(\alph*)]
\item $| {\Rm_{t_\omega}} | < K_4 t_\omega^{-1}$ on $\MM(t_\omega) \setminus (U_1 (t_\omega) \cup \ldots \cup U_{m} (t_\omega))$,
\item $\ell_t (\gamma_i) < \alpha \sqrt{t}$ and $\diam_t \partial U_i(t) < \alpha \sqrt{t}$ for all $t \in [t_0, t_\omega]$ and $i = 1, \ldots, m$,
\item $\max \curv_t \gamma_i < \Gamma_4 t^{-1}$ for all $t \in [t_0, t_\omega]$ and all $i = 1, \ldots, m$,
\item $\area_{t_0} h_i < (A + 1)t_0$ for all $i = 1, \ldots, m$,
\item $\gamma_i$ is non-contractible in $P_i$.
\end{enumerate}
\end{Proposition}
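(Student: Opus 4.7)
The plan is to iterate Proposition~\ref{Prop:firstcurvboundstep3} at a finite sequence of times spanning $[t_0, t_\omega]$, stitch the resulting solid tori together into sub-Ricci flows with surgery $U_i$ on the full interval, use Lemma~\ref{Lem:shortloopingeneralcase}(b) at time $t_0$ to produce short compressing disks in the collars of the $U_i$, and finally propagate the length and geodesic-curvature bounds forward using the uniform curvature control on each collar. I would fix an auxiliary precision $\eta = \eta(L, A, \alpha) > 0$ to be chosen small enough, and begin by applying Proposition~\ref{Prop:firstcurvboundstep3} at time $t_\omega$ to obtain sub-Ricci flows $U_i^0 \subset \MM$ on $[(1-\tau) t_\omega, t_\omega]$, collars $P_i^0$, and exponents $h_i \in [\eta, 1]$. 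Those indices with $h_i > \eta$ are already fully controlled by Proposition~\ref{Prop:firstcurvboundstep3}(c) and can be discarded by enlarging the curvature constant; the ones with $h_i = \eta$ provide $\eta$-precise collars that will be used to track the corresponding solid tori backwards.

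The main step is the backward extension. I would cover $[t_0, t_\omega]$ by $N \lesssim \tau^{-1} \log L$ overlapping intervals $[(1-\tau) t_k, t_k]$ with $t_k = (1 - \tau/2)^k t_\omega$ and argue by downward induction, at each step applying Proposition~\ref{Prop:firstcurvboundstep3} at $t_{k+1}$ and matching the new solid tori against the ones already constructed at $t_k$. The point is that on the overlap $[(1-\tau) t_k, t_k]$ the uniform curvature bound of Proposition~\ref{Prop:firstcurvboundstep3}(d) forces each new solid torus to either sit inside some $U_i^k(t_{k+1})$ or stay disjoint from its $\eta$-precise collar; this matching is carried out along the lines of Claims~1--2 in the proof of Proposition~\ref{Prop:firstcurvboundstep2}, using Lemma~\ref{Lem:2T2timesI} to glue overlapping $T^2 \times I$ pieces and the non-expansion estimate of Lemma~\ref{Lem:firstcurvboundstep1}(d) to prevent small solid tori at $t_k$ from growing large at $t_{k+1}$. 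After $N$ steps this produces the sub-Ricci flows $U_i$ on $[t_0, t_\omega]$ together with collars $P_i \subset \MM(t_\omega)$ that remain $\eta$-precise; in particular $\diam_{t_\omega} \partial U_i(t_\omega) < \eta \sqrt{t_\omega}$, and distance distortion under the uniform curvature bound propagates this to every $t \in [t_0, t_\omega]$, giving the first half of (b), while (a) is already built into the construction.

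To produce the compressing disks $h_i$, I would pull back assumption (iv) at time $t_0$ to the incompressible solid torus $U_i(t_0) \cup P_i(t_0)$ (itself a solid torus by (a)) and obtain a compressing multiply connected domain of area $< A t_0$. Applying Lemma~\ref{Lem:shortloopingeneralcase}(b) to this domain, with $P \leftarrow P_i(t_0)$ (which is $\eta$-precise and hence satisfies the required thickness hypothesis for $\eta$ small enough), and noting that $\pi_2(\MM(t_0)) = 0$ by the irreducibility assumption (iii), yields an embedded loop $\gamma_i \subset P_i(t_0)$ that is non-contractible in $P_i(t_0)$, of time-$t_0$ length $< \alpha \sqrt{t_0}$, with geodesic curvature $< \td\Gamma \, t_0^{-1/2}$, together with a disk $h_i : D^2 \to \MM(t_0)$ bounded by $\gamma_i$ of area $< (A+1) t_0$. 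This gives (d), (e), and the time-$t_0$ cases of (b) and (c).

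Finally, to propagate the bounds in (b) and (c) forward to all $t \in [t_0, t_\omega]$, I would use that on a tubular neighborhood of $P_i$ the uniform bound $|{\Rm}|, |{\nabla\Rm}| \leq C(L) \, t^{-1}$ holds (by the construction and Proposition~\ref{Prop:firstcurvboundstep3}(d) applied at each $t_k$), and integrate the standard evolution equations $\partial_t \ell_t(\gamma_i) = -\int_{\gamma_i} \Ric(T,T) \, ds$ and the corresponding parabolic inequality for the geodesic curvature over a time-interval of parabolic length at most $(L-1) t_0$. This changes the length and curvature by only a multiplicative factor depending on $L$, which is absorbed by the initial choice of $\alpha$ in Lemma~\ref{Lem:shortloopingeneralcase}, and yields $\ell_t(\gamma_i) < \alpha \sqrt{t}$ and $\max \curv_t \gamma_i < \Gamma_4(L, A) \, t^{-1}$. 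The most delicate step is the backward extension of paragraph two: matching the decompositions coming from Proposition~\ref{Prop:firstcurvboundstep3} at consecutive times without losing track of any ``large'' solid torus relies critically on the choice of $\eta$ and on the analogue of the continuity-of-torus-collars argument from the proof of Proposition~\ref{Prop:firstcurvboundstep2}.
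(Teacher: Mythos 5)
Your overall architecture matches the paper's: apply Proposition~\ref{Prop:firstcurvboundstep3} at a sequence of times spanning $[t_0,t_\omega]$, relate the resulting solid tori, extract collars that persist over the whole interval, produce the loop and disk at time $t_0$ via Lemma~\ref{Lem:shortloopingeneralcase}(b), and propagate the length and curvature bounds by the uniform curvature control. However, there is a genuine gap in the backward-extension step. Your claimed dichotomy --- that each solid torus produced at $t_{k+1}$ either sits inside some previously constructed $U_i^k(t_{k+1})$ or is disjoint from its $\eta$-precise collar --- omits the case that actually causes the difficulty: a solid torus produced at a later application can \emph{strictly contain} the one you are tracking, in which case it is neither contained in it nor disjoint from its collar. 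When this happens the object you are tracking must be \emph{replaced} by the larger solid torus, and the collar you subsequently follow is the collar of the new torus, a different set. This replacement can in principle happen once per time step, i.e.\ up to $N$ times, and each replacement can consume the portion of the collar you were relying on. The paper controls this with a device your proposal lacks: each collar $P_i^{(k)}$ is subdivided into $N+2$ segments, a partial order $\prec$ is defined on the index pairs $(k,i)$, and it is shown (Claims~1--3 of the paper's proof) that any chain is strictly decreasing in the time index and has length at most $N+1$, so that a maximal element of a chain always retains at least one segment of its collar that is non-singular and disjoint from \emph{all} solid tori at \emph{all} times in $[t_0,t_\omega]$. Without some substitute for this counting, you cannot conclude that any collar survives the full interval, which is exactly what is needed both to define $U_i$ on $[t_0,t_\omega]$ and to keep $\gamma_i$ inside a region of bounded curvature at every intermediate time.

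A secondary point: your consecutive matching only compares decompositions at adjacent times $t_k$ and $t_{k+1}$, but the interaction the paper must rule out is between non-adjacent times (a collar at time $t_{k_1}$ can first meet a solid torus from time $t_{k_2}$ with $|k_1-k_2|$ large, having avoided everything in between); this is built into the definition of $\prec$ and into the proof that chains cannot turn around and increase in $k$ again. The appeal to the continuity-of-torus-collars argument from Proposition~\ref{Prop:firstcurvboundstep2} and to Lemma~\ref{Lem:2T2timesI} is in the right spirit but operates within a single application window and does not by itself resolve this global bookkeeping. The remaining steps of your proposal (discarding the indices with $h_i>\eta$, applying Lemma~\ref{Lem:shortloopingeneralcase}(b) at time $t_0$ with a sufficiently precise collar, and propagating the bounds forward after choosing the initial length of order $e^{-LK}\alpha\sqrt{t_0}$) are consistent with the paper's argument.
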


\begin{proof}
Let $\tau$ be the constant from Proposition \ref{Prop:firstcurvboundstep3}, assume without loss of generality that $\tau < \frac1{10}$ and pick $N \in \IN$ minimal with the property that $(1+\tau)^N t_0 \geq t_\omega$.
For simplicity, we assume in the following that the equality case occurs in this inequality, i.e. that $t_0 = (1-\tau)^N t_\omega$ and that $N \geq 3$; if not, we decrease $\tau$ slightly.
Subdivide the time-interval $[(1-\tau) t_0, t_\omega]$ by times $t_k = (1-\tau)^{N-k} t_\omega$ for $k = -1, 0, 1, \ldots, N$.
Notice that for $k = 0$ this definition gives us the time $t_0$ since $t_0 = (1- \tau)^N t_\omega$.
Observe also that $N$ depends on $L$.

In the following proof, we will apply Proposition \ref{Prop:firstcurvboundstep3} at the times $t_0 \leftarrow t_k$ for $k = 0, \ldots, N$ with $A \leftarrow A$ and $\eta \leftarrow \eta^\circ$.
Here $\eta^\circ = \eta^\circ(L, A, \alpha) > 0$ is a constant that we are going to determine in the course of the proof.
It will be clear that $\eta^\circ$ can be chosen such that it only depends on $L$, $A$ and $\alpha$.
In order to be able to apply Proposition \ref{Prop:firstcurvboundstep3}, we assume $t_0 > T_3(\eta^\circ, A)$ and $w(t) < w_3(\eta^\circ, A)$ for all $t \in [\frac14 t_0, t_\omega]$.
Then at time $t_k$, for each $k = 0, \ldots, N$, Proposition \ref{Prop:firstcurvboundstep3} provides subsets $P_i^{(k)} \subset \MM(t_k)$, sub-Ricci flows with surgery $U_i^{(k)} \subset \MM$ on the time-interval $[t_{k-1}, t_k]$ as well as numbers $h_i^{(k)} \in [ \eta^\circ ,2]$ for $i = 1, \ldots, m^{(k)}$.
The $P_i^{(k)}$ are $h_i^{(k)}$-precise torus structures at scale $\sqrt{t_k}$ at any time of the time-interval $[t_{k-1}, t_k]$ and, using the universal constant $K$ from Proposition \ref{Prop:firstcurvboundstep3}, we have for each $k =0, \ldots, N$
\begin{equation} \label{eq:summarycurvboundgoodpart} 
| {\Rm} | < K t^{-1} \quad \text{on} \quad \big( \MM(t_k) \setminus \big( U_1^{(k)} (t_k) \cup \ldots \cup U_{m^{(k)}}^{(k)} (t_k) \big)\big) \times [t_{k-1}, t_k].
\end{equation}
Moreover, as described in Proposition \ref{Prop:firstcurvboundstep3}(b) we have the same curvature bound in a slightly larger region: Namely, for any $t \in [ t_{k-1}, t_k]$ and any $x \in \MM (t)$ with $\dist_{t} (x,  \MM(t) \setminus (U_1(t) \cup \ldots \cup U_m(t))) \leq \sqrt{t}$, the point $(x,t)$ is non-singular and we have $|{\Rm}| (x,t) < K t^{-1}$.
The purpose of this extra bound is purely technical. 
It will be needed later in a distance distortion argument, due to the existence of short geodesics that may leave the domain which is described in (\ref{eq:summarycurvboundgoodpart}).

\begin{figure}[t] 
\begin{center}
\setlength{\unitlength}{2863sp}%
\begingroup\makeatletter\ifx\SetFigFont\undefined%
\gdef\SetFigFont#1#2#3#4#5{%
  \reset@font\fontsize{#1}{#2pt}%
  \fontfamily{#3}\fontseries{#4}\fontshape{#5}%
  \selectfont}%
\fi\endgroup%
\begin{picture}(4500,3000)(3500,0)
\hspace{20mm}\includegraphics[width=14cm]{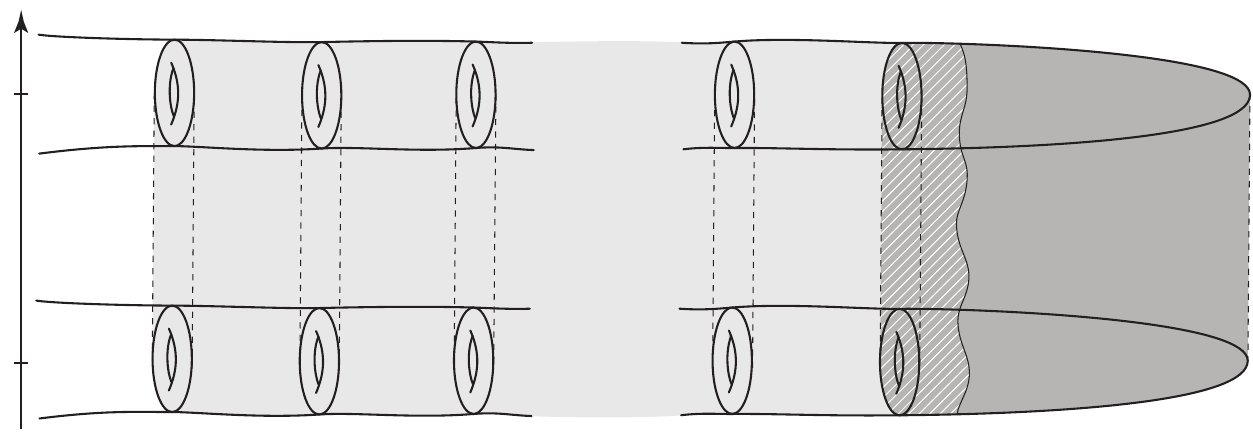}%
\put(-9330,2380){\makebox(0,0)[lb]{\smash{{\SetFigFont{12}{14.4}{\familydefault}{\mddefault}{\updefault}$t_k$}}}}
\put(-7530,2380){\makebox(0,0)[lb]{\smash{{\SetFigFont{12}{14.4}{\familydefault}{\mddefault}{\updefault}$P^{(k)}_{i,1}$}}}}
\put(-6430,2380){\makebox(0,0)[lb]{\smash{{\SetFigFont{12}{14.4}{\familydefault}{\mddefault}{\updefault}$P^{(k)}_{i,2}$}}}}
\put(-4830,2380){\makebox(0,0)[lb]{\smash{{\SetFigFont{12}{14.4}{\familydefault}{\mddefault}{\updefault}$\cdots$}}}}
\put(-3530,2380){\makebox(0,0)[lb]{\smash{{\SetFigFont{12}{14.4}{\familydefault}{\mddefault}{\updefault}$P^{(k)}_{i,N+2}$}}}}
\put(-1930,1720){\makebox(0,0)[lb]{\smash{{\SetFigFont{12}{14.4}{\familydefault}{\mddefault}{\updefault}$U^{(k)}_{i} = V^{(k)}_{i, N+3}$}}}}
\put(-9550,420){\makebox(0,0)[lb]{\smash{{\SetFigFont{12}{14.4}{\familydefault}{\mddefault}{\updefault}$t_{k-1}$}}}}
\put(-7670,1420){\makebox(0,0)[lb]{\smash{{\SetFigFont{12}{14.4}{\familydefault}{\mddefault}{\updefault}$V^{(k)}_{i,1}$}}}}
\put(-6590,1420){\makebox(0,0)[lb]{\smash{{\SetFigFont{12}{14.4}{\familydefault}{\mddefault}{\updefault}$V^{(k)}_{i,2}$}}}}
\put(-4830,420){\makebox(0,0)[lb]{\smash{{\SetFigFont{12}{14.4}{\familydefault}{\mddefault}{\updefault}$\cdots$}}}}
\put(-3530,1420){\makebox(0,0)[lb]{\smash{{\SetFigFont{12}{14.4}{\familydefault}{\mddefault}{\updefault}$V^{(k)}_{i,N+2}$}}}}
%
\put(-1830,1320){\makebox(0,0)[lb]{\smash{{\SetFigFont{12}{14.4}{\familydefault}{\mddefault}{\updefault}possible}}}}
\put(-1830,1020){\makebox(0,0)[lb]{\smash{{\SetFigFont{12}{14.4}{\familydefault}{\mddefault}{\updefault}surgeries}}}}
\end{picture}%
\caption{Illustration of the decomposition (\ref{eq:Pik-decomposition}) of $P^{(k)}_i$.
The subsets $P^{(k)}_{i,1}, \ldots, P^{(k)}_{i, N+2}$ (lightly shaded region) and the complement of the solid tori $P_i^{(k)} \cup U^{(k)}_i (t_k)$ (white region) are non-singular on the time-interval $[t_{k-1}, t_k]$.
Singularities may occur in the sub-Ricci flow with surgery $U^{(k)}_i$ (dark shaded region).
As explained in (\ref{eq:summarycurvboundgoodpart}), the curvature bound $|{\Rm}| < K t^{-1}$ holds on the lightly shaded and white region.
By the remark after (\ref{eq:summarycurvboundgoodpart}), this bound extends to the diagonally ruled collar region inside $U^{(k)}_i (t)$ for $t \in [t_{k-1}, t_k]$.
\label{fig:Pik-decomposition}}
\end{center}
\end{figure}
For all $k = 0, \ldots, N$ and $i = 1, \ldots, m^{(k)}$, we can divide $P_i^{(k)}$ into $N + 2$ approximately equally long torus structures (compare with Figure \ref{fig:Pik-decomposition}):
\begin{equation} \label{eq:Pik-decomposition}
 P_i^{(k)} = P_{i,1}^{(k)} \cup \ldots \cup P_{i, N+2}^{(k)} 
\end{equation}
such that if $h^{(k)}_i < \frac1{2(N+1)}$, then the $P_{i,j}^{(k)}$ are $2N h_i^{(k)}$-precise at scale $\sqrt{t_k}$ and time $t_k$ and such that $P_{i,j}^{(k)}$ and $P_{i,j+1}^{(k)}$ are adjacent for any $j < N + 2$ and $P_{i, N+2}^{(k)}$ is adjacent to $U_i^{(k)}(t_k)$.
Using this subdivision, we define the sub-Ricci flows with surgery $V^{(k)}_{i,1}, \ldots, V^{(k)}_{i, N+3} \subset \MM$ on the time-interval $[t_{k-1}, t_k]$ as follows:
$V^{(k)}_{i,j}$ is the extension of the subset
\[ P^{(k)}_{i, j} \cup \ldots \cup P^{(k)}_{i, N+2} \cup U^{(k)}_i (t_k) \subset \MM(t_k) \]
to the time-interval $[t_{k-1}, t_k]$.
Note that $V^{(k)}_{i, N+3} = U_i^{(k)}$.

A large part of the following proof will be concerned with the analysis of the spatial relations of the subsets $P^{(k)}_{i,j}$ and $V^{(k)}_{i,j} \subset \MM (t_k)$ for different $k = 0, \ldots, N$.
Notice that these subsets are contained in different time-slices and may not survive to a common time.
So, for example, it may not be possible to state that $P^{(k_1)}_{i_1, j_1}$ and $P^{(k_2)}_{i_2, j_2}$ are ``disjoint'' or that ``one is contained in the other'', because not all points in $P^{(k_1)}_{i_1, j_1} \subset \MM (t_{k_1})$ may survive up to time $t_{k_2}$ or, vice versa, not all points in $P^{(k_2)}_{i_2, j_2} \subset \MM (t_{k_2})$ may survive up to time $t_{k_1}$.
In other words, surgeries may interfere with $P^{(k_1)}_{i_1, j_1}$ or $P^{(k_2)}_{i_2, j_2}$ between the times $t_{k_1}$ and $t_{k_2}$.
Recall that there are no surgery points on the time-interval $[t_{-1}, t_N]$ outside the sub-Ricci flows with surgery $U^{(k)}_{i}$.
So if surgery points interfere with $P^{(k_1)}_{i_1,j_1}$ or $P^{(k_2)}_{i_2, j_2}$, then even some $U^{(k')}_{i'}$ interferes with it, in the sense that there is a point in $P^{(k_1)}_{i_1,j_1}$ or $P^{(k_2)}_{i_2, j_2}$ that survives up to some point inside $U^{(k')}_{i'}$.
Our main goal will be to show that if one of the torus structures $P_i^{(N)}$ is sufficiently precise, then there must be some $P^{(k^*)}_{i^*, 1}$ that is non-singular on the entire time-interval $[t_{-1}, t_\omega = t_N]$ and none of the $U^{(k')}_{i'}$ interferes with it.
The existence of the torus structure $P^{(k^*)}_{i^*, 1}$ will eventually be established in Claim~3 later (we will even prove a slightly more general result there).
An important objective for the proof of Claim 3 is to rule out a circular interference pattern between the $P^{(k)}_i$ and the $U^{(k')}_{i'}$.
For example, it may a priori be possible that there is a circular chain $(k_1, i_1), \ldots, (k_m, i_m), (k_{m+1} , i_{m+1})= (k_1, i_1)$ such that $U^{(k_{l+1})}_{i_{l+1}}$ interferes with $P^{(k_l)}_{i_l}$ for all $l = 1, \ldots, m$.
If this chain consisted of all pairs $(k, i)$, $k = 0, \ldots, N$, $i = 1, \ldots, m^{(k)}$, then the assertion of Claim 3 would be false.

As a first step towards Claim 3, let us now look at a more elementary case.
As explained in the last paragraph, the scenario in which some $P^{(k_1)}_{i_1, j}$ fails to be non-singular on the entire time-interval $[t_{-1}, t_\omega]$ requires special attention.
As every surgery point on the time-interval $[t_{-1}, t_\omega]$ is contained in one of the sub-Ricci flows with surgery $U^{(k')}_{i'}$ (recall that the flow is non-singular away from the $U^{(k')}_{i'}$), this scenario is accompanied by the phenomenon that some point in $P^{(k_1)}_{i_1, j}$ survives forward to some time $t_{k_2 - 1}$ or backward to some time $t_{k_2}$ and ends up in $U^{(k_2)}_{i_2} (t_{k_2 - 1})$ or $U^{(k_2)}_{i_2} (t_{k_2})$, respectively (see Figure \ref{fig:Pik1k2}).
In the following Claim 1, we will analyze this phenomenon under the additional assumptions that $j \leq N+1$ and that \emph{all} points of $P^{(k_1)}_{i_1, j} \cup P^{(k_1)}_{i_1, j+1}$ survive until time $t_{k_2 -1}$ (for $k_1 < k_2$) or $t_{k_2}$ (for $k_2 < k_1$) and don't intersect any $U^{(k')}_{i'}$ strictly between time $t_{k_1}$ and time $t_{k_2-1}$ or $t_{k_2}$.
So in this case, it makes sense to say that ``$P^{(k_1)}_{i_1, j}$, followed forward or backward in time up to time $t_{k_2 -1}$ or $t_{k_2}$, intersects $U^{(k_2)}_{i_2} (t_{k_2 - 1})$ or $U^{(k_2)}_{i_2} (t_{k_2})$''.
Our expectation is that then $P^{(k_1)}_{i_1, j+1}$, followed forward or backward in time up to time $t_{k_2 -1}$ or $t_{k_2}$, has to be almost fully contained in $U^{(k_2)}_{i_2} (t_{k_2 - 1})$ or $U^{(k_2)}_{i_2} (t_{k_2})$.
In the case $k_2 < k_1$, this will lead to a lower diameter bound of $U^{(k_2)}_{i_2} (t_{k_2})$, and therefore to an upper bound on $h^{(k_2)}_{i_2}$ in terms of $h^{(k_1)}_{i_1}$.
Furthermore, in both cases, $k_2 < k_1$ and $k_2 > k_1$, we will be able to extend $V_{i_1, j+2}^{(k_1)}$ to a sub-Ricci flow with surgery that is defined up to time $t_{k_2 -1}$ or $t_{k_2}$ such that $V_{i_1, j+2}^{(k_1)} (t_{k_2 - 1}) \subsetneq U_{i_2}^{(k_2)} (t_{k_2 - 1} )$ or $V_{i_1, j+2}^{(k_1)} (t_{k_2}) \subsetneq U_{i_2}^{(k_2)} (t_{k_2} )$.
In Claim 2, we will use this containment relationship to rule out the feared circular interference relation between the $P^{(k)}_{i}$ and the $U^{(k)}_i$.

\begin{figure}[t] 
\begin{center}
\setlength{\unitlength}{2863sp}%
\begingroup\makeatletter\ifx\SetFigFont\undefined%
\gdef\SetFigFont#1#2#3#4#5{%
  \reset@font\fontsize{#1}{#2pt}%
  \fontfamily{#3}\fontseries{#4}\fontshape{#5}%
  \selectfont}%
\fi\endgroup%
\begin{picture}(4500,5300)(3500,0)
\hspace{18mm}\includegraphics[width=14.3cm]{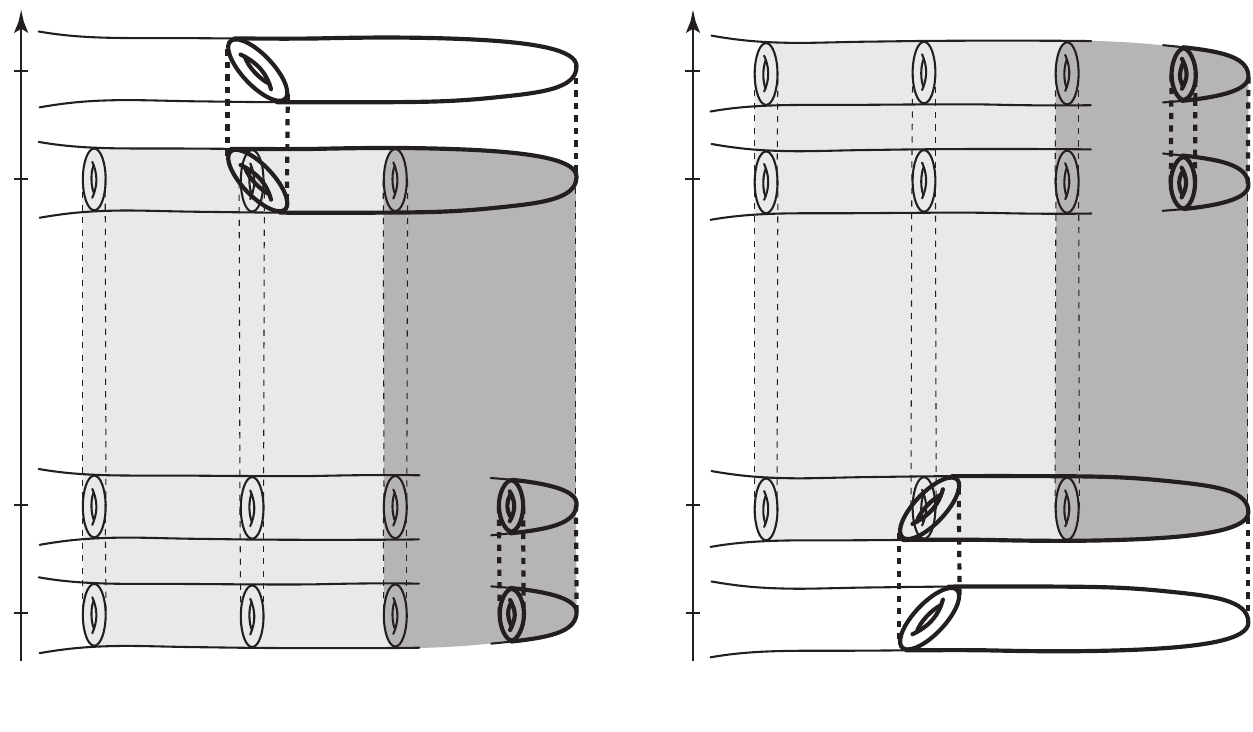}%
\put(-9570,4860){\makebox(0,0)[lb]{\smash{{\SetFigFont{12}{14.4}{\familydefault}{\mddefault}{\updefault}$t_{k_2}$}}}}
\put(-9830,4030){\makebox(0,0)[lb]{\smash{{\SetFigFont{12}{14.4}{\familydefault}{\mddefault}{\updefault}$t_{k_2-1}$}}}}
\put(-6870,4830){\makebox(0,0)[lb]{\smash{{\SetFigFont{12}{14.4}{\familydefault}{\mddefault}{\updefault}$U^{(k_2)}_{i_2}$}}}}
\put(-6020,2730){\makebox(0,0)[lb]{\smash{{\SetFigFont{12}{14.4}{\familydefault}{\mddefault}{\updefault}$V^{(k_1)}_{i_1,j+2}$}}}}
\put(-8330,1590){\makebox(0,0)[lb]{\smash{{\SetFigFont{12}{14.4}{\familydefault}{\mddefault}{\updefault}$P^{(k_1)}_{i_1,j}$}}}}
\put(-7280,1590){\makebox(0,0)[lb]{\smash{{\SetFigFont{12}{14.4}{\familydefault}{\mddefault}{\updefault}$P^{(k_1)}_{i_1,j+1}$}}}}
\put(-9570,1640){\makebox(0,0)[lb]{\smash{{\SetFigFont{12}{14.4}{\familydefault}{\mddefault}{\updefault}$t_{k_1}$}}}}
\put(-9830,810){\makebox(0,0)[lb]{\smash{{\SetFigFont{12}{14.4}{\familydefault}{\mddefault}{\updefault}$t_{k_1-1}$}}}}
\put(-6100,1590){\makebox(0,0)[lb]{\smash{{\SetFigFont{12}{14.4}{\familydefault}{\mddefault}{\updefault}$\cdots$}}}}
\put(-6100,810){\makebox(0,0)[lb]{\smash{{\SetFigFont{12}{14.4}{\familydefault}{\mddefault}{\updefault}$\cdots$}}}}
\put(-7600,0){\makebox(0,0)[lb]{\smash{{\SetFigFont{12}{14.4}{\familydefault}{\mddefault}{\updefault}Case (i): $k_1 < k_2$}}}}%
\put(-4570,4860){\makebox(0,0)[lb]{\smash{{\SetFigFont{12}{14.4}{\familydefault}{\mddefault}{\updefault}$t_{k_1}$}}}}
\put(-4830,4030){\makebox(0,0)[lb]{\smash{{\SetFigFont{12}{14.4}{\familydefault}{\mddefault}{\updefault}$t_{k_1-1}$}}}}
\put(-1670,750){\makebox(0,0)[lb]{\smash{{\SetFigFont{12}{14.4}{\familydefault}{\mddefault}{\updefault}$U^{(k_2)}_{i_2}$}}}}
\put(-1120,2730){\makebox(0,0)[lb]{\smash{{\SetFigFont{12}{14.4}{\familydefault}{\mddefault}{\updefault}$V^{(k_1)}_{i_1,j+2}$}}}}
\put(-3330,4810){\makebox(0,0)[lb]{\smash{{\SetFigFont{12}{14.4}{\familydefault}{\mddefault}{\updefault}$P^{(k_1)}_{i_1,j}$}}}}
\put(-2290,4810){\makebox(0,0)[lb]{\smash{{\SetFigFont{12}{14.4}{\familydefault}{\mddefault}{\updefault}$P^{(k_1)}_{i_1,j+1}$}}}}
\put(-4570,1640){\makebox(0,0)[lb]{\smash{{\SetFigFont{12}{14.4}{\familydefault}{\mddefault}{\updefault}$t_{k_2}$}}}}
\put(-4830,810){\makebox(0,0)[lb]{\smash{{\SetFigFont{12}{14.4}{\familydefault}{\mddefault}{\updefault}$t_{k_2-1}$}}}}
\put(-1120,4810){\makebox(0,0)[lb]{\smash{{\SetFigFont{12}{14.4}{\familydefault}{\mddefault}{\updefault}$\cdots$}}}}
\put(-1120,4010){\makebox(0,0)[lb]{\smash{{\SetFigFont{12}{14.4}{\familydefault}{\mddefault}{\updefault}$\cdots$}}}}
\put(-3000,0){\makebox(0,0)[lb]{\smash{{\SetFigFont{12}{14.4}{\familydefault}{\mddefault}{\updefault}Case (ii): $k_2 < k_1$}}}}%
\end{picture}%
\caption[caption]{Illustration of the setting in Claim~1.
The subsets $P^{(k_1)}_{i_1,j}$ and $P^{(k_1)}_{i_2,j+1}$ survive until time $t_{k_2-1}$ (if $k_1 < k_2$) or $t_{k_2}$ (if $k_2 < k_1$); see the light gray regions. 
The light gray regions do not intersect the interior of any $U^{(k')}_{i'}$.
At time $t_{k_2-1}$ (if $k_1 < k_2$) or $t_{k_2}$ (if $k_2 < k_1$), the subset $P^{(k_1)}_{k_1,j}$ intersects the initial or final time-slice of $U^{(k_2)}_{i_2}$ (bold outlined region). 
As demonstrated in the figure, this may entail that the boundary tori of $P^{(k_1)}_{i_1,j}$ and $U^{(k_2)}_{i_2}$ intersect in a complicated way. 
 \\\hspace{\textwidth}
Claim~1 asserts that the sub-Ricci flow with surgery $V^{(k_1)}_{i_1, j+2}$ can be extended to the time-interval $[t_{k_1-1}, t_{k_2-1}]$ (if $k_1 < k_2$) or $[t_{k_2}, t_{k_1}]$ (if $k_2 < k_1$), see the dark gray regions, and that $V_{i_1, j+2}^{(k_1)} (t_{k_2 - 1}) \subsetneq U_{i_2}^{(k_2)} (t_{k_2 - 1} )$ (in case $k_1 < k_2$) or $V_{i_1, j+2}^{(k_1)} (t_{k_2}) \subsetneq U_{i_2}^{(k_2)} (t_{k_2} )$ (in case $k_2 < k_1$). \\\hspace{\textwidth}
In the proof of Claim 1, the curvature bound $|{\Rm}| < Kt^{-1}$ on the light gray region is used to show that $P^{(k_1)}_{i_1, j}$ and $P^{(k_1)}_{i_1, j+1}$ remain sufficiently precise up to time $t_{k_2-1}$ or $t_{k_2}$.
\label{fig:Pik1k2}}
\end{center}
\end{figure}
\begin{Claim1}
There are a constant $\eta^*_1 = \eta^*_1 (L, A) > 0$ and a non-decreasing function $\varphi^*_1 = \varphi^*_{1, L, A} : (0, \infty) \to (0, \infty)$ that both depend on $L$ and $A$ such that $\varphi^*_1(h) < h$ for all $h > 0$ and such that the following holds: \\
Let $k_1, k_2 \in \{1, \ldots, N \}$, $k_1 \neq k_2$, $i_1 \in \{ 1, \ldots, m^{(k_1)} \}$, $i_2 \in \{ 1, \ldots, m^{(k_2)} \}$.
Assume that there is some $j \leq N+1$ such that (see Figure~\ref{fig:Pik1k2} for an illustration)
\begin{enumerate}[label=(\roman*)]
\item if $k_1 < k_2$: all points in $P_{i_1, j}^{(k_1)} \cup P_{i_1, j+1}^{(k_1)}$ survive until time $t_{k_2-1}$ and $P_{i_1, j}^{(k_1)} \cap  U_{i_2}^{(k_2)} (t_{k_2 - 1}) \neq \emptyset$.
Moreover, we have $(P_{i_1, j}^{(k_1)} \cup P_{i_1, j+1}^{(k_1)}) \cap U_{i'}^{(k')}(t_{k'}) = \emptyset$ for all $k'$ strictly between $k_1$ and $k_2$ and all  $i' \in \{ 1, \ldots, m^{(k')} \}$.
\item if $k_2 < k_1$: all points in $P_{i_1, j}^{(k_1)} \cup P_{i_1, j+1}^{(k_1)}$ survive until time $t_{k_2}$ and $P_{i_1, j}^{(k_1)} \cap  U_{i_2}^{(k_2)} (t_{k_2}) \neq \emptyset$.
Moreover, we have $(P_{i_1, j}^{(k_1)} \cup P_{i_1, j+1}^{(k_1)}) \cap U_{i'}^{(k')}(t_{k'}) = \emptyset$ for all $k'$ strictly between $k_1$ and $k_2$ and all  $i' \in \{ 1, \ldots, m^{(k')} \}$.
\end{enumerate}
Then in case (ii) we have $\varphi_1^* (h_{i_2}^{(k_2)}) \leq \max \{ h_{i_1}^{(k_1)}, \varphi_1^*(\eta^\circ) \}$.

Moreover in cases (i) and (ii), assuming $h_{i_1}^{(k_1)} < \eta^*_1$, we can uniquely extend the sub-Ricci flow with surgery $V_{i_1, j+2}^{(k_1)}$ to the time-interval $[t_{k_1-1}, t_{k_2 - 1}]$ (in case (i)) or $[t_{k_2}, t_{k_1}]$ (in case (ii)).
These extensions satisfy $V_{i_1, j+2}^{(k_1)} (t_{k_2 - 1}) \subsetneq U_{i_2}^{(k_2)} (t_{k_2 - 1} )$ (in case (i)) or $V_{i_1, j+2}^{(k_1)} (t_{k_2}) \subsetneq U_{i_2}^{(k_2)} (t_{k_2} )$ (in case (ii)).
\end{Claim1}

The strategy for the proof of this claim will be to use the curvature bound $|{\Rm}| < K t^{-1}$ on (and near) $(P_{i_1, j}^{(k_1)} \cup P_{i_1, j+1}^{(k_1)}) \times [t_{k_1-1}, t_{k_2-1}]$ (or $(P_{i_1, j}^{(k_1)} \cup P_{i_1, j+1}^{(k_1)}) \times [t_{k_2}, t_{k_1}]$)  to show that $P_{i_1, j}^{(k_1)}$ and $P_{i_1, j+1}^{(k_1)}$ remain sufficiently precise torus structure up to time $t_{k_2-1}$ (or $t_{k_2}$, respectively).
We will then compare the boundary torus $\partial U^{(k_2)}_{i_2} (t_{k_2-1})$ (or $\partial U^{(k_2)}_{i_2} (t_{k_2})$) with the torus structures $P_{i_1, j}^{(k_1)}$ and $P_{i_1, j+1}^{(k_1)}$ at time $t_{k_2-1}$ (or $t_{k_2}$) and argue that the asserted containment relationship holds.
The possibility that the boundary torus $\partial U^{(k_2)}_{i_2} (t_{k_2-1})$ (or $\partial U^{(k_2)}_{i_2} (t_{k_2})$) may intersect one of the boundary tori of  $P_{i_1, j}^{(k_1)}$ in a non-trivial way, will present a slight technical difficulty to us here.
Finally, in the case $k_2 < k_1$, we can roughly bound the diameter of $U^{(k_2)}_{i_2} (t_{k_2})$ from below by the length of $P_{i_1, j+1}^{(k_1)}$ at time $t_{k_2}$.
This bound will in turn imply an upper bound of $h^{(k_2)}_{i_2}$ in terms of $h^{(k_1)}_{i_1}$.

\begin{proof}
By (\ref{eq:summarycurvboundgoodpart}) and the fact that $(P_{i_1, j}^{(k_1)} \cup P_{i_1, j+1}^{(k_1)}) \cap U_{i'}^{(k')}(t_{k'}) = \emptyset$ for all $k'$ strictly between $k_1$ and $k_2$ and all  $i' \in \{ 1, \ldots, m^{(k')} \}$, we have
\begin{multline} \label{eq:RmlessKPP}
|{\Rm_t}| < K t^{-1} \qquad \text{on} \qquad P_{i_1, j}^{(k_1)} \cup P_{i_1, j+1}^{(k_1)} \quad \\ \text{for all} \quad \text{$t \in [t_{k_1-1}, t_{k_2-1}]$ (in case (i))} \quad \text{or \quad $t \in [t_{k_2}, t_{k_1}]$ (in case (ii)).}
\end{multline}
Using the additional comment after (\ref{eq:summarycurvboundgoodpart}), we can extend the curvature bound in (\ref{eq:RmlessKPP}) to a slightly larger subset:
More specifically, we claim that there is a constant $c (L) > 0$ such that the following holds: whenever $x \in \MM (t_{k_1})$ with $\dist_{t_{k_1}} (x, P_{i_1, j}^{(k_1)} \cup P_{i_1, j+1}^{(k_1)}) < c  \sqrt{t_{k_1}}$, then $(x,t_{k_1})$ survives until any time of the time-interval $[t_{k_1-1}, t_{k_2-1}]$ (in case (i)) or any time of the time-interval $[t_{k_2}, t_{k_1}]$ (in case (ii)) and $|{\Rm}| (x,t) < K t^{-1}$ for all $t \in [t_{k_1-1}, t_{k_2-1}]$ or $t \in [t_{k_2}, t_{k_1}]$, respectively.
To see this, choose $c(L) < \frac12$ so small that by distance distortion, any curve of time-$t_{k_1}$ length less than $c \sqrt{t_{k_1}}$ has to have length less than $\frac12 \sqrt{t}$ at any time $t \in [t_{k_1-1}, t_{k_2-1}]$ or $t \in [t_{k_2}, t_{k_1}]$, as long as the curvature along this curve satisfies the bound $|{\Rm_{t'}}| \leq K {t'}^{-1}$ for all $t'$ between $t_{k_1}$ and $t$.
Now consider a point $x \in \MM (t_1)$ with $\dist_{t_{k_1}} (x, P_{i_1, j}^{(k_1)} \cup P_{i_1, j+1}^{(k_1)}) < c  \sqrt{t_{k_1}}$.
Let $\sigma : [0,1] \to \MM (t_{k_1})$ be the shortest time-$t_{k_1}$ minimizing geodesic between $x$ and $P_{i_1, j}^{(k_1)} \cup P_{i_1, j+1}^{(k_1)}$.
Consider some time $t^* \in [t_{k_1-1}, t_{k_2-1}]$ or $t^* \in [t_{k_2}, t_{k_1}]$ with the property that for all $s \in [0,1]$, the point $(\sigma(s), t_{k_1})$ survives until time $t^*$ and that for all $t \in [t_{k_1}, t^*]$ or $t \in [t^*, t_{k_1}]$, the time-$t$ length of $\sigma$ is less than $\sqrt{t}$.
So, using the comment after (\ref{eq:summarycurvboundgoodpart}), we find that $(\sigma(s),t)$ is non-singular and $|{\Rm}| (\sigma(s), t) < K t^{-1}$ for all $s \in [0,1]$ and $t \in [t_{k_1}, t^*]$ or $t \in [t^*, t_{k_1}]$.
By distance distortion and our choice of $c$, this implies that for all $t \in [t_{k_1}, t^*]$ or $t \in [t^*, t_{k_1}]$, the time-$t$ length of $\sigma$ is even less than $\frac12 \sqrt{t}$.
This implies that if $t^*$ was chosen maximal/minimal with the required properties, then we must have $t^* = t_{k_2-1}$ or $t^* = t_{k_2}$ or $t^* = t_{k_1-1}$, depending on which case we are in, and it proves the claim made after (\ref{eq:RmlessKPP}).

Next, we use the curvature bound in (\ref{eq:RmlessKPP}) and the remark thereafter to show that there is a function $\varphi' = \varphi'_L: (0, \infty) \to (0, \infty]$, which only depends on $L$, such that $\lim_{h \to 0} \varphi' (h) = 0$ and such that the following holds: 
If $\varphi'(h_{i_1}^{(k_1)}) < \infty$, then $P_{i_1, j}^{(k_1)}$ and $P_{i_1, j+1}^{(k_1)}$ are still $\varphi'(h_{i_1}^{(k_1)})$-precise torus structures at scale $\sqrt{t_{k_2-1}}$ or $\sqrt{t_{k_2}}$ and at time $t_{k_2-1}$ or $t_{k_2}$ (depending on whether we are in case (i) or (ii)).
We will carry out the proof only for $P_{i_1, j}^{(k_1)}$ in case (i).
The other cases follow analogously.
First, choose $d$ in such a way that the time-$t_{k_2 - 1}$ distance between the two boundary components of $P_{i_1, j}^{(k_1)}$ is equal to $d \sqrt{t_{k_2 - 1}}$.
Then there is a time-$t_{k_2 - 1}$ minimizing geodesic $\sigma : [0,1] \to \MM (t_{k_2 - 1})$ between these boundary components, whose time-$t_{k_2-1}$ length is equal to $d \sqrt{t_{k_2 - 1}}$.
By minimality, the image of this geodesic has to be contained in $P_{i_1, j}^{(k_1)}$.
So, using distance distortion and (\ref{eq:RmlessKPP}), we find a constant $C^* = C^* (L) < \infty$ such that the time-$t_{k_1}$ length of $\sigma$ is less than $C^* d \sqrt{t_{k_1}}$.
But if $h^{(k_1)}_{i_1} < \frac1{2(N+1)}$, then by construction of $P^{(k_1)}_{i_1, j}$ this distance must be larger than $(2N h^{(k_1)}_{i_1})^{-1} \sqrt{t_{k_1}}$.
Therefore, we obtain a lower bound on $d$ that goes to infinity as $h^{(k_1)}_{i_1}$ goes to zero.
Next, we bound the time-$t_{k_2 - 1}$ diameters of the cross-sectional tori of $P^{(k_1)}_{i_1, j}$ from above.
Let $x, y \in P^{(k_1)}_{i_1, j}$ be two points that lie in the same cross-sectional $2$-torus.
If $h^{(k_1)}_{i_1} < \frac1{2(N+1)}$, then by construction of $P^{(k_1)}_{i_1, j}$ we know that $\dist_{t_{k_1}} (x,y) \leq 2N h^{(k_1)}_{i_1} \sqrt{t_{k_1}}$.
Let $\sigma : [0,1] \to \MM (t_{k_1})$ be a time-$t_{k_1}$ minimizing geodesic between $x$ and $y$.
If $2N h^{(k_1)}_{i_1} < c$, then for all $s \in [0,1]$ the point $(\sigma(s), t_{k_1})$ survives until time $t_{k_2 - 1}$ and we have $|{\Rm}| (\sigma (s), t) < K t^{-1}$ for all $s \in [0,1]$ and $t \in [t_{k_1}, t_{k_2 - 1}]$.
Therefore, by distance distortion, we obtain that $\dist_{t_{k_2 - 1}} (x,y) \leq C^{**} h^{(k_2)}_{i_2} \sqrt{t_{k_2 - 1}}$ for some $C^{**} = C^{**} (L) < \infty$.
So for sufficiently small $h^{(k_1)}_{i_1}$, we can ensure that $P_{i_1, j}^{(k_1)}$ is arbitrarily precise at scale $\sqrt{t_{k_2-1}}$ and at time $t_{k_2-1}$, which proves the existence of the function $\varphi'$.

Next we show that it is possible to extend the sub-Ricci flow with surgery $V_{i_1, j+2}^{(k_1)}$, which was originally defined on the time-interval $[t_{k_1-1}, t_{k_1}]$, to the time-interval $[t_{k_1-1}, t_{k_2 - 1}]$ (in case (i)) or $[t_{k_2}, t_{k_1}]$ (in case (ii)).
As the flow is non-singular on $(P_{i_1, j}^{(k_1)} \cup P_{i_1, j+1}^{(k_1)}) \times [t_{k_1-1}, t_{k_2 - 1}]$ or $(P_{i_1, j}^{(k_1)} \cup P_{i_1, j+1}^{(k_1)}) \times [t_{k_2}, t_{k_1}]$, respectively, we know that the points in the boundary $\partial V^{(k_1)}_{i_1, j+2} (t_{k_1})$ survive on the entire time-interval $[t_{k_1-1}, t_{k_2 - 1}]$ or $[t_{k_2-1}, t_{k_1}]$.
Also, since by assumption (iii), all surgeries on $[\frac14 t_0, t_\omega]$ are trivial, the boundary $\partial V^{(k_1)}_{i_1, j+2} (t_{k_1})$ remains separating in $\MM (t)$ for all $t \in [t_{k_1-1}, t_{k_2 - 1}]$ or $t \in [t_{k_2}, t_{k_1}]$.
Define now $V^{(k_1)}_{i_1, j+2} (t)$ for each such $t$ to be closure of the component of $\MM (t) \setminus \partial V^{(k_1)}_{i_1, j+2} (t)$ that does not contain $P_{i_1, j+1}^{(k_1)}$.
Then $V_{i_1, j+2}^{(k_1)}$ becomes a sub-Ricci flow with surgery on the time-interval $[t_{k_1-1}, t_{k_2 - 1}]$ or $[t_{k_2}, t_{k_1}]$, which finishes our construction.
Observe moreover, that the topology of $V^{(k_1)}_{i_1, j+2} (t)$ does not change in time, since by assumption (iii) all surgeries on $[\frac14 t_0, t_\omega]$ are trivial and $\MM (t)$ is free of spherical components for all $t \in [\frac14 t_0, t_\omega]$.
So for all $t \in [t_{k_1-1}, t_{k_2 - 1}]$ or $t \in [t_{k_2}, t_{k_1}]$ we have $V^{(k_1)}_{i_1, j+2} (t) \approx S^1 \times D^2$.

Our next goal is to show that if $h_{i_1}^{(k_1)}$ is small enough, depending on $L$ and $A$, then we must have $V_{i_1, j+2}^{(k_1)} (t_{k_2 /- 1}) \subsetneq U_{i_2}^{(k_2)} (t_{k_2 /- 1} )$ (by ``$k_2/-1$'' we mean $k_2$ in case (i) and $k_2 -1$ in case (ii)).
We will first illustrate the idea of our proof using slightly imprecise language: By assumption, the solid tori $U_{i_2}^{(k_2)} (t_{k_2 /- 1} )$ and $V_{i_1, j}^{(k_1)} (t_{k_2 /- 1})$ (notice the ``$j$'' instead of ``$j+2$'' in the index!) intersect in a point.
The diameters of the boundaries of these solid tori are small compared to the lengths of a collar neighborhoods around these boundaries, on which the geometry is almost product-like.
So up to the addition or subtraction of a small collar, we may assume that either both solid tori cover a component of $\MM (t_{k_2 /-1})$ or one solid torus is contained in the other.
The first case can be ruled out using \cite[Lemma \ref{Lem:coverMbysth}]{Bamler-LT-topology} and assumption (iii).
So in Figure~\ref{fig:Pik1k2}, this means that both solid tori ``open up in the same direction''.
Next, we show that, up to the addition of a small collar neighborhood around its boundary, $U_{i_2}^{(k_2)} (t_{k_2 /- 1} )$ must contain $V_{i_1, j+1}^{(k_1)} (t_{k_2 /- 1})$ (notice the ``$j+1$'' instead of ``$j$'' in the index!), since a slight extension of $U_{i_2}^{(k_2)} (t_{k_2 /- 1} )$ contains a cross-sectional torus of $P^{(k_1)}_{i_1, j}$.
The add-on ``up to the addition of a small collar neighborhood around its boundary'' can be removed if we reduce the solid torus $V_{i_1, j+1}^{(k_1)} (t_{k_2 /- 1})$ to the (significantly smaller) solid torus $V_{i_1, j+2}^{(k_1)} (t_{k_2 /- 1})$.

Let us now carry out the precise proof:
As by assumption $U_{i_2}^{(k_2)} (t_{k_2 /- 1} )$ has a point in common with $P^{(k_1)}_{i_1,j}$, which is disjoint from $V_{i_1, j+2}^{(k_1)} (t_{k_2 /- 1})$, it suffices to show that $V_{i_1, j+2}^{(k_1)} (t_{k_2 /- 1}) \subset U_{i_2}^{(k_2)} (t_{k_2 /- 1} )$.

\begin{figure}[t] 
\begin{center}
\setlength{\unitlength}{2863sp}%
\begingroup\makeatletter\ifx\SetFigFont\undefined%
\gdef\SetFigFont#1#2#3#4#5{%
  \reset@font\fontsize{#1}{#2pt}%
  \fontfamily{#3}\fontseries{#4}\fontshape{#5}%
  \selectfont}%
\fi\endgroup%
\begin{picture}(4500,2250)(3500,-950)
\hspace{18mm}\includegraphics[width=14.3cm]{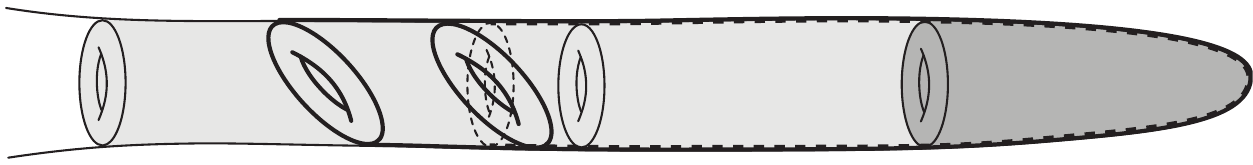}%
\put(-4670,560){\makebox(0,0)[lb]{\smash{{\SetFigFont{12}{14.4}{\familydefault}{\mddefault}{\updefault}$V^{(k_1)}_{i_1, j+1} (t_{k_2/-1})$}}}}
\put(-6370,560){\makebox(0,0)[lb]{\smash{{\SetFigFont{12}{14.4}{\familydefault}{\mddefault}{\updefault}$\mathcal{P}$}}}}
\put(-2070,560){\makebox(0,0)[lb]{\smash{{\SetFigFont{12}{14.4}{\familydefault}{\mddefault}{\updefault}$V^{(k_1)}_{i_1,j+2}(t_{k_2/-1})$}}}}
\put(-8500,-400){\makebox(0,0)[lb]{\smash{{\SetFigFont{12}{14.4}{\familydefault}{\mddefault}{\updefault}$\underbrace{\hspace{53mm}}_{\displaystyle P^{(k_1)}_{i_1, j}}$}}}}
\put(-4900,-400){\makebox(0,0)[lb]{\smash{{\SetFigFont{12}{14.4}{\familydefault}{\mddefault}{\updefault}$\underbrace{\hspace{39mm}}_{\displaystyle P^{(k_1)}_{i_1, j+1}}$}}}}
\put(-5780,-220){\makebox(0,0)[lb]{\smash{{\SetFigFont{12}{14.4}{\familydefault}{\mddefault}{\updefault}$\mathcal{T} = \partial \mathcal{S}$}}}}
\put(-6200,1230){\makebox(0,0)[lb]{\smash{{\SetFigFont{12}{14.4}{\familydefault}{\mddefault}{\updefault}$\partial U^{(k_2)}_{i_2} (t_{k_2/-1})$}}}}
\put(-7300,1230){\makebox(0,0)[lb]{\smash{{\SetFigFont{12}{14.4}{\familydefault}{\mddefault}{\updefault}$\partial \mathcal{S}'$}}}}
\end{picture}%
\caption[caption]{Picture at time $t_{k_2/-1}$ in the case in which $\partial U^{(k_2)}_{i_2} (t_{k_2/-1})$ intersects $P^{(k_1)}_{i_1,j}$.
The solid torus $U^{(k_2)}_{i_2} (t_{k_2/-1})$ and its extension $\mathcal{S}'$ by the collar extension $\mathcal{P}$ are outlined in bold.
The solid torus $\mathcal{S}$, which is bounded by a cross-sectional torus $\mathcal{T}$ of $P^{(k_1)}_{i_1,j}$, has a dashed outline.
The region  $P^{(k_1)}_{i_1,j} \cup  P^{(k_1)}_{i_1,j +1}$ is shaded in light gray and the region $V^{(k_1)}_{i_1, j+2} (t_{k_2/-1})$ is shaded in dark gray.
\label{fig:SS-SSprime1}}
\end{center}
\end{figure}
Consider first the case in which $\partial U_{i_2}^{(k_2)} (t_{k_2 /- 1} )$ intersects $P^{(k_1)}_{i_1,j}$ (see Figure~\ref{fig:SS-SSprime1} for an illustration).
Let $\mathcal{T} \subset P^{(k_1)}_{i_1, j}$ be a cross-sectional $2$-torus that intersects $\partial U_{i_2}^{(k_2)} (t_{k_2 /- 1} )$.
If $\partial U_{i_2}^{(k_2)} (t_{k_2 /- 1} )$ is not fully contained in $P^{(k_1)}_{i_1,j}$, then we can choose $\mathcal{T}$ to be one of its boundary tori.
The $2$-torus $\mathcal{T}$ bounds a solid torus $\mathcal{S} \subset \MM (t_{k_2 /-1})$, $\mathcal{T} = \partial \mathcal{S}$, $\mathcal{S} \approx S^1 \times D^2$, which is an extension of $V^{(k_1)}_{i_1,j+1} (t_{k_2/-1})$.
Therefore, we have $V_{i_1, j+2}^{(k_1)} (t_{k_2 /- 1}) \subset \mathcal{S}$.
As $P^{(k_1)}_{i_1, j}$ is $\varphi'( h^{(k_1)}_{i_1} )$-precise at scale $\sqrt{t_{k_2 /-1}}$ and at time $t_{k_2 /-1}$, we find that
\begin{equation} \label{eq:diamTTht}
 \diam_{t_{k_2/-1}} \mathcal{T} \leq \varphi'( h^{(k_1)}_{i_1} ) \sqrt{t_{k_2 /-1}} . 
\end{equation}
On the other hand, by Proposition \ref{Prop:firstcurvboundstep3}(c) we know that $P^{(k_2)}_{i_2}$ is $2$-precise at scale $\sqrt{t_{k_2}}$ and at time $t_{k_2 /-1}$.
So we can find a collar $\mathcal{P} \subset P^{(k_2)}_{i_2}$, $\mathcal{P} \approx T^2 \times I$, $\partial U_{i_2}^{(k_2)} (t_{k_2 /- 1} ) \subset \mathcal{P}$ whose two boundary components have time-$t_{k_2 /-1}$ distance of at least $\frac1{10} \sqrt{t_{k_2/-1}}$ from one another and whose time-$t_{k_2 /-1}$ diameter is less than $10 \sqrt{t_{k_2 /-1}}$.
Then $\mathcal{S}' = \mathcal{P} \cup U_{i_2}^{(k_2)} (t_{k_2 /- 1} )$ is diffeomorphic to a solid torus and by (\ref{eq:diamTTht}) we have $\partial \mathcal{S} \subset \Int \mathcal{S}'$ if $h^{(k_1)}_{i_1}$ is sufficiently small.
Assume for the remainder of this paragraph that $h^{(k_1)}_{i_1}$ is so small such that $\partial \mathcal{S} \subset \Int \mathcal{S}'$ holds.
If we also had $\partial \mathcal{S}' \subset \mathcal{S}$, then the union $\mathcal{S} \cup \mathcal{S}'$ would have no boundary.
So it would be a closed component of $\MM (t_{k_2 /-1})$.
Since $\mathcal{S}$ and $\mathcal{S}'$ are both diffeomorphic to solid tori, this would contradict \cite[Lemma \ref{Lem:coverMbysth}]{Bamler-LT-topology} and assumption (iii).
So $\partial \mathcal{S}'$ cannot be fully contained in $\mathcal{S}$.
Since $\partial \mathcal{S}$ is contained in the interior of $\mathcal{S}'$ and hence disjoint from its boundary, this implies that $\partial \mathcal{S}'$ is disjoint from $\mathcal{S}$.
Since $\mathcal{S}$ is connected, we obtain that $\mathcal{S} \subset \mathcal{S}'$.
So
\begin{equation} \label{eq:VsubsetPU}
 V_{i_1, j+2}^{(k_1)}(t_{k_2 /-1}) \subset \mathcal{S} \subset \mathcal{S}' = \mathcal{P} \cup U_{i_2}^{(k_2)} (t_{k_2 /- 1} ). 
\end{equation}
Lastly, note that the time-$t_{k_2 /-1}$ diameter of $\mathcal{P}$ is less than $10 \sqrt{t_{k_2 /-1}}$, but since $P^{(k_1)}_{i_1, j+1}$ is $\varphi'( h^{(k_1)}_{i_1} )$-precise at scale $\sqrt{t_{k_2 /-1}}$ and at time $t_{k_2 /-1}$, the boundary components of $P^{(k_1)}_{i_1,j + 1}$ have time-$t_{k_2 /-1}$ distance of more than $(\varphi'( h^{(k_1)}_{i_1} ))^{-1} \sqrt{t_{k_2 /-1}}$ from one another.
This subset separates $P^{(k_1)}_{i_1,j }$ from $V_{i_1, j+2}^{(k_1)} (t_{k_2 /- 1})$.
So, if $h_{i_1}^{(k_1)}$ is sufficiently small, then $P^{(k_1)}_{i_1,j }$ is far enough away from $V_{i_1, j+2}^{(k_1)} (t_{k_2 /- 1})$ to ensure that $\mathcal{P}$ cannot intersect both of these subsets.
Since $\mathcal{P}$ intersects $P^{(k_1)}_{i_1,j}$, it must be disjoint from $V_{i_1, j+2}^{(k_1)}(t_{k_2 /-1})$ for sufficiently small $h_{i_1}^{(k_1)}$.
So by (\ref{eq:VsubsetPU}) we must have $V_{i_1, j+2}^{(k_1)}(t_{k_2 /-1}) \subset U_{i_2}^{(k_2)} (t_{k_2 /- 1} )$ for sufficiently small $h_{i_1}^{(k_1)}$, which is what we wanted to show.

\begin{figure}[t] 
\begin{center}
\setlength{\unitlength}{2863sp}%
\begingroup\makeatletter\ifx\SetFigFont\undefined%
\gdef\SetFigFont#1#2#3#4#5{%
  \reset@font\fontsize{#1}{#2pt}%
  \fontfamily{#3}\fontseries{#4}\fontshape{#5}%
  \selectfont}%
\fi\endgroup%
\begin{picture}(4500,2050)(3500,-650)
\hspace{18mm}\includegraphics[width=14.3cm]{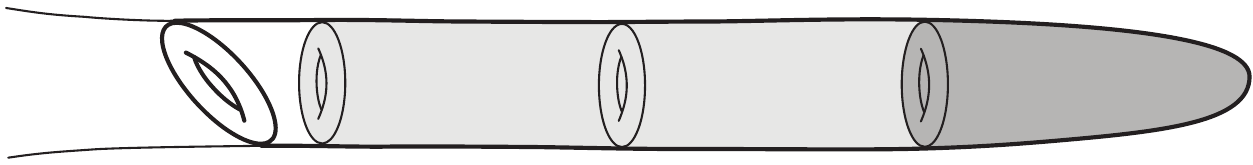}%
\put(-6170,520){\makebox(0,0)[lb]{\smash{{\SetFigFont{12}{14.4}{\familydefault}{\mddefault}{\updefault}$P^{(k_1)}_{i_1, j}$}}}}
\put(-4170,520){\makebox(0,0)[lb]{\smash{{\SetFigFont{12}{14.4}{\familydefault}{\mddefault}{\updefault}$P^{(k_1)}_{i_1, j+1}$}}}}
\put(-2070,560){\makebox(0,0)[lb]{\smash{{\SetFigFont{12}{14.4}{\familydefault}{\mddefault}{\updefault}$V^{(k_1)}_{i_1,j+2}(t_{k_2/-1})$}}}}
\put(-6880,-150){\makebox(0,0)[lb]{\smash{{\SetFigFont{12}{14.4}{\familydefault}{\mddefault}{\updefault}$\underbrace{\hspace{103mm}}_{\displaystyle V^{(k_1)}_{i_1, j} (t_{k_2/-1})}$}}}}
\put(-7800,1230){\makebox(0,0)[lb]{\smash{{\SetFigFont{12}{14.4}{\familydefault}{\mddefault}{\updefault}$U^{(k_2)}_{i_2} (t_{k_2/-1})$}}}}
\end{picture}%
\caption[caption]{Picture at time $t_{k_2/-1}$ in the case in which $\partial U^{(k_2)}_{i_2} (t_{k_2/-1})$ does not intersect $P^{(k_1)}_{i_1,j}$.
The solid torus $U^{(k_2)}_{i_2} (t_{k_2/-1})$ is outlined in bold.
The region  $P^{(k_1)}_{i_1,j} \cup  P^{(k_1)}_{i_1,j +1}$ is shaded in light gray and the region $V^{(k_1)}_{i_1, j+2} (t_{k_2/-1})$ is shaded in dark gray.
\label{fig:SS-SSprime2}}
\end{center}
\end{figure}
Second, consider the case in which $\partial U_{i_2}^{(k_2)} (t_{k_2 /- 1} )$ does not intersect $P^{(k_1)}_{i_1,j}$ (compare with Figure~\ref{fig:SS-SSprime2}).
Since by assumption $U_{i_2}^{(k_2)} (t_{k_2 /- 1} )$ intersects $P^{(k_1)}_{i_1,j}$, and since $P^{(k_1)}_{i_1,j}$ is connected, it follows that $P^{(k_1)}_{i_1,j} \subset \Int U_{i_2}^{(k_2)} (t_{k_2 /- 1} )$.
In particular, this implies that 
\begin{equation} \label{eq:partialVsubsetPU}
\partial V_{i_1, j}^{(k_1)} (t_{k_2 /-1}) \subset P^{(k_1)}_{i_1,j} \subset \Int U_{i_2}^{(k_2)} (t_{k_2 /- 1} ).
\end{equation}
(Note that $V_{i_1, j}^{(k_1)} (t_{k_2 /-1}) = P^{(k_1)}_{i_1,j} \cup P^{(k_1)}_{i_1,j+1} \cup V_{i_1, j+2}^{(k_1)} (t_{k_2 /-1})$.)
Since $\partial V_{i_1, j}^{(k_1)} (t_{k_2 /-1})$ and $\partial U_{i_2}^{(k_2)} (t_{k_2 /- 1} )$ are disjoint, we find moreover that either $\partial U_{i_2}^{(k_2)} (t_{k_2 /- 1} )$ is contained in $V_{i_1, j}^{(k_1)} (t_{k_2 /-1})$ or it is disjoint from $V_{i_1, j}^{(k_1)} (t_{k_2 /-1})$.
If $\partial U_{i_2}^{(k_2)} (t_{k_2 /- 1} ) \subset V_{i_1, j}^{(k_1)} (t_{k_2 /-1})$, then using (\ref{eq:partialVsubsetPU}) we conclude that $U_{i_2}^{(k_2)} (t_{k_2 /- 1} ) \cup V_{i_1, j}^{(k_1)} (t_{k_2 /-1}) $ does not have a boundary.
So it is equal to a component of $\MM (t_{k_2 / -1})$.
Since $U_{i_2}^{(k_2)} (t_{k_2 /- 1} )$ and $V_{i_1, j}^{(k_1)} (t_{k_2 /- 1})$ are both diffeomorphic to solid tori, this would again contradict \cite[Lemma \ref{Lem:coverMbysth}]{Bamler-LT-topology} and assumption (iii).
It follows that $\partial U_{i_2}^{(k_2)} (t_{k_2 /- 1} )$ is disjoint from $V_{i_1, j}^{(k_1)} (t_{k_2 /-1})$.
Since $V_{i_1, j}^{(k_1)} (t_{k_2 /-1})$ is connected, it follows from (\ref{eq:partialVsubsetPU}) that $V_{i_1, j}^{(k_1)} (t_{k_2 /-1}) \subset U_{i_2}^{(k_2)} (t_{k_2 /- 1} )$.
This implies that $V_{i_1, j+2}^{(k_1)} (t_{k_2 /-1}) \subset V_{i_1, j}^{(k_1)} (t_{k_2 /-1}) \subset U_{i_2}^{(k_2)} (t_{k_2 /- 1} )$, which is what we wanted to prove.
This finishes the proof of the second part of the claim.

For the first part of the claim note that we may always choose $\varphi^*_1$ such that $\varphi_1^*(\eta) < \eta_1^*$ for all $\eta > 0$.
So we only need to consider the case $h^{(k_1)}_{i_1} < \eta_1^*$.
Moreover, we can assume that $h^{(k_2)}_{i_2} > \eta^\circ$, because otherwise the statement is trivial.
Observe now that $U_{i_2}^{(k_2)} (t_{k_2})$ intersects both $P_{i_1, j}^{(k_1)}$ and $V_{i_1, j+2}^{(k_1)} (t_{k_2})$ which are separated by $P_{i_1, j+1}^{(k_1)}$, whose boundary components have time-$t_{k_2}$ distance of at least $(\varphi'(h_{i_1}^{(k_1)}))^{-1} \sqrt{t_{k_2}}$ from one another.
So, using Proposition \ref{Prop:firstcurvboundstep3}(d), we find
\[ \big( \varphi'(h_{i_1}^{(k_1)}) \big)^{-1} \sqrt{ t_{k_2}} \leq \diam_{t_{k_2}} U_{i_2}^{(k_2)} (t_{k_2})
 < D(h_{i_2}^{(k_2)}) \sqrt{t_{k_2}}. \]
As $\lim_{h \to 0} \varphi' (h) = 0$, this bound allows us to choose an appropriate $\varphi^*_1$.
\end{proof}

Consider now the index set $I = \{ (k, i) \; : \; 0 \leq k \leq N, 1 \leq i \leq m^{(k)} \}$.
We will write $(k_1, i_1) \prec (k_2, i_2)$ whenever we are in the situation of Claim 1, i.e. if there is a $j  \leq N + 1$ such case (i) or (ii) of this claim holds.

In the next claim, we will analyze the situation in which we have a chain $(k_1, i_1), (k_2, i_2), \ldots \in I$ of pairs of indices such that each two consecutive pairs satisfy the assumptions of Claim~1.
Under certain additional assumptions, we will then conclude that only case (ii) of Claim~1 occurs, which implies that $k_1 > k_2 > \ldots$.
Therefore, the chain under consideration cannot be circular.

\begin{figure}[t] 
\begin{center}
\setlength{\unitlength}{2863sp}%
\begingroup\makeatletter\ifx\SetFigFont\undefined%
\gdef\SetFigFont#1#2#3#4#5{%
  \reset@font\fontsize{#1}{#2pt}%
  \fontfamily{#3}\fontseries{#4}\fontshape{#5}%
  \selectfont}%
\fi\endgroup%
\begin{picture}(4500,5300)(3500,0)
\hspace{18mm}\includegraphics[width=14.3cm]{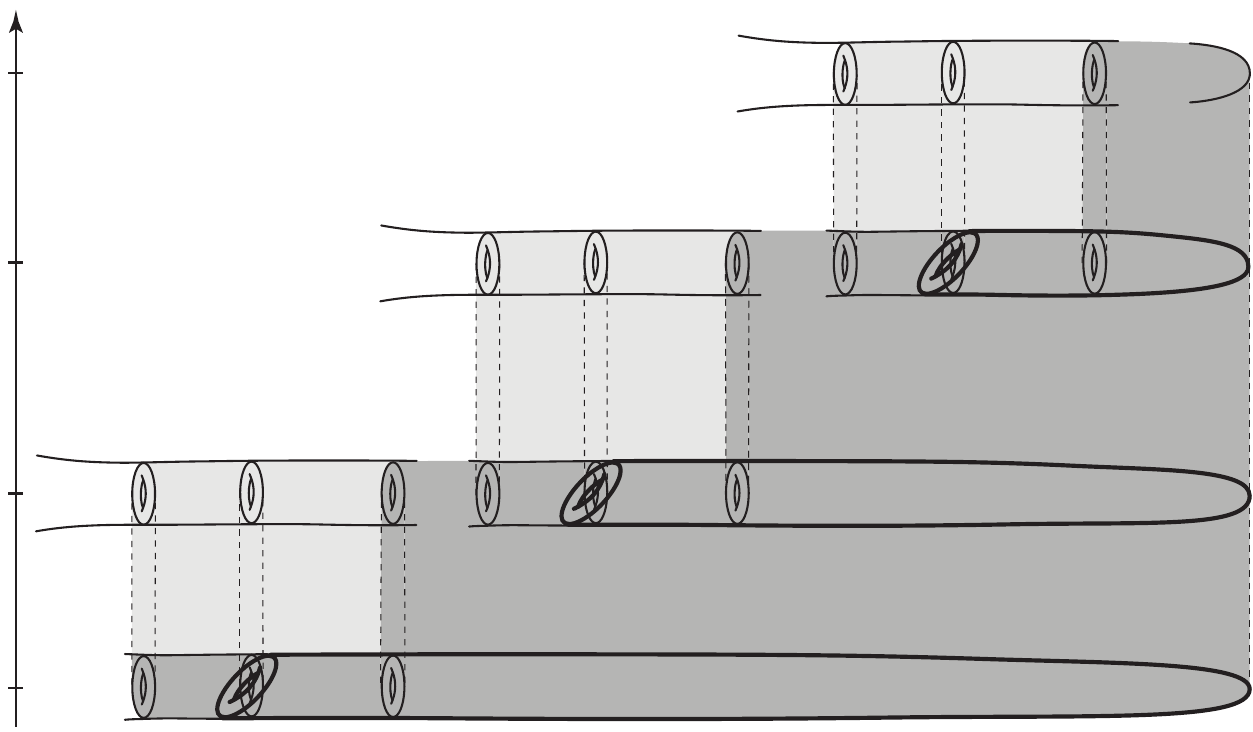}%
\put(-9570,4860){\makebox(0,0)[lb]{\smash{{\SetFigFont{12}{14.4}{\familydefault}{\mddefault}{\updefault}$t_{k_1}$}}}}
\put(-9570,3460){\makebox(0,0)[lb]{\smash{{\SetFigFont{12}{14.4}{\familydefault}{\mddefault}{\updefault}$t_{k_2}$}}}}
\put(-9570,1700){\makebox(0,0)[lb]{\smash{{\SetFigFont{12}{14.4}{\familydefault}{\mddefault}{\updefault}$t_{k_3}$}}}}
\put(-9570,260){\makebox(0,0)[lb]{\smash{{\SetFigFont{12}{14.4}{\familydefault}{\mddefault}{\updefault}$t_{k_4}$}}}}
\put(-980,4100){\makebox(0,0)[lb]{\smash{{\SetFigFont{12}{14.4}{\familydefault}{\mddefault}{\updefault}$V^{(k_1)}_{i_1,j_1+2}$}}}}
\put(-3660,2530){\makebox(0,0)[lb]{\smash{{\SetFigFont{12}{14.4}{\familydefault}{\mddefault}{\updefault}$V^{(k_2)}_{i_2,j_2+2}$}}}}
\put(-6220,930){\makebox(0,0)[lb]{\smash{{\SetFigFont{12}{14.4}{\familydefault}{\mddefault}{\updefault}$V^{(k_3)}_{i_3,j_3+2}$}}}}
\put(-1180,2930){\makebox(0,0)[lb]{\smash{{\SetFigFont{12}{14.4}{\familydefault}{\mddefault}{\updefault}$U^{(k_2)}_{i_2}(t_{k_2})$}}}}
\put(-3060,1680){\makebox(0,0)[lb]{\smash{{\SetFigFont{12}{14.4}{\familydefault}{\mddefault}{\updefault}$U^{(k_3)}_{i_3} (t_{k_3})$}}}}
\put(-4220,230){\makebox(0,0)[lb]{\smash{{\SetFigFont{12}{14.4}{\familydefault}{\mddefault}{\updefault}$U^{(k_4)}_{i_4}(t_4)$}}}}
\put(-2900,4810){\makebox(0,0)[lb]{\smash{{\SetFigFont{12}{14.4}
{\familydefault}{\mddefault}{\updefault}$P^{(k_1)}_{i_1,j_1}$}}}}
\put(-2120,4810){\makebox(0,0)[lb]{\smash{{\SetFigFont{12}{14.4}{\familydefault}{\mddefault}{\updefault}$P^{(k_1)}_{i_1,j_1+1}$}}}}
\put(-900,4810){\makebox(0,0)[lb]{\smash{{\SetFigFont{12}{14.4}{\familydefault}{\mddefault}{\updefault}$\cdots$}}}}
\put(-5590,3410){\makebox(0,0)[lb]{\smash{{\SetFigFont{12}{14.4}{\familydefault}{\mddefault}{\updefault}$P^{(k_2)}_{i_2,j_2}$}}}}
\put(-4780,3410){\makebox(0,0)[lb]{\smash{{\SetFigFont{12}{14.4}{\familydefault}{\mddefault}{\updefault}$P^{(k_2)}_{i_2,j_2+1}$}}}}
\put(-3550,3410){\makebox(0,0)[lb]{\smash{{\SetFigFont{12}{14.4}{\familydefault}{\mddefault}{\updefault}$\cdots$}}}}
\put(-9000,0){\makebox(0,0)[lb]{\smash{{\SetFigFont{12}{14.4}{\familydefault}{\mddefault}{\updefault}\scalebox{-1}[1]{$\ddots$}}}}}
\put(-8130,1680){\makebox(0,0)[lb]{\smash{{\SetFigFont{12}{14.4}{\familydefault}{\mddefault}{\updefault}$P^{(k_3)}_{i_3,j_3}$}}}}
\put(-7330,1680){\makebox(0,0)[lb]{\smash{{\SetFigFont{12}{14.4}{\familydefault}{\mddefault}{\updefault}$P^{(k_3)}_{i_3,j_3+1}$}}}}
\put(-6200,1680){\makebox(0,0)[lb]{\smash{{\SetFigFont{12}{14.4}{\familydefault}{\mddefault}{\updefault}$\cdots$}}}}
\end{picture}%
\caption[caption]{Illustration of the assertion of Claim~2.
The domains $P^{(k_1)}_{i_1, j_1}, P^{(k_2)}_{i_2, j_2}, \ldots$ and $P^{(k_2)}_{i_2, j_2}, P^{(k_2)}_{i_2, j_2+1}, \ldots$, are shaded in light gray wherever they are non-singular and don't intersect any $U^{(k')}_{i'}$.
The sub-Ricci flows with surgery $V^{(k_1)}_{i_1}, V^{(k_2)}_{i_2}, \ldots$ are shaded in dark gray.
The solid tori $U^{(k_1)}_{i_1} (t_{k_1}), U^{(k_2)}_{i_2} (t_{k_2}), \ldots$ are outlined in bold.
\label{fig:Claim2-good-case}}
\end{center}
\end{figure}
\begin{Claim2}
There are a constant $\eta_2^* = \eta_2^*(L,A)  > 0$ and a monotonically non-decreasing function $\varphi_2^* = \varphi_{2, L, A}^* : (0, \infty) \to (0, \infty)$, which both depend on $L$ and $A$, such that if $\eta^\circ < \eta_2^*$, then the following holds: \\ Whenever we have a chain
\[ (k_1, i_1) \prec (k_2, i_2) \prec \ldots \prec (k_m, i_m) \]
such that $0 \leq k_2, \ldots, k_m \leq k_1$ and $h_{i_1}^{(k_1)} < \eta_2^*$, then $m \leq N + 1$ and $k_1 > k_2 > \ldots > k_m$.
Moreover, there are indices $j_1, \ldots, j_{m-1} \in \{ 1, \ldots, N+1 \}$ such that the sub-Ricci flow with surgery $V_{i_1, j_1+2}^{(k_1)}$ can be extended to the time-interval $[t_{k_2}, t_{k_1}]$, $V_{i_2, j_2+2}^{(k_2)}$ can be extended to the time-interval $[t_{k_3}, t_{k_2}]$, \ldots, $V_{i_{m-1}, j_{m-1} + 2}^{(k_{m-1})}$ can be extended to the time-interval $[t_{k_m}, t_{k_{m-1}}]$ and such that (see Figure~\ref{fig:Claim2-good-case} for an illustration)
\begin{multline*}
 V_{i_1, j_1+2}^{(k_1)} (t_{k_2}) \subsetneq U_{i_2}^{(k_2)} (t_{k_2}), \qquad V_{i_2, j_2+2}^{(k_2)} (t_{k_3}) \subsetneq U_{i_3}^{(k_3)} (t_{k_3}),  \qquad \ldots, \\
V_{i_{m-1}, j_{m-1}+2}^{(k_{m-1})} (t_{k_m}) \subsetneq U_{i_m}^{(k_m)} (t_{k_m})
\end{multline*}
Lastly, $\varphi_2^* (h_{i_m}^{(k_m)}) \leq \max \{ h_{i_1}^{(k_1)}, \eta^\circ \}$.
\end{Claim2}

The idea of the proof of Claim~2 will be to apply Claim~1 for each consecutive pair $(k_l, i_l) \prec (k_{l+1}, i_{l+1})$.
The strict containment relationship asserted in Claim~1, together with the fact that the solid tori $U^{(k')}_{i'} (t_{k'})$ are pairwise disjoint for each $k'$, will imply that the sequence $k_1, k_2, \ldots$ cannot reverse its monotonicity.
Since $k_1 > k_2$, this means that the sequence must remain decreasing.
Note that Claim~1 can only be applied if the corresponding preciseness parameter $h^{(k_l)}_{i_l}$ is small enough.
This bound will follow from iteratively applying the asserted inequality in Claim~1 under an inductive assumption.
The inequality at the end of Claim~2 then follows similarly.

\begin{proof}
Set $\eta_2^* = {\varphi_1^*}^{(N)}( \eta_1^* )$ and $\varphi_2^* (h) = {\varphi_1^*}^{(N)} (h)$, where $\eta_1^*, \varphi_1^*$ are taken from Claim 1 and the upper index in parentheses indicates multiple application.

Without loss of generality, we can assume that $m \leq N + 2$, because otherwise we can shorten the given chain to size $N + 2$.
We will first show the claim without the last line, by induction on $m$.
Additionally, we will show that
\begin{equation} \label{eq:extraindasspt}
 h_{i_m}^{(k_m)} \leq  {\varphi_1^*}^{(N - m +1 )} (\eta_1^*),
\end{equation}
if $m \leq N + 1$.
For $m=1$ there is nothing to show.
Assume that the induction hypothesis holds for $m-1$, i.e. that we can extend the sub-Ricci flows with surgery $V_{i_1, j_1+2}^{(k_1)}, \ldots, V_{i_{m-2}, j_{m-2}+2}^{(k_1)}$ to the appropriate time-intervals so that they satisfy the inclusion property above, that $k_1 > k_2 > \ldots > k_{m-1}$ and that $h_{i_m}^{(k_{m-1})}$ satisfies inequality (\ref{eq:extraindasspt}) above with $m$ replaced by $m-1$.

So $h_{i_{m-1}}^{(k_{m-1})} \leq {\varphi^*_1}^{(N - m +2 )} (\eta_1^*) \leq \eta_1^*$ and we conclude by Claim 1 that there is a $j_{m-1}$ such that we can extend the sub-Ricci flow with surgery $V^{(k_{m-1})}_{i_{m-1}, j_{m-1} +2}$ to the time-interval $[t_{k_m}, t_{k_{m-1}}]$ or $[t_{k_{m-1} - 1}, t_{k_m - 1}]$ depending on whether $k_m < k_{m-1}$ or $k_m > k_{m-1}$ and we have
\begin{equation} \label{eq:VkminUkm}
V^{(k_{m-1})}_{i_{m-1}, j_{m-1} + 2} (t_{k_m /- 1}) \subsetneq U^{(k_m)}_{i_m}( t_{k_m /-1}).
\end{equation}
(Here we again use the notation $k_{m/-1} = k_m$ if $k_m < k_{m-1}$ and $k_{m/-1} = k_{m-1}$ if $k_{m-1} > k_{m}$.)

\begin{figure}[t] 
\begin{center}
\setlength{\unitlength}{2863sp}%
\begingroup\makeatletter\ifx\SetFigFont\undefined%
\gdef\SetFigFont#1#2#3#4#5{%
  \reset@font\fontsize{#1}{#2pt}%
  \fontfamily{#3}\fontseries{#4}\fontshape{#5}%
  \selectfont}%
\fi\endgroup%
\begin{picture}(4400,5900)(3400,-400)
\hspace{18mm}\includegraphics[width=14.3cm]{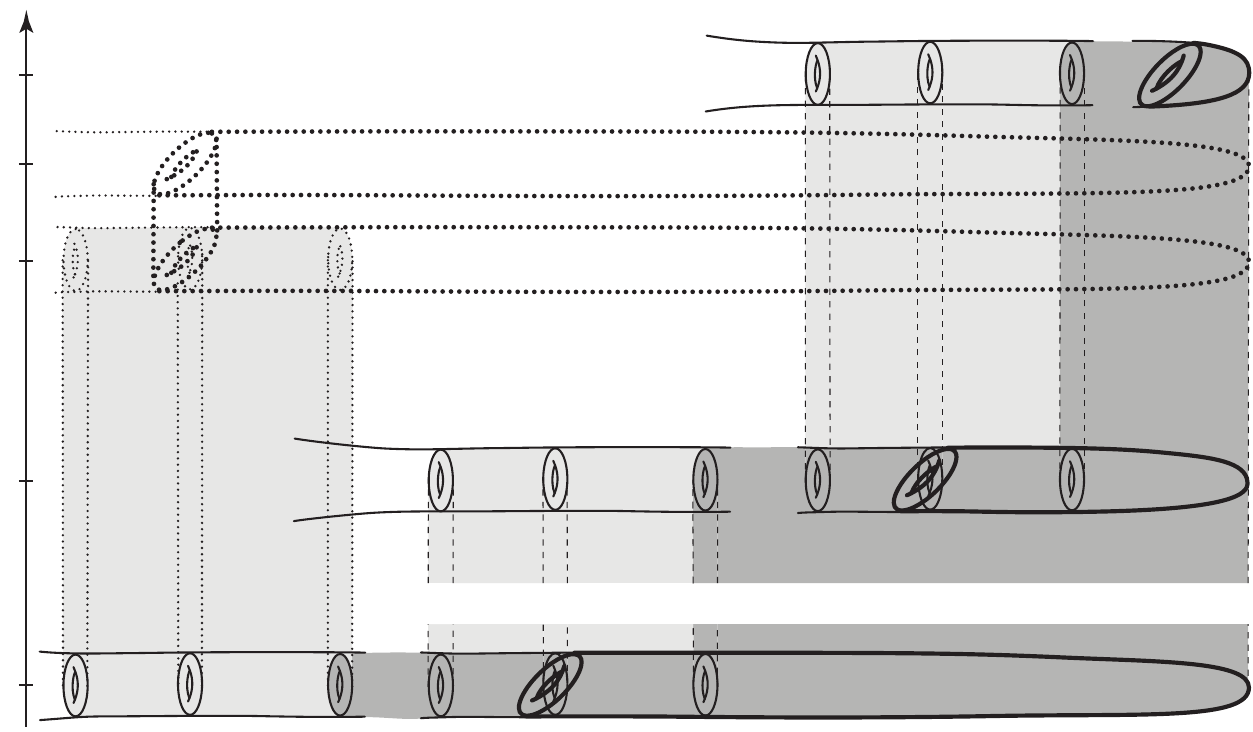}%
\put(-9500,4840){\makebox(0,0)[lb]{\smash{{\SetFigFont{12}{14.4}{\familydefault}{\mddefault}{\updefault}$t_{k_l}$}}}}
\put(-9810,3430){\makebox(0,0)[lb]{\smash{{\SetFigFont{12}{14.4}{\familydefault}{\mddefault}{\updefault}$t_{k_m-1}$}}}}
\put(-9610,4180){\makebox(0,0)[lb]{\smash{{\SetFigFont{12}{14.4}{\familydefault}{\mddefault}{\updefault}$t_{k_m}$}}}}
\put(-9660,1730){\makebox(0,0)[lb]{\smash{{\SetFigFont{12}{14.4}{\familydefault}{\mddefault}{\updefault}$t_{k_{l+1}}$}}}}
\put(-9740,260){\makebox(0,0)[lb]{\smash{{\SetFigFont{12}{14.4}{\familydefault}{\mddefault}{\updefault}$t_{k_{m-1}}$}}}}
\put(-890,5300){\makebox(0,0)[lb]{\smash{{\SetFigFont{12}{14.4}{\familydefault}{\mddefault}{\updefault}$U^{(k_l)}_{i_l} (t_{k_l})$}}}}
\put(-1180,2700){\makebox(0,0)[lb]{\smash{{\SetFigFont{12}{14.4}{\familydefault}{\mddefault}{\updefault}$V^{(k_l)}_{i_l,j_l+2}$}}}}
\put(-3900,1280){\makebox(0,0)[lb]{\smash{{\SetFigFont{12}{14.4}{\familydefault}{\mddefault}{\updefault}$V^{(k_{l+1})}_{i_{l+1},j_{l+1}+2}$}}}}
\put(-6620,2900){\makebox(0,0)[lb]{\smash{{\SetFigFont{12}{14.4}{\familydefault}{\mddefault}{\updefault}$V^{(k_{m-1})}_{i_{m-1},j_{m-1}+2}$}}}}
\put(-4100,840){\makebox(0,0)[lb]{\smash{{\SetFigFont{12}{14.4}{\familydefault}{\mddefault}{\updefault}$\vdots$}}}}
\put(-2200,840){\makebox(0,0)[lb]{\smash{{\SetFigFont{12}{14.4}{\familydefault}{\mddefault}{\updefault}$\vdots$}}}}
\put(-300,840){\makebox(0,0)[lb]{\smash{{\SetFigFont{12}{14.4}{\familydefault}{\mddefault}{\updefault}$\vdots$}}}}
\put(-5230,840){\makebox(0,0)[lb]{\smash{{\SetFigFont{12}{14.4}{\familydefault}{\mddefault}{\updefault}$\vdots$}}}}
\put(-6050,840){\makebox(0,0)[lb]{\smash{{\SetFigFont{12}{14.4}{\familydefault}{\mddefault}{\updefault}$\vdots$}}}}
\put(-1680,1300){\makebox(0,0)[lb]{\smash{{\SetFigFont{12}{14.4}{\familydefault}{\mddefault}{\updefault}$U^{(k_{l+1})}_{i_{l+1}}(t_{k_{l+1}})$}}}}
\put(-3060,250){\makebox(0,0)[lb]{\smash{{\SetFigFont{12}{14.4}{\familydefault}{\mddefault}{\updefault}$U^{(k_{m-1})}_{i_{m-1}} (t_{k_{m-1}})$}}}}
\put(-5020,4130){\makebox(0,0)[lb]{\smash{{\SetFigFont{12}{14.4}{\familydefault}{\mddefault}{\updefault}$U^{(k_m)}_{i_m}(t_{k_m})$}}}}
\put(-5020,3400){\makebox(0,0)[lb]{\smash{{\SetFigFont{12}{14.4}{\familydefault}{\mddefault}{\updefault}$U^{(k_m)}_{i_m}(t_{{k_m}-1})$}}}}
\put(-3100,4810){\makebox(0,0)[lb]{\smash{{\SetFigFont{12}{14.4}
{\familydefault}{\mddefault}{\updefault}$P^{(k_l)}_{i_l,j_l}$}}}}
\put(-2280,4810){\makebox(0,0)[lb]{\smash{{\SetFigFont{12}{14.4}{\familydefault}{\mddefault}{\updefault}$P^{(k_l)}_{i_l,j_l+1}$}}}}
\put(-1170,4810){\makebox(0,0)[lb]{\smash{{\SetFigFont{12}{14.4}{\familydefault}{\mddefault}{\updefault}$\cdots$}}}}
\put(-6090,2280){\makebox(0,0)[lb]{\smash{{\SetFigFont{12}{14.4}{\familydefault}{\mddefault}{\updefault}$P^{(k_{l+1})}_{i_{l+1},j_{l+1}}$}}}}
\put(-5080,2280){\makebox(0,0)[lb]{\smash{{\SetFigFont{12}{14.4}{\familydefault}{\mddefault}{\updefault}$P^{(k_{l+1})}_{i_{l+1},j_{l+1}+1}$}}}}
\put(-3800,1780){\makebox(0,0)[lb]{\smash{{\SetFigFont{12}{14.4}{\familydefault}{\mddefault}{\updefault}$\cdots$}}}}
\put(-8950,-200){\makebox(0,0)[lb]{\smash{{\SetFigFont{12}{14.4}{\familydefault}{\mddefault}{\updefault}$P^{(k_{m-1})}_{i_{m-1},j_{m-1}}$}}}}
\put(-7750,-200){\makebox(0,0)[lb]{\smash{{\SetFigFont{12}{14.4}{\familydefault}{\mddefault}{\updefault}$P^{(k_{m-1})}_{i_{m-1},j_{m-1}+1}$}}}}
\put(-6570,250){\makebox(0,0)[lb]{\smash{{\SetFigFont{12}{14.4}{\familydefault}{\mddefault}{\updefault}$\cdots$}}}}
\end{picture}%
\caption[caption]{Illustration of the case in which $k_m > k_{m-1}$.
The subsets $P^{(k_{l'})}_{i_{l'}, j_{l'}}$ and $P^{(k_{l'})}_{i_{l'}, j_{l'}+1}$ are shaded in light gray wherever the flow is non-singular and disjoint from the $U^{(k')}_{i'}$.
The sub-Ricci flows with surgery $V^{(k_{l'})}_{i_{l'}, j_{l'}+2}$ are shaded in dark gray, except for $V^{(k_{m-1})}_{i_{m-1}, j_{m-1}+2}$, in order to avoid overlap. 
The solid tori $U^{(k_{l'})} (t_{k_{l'}})$ are outlined in bold.
The objects involving $k_m$ are dotted, since their location in space-time is being contradicted.
Note that there may be several time-steps between $t_{k_{l+1}}$ and $t_{k_{m-1}}$.
This is indicated by the white omission and the vertical dots.
The light/dark gray regions below this omission may not necessarily correspond to the light/dark gray above the omission.
\label{fig:Claim2-bad-case}}
\end{center}
\end{figure}
We now show in the next two paragraphs that we must have $k_m < k_{m-1}$:
Assume by contradiction that $k_m > k_{m-1}$ (see Figure~\ref{fig:Claim2-bad-case} for an illustration).
Then $V^{(k_{m-1})}_{i_{m-1}, j_{m-1} +2}$ is defined on $[t_{k_{m-1} -1}, t_{k_m -1}]$ and we have $m \geq 3$ and there is an $l \in \{ 1, \ldots, m-2 \}$ such that $k_{l+1} + 1\leq k_m \leq k_l$.
We first show that
\begin{equation} \label{eq:Ulp1inVkm}
V^{(k_{l+1})}_{i_{l+1}, j_{l+1}} (t_{k_{l+1}}) \subset V^{(k_{m-1})}_{i_{m-1}, j_{m-1} + 2} (t_{k_{l+1}}).
\end{equation}
Observe here that the sub-Ricci flow with surgery $V^{(k_{m-1})}_{i_{m-1}, j_{m-1} + 2}$ is defined at time $t_{k_{l+1}}$ since $k_{l+1} < k_m$.
Choose $l+1 \leq l^* \leq m-1$ minimal with the property that $V^{(k_{l^*})}_{i_{l^*}, j_{l^*} +2} (t_{k_{l^*}})\subset V^{(k_{m-1})}_{i_{m-1}, j_{m-1} + 2} (t_{k_{l^*}})$.
This is possible, since the inclusion trivially holds for $l^* = m-1$.
Then by the induction assumption
\begin{equation} \label{eq:Vklsm1Vkls}
 V^{(k_{l^*-1})}_{i_{l^*-1}, j_{l^*-1} + 2} (t_{k_{l^*}}) \subset U^{(k_{l^*})}_{i_{l^*}} (t_{k_{l^*}})\subset V^{(k_{l^*})}_{i_{l^*}, j_{l^*} + 2} (t_{k_{l^*}}) \subset V^{(k_{m-1})}_{i_{m-1}, j_{m-1} + 2} (t_{k_{l^*}}). 
\end{equation}
Now, if $l^* > l+1$, then $k_{l^*-1} \leq k_{l+1} \leq k_{m}-1$, so both sub-Ricci flows with surgery, $V^{(k_{l^*-1})}_{i_{l^*-1}, j_{l^*-1} + 2}$ and $V^{(k_{m-1})}_{i_{m-1}, j_{m-1} + 2}$, are defined on the time-interval $[t_{k_{l^*}}, t_{k_{l^*-1}}]$.
Since at every surgery time of the time-interval $[t_{k_{l^*}}, t_{k_{l^*-1}}]$ the topology of the time-slice does not change, we conclude that the containment relationship of the left and right-hand side in (\ref{eq:Vklsm1Vkls}) is preserved on $[t_{k_{l^*}}, t_{k_{l^*-1}}]$.
This implies that if $l^* > l+1$, then
\[  V^{(k_{l^*-1})}_{i_{l^*-1}, j_{l^*-1} + 2} (t_{k_{l^*-1}}) \subset V^{(k_{m-1})}_{i_{m-1}, j_{m-1} + 2} (t_{k_{l^*-1}}), \]
in contradiction to the minimal choice of $l^*$.
So $l^* = l+1$ and (\ref{eq:Ulp1inVkm}) holds.

Similarly as in (\ref{eq:Vklsm1Vkls}), we can use (\ref{eq:Ulp1inVkm}) to conclude that
\begin{equation} \label{eq:VUVV}
 V^{(k_l)}_{i_l, j_l +2} (t_{k_{l+1}} ) \subset U^{(k_{l+1})}_{i_{l+1}} (t_{k_{l+1}}) \subset V^{(k_{l+1})}_{i_{l+1}, j_{l+1} +2} (t_{k_{l+1}}) \subset V^{(k_{m-1})}_{i_{m-1}, j_{m-1} +2} (t_{k_{l+1}}). 
\end{equation}
Again, similarly as argued after (\ref{eq:Vklsm1Vkls}), the containment relationship of the left and right-hand side of (\ref{eq:VUVV}) is preserved on the time-interval $[t_{k_{l+1}}, t_{k_{m-1}}]$.
So
\[ V^{(k_l)}_{i_l, j_l +2} (t_{k_{m-1}} ) \subset V^{(k_{m-1})}_{i_{m-1}, j_{m-1} +2} (t_{k_{m-1}}). \]
Combining this with (\ref{eq:VkminUkm}) yields
\[ V^{(k_l)}_{i_l, j_l +2} (t_{k_{m-1}} ) \subset V^{(k_{m-1})}_{i_{m-1}, j_{m-1} +2} (t_{k_{m-1}}) \subsetneq U^{(k_m)}_{i_m} (t_{k_{m-1}}). \]
Since both sub-Ricci flows with surgery, $V^{(k_l)}_{i_l, j_l +2}$ and $U^{(k_m)}_{i_m}$, are defined on the time-interval $[t_{k_{m-1}}, t_{k_m}]$, this containment relationship is preserved up to time $t_{k_m}$:
\begin{equation} \label{eq:Vkliljl2Ukmim}
 V^{(k_l)}_{i_l, j_l +2} (t_{k_{m}} ) \subsetneq U^{(k_m)}_{i_m} (t_{k_{m}}). 
\end{equation}
We will now consider the two cases $k_m = k_l$ and $k_m \leq k_l -1$ separately (recall that $k_{l+1} + 1 \leq k_m \leq k_l$).
In the case $k_m = k_l$, we obtain
\[ U^{(k_m)}_{i_l} (t_{k_{m}} ) \subset V^{(k_m)}_{i_l, j_l +2} (t_{k_{m}} ) \subsetneq U^{(k_m)}_{i_m} (t_{k_{m}}), \]
which is impossible as the solid tori $U^{(k_m)}_{1} (t_{k_{m}} ), \ldots, U^{(k_m)}_{m^{(k_m)}} (t_{k_{m}} ) \subset \MM( t_m )$ are pairwise disjoint.
It remains to consider the case in which $k_m \leq k_l - 1$.
In this case observe that (\ref{eq:Vkliljl2Ukmim}) implies that
\[ \partial V^{(k_l)}_{i_l, j_l +2} (t_{k_{m}} ) \subset V^{(k_l)}_{i_l, j_l +2} (t_{k_{m}} ) \subsetneq U^{(k_m)}_{i_m} (t_{k_{m}}). \]
As $\partial V^{(k_l)}_{i_l, j_l +2} (t_{k_{m}} )$ is contained in the non-singular space-time region $( P^{(k_l)}_{i_l, j_l} \cup P^{(k_l)}_{i_l, j_l+1} ) \times [t_{k_{l+1}}, t_{k_l}]$, this implies that $U^{(k_m)}_{i_m} (t_{k_{m}})$ intersects this region.
But since $k_m$ lies strictly between $k_{l+1}$ and $k_l$, this contradicts the definition of the relation $(k_l, i_l) \prec (k_{l+1}, i_{l+1})$.
So, indeed we have $k_m < k_{m-1}$.

To finish the induction, we can now apply case (ii) of Claim~1 to conclude from (\ref{eq:extraindasspt}) for $m-1$ that
\begin{multline*}
 \varphi_1^* ( h_{i_m}^{(k_m)} ) \leq \max \big\{ h_{i_{m-1}}^{(k_{m-1})} , \varphi_1^*(\eta^\circ) \big\} \\ \leq \max \big\{ {\varphi_1^*}^{(N - m + 2 )} (\eta_1^*), \varphi_1^*(\eta_2^*) \big\} 
 \leq  {\varphi_1^*}^{(N - m + 2 )} (\eta_1^*).
\end{multline*}
This implies (\ref{eq:extraindasspt}) for $m$, by the monotonicity of $\varphi_1^*$ and finishes the induction.
In particular, we obtain that $k_1 > k_2 > \ldots > k_m$, which implies that $m \leq N+1$.

For the last line in the claim, we can use Claim 1 to conclude
\begin{alignat*}{1}
\varphi_2^*( h^{(k_m)}_{i_m}) &= {\varphi_1^*}^{(N)}( h^{(k_m)}_{i_m} ) \leq {\varphi_1^*}^{(N-1)} \big( \max \{ h^{(k_{m-1})}_{i_{m-1}} , \varphi_1^*(\eta^\circ) \} \big) \\
&\leq \max \big\{ {\varphi_1^*}^{(N-1)} (  h^{(k_{m-1})}_{i_{m-1}} ), \eta^\circ \big\}
\\
& \qquad \leq \max \big\{ {\varphi_1^*}^{(N-2)} \big(  \max \{ h^{(k_{m-2})}_{i_{m-2}}, \varphi_1^*(\eta^\circ) \} \big), \eta^\circ \big\} \\
 &\leq \max \big\{ {\varphi_1^*}^{(N-2)} ( h^{(k_{m-2})}_{i_{m-2}} ), \eta^\circ \big\} 
 \\ &\leq \ldots \\ 
 &\leq \max \big\{ {\varphi_1^*}^{(N-m+1)} ( h^{(k_1)}_{i_1} ), \eta^\circ \big\}
 \leq  \max \big\{ h^{(k_1)}_{i_1} , \eta^\circ \big\} \\[-2.1\baselineskip]
\end{alignat*}
\end{proof}

\begin{figure}[t] 
\begin{center}
\setlength{\unitlength}{2863sp}%
\begingroup\makeatletter\ifx\SetFigFont\undefined%
\gdef\SetFigFont#1#2#3#4#5{%
  \reset@font\fontsize{#1}{#2pt}%
  \fontfamily{#3}\fontseries{#4}\fontshape{#5}%
  \selectfont}%
\fi\endgroup%
\begin{picture}(4400,5900)(3400,-400)
\hspace{18mm}\includegraphics[width=14.3cm]{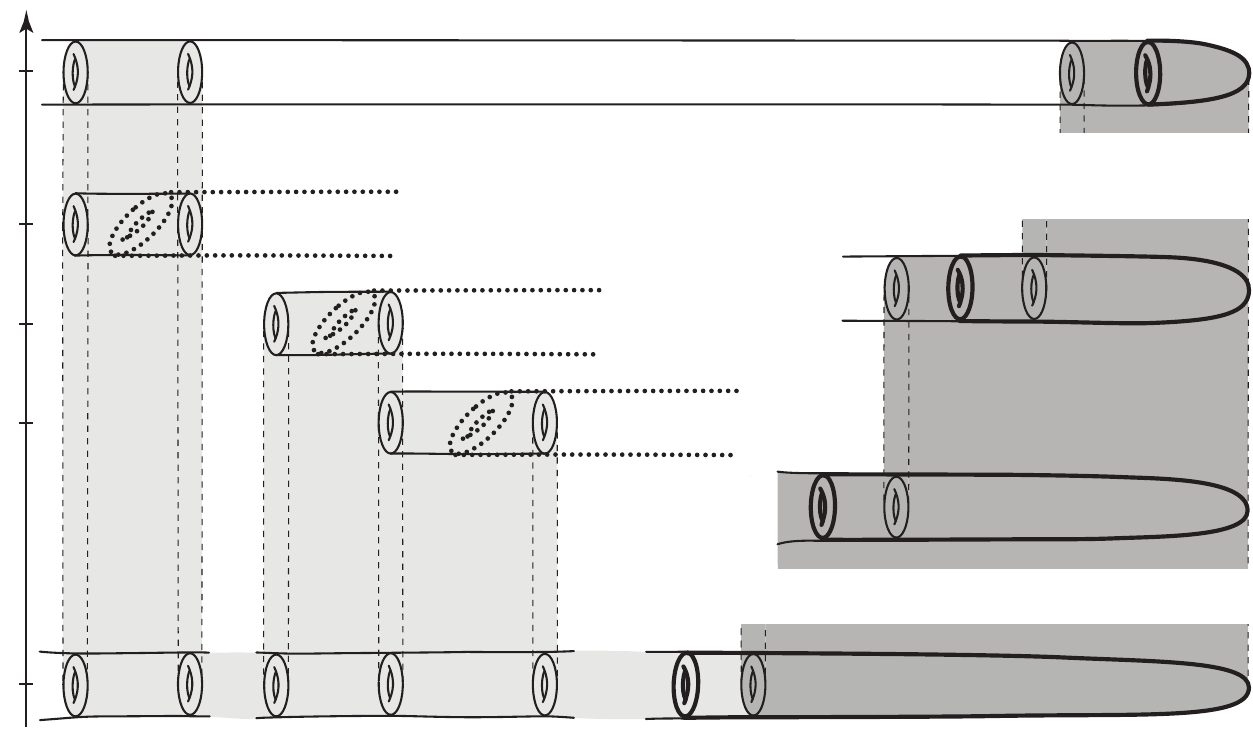}%
\put(-9500,4840){\makebox(0,0)[lb]{\smash{{\SetFigFont{12}{14.4}{\familydefault}{\mddefault}{\updefault}$t_{k_1}$}}}}
\put(-9500,3730){\makebox(0,0)[lb]{\smash{{\SetFigFont{12}{14.4}{\familydefault}{\mddefault}{\updefault}$t_{k'}$}}}}
\put(-9550,2930){\makebox(0,0)[lb]{\smash{{\SetFigFont{12}{14.4}{\familydefault}{\mddefault}{\updefault}$t_{k''}$}}}}
\put(-9600,2200){\makebox(0,0)[lb]{\smash{{\SetFigFont{12}{14.4}{\familydefault}{\mddefault}{\updefault}$t_{k'''}$}}}}
\put(-9560,260){\makebox(0,0)[lb]{\smash{{\SetFigFont{12}{14.4}{\familydefault}{\mddefault}{\updefault}$t_{k_{m}}$}}}}
\put(-950,5300){\makebox(0,0)[lb]{\smash{{\SetFigFont{12}{14.4}{\familydefault}{\mddefault}{\updefault}$U^{(k_1)}_{i_1} (t_{k_1})$}}}}
\put(-2480,2600){\makebox(0,0)[lb]{\smash{{\SetFigFont{12}{14.4}{\familydefault}{\mddefault}{\updefault}$V^{(k_l)}_{i_l,j_l+2}$}}}}
\put(-3700,930){\makebox(0,0)[lb]{\smash{{\SetFigFont{12}{14.4}{\familydefault}{\mddefault}{\updefault}\scalebox{-1}[1]{$\ddots$}}}}}
\put(-2200,900){\makebox(0,0)[lb]{\smash{{\SetFigFont{12}{14.4}{\familydefault}{\mddefault}{\updefault}$\vdots$}}}}
\put(-300,900){\makebox(0,0)[lb]{\smash{{\SetFigFont{12}{14.4}{\familydefault}{\mddefault}{\updefault}$\vdots$}}}}
\put(-300,4050){\makebox(0,0)[lb]{\smash{{\SetFigFont{12}{14.4}{\familydefault}{\mddefault}{\updefault}$\vdots$}}}}
\put(-1600,4050){\makebox(0,0)[lb]{\smash{{\SetFigFont{12}{14.4}{\familydefault}{\mddefault}{\updefault}\scalebox{-1}[1]{$\ddots$}}}}}
%
\put(-2230,1580){\makebox(0,0)[lb]{\smash{{\SetFigFont{12}{14.4}{\familydefault}{\mddefault}{\updefault}$U^{(k_{l+1})}_{i_{l+1}}(t_{k_{l+1}})$}}}}
\put(-1330,3200){\makebox(0,0)[lb]{\smash{{\SetFigFont{12}{14.4}{\familydefault}{\mddefault}{\updefault}$U^{(k_{l})}_{i_{l}}(t_{k_{l}})$}}}}
\put(-3060,250){\makebox(0,0)[lb]{\smash{{\SetFigFont{12}{14.4}{\familydefault}{\mddefault}{\updefault}$U^{(k_{m})}_{i_{m}} (t_{k_{m}})$}}}}
\put(-7720,4300){\makebox(0,0)[lb]{\smash{{\SetFigFont{12}{14.4}{\familydefault}{\mddefault}{\updefault}$V^{(k^*)}_{i^*,2}$}}}}
\put(-7620,3690){\makebox(0,0)[lb]{\smash{{\SetFigFont{12}{14.4}{\familydefault}{\mddefault}{\updefault}$U^{(k')}_{i'}(t_{k'}) \;\; \cdots$}}}}
\put(-6180,2950){\makebox(0,0)[lb]{\smash{{\SetFigFont{12}{14.4}{\familydefault}{\mddefault}{\updefault}$U^{(k'')}_{i''}(t_{{k''}}) \;\; \cdots$}}}}
\put(-5080,2190){\makebox(0,0)[lb]{\smash{{\SetFigFont{12}{14.4}{\familydefault}{\mddefault}{\updefault}$U^{(k''')}_{i'''}(t_{{k'''}})  \cdots$}}}}
%
%
\put(-3750,1600){\makebox(0,0)[lb]{\smash{{\SetFigFont{12}{14.4}{\familydefault}{\mddefault}{\updefault}$\cdots$}}}}
\put(-3200,3200){\makebox(0,0)[lb]{\smash{{\SetFigFont{12}{14.4}{\familydefault}{\mddefault}{\updefault}$\cdots$}}}}
\put(-8620,250){\makebox(0,0)[lb]{\smash{{\SetFigFont{12}{14.4}{\familydefault}{\mddefault}{\updefault}$P^{(k^*)}_{i^*,1}$}}}}
\put(-7130,250){\makebox(0,0)[lb]{\smash{{\SetFigFont{12}{14.4}{\familydefault}{\mddefault}{\updefault}$P^{(k^*)}_{i^*,j^*}$}}}}
\put(-6250,250){\makebox(0,0)[lb]{\smash{{\SetFigFont{12}{14.4}{\familydefault}{\mddefault}{\updefault}$P^{(k^*)}_{i^*,j^*+1}$}}}}
\put(-7750,250){\makebox(0,0)[lb]{\smash{{\SetFigFont{12}{14.4}{\familydefault}{\mddefault}{\updefault}$\cdots$}}}}
\put(-4950,250){\makebox(0,0)[lb]{\smash{{\SetFigFont{12}{14.4}{\familydefault}{\mddefault}{\updefault}$\cdots$}}}}
\end{picture}%
\caption[caption]{Illustration of the proof of Claim~3 for the case $k' > k^* = k_m$.
The regions $P^{(k^*)}_{i^*,1}, \ldots, P^{(k^*)}_{i^*,N+2}$ are shaded in light gray.
The solid tori $U^{(k')}_{i'} (t_{k'})$, $U^{(k'')}_{i''} (t_{k''})$ and $U^{(k''')}_{i'''} (t_{k'''})$ are outlined in bold dots.
The sub-Ricci flows with surgery $V^{(k_1)}_{i_1, j_1+2}, \ldots, V^{(k_{m-1})}_{i_{m-1}, j_{m-1} +2}$ are shaded in dark gray and the solid tori $U^{(k_1)}_{i_1} (t_{k_1}), \ldots, U^{(k_m)}_{i_m} (t_{k_m})$ are outlined in bold.
\label{fig:Claim3PUV}}
\end{center}
\end{figure}

With the help of Claim~2, we can finally derive the existence of regions $P^{(k^*)}_{i^*,1}$ that are non-singular on $[t_{-1}, t_k]$.

\begin{Claim3}
Assume that $\eta^\circ < \eta_2^*$.
If there are indices $(k, i) \in I$ such that $h_i^{(k)} < \eta_2^*$, then there are indices $(k^*, i^*) \in I$ with $k^* \leq k$ such that the following holds:
The set $P^{(k^*)}_{i^*, 1}$ is non-singular on $[t_{-1}, t_k]$ and we can extend $V^{(k^*)}_{i^*, 2}$ to a sub-Ricci flow with surgery on the time-interval $[t_{-1}, t_k]$.
Moreover, $P^{(k^*)}_{i^*, 1} \cap U^{(k')}_{i'} (t_{k'}) = \emptyset$ for all $(k', i') \in I$ with $k' \leq k$ and we have $U^{(k)}_i (t_k) \subsetneq V^{(k^*)}_{i^*, 2} (t_k)$.
Lastly, $\varphi_2^* ( h_{i^*}^{(k^*)} ) \leq \max \{ h_i^{(k)}, \eta^\circ \}$.
\end{Claim3}

The idea of the proof is to look at a maximal chain $(k_1, i_1) = (k,i) \prec (k_2, i_2) \prec \ldots \prec (k_m, i_m)$, starting with $(k,i)$, and set $(k^*, i^*) = (k_m, i_m)$.
Then $(k^*, i^*) \not\prec (k', i')$ for any $(k', i') \in I$ with $k' \leq k$.
Looking back at the definition of the relation $\prec$, this means roughly that for any $j^* = 1, \ldots, N+1$ the subset $P^{(k^*)}_{i^*, j^*}$ is non-singular on $[t_{-1}, t_k]$ or $P^{(k^*)}_{i^*, j^*+1}$ becomes singular at least one time-step earlier than $P^{(k^*)}_{i^*, j^*}$.
So if $P^{(k^*)}_{i^*, 1}$ became singular at some time, then $P^{(k^*)}_{i^*, 2}$ would become singular at least one time-step earlier than $P^{(k^*)}_{i^*, 1}$, so $P^{(k^*)}_{i^*,3}$ would become singular at least one time step earlier than $P^{(k^*)}_{i^*, 2}$, and so on.
However, there are only at most $N$ time-steps, but $N+1$ subsets $P^{(k^*)}_{i^*, j^*}$ for which we can draw this conclusion.
So this process would have to terminate too early, giving us the desired contradiction.
Note that, for technical reasons, the following proof uses a minimality argument in lieu of an induction argument.
The underlying principle will, however, be the same as laid out in this explanation.

\begin{proof}
Consider a maximal chain as in Claim 2 with $(k_1, i_1) = (k, i)$ and $k_2, \ldots, k_m \linebreak[2] \leq k_1$ and set $(k^*, i^*) = (k_m, i_m)$.
By Claim 2, we have $k_1 > k_2 > \ldots > k_m$ and we obtain indices $j_1, \ldots, j_{m-1} \in \{ 1, \ldots, N+1 \}$ together with extensions of the flows $V^{(k_1)}_{i_1, j_1 +2}, \ldots, V^{(k_{m-1})}_{i_{m-1}, j_{m-1} + 2}$ that satisfy the inclusion property mentioned above.
Moreover, $\varphi^*_2( h^{(k^*)}_{i^*} ) \leq \max \{ h_i^{(k)}, \eta^\circ \}$, which establishes the last part of the claim.

Assume now that $P^{(k^*)}_{i^*, 1}$ is singular on the time-interval $[t_{-1}, t_k]$.
Then we can find some $k' \leq k$ such that $P^{(k^*)}_{i^*, 1}$ is non-singular on $[t_{k^*}, t_{k'-1}]$ (if $k' > k^*$) or on $[t_{k'}, t_{k^*}]$ (if $k' < k^*$) and there is an index $i' \in \{ 1, \ldots, m^{(k')} \}$ such that $P^{(k^*)}_{i^*, 1} \cap U^{(k')}_{i'} (t_{k' - 1}) \neq \emptyset$ (if $k' > k^*$) or $P^{(k^*)}_{i^*, 1} \cap U^{(k')}_{i'} (t_{k'}) \neq \emptyset$ (if $k' < k^*$).
Let us now consider all triples $(k'', i'', j^*)$ of indices with $(k'', i'') \in I$, $k'' \leq k$ and $j^* \in \{ 1, \ldots, N + 2 \}$ for which $P^{(k^*)}_{i^*, j^*}$ is non-singular on $[t_{k^*}, t_{k''-1}]$ (if $k'' > k^*$) or on $[t_{k''}, t_{k^*}]$ (if $k'' < k^*$) and
\[ P_{i^*, j^*}^{(k^*)} \cap U_{i''}^{(k'')} (t_{k''-1})  \not= \emptyset \quad \text{if $k'' > k^*$} \quad \text{or} \quad P_{i^*, j^*}^{(k^*)} \cap U_{i''}^{(k'')} (t_{k''})  \not= \emptyset \quad \text{if $k'' < k^*$}. \]
(Note that we have exchanged the $1$ for $j^*$. So $(k', i', 1)$ is one of these triples.)
We can assume that we have picked $(k'', i'', j^*)$ amongst all these triples of indices, such that $j^* + |k^* - k''|$ is minimal and amongst such triples of indices for which this number is the same, we can assume that $| k^* - k'' |$ is minimal.

Now observe that by maximality of $(k_m, i_m)$ with respect to $\prec$ we must have $(k^*, i^*) \not\prec (k'', i'')$.
So either $j^* = N + 2$ or $j^* \leq N+1$, and there are indices $(k''', i''') \in I$ with $k'''$ strictly between $k^*$ and $k''$ such that the following holds:
The set $P_{i^*, j^* + 1}^{(k^*)}$ is non-singular on the time-interval $[t_{k^*}, t_{k'''-1}]$ (if $k'' > k^*$) or on $[t_{k'''}, t_{k^*}]$ (if $k'' < k^*$) and we have $(P_{i^*, j^*}^{(k^*)} \cup P_{i^*, j^* + 1}^{(k^*)} ) \cap U_{i'''}^{(k''')} (t_{k''' - 1}) \neq \emptyset$ (if $k'' > k^*$) or $(P_{i^*, j^*}^{(k^*)} \cup P_{i^*, j^* + 1}^{(k^*)} ) \cap U_{i'''}^{(k''')} (t_{k'''}) \neq \emptyset$ (if $k'' < k^*$).
But the latter possibility implies that we could replace the triple $(k'', i'', j^*)$ by either $(k''', i''', j^*)$ or $(k''', i''', j^* + 1)$, contradicting its minimal choice.
So $j^* = N + 2$.
However, the triple $(k', i', 1)$ would make $j^* + |k^* - k'|$ smaller than the triple $(k'', i'', N+2)$.
This yields the desired contradiction and shows that $P^{(k^*)}_{i^*, 1}$ is non-singular on the time-interval $[t_{-1}, t_k]$ as well as the fact that $P^{(k^*)}_{i^*, 1} \cap U^{(k')}_{i'} (t_{k'}) = \emptyset$ for all $(k', i') \in I$ with $k' \leq k$.
Moreover, it follows that the sub-Ricci flow with surgery $V^{(k^*)}_{i^*, 2}$ can be extended to the time-interval $[t_{-1}, t_k]$.

We finally show by induction that $U^{(k_l)}_{i_l} (t_{k_l}) \subset V^{(k^*)}_{i^*, 2} (t_{k_l})$ for all $l = m, \ldots, 1$.
This implies the claim for $l = 1$.
The statement is clear for $l = m$, so assume that $l < m$ and that it holds for $l + 1$.
By Claim 2 we have $V^{(k_l)}_{i_l, j_l + 2} (t_{k_{l+1}}) \subsetneq U^{(k_{l+1})}_{i_{l+1}} (t_{k_{l+1}}) \subset V^{(k^*)}_{i^*, 2} (t_{k_{l+1}})$.
The relationship $V^{(k_l)}_{i_l, j_l + 2} (t_{k_{l+1}}) \linebreak[1] \subsetneq \linebreak[1] V^{(k^*)}_{i^*, 2} (t_{k_{l+1}})$ remains preserved up to time $t_{k_l}$.
So $U^{(k_l)}_{i_l} (t_{k_l}) \subset V^{(k_l)}_{i_l, j_l + 2} (t_{k_l}) \subsetneq V^{(k^*)}_{i^*, 2} (t_{k_l})$, finishing the induction.
\end{proof}

We can now finish the proof of the proposition.
Assume in the following that $\eta^\circ < \eta_2^*$.
Apply Claim 3 for $k = N$.
So for every $i \in \{ 1, \ldots, m^{(N)} \}$ for which $h_i^{(N)} < \eta_2^*$, we can find indices $(k^*_i, i^*_i) \in I$ such that the following holds: $P^{(k^*_i)}_{i^*_i, 1}$ is non-singular on the time-interval $[t_{-1}, t_\omega]$ and $P^{(k^*_i)}_{i^*_i, 1} \cap U^{(k')}_{i'} (t_{k'}) = \emptyset$ for all indices $(k', i') \in I$.
Moreover, we can extend $V^{(k^*_i)}_{i^*_i, 2}$ to the time-interval $[t_0, t_\omega]$ and we have $U^{(k)}_i (t_\omega) \subsetneq V^{(k^*_i)}_{i^*_i, 2} (t_\omega)$.
Using (\ref{eq:summarycurvboundgoodpart}), we obtain that
\begin{equation} \label{eq:RmKtinvPis1}
| {\Rm}_t | < K t^{-1} \quad \text{on} \quad P^{(k^*_i)}_{i^*_i, 1} \qquad \text{for all} \qquad t \in [t_{-1}, t_\omega]
\end{equation}
and by Shi's estimates there is a universal $K_1 > K$ such that for all $t \in [t_0, t_\omega]$ we have $ |{\nabla \Rm}| < K_1 t^{-3/2}$ at all points of $P^{(k^*_i)}_{i^*_i, 1}$ that have time-$t$ distance of at least $\sqrt{t}$ away from $\partial P^{(k^*_i)}_{i^*_i, 1}$.
Using the curvature bound (\ref{eq:RmKtinvPis1}) and a distance distortion estimate, we find a function $\varphi'' = \varphi''_{L} : (0, \infty) \to (0, \infty]$ with $\lim_{h \to 0} \varphi''(h) = 0$, which depends only on $L$, such that whenever $\varphi'' ( h^{(k^*_i)}_{i^*_i}) < \infty$ there is a subset $P'_i \subset P^{(k^*_i)}_{i^*_i, 1}$ in such a way that $(P^{(k^*_i)}_{i^*_i, 1}, P'_i)$ is diffeomorphic to $(T^2 \times [-2,2], T^2 \times [-1,1])$ and such that the following holds for any $t \in [t_0, t_\omega]$:
The subset $P'_i$ has time-$t$ distance from $\partial P^{(k^*_i)}_{i^*_i, 1}$ at least $\varphi^{\prime\prime -1} ( h^{(k^*_i)}_{i^*_i} ) \sqrt{t}$ and $P'_i$ is a $\varphi''( h^{(k^*_i)}_{i^*_i} )$-precise torus structure at scale $\sqrt{t}$ and time $t$.
Using Claim 3, we find that there is a constant $\eta_4^* = \eta_4^*(L,A, \alpha) > 0$ with $\eta_4^* < \eta_2^*$ such that, assuming $\eta^\circ < \eta_4^*$, the following holds: 
For all $i \in \{ 1, \ldots, m^{(N)} \}$ with $h_i^{(N)} < \eta_4^*$ and all $t \in [t_0, t_\omega]$, the set $P'_i$ is a $\min \{ \alpha, \frac1{10} \}$-precise torus structure at scale $\sqrt{t}$ and time $t$ and $P'_i$ has time-$t$ distance of at least $10 \sqrt{t}$ from $\partial P^{(k^*_i)}_{i^*_i, 1}$.
Moreover, at time $t_0$, the torus structure $P^{(k^*_i)}_{i^*_i, 1}$ is even $\td{L}_0^{-1} (\min \{ e^{-LK} \alpha, \td{\alpha}_0 (A, K_1) \}, A)$-precise.
Here $\td{L}_0$ and $\td{\alpha}_0$ are the constants from Lemma \ref{Lem:shortloopingeneralcase}.
Let us now fix $\eta^\circ = \eta^\circ (L, A, \alpha) > 0$ such that $\eta^\circ < \eta_4^*$ for the remainder of the proof.

Next, we address the issue that there is a small gap between each sub-Ricci flow with surgery $V^{(k^*_i)}_{i^*_i, 2}$ and $P'_i$:
For each $i \in \{ 1, \ldots, m^{(N)} \}$ with $h_i^{(N)} < \eta_4^*$ we let $U'_i$ be the union of $V^{(k^*_i)}_{i^*_i, 2}$ with the closure of the component of $P^{(k^*_i)}_{i^*_i, 1} \setminus P'_i$ that is adjacent to $V^{(k^*_i)}_{i^*_i, 2}$.

We now pick a subcollection of the $U'_1, \ldots, U'_{m^{(N)}}$ that are pairwise disjoint at time $t_\omega$.
If $U'_{i_1} (t_\omega) \cap U'_{i_2} (t_\omega) \neq \emptyset$, then by the fact that $P'_{i_1}$ and $P'_{i_2}$ are $\frac1{10}$-precise and \cite[Lemma \ref{Lem:coverMbysth}]{Bamler-LT-topology} we have $U'_{i_1} (t_\omega) \subset P'_{i_2} \cup U'_{i_2} (t_\omega)$ or $U'_{i_2} (t_\omega) \subset P'_{i_1} \cup U'_{i_1} (t_\omega)$.
In the first case we remove the index $i_1$ from the list and in the second case, we remove $i_2$ (if both cases hold, then we remove either $i_1$ or $i_2$).
We can repeat this process until we arrive at a collection $U_1, \ldots, U_m$ whose time-$t_\omega$ slices are pairwise disjoint.
This implies that the time-$t$ slices are pairwise disjoint as well for any $t \in [t_0, t_\omega]$.
Let $P_1, \ldots, P_m$ be the corresponding collection of torus structures.
Observe that at each step of this process, the set $\bigcup_i U'_i (t_\omega) \setminus \bigcup_i P'_i$ does not decrease.
Thus $U_1(t_\omega) \cup \ldots \cup U_m (t_\omega) \supset \bigcup_i U'_i (t_\omega) \setminus \bigcup_i  P'_i$.
We conclude that every point of $\MM(t_\omega) \setminus ( U_1(t_\omega) \cup \ldots \cup U_m (t_\omega))$ that does not belong to $\MM(t_\omega) \setminus ( U^{(N)}_1 (t_\omega) \cup \ldots \cup U^{(m)}_{m^{(N)}} (t_\omega))$ is either contained in some $U^{(N)}_j(t_\omega)$ for which $h_j^{(N)} \geq \eta_4^*$ or it is contained in some $U'_j (t_\omega)$, in which case it must belong to $\bigcup_i P'_i$.
So assertion (a) holds for $K_4 = \max \{ K, K'(\eta^*_4) \}$ (see Proposition \ref{Prop:firstcurvboundstep3}(c)).
The second part of assertion (b) holds by the choice of $\eta^*_4$.

It remains to construct the loops $\gamma_i \subset P_i$ and the maps $h_i : D^2 \to \MM (t_0)$ such that assertions (b)--(e) hold.
Fix $i = 1, \ldots, m$.
By the choice of $\eta_4^*$ we know that $|{\Rm}_{t_0}| < K_1 t_0^{-1}$ and $|{\nabla \Rm}_{t_0}| < K_1 t_0^{-3/2}$ in a time-$\sqrt{t_0}$ neighborhood of size $\sqrt{t_0}$ around $P_i$.
We can hence apply Lemma \ref{Lem:shortloopingeneralcase}(a), (b) with $\alpha \leftarrow \min \{ e^{-LK} \alpha, \td{\alpha}_0 (A, K_1) \}$, $A \leftarrow A$, $K \leftarrow K_1$, $P \leftarrow P_i$ and $S \leftarrow P_i \cup U_i$.
Observe hereby that $\pi_2$ of the component of $\MM(t_0)$ that contains $P^*_i$ vanishes due to assumption (iii) and \cite[Proposition \ref{Prop:pi2irred}]{Bamler-LT-topology}.
We hence obtain a loop $\gamma_i : S^1 \to P_i$ of length $\ell_{t_0} (\gamma_i) < e^{-LK} \alpha$ that is non-contractible in $P_i$, bounds by a disk $h_i : D^2 \to \MM(t_0)$ of time-$t_0$ area $\area_{t_0} h_i < A t_0$ and whose geodesic curvatures at time $t_0$ are bounded by $\td\Gamma(K_1) t_0^{-1/2}$.
So assertions (d) and (e) hold and the first part of assertion (b) follows by a distance distortion estimate.
For assertion (d) observe that $|{\Rm}| < K t^{-1}$ and $|{\nabla \Rm}| < K_1 t^{-3/2}$ on $\gamma_i$ for all $t \in [t_0, t_\omega]$.
\end{proof}

\subsection{Proof of the first main result} \label{subsec:finalargument}
We will finally show that the general picture as described in Proposition \ref{Prop:structontimeinterval} is impossible on a long time-interval, i.e. that for an appropriate choice of the parameters $L$, $A$ and $\alpha$ we must have $m = 0$.
This fact will imply a curvature bound on the final time-slice $\MM (t_\omega)$ and hence establish \cite[Theorem \ref{Thm:LT0-main-1}]{Bamler-LT-Introduction}.

As a preparation, we first prove that after some large time, all time-slices are irreducible and all surgeries are trivial (see also \cite[Proposition 18.9]{MTRicciflow}).
\begin{Proposition} \label{Prop:irreducibleafterfinitetime}
Let $\MM$ be a Ricci flow with surgery and precise cutoff whose time-slices are closed manifolds and that is defined on the time-interval $[T, \infty)$ $(T\geq0)$.
Then there is some $T_1 \in [T, \infty)$ such that all surgeries on $[T_1, \infty)$ are trivial and we can find a sub-Ricci flow with surgery $\MM' \subset \MM$ on the time-interval $[T, \infty)$ that is performed by precise cutoff (and whose time-slices have no boundary) such that:
For all $t \in [T_1, \infty)$ the complement $\MM(t) \setminus \MM'(t)$ consists of a disjoint union of spheres and all components of $\MM'(t)$ are irreducible and not diffeomorphic to spherical space forms.

Moreover, if there is a time $T^* \geq T_1$ such that if $\MM'$ is non-singular on $[T^*, \infty)$, then there is also a time $T^{**} \geq T^*$ such that $\MM$ is non-singular on $[T^{**}, \infty)$ and $\MM'(t) = \MM(t)$ for all $t \in [T^{**}, \infty)$.
\end{Proposition}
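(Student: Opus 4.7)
The plan is to exhibit a non-negative integer-valued invariant of the flow that strictly decreases at each topologically non-trivial surgery, and to construct $\MM'$ as the essential sub-flow obtained via Kneser--Milnor decomposition. At a non-surgery time $t$, write $\MM(t) = C_1 \sqcup \cdots \sqcup C_{c(t)}$ for the connected-component decomposition, let $p(t)$ be the total number of prime factors in the Kneser--Milnor decompositions of the $C_i$, and let $n(t)$ count those prime factors that are spherical space forms or diffeomorphic to $S^1 \times S^2$. I set
$$ N(t) := \bigl(p(t)-c(t)\bigr) + n(t) \in \IZ_{\geq 0}. $$
A precise-cutoff surgery cuts a $\delta$-neck along a $2$-sphere and caps off both sides with standard $3$-balls, discarding any side that becomes an extinct component. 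Its topological effect on $(p,c,n)$ falls into one of three classes: (i) a trivial surgery leaves all three invariant; (ii) a non-trivial splitting of a connected sum fixes $p$ and $n$ while increasing $c$ by one; (iii) extinction of a connected component, which by Perelman's finite-time extinction theorem can only occur for components with no aspherical prime summand, decreases $p$ and $n$ by the same amount and $c$ by no more than that. In each of (ii) and (iii) the invariant $N$ strictly decreases, so $N(t)$ is non-increasing overall and stabilizes after at most $N(T)$ non-trivial events. Let $T_1 \geq T$ be any time after the last such event and also large enough that every initial spherical space form or $S^1 \times S^2$ component of $\MM(T)$ has gone extinct (which happens in finite time under the flow with surgery by Perelman's extinction result).

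To construct $\MM'$, I choose at time $T$ a disjoint family of embedded $2$-spheres $\Sigma \subset \MM(T)$ realizing the Kneser--Milnor decomposition of each component of $\MM(T)$, and let $\MM'(T)$ be the closure of the union of those components of $\MM(T) \setminus \Sigma$ whose unique prime factor is neither a spherical space form nor $S^1 \times S^2$, with boundary spheres capped off by the same standard $3$-ball caps used in precise cutoff. Then $\MM'(T)$ has no boundary, and each of its components is irreducible and not a spherical space form. I evolve this configuration forward with the flow; at each surgery time of $\MM$, I update $\MM'$ according to where the surgery sphere $S$ lies: if $S \subset \MM(t) \setminus \MM'(t)$, or is isotopic within $\MM(t) \setminus \MM'(t)$ to one of the decomposition spheres, I do nothing to $\MM'$; if $S \subset \MM'(t)$ and $S$ bounds a ball there, I perform the corresponding trivial cap-off inside $\MM'$; if $S \subset \MM'(t)$ is non-trivial (which can only happen before time $T_1$), I enlarge $\Sigma$ with this new splitting sphere and reassign the resulting non-essential piece to $\MM \setminus \MM'$. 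After $T_1$ the third case cannot occur, since no essential prime of $\MM'(t)$ admits a non-trivial splitting sphere (by irreducibility); hence $\MM'$ inherits the precise-cutoff structure of $\MM$, and $\MM(t) \setminus \MM'(t)$ for $t \geq T_1$ is a disjoint union of $S^3$-caps created by trivial surgeries.

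The hard part will be the compatibility at surgery times: the decomposition spheres must be placed in general position with respect to each $\delta$-neck cutoff sphere, and the case analysis above must cover all configurations. This reduces to a standard transversality and sphere-classification argument in $3$-manifold topology, using irreducibility of each essential prime factor (so that every embedded $2$-sphere in it bounds a ball) together with Kneser--Haken finiteness to ensure that the isotopy class of $\Sigma$ can be tracked consistently across surgery times. For the last assertion of the proposition: if $\MM'$ is non-singular on $[T^*, \infty)$ with $T^* \geq T_1$, then every surgery of $\MM$ after $T^*$ takes place in $\MM(t) \setminus \MM'(t)$ and involves only the $S^3$ components left as caps; each such component has strictly positive scalar curvature and becomes extinct in finite time, so after the last such extinction at some $T^{**} \geq T^*$ we have $\MM(t) \setminus \MM'(t) = \emptyset$ and thus $\MM(t) = \MM'(t)$ for all $t \geq T^{**}$.
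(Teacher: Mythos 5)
There is a genuine gap, and it is the central one. Your $\MM'$ is built by cutting the time-slices along Kneser--Milnor spheres and capping the resulting boundary spheres with artificial $3$-ball caps. The resulting object is not a sub-Ricci flow with surgery $\MM' \subset \MM$: its time-slices are not subsets of $\MM(t)$ carrying the induced metric, the caps do not evolve under the flow, and the complement $\MM(t) \setminus \MM'(t)$ is then a union of punctured prime pieces rather than a disjoint union of spheres. The proposition really asserts something about the \emph{actual} components of $\MM(t)$: after some finite time the non-sphere components themselves are irreducible. This cannot be obtained by topological bookkeeping alone, because nothing in your argument prevents a component from remaining a non-trivial connected sum forever --- the flow is under no obligation to ever perform the surgery along an essential $2$-sphere. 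The paper's proof takes $\MM'(t)$ to be simply the union of the components of $\MM(t)$ not diffeomorphic to $S^3$, shows via uniqueness of the prime decomposition that its topology stabilizes, rules out spherical space forms by finite-time extinction, and then --- this is the step you have no substitute for --- rules out reducibility by observing that $\pi_2(N) \neq 0$ for a component $N$ (via \cite[Proposition \ref{Prop:pi2irred}]{Bamler-LT-topology}) contradicts the evolution of the minimal area of a homotopically non-trivial $2$-sphere, \cite[Proposition \ref{Prop:evolsphere}]{Bamler-LT-simpcx}. That analytic input is indispensable here.

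A secondary but concrete error: your invariant $N(t) = (p(t)-c(t)) + n(t)$ is not monotone. Trivial surgeries split off components diffeomorphic to $S^3$ (these persist as components --- the proposition's own conclusion describes $\MM(t)\setminus\MM'(t)$ as a disjoint union of spheres), and when such a component goes extinct, $p$ and $n$ are unchanged while $c$ drops by one, so $N$ \emph{increases}. Hence your case (iii) analysis fails and the bound of ``at most $N(T)$ non-trivial events'' does not follow; you would need to count only non-sphere components, which is essentially what the paper does. Finally, note that your claim ``after $T_1$ no essential prime of $\MM'(t)$ admits a non-trivial splitting sphere (by irreducibility)'' is circular in spirit: irreducibility of the genuine time-slice components is precisely the conclusion to be proved, not a property you may assume of them.
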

\begin{proof}
Let $\MM = (( T^i ), (M^i \times I^i), (g^i_t), (\Omega_i), (U^i_\pm))$ (see \cite[Definition \ref{Def:RFsurg}]{Bamler-LT-Perelman}).
By \cite[Definition \ref{Def:precisecutoff}]{Bamler-LT-Perelman}, for any surgery time $T^i$, the topological manifold $M^i$ can be obtained from $M^{i+1}$ by possibly adding spherical space forms or copies of $S^1 \times S^2$ to the components of $\MM(t_2)$ and then performing connected sums between some of those components.
So every component of $M^{i+1}$ that is not diffeomorphic to a sphere forms the building block of a component of $M^i$ that is also not diffeomorphic to a sphere.
This fact enables us to choose $\MM' \subset \MM$ such that for every $t \in [T, \infty)$ the set $\MM'(t)$ is the union of all components of $\MM(t)$ that are not diffeomorphic to spheres.

By the existence and uniqueness of the prime decomposition (see e.g. \cite[Theorem 1.5]{Hat}) and the conclusion above, there are only finitely many times when the topology of $\MM'(t)$ changes by more than the removal of components that are diffeomorphic to spherical space forms or $S^1 \times S^2$.
Choose $T'_1 \in [T, \infty)$ larger than those times.
So the number of components of $\MM' (t)$ is non-increasing in $t$ for $t \in [T'_1, \infty)$.
We can hence choose $T_1 \in [T'_1, \infty)$ such that the number of components of $\MM' (t)$ remains constant for $t \in [T_1, \infty)$.
This implies that the topology of $\MM' (t)$ is constant on $[T_1, \infty)$.
Note moreover that all surgeries on $[T_1, \infty)$ are trivial on $\MM'$ and hence also on $\MM$.

By finite-time extinction of spherical components (see \cite{PerelmanIII}, \cite{ColdingMinicozziextinction}), we conclude that $\MM'(t)$ cannot have components that are diffeomorphic to spherical space forms for any $t \in [T_1, \infty)$.
Next, assume that $\MM' (T_1)$ was not irreducible.
Then by \cite[Proposition \ref{Prop:pi2irred}]{Bamler-LT-topology}, we have $\pi_2(N) \not= 0$ for some component $N$ of $\MM'(T_1)$.
We can thus use \cite[Proposition \ref{Prop:evolsphere}]{Bamler-LT-simpcx} to obtain a contradiction.

The last part of the proposition follows again from finite-time extinction.
\end{proof}

We can finally finish the proof of the main result, \cite[Theorem \ref{Thm:LT0-main-1}]{Bamler-LT-Introduction}, which provides a curvature bound for large times and states that surgeries eventually stop occurring if the cutoff is performed sufficiently precise.
In the case in which the underlying manifold is not covered by a torus bundle over a circle, we will obtain this result by ruling out the existence of the loops $\gamma_i$ obtained from Proposition \ref{Prop:structontimeinterval} (with the appropriate parameters) using the minimal disk argument from \cite[Proposition \ref{Prop:evolminsurfgeneral}]{Bamler-LT-simpcx}.
In the case in which the underlying manifold is covered by a torus bundle over a circle, we will not be able to construct the ``compressing planar domains'' as needed in assumption (iv) of Proposition \ref{Prop:structontimeinterval}.
Note that \cite[Proposition \ref{Prop:maincombinatorialresult}(a)]{Bamler-LT-topology}, which produces such compressing domains, requires the underlying manifold not be covered by such a torus bundle over a circle.
In this case, however, we can use a different and more basic argument, which makes use of \cite[Proposition \ref{Prop:maincombinatorialresult}(b)]{Bamler-LT-topology}.

\makeatletter
\def\@cite#1#2{{\normalfont{\itshape[#1\if@tempswa , #2\fi]}}}
\makeatother
\begin{proof}[Proof of {\cite[Theorem \ref{Thm:LT0-main-1}]{Bamler-LT-Introduction}}]
\makeatletter
\def\@cite#1#2{{\normalfont[{#1\if@tempswa , #2\fi}]}}
\makeatother
Let the function $\delta(t)$ be the minimum of the functions given in Proposition \ref{Prop:structontimeinterval}, \cite[Proposition \ref{Prop:thickthindec}]{Bamler-LT-Perelman}, \cite[Corollary \ref{Cor:Perelman68}]{Bamler-LT-Perelman}, and \cite[Proposition \ref{Prop:curvcontrolgood}]{Bamler-LT-Perelman}.

Consider the constant $T_1 < \infty$ and the sub-Ricci flow with surgery $\MM' \subset \MM$ from Proposition \ref{Prop:irreducibleafterfinitetime} defined on the time-interval $[0, \infty)$.
Recall that all components of all time-slices of $\MM'$ at or after time $T_1$ are irreducible and not diffeomorphic to spherical space forms and that all surgeries of $\MM'$ at or after time $T_1$ are trivial.
Moreover, all time slices $\MM' (t)$ for $t \geq T_1$ are diffeomorphic to one another.
By the last statement of Proposition \ref{Prop:irreducibleafterfinitetime}, it suffices to establish the desired curvature bound and the finiteness of the surgeries on $\MM'$.
Choose now a sub-Ricci flow with surgery $\MM^* \subset \MM$ defined on the time-interval $[T_1, \infty)$ whose time-slices $\MM^*(t)$ are all connected, closed components of $\MM(t)$.
Since the choice of $\MM^*$ was arbitrary, it suffices to establish the curvature bound and the finiteness of the surgeries on $\MM^*$ instead of $\MM$.

Next, we apply \cite[Proposition \ref{Prop:thickthindec}]{Bamler-LT-Perelman} to $\MM$ and consider the time $T_0 < \infty$, the function $w : [T_0, \infty) \to (0, \infty)$ as well as the decomposition $\MM (t) = \MM_{\textnormal{thick}} (t) \cup \MM_{\textnormal{thin}}(t)$ for all $t \in [T_0, \infty)$.
Set $T_2 = \max \{ T_0, T_1 \}$.

Let now $M = \MM^* (T_2)$ and $M_{\textnormal{hyp}} = \MM_{\textnormal{thick}}(T_2) \cap \MM^*(T_2)$, $M_{\textnormal{Seif}} = \MM_{\textnormal{thin}} (T_2) \cap \MM^*(T_2)$.
So $M = M_{\textnormal{hyp}} \cup M_{\textnormal{Seif}}$.
This decomposition can be refined to a geometric decomposition of $M$ since by the results of \cite{MorganTian} or \cite{KLcollapse}, which led to the resolution of the Geometrization Conjecture (essentially their statement is Proposition \ref{Prop:MorganTianMain} plus a topological discussion), we know that $M_{\textnormal{Seif}}$ is a graph manifold (see \cite[Definition \ref{Def:geomdec}]{Bamler-LT-topology} and the subsequent discussion).
So there are pairwise disjoint, embedded, incompressible $2$-tori $T^*_1, \ldots, T^*_k \subset M_{\textnormal{Seif}}$ that cut $M$ into hyperbolic and Seifert pieces in such a way that the union of the closures of the hyperbolic pieces is exactly $M_{\textnormal{hyp}}$.
Moreover by construction, no two hyperbolic components in this decomposition are adjacent to one another.
We are hence in a position to apply \cite[Proposition \ref{Prop:maincombinatorialresult}]{Bamler-LT-topology}, which yields a simplicial complex $V$ and either a continuous map $f_0 : V \to M$ with $f_0 (\partial V) \subset \partial M_{\textnormal{Seif}} = \partial M_{\textnormal{hyp}}$ that is a smooth immersion on $\partial V$ (if $M$ is not covered by a $T^2$-bundle over a circle) or a sequence of continuous maps $f_1, f_2, \ldots : V \to M$ (if $M$ is covered by a $T^2$-bundle over a circle).

Next, we apply \cite[Proposition \ref{Prop:areaevolutioninMM}]{Bamler-LT-simpcx} to obtain a constant $A_0 < \infty$ and (not necessarily continuous) families of piecewise smooth maps $f_{0, t} : V \to \MM^*(t)$ or $f_{1,t}, f_{2,t}, \ldots : V \to \MM^*(t)$ for all $t \in [T_2, \infty)$ with $f_{0,T_2} |_{\partial V} = f_0 |_{\partial V}$ such that $f_{0, t} |_{\partial V}$ moves by the ambient isotopies of \cite[Proposition \ref{Prop:thickthindec}]{Bamler-LT-Perelman}, $f_{n, t}$ is homotopic to $f_n$ in space-time---restricting to said isotopies on $\partial V$ if $n = 0$---and such that for $n = 0$ or all $n \geq 1$
\begin{equation} \label{eq:areaoffntA0}
 \limsup_{t \to \infty} t^{-1} \area_t f_{n,t} < A_0.
\end{equation}
Note that the constant $A_0$ can be chosen independently of $n$ because the upper bound in \cite[Proposition \ref{Prop:areaevolutioninMM}]{Bamler-LT-simpcx} only depends on the topology of $V$ if $\partial V = \emptyset$, which is always true in the case in which $M$ is covered by a $T^2$-bundle over a circle.
We now distinguish the cases in which $M$ is or is not covered by a $T^2$-bundle over a circle.

\textbf{Case 1: $M$ is not covered by a $T^2$-bundle over a circle} \quad
Choose $T_3 > T_2$ such that $\area_t f_{0,t} < (A_0 + 1) t$ for all $t \geq T_3$.
It now follows from \cite[Proposition \ref{Prop:maincombinatorialresult}]{Bamler-LT-topology} that for every $t \geq T_3$ and every smoothly embedded solid torus $S \subset \Int\MM_{\textnormal{thin}}(t) \cap \MM^*(t)$ that is incompressible in $\MM^*(t)$ there is a compact smooth domain $\Sigma \subset \IR^2$ and a smooth map $h : \Sigma \to S$ such that $h( \partial \Sigma) \subset \partial S$ and such that $h$ restricted to only the exterior boundary circle of $\Sigma$ is non-contractible in $\partial S$ and such that
\[ \area_t h < C \area_t f_{0, t} < C (A_0 + 1) t. \]
Here, the constant $C$ only depends on the topology of the manifold $M$.

Next set
\[ A = C(A_0 + 1), \qquad L = \Big( 1 + \frac{A + 1}{ 4 \pi } \Big)^4 \]
and consider the constant $\Gamma_4 = \Gamma_4(L, A)$ from Proposition \ref{Prop:structontimeinterval}.
Set
\[ \alpha = \frac{\pi}{\Gamma_4} \]
and choose $T_4 = T_4(L, A, \alpha)$ and $w_4 (L, A, \alpha)$ according to this proposition.
Choose now $T^* > \max \{ 4 T_3, T_4 \}$ such that $w(t) < w_4$ for all $t \in [ \frac14 T^*, \infty)$ and consider times $t_\omega > L T^*$ and $t_0 = L^{-1} t_\omega$.
Observe that $\MM^*$ is defined on the whole time-interval $[\frac14 t_0, t_\omega]$ and that condition (iv) of Proposition \ref{Prop:structontimeinterval} holds assuming additionally that $S \subset \MM^*(t)$.
We can then apply Proposition \ref{Prop:structontimeinterval} to the sub-Ricci flow with surgery $\MM^*$ with the parameters $L, A, \alpha$ (note that this is not strictly the statement of Proposition \ref{Prop:structontimeinterval}, but the constructions in the proofs of Propositions \ref{Prop:firstcurvboundstep3} and \ref{Prop:structontimeinterval} can be carried out separately on every component of $\MM$).
We then obtain sub-Ricci flows with surgery $U_1, \ldots, U_m \subset \MM^*$, outside of which we have a curvature bound, and maps $h_1, \ldots, h_m  : D^2 \to \MM^*(t_0)$ with $\area_{t_0} h_i < (A+1) t_0$ whose boundary loops $\gamma_i = h_i |_{\partial D^2}$ have length $< \alpha \sqrt{t}$ and geodesic curvature bounded by $\Gamma_4 t^{-1}$ at all times $t \in [t_0, t_\omega]$.
Assume that $m \geq 1$.
Since $\alpha \Gamma_4 = \pi < 2\pi$, we obtain a contradiction by \cite[Proposition \ref{Prop:evolminsurfgeneral}]{Bamler-LT-simpcx}:
\[ t_\omega < \Big( 1 + \frac{A+1}{4(2\pi - \alpha \Gamma_4)} \Big)^4 t_0 = L t_0 = t_\omega. \]
So $m = 0$ and thus we have $|{\Rm_{t_\omega}}| < K_4 t_\omega^{-1}$ on $\MM(t_\omega)$.

We have shown that if $M$ is not covered by a $T^2$-bundle over a circle, then $|{\Rm_t}| < K_4 t^{-1}$ on $\MM^*(t)$ for all $t \geq L T^*$.
So in particular, $\MM^*$ does not develop any singularities past time $L T^*$.

\textbf{Case 2: $M$ is covered by a $T^2$-bundle over a circle} \quad
In this case consider the families of piecewise smooth maps $f_{1,t}, f_{2, t}, \ldots$ and observe that the constant $A_0$ in (\ref{eq:areaoffntA0}) is independent of $n$.
Also in the present case $\MM^* (t) \subset \MM_{\textnormal{thin}} (t)$ for all $t \geq T_2$ by the uniqueness of the geometric decomposition of $M$.

Let now, $\ov{r}$, $K_2$ be the functions from \cite[Corollary \ref{Cor:Perelman68}]{Bamler-LT-Perelman} and $\mu_1$ the constant from Lemma \ref{Lem:unwrapfibration}.
Set $\mu = \min \{ \mu_1, \frac1{10} \}$ and consider the constants $w_0 = w_0 (\mu, \ov{r}(\cdot, 1), K_2(\cdot, 1))$, $0 < s_2 = s_2 (\mu, \ov{r} (\cdot, 1), K_2(\cdot, 1)) \linebreak[1] < s_1 = s_1 (\mu, \ov{r} (\cdot, 1), K_2 (\cdot, 1)) \linebreak[1] < \frac1{10}$ from Proposition \ref{Prop:MorganTianMain}.
Choose $T_3 > T_2$ such that $w(t) < w_0$ for all $t \geq T_3$.
Fix such a time $t$.
We can hence apply Proposition \ref{Prop:MorganTianMain} to $\MM^*(t)$ and conclude that there are two cases: Either $\diam_t \MM^*(t) < \mu \rho_{\sqrt{t}} (x,t)$ for all $x \in \MM^*(t)$ and $\MM^*(t)$ is diffeomorphic to an infra-nilmanifold or a manifold that carries a metric of non-negative sectional curvature, or we obtain a decomposition $\MM^* (t) = V_1 \cup V_2 \cup V'_2$ satisfying assertions (a)--(c) of this Proposition.


Let us first consider the second case in which we have a decomposition of the form $\MM^* (t) = V_1 \cup V_2 \cup V'_2$.
We will now analyze this decomposition further, using the tools of subsection \ref{subsec:topimplications} (observe that we are in case A of this subsection).
As in Definition \ref{Def:GG} let $\mathcal{G} \subset \MM^*(t)$ be the union of all components of $V_2$ whose generic $S^1$-fiber is incompressible in $\MM^*(t)$ and all components of $V_1$ or $V'_2$ whose generic fibers are incompressible tori.
Then by Lemma \ref{Lem:bdrygoodisV2} we have $\partial \mathcal{G} \subset V_2 \cap \mathcal{G}$.
Moreover, by Lemma \ref{Lem:SStori} there is a disjoint union of finitely many embedded solid tori $\mathcal{S} \subset \MM^*(t)$ such that $\MM^*(t) = \mathcal{G} \cup \mathcal{S}$.
So we can make the following conclusion:
Either $\mathcal{G} = \MM^* (t)$ or there is a component $\CC \subset V_2$ such that the $S^1$-fibers on $\CC \cap V_{2, \textnormal{reg}}$ are incompressible in $\MM^*(t)$.

Let $x \in \mathcal{G}$ be an arbitrary point and recall the notation (see the beginning of this section \ref{sec:mainargument})
\[ \rho_{r_0}(x,t) = \sup \{ r \in (0, r_0] \;\; : \;\; \sec_t \geq -r^{-2} \quad \text{on} \quad B(x,t,r) \}. \]
Consider the universal cover $\td\MM^* (t)$ of $M^*(t)$ and choose a lift $\td{x} \in \td\MM^* (t)$ of $x$.
Then by Lemma \ref{Lem:unwrapfibration} there is a constant $w_1 = w_1(\mu) > 0$ such that
\begin{equation} \label{eq:volBw1rho-application}
 \vol B(\td{x}, \rho_{\sqrt{t}} (x,t)) > w_1 \rho_{\sqrt{t}}^3(x,t). 
\end{equation}
In other words, $x$ is $w_1$-good at scale $\sqrt{t}$ (compare with \cite[Definition \ref{Def:goodness}]{Bamler-LT-Perelman}).
Consider now the constants $T_4 = T (w_1, 1)$, $K = K(w_1)$ and $\ov\rho = \ov\rho(w_1)$ from \cite[Proposition \ref{Prop:curvcontrolgood}]{Bamler-LT-Perelman}.
Assuming that we have picked $t$ such that $t > T_4$, we conclude that
\[ |{\Rm}|(x,t) < K t^{-1} \qquad \text{and} \qquad \rho_{\sqrt{t}} (x,t) > \ov\rho \sqrt{t} \qquad \text{for all} \qquad x \in \mathcal{G}. \]

Consider first the case in which $\mathcal{G} \neq \MM^*(t)$.
In this case there is a component $\CC \subset V_2 \cap \mathcal{G}$ such that the $S^1$-fibers on $\CC \cap V_{2, \textnormal{reg}}$ are incompressible in $\MM^*(t)$.
Pick $x \in \CC \cap V_{2, \textnormal{reg}}$.
By Proposition \ref{Prop:MorganTianMain}(c2) there are an open subset $U \subset \MM^*(t)$ with
\[ B(x, t, \tfrac12 s_2 \rho_{\sqrt{t}} (x)) \subset U \subset B(x, t, s_2 \rho_{\sqrt{t}} (x)) \]
that is diffeomorphic to $B^2 \times S^1$, vector fields $X_1, X_2$ on $U$ and a smooth map $p : U \to \IR^2$ such that:
We have $dp (X_i) = \frac{\partial}{\partial x_i}$ and $|\langle X_i, X_j \rangle - \delta_{ij} | < \frac1{10}$ for all $i,j = 1,2$.
Moreover, $p : U \approx B^2 \times S^1 \to p(U)$ corresponds to the projection to the first factor and the $S^1$-fibers coming from the second factor are isotopic to the $S^1$-fibers in $\CC \cap V_{2, \textnormal{reg}}$ and hence incompressible in $\MM^*(t)$.
It then follows easily that $p$ is $2$-Lipschitz and that $B_0 = B( p(x), \frac14 s_2 \rho_{\sqrt{t}} (x)) \subset p(U)$.

Now recall the maps $f_{1, t}, f_{2, t}, \ldots : V \to \MM^*(t)$ from the beginning of the proof.
By \cite[Proposition \ref{Prop:maincombinatorialresult}]{Bamler-LT-topology}, we know that for each $n \geq 1$, the map $f_{n, t}$ intersects each $S^1$-fiber on $U$ at least $n$ times.
In other words, $f^{-1}_{n,t} (p^{-1}(y))$ contains at least $n$ elements for each $y \in B_0 \subset p(U)$.
Since $p$ is $2$-Lipschitz, we find that
\[ \area_t f_{n, t} \geq \frac{n}{4} \area_t B_0 = \frac{n \pi s_2^2 \rho^2_{\sqrt{t}} (x,t)}{16 \cdot 4} > n \cdot \frac{s_2^2 \ov\rho^2}{100} \cdot t. \]
So it follows that for
\[ n > \frac{100}{s_2^2 \ov\rho^2} (A_0 + 1) \]
we have $\area_t f_{n, t} > (A_0 + 1) t$.
This however contradicts (\ref{eq:areaoffntA0}) for large $t$.

So there is some constant $T_5 < \infty$ such that whenever $t > T_5$, then $\mathcal{G} = \MM^*(t)$ and hence $|{\Rm_t}| < K t^{-1}$ on $\MM^*(t)$.
As before, this implies that there are no surgeries on $\MM^*$ past time $T_5$.
This concludes the case in which we have a decomposition of the form $\MM^* (t) = V_1 \cup V_2 \cup V'_2$.

Lastly, it remains to discuss the case in which $\diam_t \MM^*(t) < \mu \rho_{\sqrt{t}} (x,t)$ for all $x \in \MM^*(t)$ and in which $\MM^*(t)$ is diffeomorphic to an infra-nilmanifold or a manifold that carries a metric of non-negative sectional curvature.
Since $\MM^*(t)$ is covered by a $T^2$-bundle over a circle, it must be diffeomorphic to an infra-nilmanifold or the quotient of a torus.
Similarly as before, we can now apply part (v) of Lemma~\ref{Lem:unwrapfibration} to show that (\ref{eq:volBw1rho-application}) holds for all $\td{x} \in \td{\MM}^*(t)$.
By the same reasoning as presented after (\ref{eq:volBw1rho-application}), this implies that we have $|{\Rm}_t| < K t^{-1}$ on $\MM^*(t)$ for $K = K(w_1)$ if $t > T_4 = T_4 (w_1, 1)$.
So, as before, it follows that there are no surgeries past some time $T_5 < \infty$. This finishes the proof.
\end{proof}
\makeatletter
\def\@cite#1#2{{\normalfont[{#1\if@tempswa , #2\fi}]}}
\makeatother

\subsection{Behavior of the geometry for large times} \label{subsec:behaviorlargetimesproof}
In the following, we will prove \cite[Theorem \ref{Thm:geombehavior}]{Bamler-LT-Introduction}, which describes the behavior of the geometry of a Ricci flow with surgery $\MM$ as $t \to \infty$ in more detail.
Most of the characterizations of this behavior will follow from proofs leading to \cite[Theorem \ref{Thm:LT0-main-1}]{Bamler-LT-Introduction}.
We will refer to these proofs in the following and discuss their geometric implications.

Note that, in virtue of \cite[Theorem \ref{Thm:LT0-main-1}]{Bamler-LT-Introduction}, it suffices to restrict our attention to non-singular Ricci flows $(g_t)_{t \in [0, \infty)}$ on connected, orientable manifolds $M$ that satisfy the curvature bound $|{\Rm_t}| < C t^{-1}$.
We will sometimes denote these Ricci flows by $\MM$ to stay in line with our previous notions.
Note that $\MM (t) = (M, g_t)$ for any $t \in [0, \infty)$.
For the remainder of this subsection we fix such a Ricci flow $\MM$.
By \cite[Corollary \ref{Cor:topconditionforlongtime}]{Bamler-LT-Introduction}, $M$ is irreducible and not diffeomorphic to a spherical space form.
The curvature bound implies:

\begin{Lemma} \label{Lem:curvderbounds}
There are constants $C_1, C_2, \ldots$ such that $|\nabla^m {\Rm_t}| < C_m t^{-m/2 - 1}$ for all $t \in [1, \infty)$ and all $m \geq 1$.
\end{Lemma}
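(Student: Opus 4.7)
The plan is to derive these higher-order bounds from the zeroth-order bound $|\Rm_t| < Ct^{-1}$ via parabolic rescaling and Shi's local derivative estimates. Fix $t_0 \geq 1$ and consider the rescaled and translated flow $\tilde{g}_s = t_0^{-1} g_{t_0(1+s)}$ for $s \in [-\tfrac12, 0]$. This is again a Ricci flow on the closed manifold $M$, and the hypothesis $|\Rm_t| < Ct^{-1}$ transforms into the uniform bound
\[
|\tilde\Rm_{\tilde{g}_s}| = t_0 (1+s)^{-1} \cdot \frac{|\Rm_{g_{t_0(1+s)}}|}{t_0^{-1}} \cdot t_0^{-1} \cdot \ldots
\]
—more directly, $|\tilde\Rm|_{\tilde{g}_s} = t_0 |\Rm|_{g_{t_0(1+s)}} < C(1+s)^{-1} \leq 2C$ on $s \in [-\tfrac12, 0]$. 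Thus on the rescaled flow we have an unconditional curvature bound independent of $t_0$ on a time-interval of uniform length $\tfrac12$.

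Next I would apply Shi's local derivative estimates (in their global form for closed manifolds, which is all that is needed here since $\MM$ is non-singular and each time-slice is compact). Starting from the initial time $s = -\tfrac12$ of the rescaled flow, with curvature bound $2C$ on the full parabolic region, these estimates yield constants $\tilde C_m$ depending only on $m$ and $C$ such that
\[
|\tilde\nabla^m \tilde\Rm|_{\tilde{g}_0} \leq \tilde C_m \qquad \text{on } M.
\]
Translating back via the scaling identities $|\tilde\nabla^m \tilde\Rm|_{\tilde{g}_0} = t_0^{1+m/2} |\nabla^m \Rm|_{g_{t_0}}$ gives
\[
|\nabla^m \Rm|_{g_{t_0}} \leq \tilde C_m \, t_0^{-m/2 - 1},
\]
which is the desired inequality with $C_m := \tilde C_m$.

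There is no genuine obstacle: the only point to be a little careful about is that Shi's estimates require a curvature bound on a full parabolic cylinder, which here is automatic because each time-slice is a closed manifold and $\MM$ is non-singular on $[\tfrac12 t_0, t_0]$ by the hypothesis $|\Rm_t| < Ct^{-1}$. The statement of the lemma as written concerns $|\nabla \Rm_t|$, but the same argument with $m=1,2,\ldots$ gives all higher derivatives simultaneously, which is the intended content.
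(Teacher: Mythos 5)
Your proposal is correct and is exactly the argument the paper intends: the paper's proof is the one-line remark that the bound follows from $|\Rm_t| < Ct^{-1}$ and Shi's estimates, and your parabolic rescaling to a uniform curvature bound on a time-interval of uniform length, followed by Shi and scaling back, is the standard way to make that precise. The scaling identities you use are correct, so nothing further is needed.
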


\begin{proof}
This is a consequence of the curvature bound $|{\Rm_t}| < C t^{-1}$ and Shi's estimates.
\end{proof}

\begin{Lemma}
There is a constant $\ov\rho > 0$ such that $\rho_{\sqrt{t}} (x,t) > \ov\rho \sqrt{t}$ for all $(x,t) \in M \times [1, \infty)$.
\end{Lemma}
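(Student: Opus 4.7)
The plan is to observe that this lemma is essentially an immediate consequence of the hypothesis $|\Rm_t|<Ct^{-1}$, since the bound on the Riemann tensor yields a two-sided bound on the sectional curvature, and the definition of $\rho_{\sqrt t}$ only requires a one-sided (lower) bound on $\sec_t$.

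First I would fix the universal constant bounding $|\Rm_t|$: since $|\Rm_t|<Ct^{-1}$ on $M\times[0,\infty)$, there is a constant $C'$ (depending only on $C$ and the dimension~$3$) such that $\sec_t\geq -C' t^{-1}$ on all of $M$ for every $t>0$. Set
\[ \ov\rho \;=\; \tfrac12\min\{1,\,(C')^{-1/2}\}, \]
which is a strictly positive universal constant. Then for every $(x,t)\in M\times[1,\infty)$ and every $r$ with $\ov\rho\sqrt t<r\leq\sqrt t$, one has $r^{-2}\leq (\ov\rho\sqrt t)^{-2}\leq \tfrac1{\ov\rho^{2}}\,t^{-1}$ and, by the choice of $\ov\rho$, $\ov\rho^{-2}\geq C'$, so
\[ -r^{-2} \;\leq\; -C' t^{-1} \;\leq\; \sec_t \qquad \text{on } B(x,t,r). \]

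Next I would verify that such an $r$ is admissible in the definition of $\rho_{\sqrt t}(x,t)$: by construction $r\in(0,\sqrt t]$, and the displayed inequality is exactly the defining condition $\sec_t\geq -r^{-2}$ on $B(x,t,r)$. Taking any $r$ strictly between $\ov\rho\sqrt t$ and $\min\{1,(C')^{-1/2}\}\sqrt t$, we conclude $\rho_{\sqrt t}(x,t)\geq r>\ov\rho\sqrt t$, which is the desired bound. Since $\ov\rho$ depends only on $C$, it is universal, and the argument is uniform in $(x,t)$.

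There is no real obstacle here — the only subtlety is the mild bookkeeping of converting $|\Rm|<Ct^{-1}$ into a lower sectional curvature bound of the form $\sec\geq -C't^{-1}$ and then matching the scale $r$ with the required upper bound $r\leq\sqrt t$ built into $\rho_{\sqrt t}$; the restriction $t\geq 1$ plays no role beyond ensuring $\sqrt t\geq 1$, which is not actually used. The lemma should be read as saying that under the scale-invariant curvature bound, there is automatic non-collapsing at the natural parabolic scale in the sectional-curvature sense.
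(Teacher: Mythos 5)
Your proof is correct and is exactly the argument the paper intends — the paper's own proof is the one-line remark that the claim "follows from the definition of $\rho_{r_0}(x,t)$", i.e. converting $|\Rm_t|<Ct^{-1}$ into a lower sectional curvature bound $-C't^{-1}$ and noting that $r=\min\{1,(C')^{-1/2}\}\sqrt{t}$ is then admissible in the supremum defining $\rho_{\sqrt t}(x,t)$. Your write-up just makes this bookkeeping explicit.
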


\begin{proof}
This follows from the definition of $\rho_{r_0} (x,t)$.
\end{proof}

We first apply \cite[Proposition \ref{Prop:thickthindec}]{Bamler-LT-Perelman} to $\MM$ to obtain the time $T_0 < \infty$, the function $w : [T_0, \infty) \to (0, \infty)$, with $\lim_{t \to \infty} w(t) = 0$, as well as the decomposition $M = \MM (t) = \MM_{\textnormal{thick}} (t) \cup \MM_{\textnormal{thin}}(t)$ for all $t \in [T_0, \infty)$.
Note that by part (e) of \cite[Proposition \ref{Prop:thickthindec}]{Bamler-LT-Perelman}, we have for all $t \geq T_0$ and $x  \in \MM_{\textnormal{thin}} (t)$ that
\[ \vol_t B(x,t, \rho_{\sqrt{t}} (x,t)) < w(t) \rho^3_{\sqrt{t}} (x,t) < w(t) t^{3/2}. \]
By the previous Lemma and volume comparison, we hence obtain, after possibly adjusting $w(t)$:

\begin{Lemma}
For all $t \geq T_0$ and $x \in \MM_{\textnormal{thin}} (t)$ we have
\[ \vol_t B(x,t, \sqrt{t}) < w(t) t^{3/2}. \]
\end{Lemma}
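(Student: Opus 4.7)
The plan is to upgrade the known volume bound at scale $\rho_{\sqrt{t}}(x,t)$ to one at scale $\sqrt{t}$ via Bishop--Gromov volume comparison. The global curvature bound $|{\Rm_t}| < C t^{-1}$ gives in particular a lower sectional curvature bound $\sec_t \geq -Ct^{-1}$ on all of $\MM(t)$, so the hypotheses of the comparison theorem hold on every metric ball in $\MM(t)$.

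First, for fixed $t \geq T_0$ and $x \in \MM_{\textnormal{thin}}(t)$, set $r_1 = \rho_{\sqrt{t}}(x,t)$. By the previous lemma $r_1 \geq \ov\rho \sqrt{t}$, and by definition $r_1 \leq \sqrt{t}$. Let $V_{-\kappa^2}(r)$ denote the volume of a ball of radius $r$ in the $3$-dimensional simply connected space form of constant curvature $-\kappa^2$. Bishop--Gromov applied to $g_t$ (with $\kappa^2 = Ct^{-1}$) states that the function
\[ r \longmapsto \frac{\vol_t B(x,t,r)}{V_{-Ct^{-1}}(r)} \]
is non-increasing for $r$ up to the diameter. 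In particular,
\[ \vol_t B(x,t,\sqrt{t}) \;\leq\; \frac{V_{-Ct^{-1}}(\sqrt{t})}{V_{-Ct^{-1}}(r_1)} \cdot \vol_t B(x,t,r_1). \]

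Second, I bound the comparison factor uniformly in $t$. A rescaling by $\sqrt{t}$ reduces this to the model space of constant curvature $-C$ and to radii in the interval $[\ov\rho, 1]$; by monotonicity of $V_{-C}$ in its argument,
\[ \frac{V_{-Ct^{-1}}(\sqrt{t})}{V_{-Ct^{-1}}(r_1)} \;=\; \frac{V_{-C}(1)}{V_{-C}(t^{-1/2} r_1)} \;\leq\; \frac{V_{-C}(1)}{V_{-C}(\ov\rho)} \;=:\; C', \]
with $C'$ depending only on $C$ and $\ov\rho$, both of which are universal. Combining this with the bound $\vol_t B(x,t,r_1) < w(t)\,r_1^3 \leq w(t)\,t^{3/2}$ from part (e) of Proposition \ref{Prop:thickthindec} yields $\vol_t B(x,t,\sqrt{t}) < C'\,w(t)\,t^{3/2}$. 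After replacing $w(t)$ by $C'w(t)$ (which preserves the property $\lim_{t\to\infty}w(t) = 0$ already used in the preceding lemma), the claim follows.

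There is really no obstacle here: all the work was done by the previous lemma, which promoted the $\rho_{\sqrt{t}}$-scale to a definite fraction of $\sqrt{t}$; the present statement is merely a Bishop--Gromov bookkeeping step, and the minor subtlety is only to ensure that the comparison factor does not blow up with $t$, which is handled by the parabolic rescaling in the display above.
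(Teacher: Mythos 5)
Your proposal is correct and follows exactly the route the paper takes: the paper's (one-line) justification is precisely "by the previous Lemma and volume comparison, after possibly adjusting $w(t)$", and you have simply written out the Bishop--Gromov step with the uniform bound on the comparison factor coming from $\rho_{\sqrt{t}}(x,t) > \ov\rho\sqrt{t}$ and the rescaling to curvature $-C$. No issues.
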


So the thin part collapses at the \emph{uniform} scale $\sqrt{t}$, with a \emph{two-sided} curvature bound.
This more controlled collapsing behavior enables us to improve and simplify the characterization of Proposition \ref{Prop:MorganTianMain}, using the theory of Cheeger-Fukaya-Gromov (\cite{CFG}) instead of the theory of Morgan-Tian (\cite{MorganTian}).
We will, however, use the language of Proposition \ref{Prop:MorganTianMain} to describe the collapsing behavior.

\begin{Proposition} \label{Prop:V1V2decompositionboundedcurvature}
Given $\MM$ there is a function $\varepsilon : [T_0, \infty) \to (0, \infty)$ with $\lim_{t \to \infty} \varepsilon (t) = 0$ and for every $\mu > 0$ there are constants $a = a(\mu) > 0$, $T_1 = T_1 (\mu) \geq T_0$ such that for any $t \in [T_1, \infty)$ there is a metric $g'_t$ such that
$g'_t$ is $(1+\mu)$-bilipschitz to $g_t$, $| \partial^m (g'_t - g_t) |_{g_t} < \mu t^{-m/2}$ for $0 \leq m < \mu^{-1}$ and such that the following properties hold:

There are two cases.
In the first case, $\MM_{\textnormal{thin}} (t) = M$, $M$ is a quotient of the $3$-torus $T^3$ or the nilmanifold, $\diam_t M < \mu \sqrt{t}$ and $g'_t$ is flat or a quotient of a left-invariant metric.
In the second case, we can find finitely many embedded $2$-tori $\Sigma_{i,t}^T \subset \Int \MM_{\textnormal{thin}} (t)$ that are pairwise disjoint, as well as closed subsets $V_{1,t}, V_{2,t}, V^*_t \subset \MM_{\textnormal{thin}} (t)$ such that:
\begin{enumerate}[label=(\alph*)]
\item $\MM_{\textnormal{thin}} (t) = V_{1,t} \cup V_{2,t} \cup V^*_t$ and the interiors of the sets $V_{1,t}, V_{2,t}, V^*_t$ are pairwise disjoint and $V_{1,t}, V_{2,t}, V^*_t$ are separated by the $\Sigma_{i,t}^T$.
\item Every component of $\partial \MM_{\textnormal{thin}} (t)$ is adjacent to a component of $V_{1,t}$.
\item The components of $V_{1,t}$ are diffeomorphic to $T^2 \times I$, $\Klein^2 \td\times I$, a $T^2$-bundle over a circle or the union of two copies of $\Klein^2 \td\times I$ along their $T^2$-boundary.
In the last two cases $V_{1,t} = M$.
\item The set $V_{2,t}$ carries a Seifert fibration $p_{V_{2,t}} : V_{2,t} \to \Sigma_{V_{2,t}}$, where $\Sigma_{V_{2,t}}$ is a possibly disconnected orbifold with cone singularities.
The fibration is compatible with the boundary tori of $V_{2,t}$.
\item Each component of $V^*_t$ is diffeomorphic to the solid torus $S^1 \times D^2$ and adjacent to a component of $V_{1,t}$.
\end{enumerate}
We can furthermore characterize the geometric properties of $V_{1,t}$ and $V_{2,t}$ as follows:
\begin{enumerate}[label=(\alph*), start=6]
\item If $\mathcal{C}$ is a component of $V_{1,t}$, then:
\begin{enumerate}[label=(f\arabic*)] 
\item If $\mathcal{C} \approx T^2 \times I$, then there is a diffeomorphism $\Phi : T^2 \times I \to \mathcal{C}$ such that $\Phi^* g'_t$ is invariant under the $T^2$-action on the first factor.
Moreover, the orbits of this action have diameter $< \mu \sqrt{t}$.
\item If $\mathcal{C} \approx \Klein^2 \td\times I$, then the assertions of item (f1) hold for the double cover $\widehat{\mathcal{C}}$ that is diffeomorphic to $T^2 \times I$.
\item If $\mathcal{C} = M$ is diffeomorphic to a $T^2$-bundle over a circle, then there is a bundle projection $p : M \to S^1$ and in a fibered neighborhood of every $T^2$-fiber there is a fiberwise $T^2$-action on $M$ that is isometric with respect to $g'_t$.
Moreover, the fibers of $p$ have diameter $< \mu \sqrt{t}$.
\item If $\mathcal{C} = M$ is diffeomorphic to the union of two copies of $\Klein^2 \td\times I$ along their $T^2$-boundary, then there is a double cover $\widehat{M}$ of $M$ that satisfies the assertions of item (f3).
\end{enumerate}
\item All components of $V_{1,t}$ that are not equal to components of $\MM_{\textnormal{thin}} (t)$ have diameter $> \mu^{-1} \sqrt{t}$.
\item The Seifert fibers on $V_{2,t}$ are orbits of an isometric $S^1$-action on $V_{2,t}$ and the map $p_{V_{2,t}} : V_{2,t} \to \Sigma_{V_{2,t}}$ is a submersion with respect to $g'_t$ onto a smooth orbifold metric $g''_t$ on $\Sigma_{V_{2,t}}$ whose curvature is bounded by $a^{-1}(\mu) t^{-1}$.
The Seifert fibers have diameter $< \varepsilon (t) \sqrt{t}$.
Moreover, on $V_{2,t}$ the metric $g'_t$ is $(1+\varepsilon (t))$-bilipschitz to $g_t$ and $|\partial^m (g'_t - g_t)|_{g_t} < \varepsilon(t) t^{-m/2}$ for $0 \leq m < \mu^{-1}$.
\item The area of every component of $(\Sigma_{V_{2,t}}, g''_t)$ is $> a(\mu) t$ and for every $x \in \Sigma_{V_{2,t}}$ for which $B_{g''_t} (x, \sqrt{t}) \subset \Int \Sigma_{V_{2,t}}$ we have as well $\area_{g''_t} B_{g''_t} (x, \sqrt{t}) > a(\mu) t$.
Furthermore, the boundary circles of $(\Sigma_{V_{2,t}}, g''_t)$ have diameter \linebreak[1] $> a(\mu) \sqrt{t}$.
\item The second fundamental form of the $T^2$-fibers on $V_{2,t}$ and the geodesic curvatures of the Seifert fibers on $V_{2,t}$, with respect to $g'_t$ and $g_t$, are bounded from above by $a^{-1} (\mu) t^{-1/2}$.
\item The components of $V^*_t$ have diameter $< \mu \sqrt{t}$.
\end{enumerate}
\end{Proposition}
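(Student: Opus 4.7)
The plan is to work with the rescaled metrics $\td g_t = t^{-1} g_t$, which have uniformly bounded curvature, bounded derivatives of curvature (by Lemma \ref{Lem:curvderbounds}), and satisfy a uniform collapse on $\MM_{\textnormal{thin}}(t)$ at scale $1$. This is precisely the hypothesis setting of Cheeger--Fukaya--Gromov (CFG, \cite{CFG}): at every point of $\MM_{\textnormal{thin}}(t)$ we obtain a local invariant metric $\td g'_t$ that is $(1+\varepsilon(t))$-close to $\td g_t$ in $C^{[\mu^{-1}]}$ and a pure nilpotent Killing structure whose orbits through each point have diameter $<\varepsilon(t)$. The fact that all local models have collapsing rank $\geq 1$ (and not $0$) follows since $\rho_{\sqrt t}\geq\bar\rho\sqrt t$ combined with the uniform volume collapse forces at least one collapsing direction. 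For large enough $t$ the orbit dimensions define the global stratification into the sets $V_1,V_2,V^*$: orbits of dimension $2$ give $V_1$, orbits of dimension $1$ give $V_2$, and regions where the nilpotent structure ``degenerates'' through a singular orbit of lower dimension will form $V^*$.

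To identify each stratum topologically, I apply Proposition \ref{Prop:MorganTianMain} to $(M,\td g_t)$ with parameter $\mu$; the additional curvature bound both above and below reduces the nilpotent actions to torus actions (since in dimension at most $3$ a simply connected nilpotent group acting with codimension $\leq 2$ on a manifold of bounded geometry must be abelian up to a finite cover), and rules out several technical cases. Components of $V_{2,\textnormal{cone}}$ and $V_{2,\partial}$ that are diffeomorphic to solid tori get absorbed into $V^*_t$; the remaining topology of $V_{2,t}$ is a genuine Seifert fibration, realized geometrically because the local $S^1$-orbits of the CFG structure glue, after symmetrization by averaging over the orbits of the local $T$-actions, to an isometric $S^1$-action for $\td g'_t$. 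This establishes (a)--(e) and (h). For (f1)--(f4), components of $V_{1,t}$ have rank-$2$ orbits; the monodromy of the local $T^2$-structure is finite, so after passing to the oriented double cover and averaging one obtains either a global $T^2$ action ($T^2\times I$, torus bundle) or one factoring through $T^2\to T^2/\IZ_2$ ($\Klein^2\td\times I$ or its glued double). The $C^m$-closeness bound in $g'_t-g_t$ comes directly from CFG.

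The quantitative assertions (g), (i), (j), (k) are where most actual work lies. The diameter bound (g) for $V_1$-components that are not full components of $\MM_{\textnormal{thin}}(t)$ follows because, if such a component had diameter $\leq\mu^{-1}\sqrt t$, its boundary $T^2$'s would have bounded diameter but the adjacent $V_{2,t}$-region would satisfy the collar condition of Proposition \ref{Prop:MorganTianMain}, so one can enlarge $V_1$ to swallow it. For (i), the lower area bound on each component of the base orbifold $\Sigma_{V_{2,t}}$ and on every ball $B_{g''_t}(x,\sqrt t)$ follows from the volume lower bound coming from Bishop--Gromov applied to $\td g_t$ combined with the uniform upper bound on the fiber length; a boundary circle of $\Sigma_{V_{2,t}}$ corresponds to an adjacent $T^2$ of $V_{1,t}$, whose diameter gives the length of the circle after dividing by the $S^1$-fiber length, so the bound $\mu^{-1}$ on the $V_1$-diameter gives a lower bound on the circumference. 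Statement (j) is an elementary consequence of CFG for the averaged metric since orbits of an isometric action have second fundamental form controlled by curvature, and (k) holds because components of $V^*_t$ are solid tori adjacent to $V_{1,t}$-pieces of bounded cross section that ``cap off'' one end and do not extend further than $\mu\sqrt t$ by the local CFG model.

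The main obstacle will be the globalization step: CFG provides only local invariant metrics up to the $\varepsilon$-closeness on the scale of a single orbit, while the proposition asks for a metric $g'_t$ defined on all of $\MM_{\textnormal{thin}}(t)$ that is simultaneously invariant under the $T^2$-actions on each $V_1$-component and under the $S^1$-action on $V_{2,t}$, and which matches smoothly across the $\Sigma^T_{i,t}$. Averaging over a common finite cover where the local invariance groups become a single torus, and then gluing via a partition of unity adapted to the stratification, requires care to ensure that on overlaps the two symmetry groups are compatible --- i.e.\ that on a collar where a $V_1$-piece meets a $V_2$-piece, the $S^1$-action on the $V_2$-side coincides with a subtorus of the $T^2$-action on the $V_1$-side. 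This compatibility is exactly Proposition \ref{Prop:MorganTianMain}(b2) (the $S^1$-fibration is compatible with the boundary torus fibration), and feeding this into the CFG framework is what makes the averaging produce a globally defined $g'_t$ as claimed.
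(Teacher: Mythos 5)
Your core strategy---rescale to $t^{-1}g_t$, note that the two-sided curvature bound plus Lemma \ref{Lem:curvderbounds} makes the metric $A$-regular, and invoke Cheeger--Fukaya--Gromov to get an invariant metric $g'_t$ and a nilpotent Killing structure whose orbit dimensions produce the stratification into $V_{1,t}$, $V_{2,t}$, $V^*_t$---is exactly the paper's route. However, two steps are not adequately justified. First, your argument for assertion (g) is backwards: if a component $\mathcal{C}\approx T^2\times I$ of the $2$-dimensional orbit stratum has diameter $\leq\mu^{-1}\sqrt{t}$, ``enlarging $V_1$ to swallow it'' does not make it large; the correct move (and the one the paper takes) is to \emph{delete} $\mathcal{C}$ from $V_{1,t}$ and absorb it into $V_{2,t}$ (or into a solid torus of $V^*_t$). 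For this to be legitimate one must show that the Seifert fibrations on the two sides of the thin neck $\mathcal{C}$ are compatible, i.e.\ that the fibration extends across $\mathcal{C}$ and that the lower area bound on the base persists. Second, you attribute precisely this compatibility to Proposition \ref{Prop:MorganTianMain}(b2), but that item only says the $S^1$-fibration on $V_{2,\textnormal{reg}}$ restricts to a fibration of the boundary tori \emph{within a single component of $V_2$}; it says nothing about whether the fibrations on two different $V_2$-components separated by a thin $T^2\times I$ agree, nor about whether the isometric $S^1$-action extends over an isolated circle orbit. The paper proves this by a genuine additional argument: assume a sequence of counterexamples $t_k\to\infty$, pass to a pointed Cheeger--Gromov limit of $(M,t_k^{-1}g'_{t_k},x_k)$, use Naber--Tian to identify the limit as a smooth non-collapsed $2$-orbifold, and read off the compatibility and the area bound there. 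Without some substitute for this blow-up step your construction of a single global $g'_t$ with a coherent $S^1$-action on $V_{2,t}$ does not close.

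A smaller structural remark: the paper deliberately does \emph{not} route the topological identification through Proposition \ref{Prop:MorganTianMain} here---the CFG local models (tori/Klein bottles for rank-$2$ orbits, Seifert fibrations for rank-$1$ orbits, solid tori around isolated circle orbits) already give the topology, and building $V_{1,t},V_{2,t},V^*_t$ directly from the orbit dimensions avoids having to align two a priori unrelated decompositions. If you insist on importing the Morgan--Tian decomposition, you incur an extra (unaddressed) matching problem between its pieces and the CFG strata, e.g.\ your statement that ``components of $V_{2,\textnormal{cone}}$ and $V_{2,\partial}$ get absorbed into $V^*_t$'' presumes an identification that has not been established.
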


\begin{proof}
We first apply \cite[Theorem 1.7]{CFG} with sufficiently small $\varepsilon = \varepsilon (\mu)$ to obtain a $(\rho \sqrt{t}, k)$-round metric $g'_t$ that is sufficiently regular.
In (1.3.3) of \cite{CFG} this regularity is expressed in terms of bounds on the derivatives of the curvature of $g'_t$.
In our case, we actually obtain a bound of the form $| \partial^m (g'_t - g_t) |_{g_t} < \mu t^{-m/2}$ for $0 \leq m < \mu^{-1} + 1$, since by Lemma \ref{Lem:curvderbounds} the metric $t^{-1} g_t$ is already ``$A$-regular'' and we can omit the application of Abresch's Theorem, \cite[Theorem 1.12]{CFG}; see also \cite[Proposition 7.21]{CFG}.
We furthermore obtain a nilpotent Killing structure $\mathfrak{N}_t$ for $g'_t$ whose orbits are compact and have diameter $< \frac1{2}\mu \sqrt{t}$.
Here $\rho = \rho(\mu) > 0, k = k(\mu) < \infty$ are uniform in $t$.
If $\mathfrak{N}_t$ has only one single orbit that fills out $M$, then $M$ is diffeomorphic to a finite quotient of a nilpotent Lie group and $\diam_t M < \mu \sqrt{t}$; so we are done.

So consider from now on the case in which the orbits of $\mathfrak{N}_t$ are $0$, $1$ or $2$-dimensional.
We first discuss the local geometry of $g'_t$ around each orbit.
Let $p \in M$ and denote by $\mathcal{O}_{p,t}$ the orbit of $\mathcal{N}_t$ through $p$.
Then there is a subset $V_{p,t} \subset M$ containing $B(p,t,\rho \sqrt{t}) \subset V_{p,t}$, a Lie group $H_{p,t}$ that acts isometrically and faithfully on a normal cover $\pi_{p,t}: (\widehat{V}_{p,t}, \widehat{g}'_t) \to (V_{p,t}, g'_t)$ such that the group of deck transformations is represented by a discrete subgroup $\Lambda_{p,t} \subset H_{p,t}$ and such that the following holds: $H_{p,t}$ consists of $\leq k$ many components, its identity component $N_{p,t}$ is nilpotent and $H_{p,t}$ is generated by $N_{p,t}$ and $\Lambda_{p,t}$.
Moreover, the injectivity radius at every lift $\widehat{p} \in \widehat{V}_{p,t}$ of $p$ is $> \rho \sqrt{t}$.
Let $\mathcal{O}_{\widehat{p}, t} \subset \widehat{V}_{p,t}$ be the orbit of such a lift $\widehat{p}$ under $H_{p,t}$.
Then $\mathcal{O}_{p,t} = \pi (\mathcal{O}_{\widehat{p}, t} )$ and $\mathcal{O}_{\widehat{p}, t} \approx  H_{p,t} / \Stab_H (\widehat{p})$.
Here $\Stab_{H_{p,t}} (\widehat{p})$ denotes the stabilizer subgroup of $\widehat{p}$, which is isomorphic to a subgroup of $SO(3)$.
Since $SO(3)$ is not nilpotent and has no $2$-dimensional subgroups, $\Stab_H (\widehat{p})$ is either $0$ or $1$-dimensional and hence $\dim \mathcal{O}_{p,t} \leq \dim H_{p,t} \leq \dim \mathcal{O}_{p,t} + 1$.
We can then describe the local geometry around $\mathcal{O}_{p,t}$, depending on its dimension, as follows:
\begin{enumerate}[label=(\arabic*), start=0]
\item If $\mathcal{O}_{p,t}$ is $0$-dimensional, then $H_{p,t} \cap \Lambda_{p,t} = \{ 1 \}$ and hence $|\Lambda_{p,t} | \leq k$.
So $B(p,t, \rho \sqrt{t} ) > c(\mu) \rho^3 t^{3/2}$ for some $c = c(\mu) > 0$.
\item If $\mathcal{O}_{p,t}$ is $1$-dimensional, then it must be a circle and $H_{p,t}$ must be $1$ or $2$-dimensional.

If $H_{p,t}$ is $1$-dimensional, then consider the finite cover $\ov\pi_{p,t} : (\ov{V}_{p,t}, \ov{g}_t) \to (V_{p,t}, g'_t)$ corresponding to the subgroup $\Lambda_{p,t} \cap N_{p,t}$.
The group $N_{p,t}$ acts isometrically on $(\ov{V}_{p,t}, \ov{g}_t)$ and the orbits of this action are the circles of a smooth $S^1$-fibration on $\ov{V}_{p,t}$.
This $S^1$-fibration induces a Seifert fibration on $V_{p,t}$ via the covering projection map $\ov\pi_{p,t}$, and the action of $N_{p,t}$ on $\ov{V}_{p,t}$ induces an isometric action of $N_{p,t}$ on $V_{p,t}$.
Any orbit $\mathcal{O}_{p',t} \subset V_{p,t}$ is a union of Seifert fibers.
The area of of any $r$-ball, $r < \sqrt{t}$, that is compactly contained in the base space of this Seifert fibration is $> a'(\mu) r^2$ for some $a' = a'(\mu) > 0$ (this area bound is a consequence of Lemma 8.5 and (8.7) in \cite{CFG}).
The area bound implies that the rotational velocity of the local Killing field $K$ corresponding to an infinitesimal generator of $N_{p,t}$ is bounded along $\mathcal{O}_{p,t}$.
In other words, $|\nabla K| < C' (\mu) t^{-1/2} K$ on $\mathcal{O}_{p,t}$ for some $C' = C' (\mu) < \infty$.
So the geodesic curvature on $\mathcal{O}_{p,t}$ is bounded by $C'' (\mu) t^{-1/2}$ and the curvature of the base space is bounded by $C'' (\mu) t^{-1}$, for some $C'' = C'' (\mu) < \infty$ (the latter bound follows by O'Neill's formula).

If $H_{p,t}$ is $2$-dimensional, then there is a neighborhood $U$ of $\mathcal{O}_{p,t}$ that is diffeomorphic to the solid torus $S^1 \times D^2$ and that has the property that $U \setminus \mathcal{O}_{p,t}$ is the union of $2$-dimensional orbits $\mathcal{O}_{p',t}$.
This neighborhood can be chosen to contain an $a'' (\mu) \sqrt{t}$-neighborhood of $\mathcal{O}_{p,t}$ for some $a'' = a'' (\mu) > 0$.
\item If $\mathcal{O}_{p,t}$ is $2$-dimensional, then it must be a $2$-torus or a Klein bottle $\Klein^2$ and $N_{p,t}$ is isomorphic to a quotient of $\IR^2$ or the Poincar\'e group of isometries of $\IR^2$.
Moreover, all orbits of $\mathfrak{N}_t$ on $V_{p,t}$ are $1$ or $2$-dimensional and all orbits of $\mathfrak{N}_t$ in a neighborhood of $\mathcal{O}_{p,t}$ are $2$-dimensional (this follows from (1.5.2) in \cite{CFG}).
If all orbits $\mathcal{O}_{p',t}$ at distance $< r < \sqrt{t}$ are $2$-dimensional, then the second fundamental form on $\mathcal{O}_{p,t}$ is $< C' (\mu) r^{-1}$ for some $C' = C' (\mu) < \infty$ (this bound is a consequence of Remark 8.6 in \cite{CFG} and the following conclusions).

If $\mathcal{O}_{p,t}$ is a $2$-torus, then so are the orbits in a neighborhood of it.

If $\mathcal{O}_{p,t}$ is a Klein bottle, then when we pass to the cover corresponding to the finite index subgroup $\Lambda_{p,t} \cap N_{p,t}$, this orbit lifts to a $2$-torus.
So there is a neighborhood $U \subset V_{p,t}$ of $\mathcal{O}_{p,t}$ that is diffeomorphic to $\Klein^2 \td\times I$ with the property that $U \setminus \mathcal{O}_{p,t}$ is the union of orbits $\mathcal{O}_{p',t}$ that are $2$-tori.
\end{enumerate}

Using this local characterization, we can now describe the global structure of $\MM_{\textnormal{thin}} (t)$.
Let $X_{1,t} \subset M$ be the union of all $1$-dimensional orbits and $X_{2,t} \subset M$ the union of all $2$-dimensional orbits.
By item (0) of the previous list, we have $\MM_{\textnormal{thin}} (t) \subset X_{1,t} \cup X_{2,t}$ for sufficiently large $t$ (depending on $\mu$).
We first consider components $\mathcal{C}$ of $X_{1,t}$ that do not contain a ball of radius $\frac1{10} \rho \sqrt{t}$.
By item (2) in the previous list, we can extend the Killing structure $\mathfrak{N}_t$ to a Killing structure $\mathfrak{N}'$ on $\mathcal{C}$, whose generic orbits are $2$-dimensional.
Using Jacobi field comparison, we find that the orbits of this new Killing structure are bounded by $2 \cdot \frac12 \mu \sqrt{t} = \mu \sqrt{t}$ in diameter (assuming $\rho$ to be sufficiently small).
So we may assume from now on that each component of $X_{1,t}$ contains a ball of radius $\frac1{10} \rho \sqrt{t}$.

Next, we consider the isolated orbits of $X_{1,t}$ that are adjacent to $X_{2,t}$.
Around these orbits, we choose pairwise disjoint solid tori of diameter $< \mu \sqrt{t}$ within $\MM_{\textnormal{thin}} (t)$ that are unions of this orbits and orbits which are $2$-tori.
We may assume that these solid tori contain an $r(\mu) \sqrt{t}$ neighborhood of each isolated $S^1$-orbit for a uniform $r = r(\mu) > 0$.
Denote the union of these solid tori by $V^{* \prime}_t$.
Choose $V'_{1,t}$ to be the closure of $X_{2,t} \setminus V^{* \prime}_t$ and $V'_{2,t}$ to be the closure of $X_{1,t} \setminus (V'_{1,t} \cup V^{* \prime}_t)$.
Then the decomposition $\MM_{\textnormal{thin}} (t) = V'_{1,t} \cup V'_{2,t} \cup V^{* \prime}_t$ satisfies assertions (a)--(f), (j), (k) and the first parts of (h) and (i), whenever $t > T'(\mu)$.
For the second part of (i), consider a $2$-torus orbit orbit $\mathcal{O}_{p,t} \subset V_{1,t}$ close to $V_{1,t} \cap V_{2,t}$.
By item (2) of the previous list, we conclude that a collar neighborhood of $V_{2,t}$ of uniform size has a $T^2$-symmetry.
The second part of (i) now follows from the lower area bound on the base space of the Seifert fibration on this collar.
Moreover, there is a constant $C_* = C_* (\mu) < \infty$ such that the Seifert fibers of $V'_{2, t}$ have diameter $< C_* w(t) \sqrt{t}$ (this bound is again a consequence of Remark 8.6 in \cite{CFG} and the following conclusions, or it follows from the earlier bound $|\nabla K| < C' t^{-1/2}$ on a local Killing field $K$).

Consider now a component $\mathcal{C}$ of $V'_{1,t}$ that is not a component of $\MM_{\textnormal{thin}} (t)$ and that has diameter $\leq \mu^{-1} \sqrt{t}$.
Then $\mathcal{C}$ is diffeomorphic to $T^2 \times I$ or $\Klein^2 \td\times I$.
By \cite[Lemmas \ref{Lem:coverMbysth}, \ref{Lem:Kleinandsolidtorus}]{Bamler-LT-topology}, $\mathcal{C}$ has to be adjacent to a component $\mathcal{C}_1$ of $V'_{2,t}$ on at least one end.
Using the Killing structure $\mathfrak{N}_t$, the Seifert fibration on $\mathcal{C}_1$ can be extended to a Seifert fibration on $\mathcal{C} \cup \mathcal{C}_1$.
If $\mathcal{C}$ is adjacent to a component of $V^{* \prime}_t$ on the other end, then this fibration induces an isometric $S^1$-action on this component.
We now establish the following

\begin{Claim}
For sufficiently large $t$, depending on $\mu$, we have the following picture: If $\mathcal{C}$ is adjacent to another component $\mathcal{C}_2$ of $V_{2,t}$, then the Seifert fibers on $\mathcal{C}$ and $\mathcal{C}_2$ coincide on their intersection.
If $\mathcal{C}$ is adjacent to a component of $V^{* \prime}_t$, then the orbits of the $S^1$-action on this component form a Seifert fibration.
Moreover, there is a constant $a = a(\mu) > 0$ such that the area bound of assertion (i) holds for the base spaces of these Seifert fibrations.
\end{Claim}

\begin{proof}
Assume that the claim was wrong.
Then we can find sequences $t_k \to \infty$, $\mathcal{C}_k$, $\mathcal{C}_{1,k}$ of counterexamples.
Choose base points $x_k \in \mathcal{C}_k$ and consider the sequence $(M, t^{-1}_k g'_{t_k}, x_k)$ of pointed Riemannian manifolds.
After passing to a subsequence, we may assume that this sequence converges to a complete pointed metric space $(X_\infty, d_\infty, x_\infty)$.
Since $x_k$ stays in bounded distance towards $\mathcal{C}_{1,k}$ with respect to the metric $t_k^{-1} g'_{t_k}$, the limit space $(X, d_\infty, x_\infty)$ has to be $2$-dimensional.
By  \cite[Theorem 1.1]{NaberTian-collapse} the limit $(X_\infty, d_\infty, x_\infty)$ is a smooth $2$-dimensional orbifold.
Since the diameters of the Seifert fibers on $V_{2,t_k}$ are $< C_* w(t)$, the sequence $(M, t^{-1}_k g'_{t_k}, x_k)$ collapses along these fibers and hence also along the newly constructed $S^1$-fibers or orbits.
This proves that the Seifert fibration on $\mathcal{C}_{k,t}$ is compatible with the Seifert fibrations on adjacent components of $V'_{2, t_k}$ or that it can be extended to a Seifert fibration on an adjacent component of $V^{*\prime}_{t_k}$.

So the only remaining possibility for the $\mathcal{C}_k, \mathcal{C}_{1,k}$ to be counterexamples to the claim is that the areas of the base spaces of these Seifert fibrations converge to zero locally somewhere on $B(x_k, t_k, 2 \mu^{-1} \sqrt{t_k})$.
This is however impossible since the orbifold $(X_\infty, d_\infty)$ is non-collapsed on $B(x_\infty, 2 \mu^{-1}) \subset X_\infty$
\end{proof}

Let now $V_{1,t}$ be equal to $V'_{1,t}$ minus the components of diameter $\leq \mu^{-1} \sqrt{t}$ and $V_{2,t}$ be equal to the union of $V_{2, t}$ with these components and the components of $V^{* \prime}_t$ that are adjacent to such components.
Finally, let $V^*_t$ be the union of all components of $V^{* \prime}_t$ that are adjacent to $V_{1,t}$.
So we have established all assertions, except for the better bound on $g'_t - g_t$ in assertion (h).
To obtain this bound we need to replace $g'_t$ locally by the average of the metric $g_t$ under the $S^1$-action on $V_{2,t}$ and interpolate using a cutoff function.
The bound on the derivatives follows from the fact that $| \partial^m (g'_t - g_t) |_{g_t} < \mu t^{-1/2}$ for $0 \leq m < \mu^{-1} + 1$.
\end{proof}

We can finally prove the second main result, \cite[Theorem \ref{Thm:geombehavior}]{Bamler-LT-Introduction}.

\makeatletter
\def\@cite#1#2{{\normalfont{\itshape[#1\if@tempswa , #2\fi]}}}
\makeatother
\begin{proof}[Proof of {\cite[Theorem \ref{Thm:geombehavior}(a)]{Bamler-LT-Introduction}}]
\makeatletter
\def\@cite#1#2{{\normalfont[{#1\if@tempswa , #2\fi}]}}
\makeatother
By the resolution of the Geometrization Conjecture we have $\MM_{\textnormal{thick}} (t) = M_0$ for large $t$.
So by \cite[Proposition \ref{Prop:thickthindec}]{Bamler-LT-Perelman} the metric $\tfrac14 t^{-1} g_t$ can be approximated by a hyperbolic metric with better and better precision as $t \to \infty$.
The pointwise convergence of $\tfrac14 t^{-1} g_t$ follows from the stability of compact hyperbolic metrics (see \cite[Theorem 4]{Ye} or \cite{Bamler-hypcusps}).
\end{proof}
\makeatletter
\def\@cite#1#2{{\normalfont[{#1\if@tempswa , #2\fi}]}}
\makeatother

\makeatletter
\def\@cite#1#2{{\normalfont{\itshape[#1\if@tempswa , #2\fi]}}}
\makeatother
\begin{proof}[Proof of {\cite[Theorem \ref{Thm:geombehavior}(b)--(d)]{Bamler-LT-Introduction}}]
\makeatletter
\def\@cite#1#2{{\normalfont[{#1\if@tempswa , #2\fi}]}}
\makeatother
Assume first that $t^{-1/2} \diam_t M_0$ is bound\-ed for large $t$.
Then we can use \cite[Theorem 1.2]{LottDimRed} to deduce that the universal covers $(\td{M}_0, t^{-1} \td{g}_t, \td{x})$ of $(M_0, t^{-1} g_t, x)$ smoothly converge to the flat metric or to a left-invariant metric on the nilmanifold or solmanifold, depending on whether we are in case (b), (c) or (d).
In case (d) we are then done.
In case (b), we argue as follows:
Since $(\td{M}_0, t^{-1} \td{g}_t, \td{x})$ smoothly converges to a flat metric, we find that $\lim_{t \to \infty} t \Vert {\Rm}(\cdot, t) \Vert_{L^\infty(M_0)} = 0$.
So $\lim_{t \to \infty} t^{-1/2} \diam_t M_0 = 0$.
Hence by Proposition \ref{Prop:V1V2decompositionboundedcurvature}, $g_t$ is $\varepsilon (t)$-bilipschitz close to a flat metric $g'_t$ on $M_0$ for large $t$, where $\lim_{t \to \infty} \varepsilon(t) = 0$.
By \cite[Theorem 1.2]{SSS} there is some positive time $T_0 < \infty$ such that for any $t \geq t_0 \geq T_0$, there is a diffeomorphism $\Phi_{t_0, t} : M_0 \to M_0$ with the property that $g_t$ is $\varepsilon' (t_0)$-bilipschitz to $\Phi^*_{t_0, t} g_{t_0}$, where $\lim_{t \to \infty} \varepsilon' (t) = 0$.
So $(M_0, g_t)$ converges to a unique flat torus in the smooth Cheeger-Gromov sense as $t \to \infty$.
The pointwise convergence of the metric $g_t$ follows now by the stability results of \cite[Theorem 3.7]{Guenter-Isenberg-Knopf-2002} or \cite{KochLamm}.
The diameter bound in case (c) follows from the fact that in the left-invariant Ricci flow on the nilmanifold distances grow like $O(t^{1/6})$.

So assume for the rest of the proof that $t^{-1/2} \diam_t M_0$ becomes unbounded as $t \to \infty$.
We now argue similarly as in the second case of the proof of \cite[Theorem \ref{Thm:LT0-main-1}]{Bamler-LT-Introduction}.
By the resolution of the Geometrization Conjecture we have $\MM_{\textnormal{thin}} (t) = M_0$ for large $t$.
Since $M_0$ is the quotient of a torus bundle over a circle, we can apply \cite[Proposition \ref{Prop:maincombinatorialresult}(b)]{Bamler-LT-topology} and obtain a simplicial complex $V$ and a sequence of continuous maps $f_1, f_2, \ldots : V \to M_0$ with the properties indicated there.
Next, we apply \cite[Proposition \ref{Prop:areaevolutioninMM}]{Bamler-LT-simpcx} to obtain families of piecewise smooth maps $f_{1,t}, f_{2,t}, \ldots : V \to M_0$, homotopic to the maps $f_1, f_2, \ldots$, such that
\begin{equation} \label{eq:limsupofareainsecondresult}
 \limsup_{t \to \infty} t^{-1} \area_t f_{n,t} < A(V),
\end{equation}
where $A(V)$ only depends on $V$.
Now fix some small and arbitrary constant $\mu > 0$ and consider the decomposition $M_0 = V_{1,t} \cup V_{2,t} \cup V^*_t$ and the metric $g'_t$ from Proposition \ref{Prop:V1V2decompositionboundedcurvature} for large $t$.
Assume that $V_{2,t}$ is non-empty.
Note that $V_1 \leftarrow V_{1,t} \cup V^*_t$ and $V_2 \leftarrow V_{2,t}$ satisfy the topological characterizations of Proposition \ref{Prop:MorganTianMain}.
So we can apply the discussion of subsection \ref{subsec:topimplications}, in particular Lemma \ref{Lem:SStori}, and find a good component $\mathcal{C}$ of $V_{2,t}$ (i.e. a component whose Seifert fibers are incompressible in $M_0$).
Consider the submersion $p_{V_{2, t}} |_{\mathcal{C}} : (\mathcal{C}, g'_t) \to (p_{V_{2,t}} (\mathcal{C}), g''_t) \subset (\Sigma_{V_2, t}, g''_t)$ restricted to this component.
By Proposition \ref{Prop:V1V2decompositionboundedcurvature}(i), the area of the base space $(p_{V_{2,t}} (\mathcal{C}), g''_t)$ is $> a(\mu) t$.
So since for every $n$, the image of $f_{n,t}$ intersects each Seifert fiber of $\mathcal{C}$ at least $n$ times, we obtain
\[  \area_t f_{n,t} > \tfrac12 \area_{g'_t} f_{n,t} > \tfrac12 n a(\mu) t. \]
So if we pick $n > 2(A + 1) a^{-1}(\mu)$, then we obtain that $\area_t f_{n,t} > (A+1)t$ for large $t$, in contradiction to (\ref{eq:limsupofareainsecondresult}).
It follows that $V_{2,t} = \emptyset$ and hence $V_{1,t} = M_0$ for $t \geq T(\mu)$.
This proves the desired result for some function $\varepsilon(t)$ satisfying $t \geq T(\varepsilon (t))$ and $\lim_{t \to \infty} \varepsilon (t) = 0$.
\end{proof}
\makeatletter
\def\@cite#1#2{{\normalfont[{#1\if@tempswa , #2\fi}]}}
\makeatother

\makeatletter
\def\@cite#1#2{{\normalfont{\itshape[#1\if@tempswa , #2\fi]}}}
\makeatother
\begin{proof}[Proof of {\cite[Theorem \ref{Thm:geombehavior}(e)]{Bamler-LT-Introduction}}]
\makeatletter
\def\@cite#1#2{{\normalfont[{#1\if@tempswa , #2\fi}]}}
\makeatother
We first apply \cite[Proposition \ref{Prop:maincombinatorialresult}(a)]{Bamler-LT-topology} to $(M_0, \MM_{\textnormal{thin}} (T_0))$ and obtain a simplicial complex $V$ and a continuous map $f_0 : V \to M_0$ with $f_0 (\partial V) \subset \partial \MM_{\textnormal{thin}} (t)$.
Next, \cite[Proposition \ref{Prop:areaevolutioninMM}]{Bamler-LT-simpcx} yields a family of piecewise smooth maps $f_t : V \to M_0$, $t \in [T_0, \infty)$ with the following properties:
$f_{T_0} |_{\partial V} = f_0 |_{\partial V}$, $f_{T_0}$ is homotopic to $f_0$ relative $\partial V$, $f_t |_{\partial V}$ moves by the ambient isotopies of \cite[Proposition \ref{Prop:thickthindec}]{Bamler-LT-Perelman}, $f_t$ is homotopic to $f_0$ in space-time and
\[ \limsup_{t \to \infty} t^{-1} \area_t f_t < A < \infty. \]
For the rest of the proof we will always assume $t$ to be large enough in order to guarantee $t^{-1} \area_t f_t < A+1$.

Let $\mu^* > 0$ be some small constant, whose value we will determine in the course of the proof, depending on $\mu$.
Apply Proposition \ref{Prop:V1V2decompositionboundedcurvature} for large $t$ with $\mu \leftarrow \mu^*$ and consider the decomposition $\MM_{\textnormal{thin}} (t) = V_{1,t} \cup V_{2,t} \cup V^*_t$.
By the discussion of subsection \ref{subsec:topimplications}, see especially Lemma \ref{Lem:SStori}, (setting $V_1 \leftarrow V_{1,t} \cup V^*_t$ and $V_2 \leftarrow V_{2,t}$), we find that there are two cases: either all components of $V_{1,t}$ or $V_{2,t}$ are good and $V^*_t = \emptyset$ or there is a component $P \approx T^2 \times I$ of $V_{1,t}$ each of whose boundary component bounds a solid torus on one side.
We will now show that the latter case cannot occur.
The argument, which we will carry out in the next two paragraphs, will be similar to the first case of the proof of \cite[Theorem \ref{Thm:LT0-main-1}]{Bamler-LT-Introduction}: If there was such a $P$, then we could find a short geodesic loop inside $P$ of controlled geodesic curvature that bounds a disk of bounded area.
By the curvature bound, this loop stays geometrically controlled on a longer time-interval, contradicting Hamilton's minimal disk argument.

Consider such a component $P \approx T^2 \times I$ and choose a solid torus $S \subset \MM_{\textnormal{thin}} (t)$, such that $(S, S \setminus \Int P)$ is diffeomorphic to $(S^1 \times D^2 (1), S^1 \times D^2 (\frac12))$.
By \cite[Proposition \ref{Prop:maincombinatorialresult}(a)]{Bamler-LT-topology} there is a compact, smooth domain $\Sigma \subset \IR^2$ and a smooth map $h : \Sigma \to S$ with $h(\partial \Sigma) \subset \partial S$ such that $h$ restricted to only the outer boundary circle of $\Sigma$ is non-contractible in $\partial S$ and such that $\area_t h < C(A+1)t$.
Here $C < \infty$ is a constant that only depends on the topology of $M_0$.

Next, choose $K = \max \{ C_0, C_1 \}$ such that $|{\Rm_t}| < K t^{-1}$ and $|\nabla {\Rm_t}|< K t^{-3/2}$ on $M_0$ and consider the constants $\td\alpha_0 = \td\alpha_0 ( C(A+2), K)$, $\td\Gamma = \td\Gamma (K)$ from Lemma \ref{Lem:shortloopingeneralcase}.
By the uniform bounds on the curvature and its derivatives (see Lemma \ref{Lem:curvderbounds}), we can pick constants $\Gamma' = \Gamma' (\td\Gamma (K), A) < \infty$, $B = B(K, A) < \infty$ with the following properties:
If $\gamma : S^1 \to M_0$ is a loop whose geodesic curvature is bounded by $\td\Gamma (K) t^{-1/2}$ at time $t$, then it is bounded by $\Gamma' t^{\prime -1}$ at all times $t' \in [t, (1+ C(A+2)/4\pi )^4 t]$.
Moreover, $\ell_{t'} (\gamma) \leq B \ell_t (\gamma)$ for all such times $t'$.
Choose 
\[ \alpha = \min \Big\{ \frac{\pi}{\Gamma' B}, \td\alpha_0 \Big\} \] 
and observe that none of the constants in this paragraph depend on $\mu^*$.
So we may assume that $\mu^* < \frac12 \td{L}_0^{-1} (\alpha, C(A+2))$, where $\td{L}_0$ is the constant from Lemma \ref{Lem:shortloopingeneralcase}.
Then by Proposition \ref{Prop:V1V2decompositionboundedcurvature}(f), (g) the subset $P$ is a torus structure of width $\leq \td{L}_0^{-1} \sqrt{t}$ and length $\geq \td{L}_0 \sqrt{t}$.
By Lemma \ref{Lem:shortloopingeneralcase}(b) there is a map $h_0 : D^2 \to M_0$ with $\area h_0 < (C(A+1)+1) t < C(A+2) t$, whose boundary loop $\gamma_0 = h_0 |_{\partial D^2}$ has time-$t$ length $< \alpha \sqrt{t}$ and time-$t$ geodesic curvature $< \Gamma t^{-1/2}$.
So on the time-interval $[t, (1+ C(A+2)/4\pi )^4 t]$ the geodesic curvature of $\gamma_0$ is bounded by $\Gamma' t^{-1/2}$ and its length is bounded by $B \alpha t^{1/2} \leq \frac{\pi}{\Gamma'} t^{1/2}$.
Using \cite[Proposition \ref{Prop:evolminsurfgeneral}]{Bamler-LT-simpcx}, we now obtain a contradiction.

We have shown that $V^*_t = \emptyset$ and that all components of $V_{1,t}$ and $V_{2,t}$ are good.
We now denote the components of $V_{1,t}$ by $E_{1,t}, \ldots, E_{m_t,t}$, the components of $\MM_{\textnormal{thick}} (t)$ by $M_{1,t}, \ldots, M_{p,t}$ and the components of $V_{2,t}$ by $M_{p+1,t}, \ldots, M_{k_t, t}$.
Furthermore, let $\Sigma_{p+1, t}, \ldots, \Sigma_{k_t, t}$ be the corresponding components of $\Sigma_{V_{2,t}}$ and $p_{p+1,t}, \ldots, p_{k_t, t}$ the projections of $M_{p+1, t}, \ldots, M_{k_t, t}$ onto those components.
Then assertions (e1)--(e3), (e5)--(e7) and the first part of (e8) hold.

Since the components of $V_{2,t}$ are good, we can use \cite[Proposition \ref{Prop:maincombinatorialresult}(b)]{Bamler-LT-topology} to deduce that every Seifert fiber of $V_{2,t}$ intersects the image of $f_t$.
This implies that the area of the base of the Seifert fibration, $(\Sigma_{V_{2,t}}, g''_t)$, is bounded by $(A+1)t$.
So also the second part of assertion (e7) holds.
The diameter bound in assertion (e4) is a consequence of the lower area bound of balls in $(\Sigma_{V_{2,t}}, g''_t)$, Proposition \ref{Prop:V1V2decompositionboundedcurvature}(i), and assertion (e7).

Assertion (e9) is a consequence of the proof of \cite{Bamler-certain-topologies}.
\end{proof}
\makeatletter
\def\@cite#1#2{{\normalfont[{#1\if@tempswa , #2\fi}]}}
\makeatother

\end{document}